\newcommand\shorttitle{Interlacing Diffusions}
\newcommand\authors{T. Assiotis, N. O'Connell and J. Warren}
\ifodd\value{page}
\authors
\shorttitle
\newtheorem{thm}{Theorem}[section]
\newtheorem{cor}[thm]{Corollary}
\newtheorem{lem}[thm]{Lemma}
\newtheorem{defn}[thm]{Definition}
\newtheorem{rmk}[thm]{Remark}
\newtheorem{prop}[thm]{Proposition}
\title{\large \bf INTERLACING DIFFUSIONS}
\author{\small THEODOROS ASSIOTIS, NEIL O'CONNELL AND JON WARREN}
\date{}
\begin{document}

\maketitle

\begin{abstract}
We study in some generality intertwinings between $h$-transforms of Karlin-McGregor semigroups associated with one dimensional diffusion processes and those of their Siegmund duals. We obtain couplings so that the corresponding processes are interlaced and furthermore give formulae in terms of block determinants for the transition densities of these coupled processes. This allows us to build diffusion processes in the space of Gelfand-Tsetlin patterns so that the evolution of each level is Markovian. We show how known examples naturally fit into this framework and construct new processes related to minors of matrix valued diffusions. We also provide explicit formulae for the transition densities of the particle systems with one-sided collisions at either edge of such patterns. 
\end{abstract}

{\small\tableofcontents}

\section{Introduction}
In this work we study in some generality intertwinings and couplings between Karlin-McGregor semigroups (see \cite{KarlinMcGregor}, also \cite{Karlin}) associated with one dimensional diffusion processes and their duals. Let $X(t)$ be a diffusion process with state space an interval $I\subset \mathbb{R}$ with end points $l<r$ and transition density $p_t(x,y)$. We define the Karlin-McGregor semigroup associated with $X$, with $n$ particles, by its transition densities (with respect to Lebesgue measure) given by,
\begin{align*}
\det(p_t(x_i,y_j))^n_{i,j=1} ,
\end{align*}
for $x,y \in W^{n}(I^{\circ})$ where $W^{n}(I^{\circ})=(x=(x_1,\cdots,x_n):l < x_1 \le \cdots \le x_{n}< r )$. This sub-Markov semigroup is exactly the semigroup of $n$ independent copies of the diffusion process $X$ which are killed when they intersect. For such a diffusion process $X(t)$ we consider the conjugate (see \cite{ConjugateToth}) or Siegmund dual (see \cite{CoxRosler} or the original paper \cite{Siegmund}) diffusion process $\hat{X}(t)$ via a description of its generator and boundary behaviour in the next subsection. The key relation dual/conjugate diffusion processes satisfy is the following (see Lemma \ref{ConjugacyLemma}), with $z,z'\in I^{\circ}$,
\begin{align*}
\mathbb{P}_z(X(t)\le z')=\mathbb{P}_{z'}(\hat{X}(t)\ge z) .
\end{align*}
We will obtain \textit{couplings} of $h$-transforms of Karlin-McGregor semigroups associated with a diffusion process and its dual so that the corresponding processes \textit{interlace}. We say that  $y\in W^{n}(I^{\circ})$ and $x\in W^{n+1}(I^{\circ})$ interlace and denote this by $y\prec x$ if $x_1 \le y_1 \le x_2 \le \cdots \le x_{n+1}$. Note that this defines a space denoted by $W^{n,n+1}(I^{\circ})=((x,y):l < x_1 \le y_1 \le x_2 \le \cdots \le x_{n+1}< r)$,

\begin{center}
\begin{tabular}{ c c c c c c c c c }
 $\overset{x_1}{\bullet}$&$\overset{\textcolor{red}{y_1}}{\textcolor{red}{\bullet}}$ &$\overset{x_2}{\bullet}$   &$\overset{\textcolor{red}{y_1}}{\textcolor{red}{\bullet}}$ &$\overset{x_3}{\bullet}$ &$\cdots$ &$\overset{x_{n}}{\bullet}$&$\overset{\textcolor{red}{y_n}}{\textcolor{red}{\bullet}}$&$\overset{x_{n+1}}{\bullet}$ \ ,
 \end{tabular}
\end{center}
with the following two-level representation,

\begin{center}
\begin{tabular}{ c c c c c c c c c }
 &$\overset{\textcolor{red}{y_1}}{\textcolor{red}{\bullet}}$ &  &$\overset{\textcolor{red}{y_1}}{\textcolor{red}{\bullet}}$ &$\textcolor{red}{\cdots\cdots}$ &&&$\overset{\textcolor{red}{y_n}}{\textcolor{red}{\bullet}}$&\\ 
 $\overset{x_1}{\bullet}$&  &$\overset{x_2}{\bullet}$  & &$\overset{x_3}{\bullet}$&$\cdots$&$\overset{x_{n}}{\bullet}$&&$\overset{x_{n+1}}{\bullet}$ \ .
 \end{tabular}
\end{center}
Similarly, we say that $x,y \in W^n(I^{\circ})$ interlace if  $l< y_1 \le x_1 \le y_2 \le \cdots \le x_{n}< r$ (we still denote this by $y\prec x$). Again, this defines the space $W^{n,n}(I^{\circ})=((x,y):l < y_1 \le x_1 \le y_2 \le \cdots \le x_{n}< r )$,
\begin{center}
\begin{tabular}{ c c c c c c c c }
 $\overset{\textcolor{red}{y_1}}{\textcolor{red}{\bullet}}$ &$\overset{x_1}{\bullet}$   &$\overset{\textcolor{red}{y_2}}{\textcolor{red}{\bullet}}$ &$\overset{x_2}{\bullet}$ &$\cdots$ &$\overset{x_{n-1}}{\bullet}$&$\overset{\textcolor{red}{y_n}}{\textcolor{red}{\bullet}}$&$\overset{x_{n}}{\bullet}$ \  ,
 \end{tabular}
\end{center}
with the two-level representation,
\begin{center}
\begin{tabular}{ c c c c c c c c c }
 &$\overset{\textcolor{red}{y_1}}{\textcolor{red}{\bullet}}$ &  &$\overset{\textcolor{red}{y_2}}{\textcolor{red}{\bullet}}$ & &$\textcolor{red}{\cdots}$&&$\overset{\textcolor{red}{y_n}}{\textcolor{red}{\bullet}}$&\\ 
 &  &$\overset{x_1}{\bullet}$  & &$\overset{x_2}{\bullet}$&$\cdots$&$\overset{x_{n-1}}{\bullet}$&&$\overset{x_{n}}{\bullet}$ \ .
 \end{tabular}
\end{center}

Our starting point in this paper are explicit transition kernels, actually arising from the consideration of stochastic coalescing flows. These kernels defined on $W^{n,n+1}(I^{\circ})$ (or $W^{n,n}(I^{\circ})$) are given in terms of block determinants and give rise to a Markov process $Z=(X,Y)$ with (sub-)Markov transition semigroup $Q_t$ with joint dynamics described as follows. Let $L$ and $\hat{L}$ be the generators of a pair of one dimensional diffusions in Siegmund duality. Then, after an appropriate Doob's $h$-transformation $Y$ evolves \textit{autonomously} as $n$  $\hat{L}$-diffusions conditioned not to intersect. The $X$ components then evolve as $n+1$ (or $n$) independent $L$-diffusions reflected off the random $Y$ barriers, a notion made precise in the next subsection. Our main result, Theorem \ref{MasterDynamics} in the text, states (modulo technical assumptions) that under a special initial condition for $Z=(X,Y)$, the \textit{non-autonomous} $X$ component is distributed as a Markov process in its own right. Its evolution governed by an explicit Doob's $h$-transform of the Karlin-McGregor semigroup associated with $n+1$ (or $n$) $L$-diffusions.

 At the heart of this result lie certain intertwining relations, obtained immediately from the special structure of $Q_t$, of the form,
\begin{align}
P_t\Lambda&=\Lambda Q_t \label{introinter1} \ ,\\
\Pi\hat{P}_t&=Q_t\Pi \label{introinter2} \ ,
\end{align}
where $\Lambda$ is an explicit positive kernel (not yet normalized), $\Pi$ is the operator induced by the projection on the $Y$ level, $P_t$ is the Karlin-McGregor semigroup associated with the one dimensional diffusion process with transition density $p_t(x,y)$ and $\hat{P}_t$ the corresponding semigroup associated with its dual/conjugate (some conditions and more care is needed regarding boundary behaviour for which the reader is referred to the next section). 

Now we move towards building a multilevel process. First, note that by concatenating $W^{1,2}(I^{\circ}), W^{2,3}(I^{\circ}),\cdots,W^{N-1,N}(I^{\circ})$ we obtain the space of Gelfand-Tsetlin patterns of depth $N$ denoted by $\mathbb{GT}(N)$,
\begin{align*}
\mathbb{GT}(N)=\{(X^{(1)},\cdots,X^{(N)}):X^{(n)}\in W^{n}(I^{\circ}), \ X^{(n)}\prec X^{(n+1)}\} \ .
\end{align*}
A point $(X^{(1)},\cdots,X^{(N)})\in\mathbb{GT}(N)$ is typically depicted as an array as shown in the following diagram:
\begin{center}
\begin{tabular}{ c c c c c c c c c }
 &  &  &&$\overset{X^{(1)}_1}{\bullet}$ &&&&\\ 
 &  &  & $\overset{X^{(2)}_1}{\bullet} $&&$\overset{X^{(2)}_2}{\bullet} $&&&  \\ 
     &  &$\overset{X^{(3)}_1}{\bullet} $   &&$\overset{X^{(3)}_2}{\bullet} $ &&$\overset{X^{(3)}_3}{\bullet} $&&  \\ 
  &$\iddots$ & & &$\vdots$& &&$\ddots$&\\
$\overset{X^{(N)}_1}{\bullet} $ &  $\overset{X^{(N)}_2}{\bullet} $  &$\overset{X^{(N)}_3}{\bullet} $&&$\cdots\cdots$&& & $\overset{X^{(N)}_{N-1}}{\bullet} $&$\overset{X^{(N)}_N}{\bullet} $
\end{tabular}
\end{center}
Similarly, by concatenating  $W^{1,1}(I^{\circ}), W^{1,2}(I^{\circ}),W^{2,2}(I^{\circ}),\cdots,W^{N,N}(I^{\circ})$ we obtain the space of symplectic Gelfand-Tsetlin patterns of depth $N$ denoted by $\mathbb{GT}_{\textbf{s}}(N)$,
\begin{align*}
\mathbb{GT}_{\textbf{s}}(N)=\{(X^{(1)},\hat{X}^{(1)}\cdots,X^{(N)}, \hat{X}^{(N)}):X^{(n)},\hat{X}^{(n)}\in W^{n}(I^{\circ}), \ X^{(n)}\prec \hat{X}^{(n)}\prec X^{(n+1)} \} \ ,
\end{align*}

\begin{center}
\begin{tabular}{ | c c c c c c c c}
 $\overset{X^{(1)}_1}{\bullet}$ &  &  &&&&&\\ 
  & $\overset{\hat{X}^{(1)}_1}{\circ}$&  &&&&&\\  
 $\overset{X^{(2)}_1}{\bullet} $ &  & $\overset{X^{(2)}_2}{\bullet} $ &&&&&  \\ 
  & $\overset{\hat{X}^{(2)}_1}{\circ}$& &$\overset{\hat{X}^{(2)}_2}{\circ}$ &&&&\\
    $\overset{X^{(3)}_1}{\bullet} $ &  &  $\overset{X^{(3)}_2}{\bullet} $ && $\overset{X^{(3)}_3}{\bullet} $&&&  \\ 
  $\vdots $& &$\vdots$ & &&$\ddots$&&\\
$\overset{X^{(N)}_1}{\bullet} $ &  & $\overset{X^{(N)}_2}{\bullet} $  &$\cdots$&&&$\overset{X^{(N)}_N}{\bullet} $ &  \\ 
  & $\overset{\hat{X}^{(N)}_{1}}{\circ}$ & &$\overset{\hat{X}^{(N)}_{2}}{\circ} $ &$\cdots$&$\overset{\hat{X}^{(N)}_{N-1}}{\circ}$&&$\overset{\hat{X}^{(N)}_N}{\circ}$
\end{tabular}
\end{center}

Theorem \ref{MasterDynamics} allows us to concatenate a sequence of $W^{n,n+1}$-valued processes (or two-level processes), by a procedure described at the beginning of Section 3, in order to build diffusion processes in the space of Gelfand Tsetlin patterns so that each level is Markovian with explicit transition densities. Such examples of dynamics on \textit{discrete} Gelfand-Tsetlin patterns have been extensively studied over the past decade as models for random surface growth, see in particular \cite{BorodinKuan}, \cite{BorodinFerrari}, \cite{WarrenWindridge} and the more recent paper \cite{CerenziaKuan} and the references therein. They have also been considered in relation to building infinite dimensional Markov processes, preserving some distinguished measures of representation theoretic origin, on the boundary of these Gelfand-Tsetlin graphs via the \textit{method of intertwiners}; see Borodin and Olshanski \cite{BorodinOlshanski} for the type A case and more recently Cuenca \cite{Cuenca} for the type BC. In the paper \cite{Random growth} we pursued these directions in some detail.

 Returning to the continuum discussion both the process considered by Warren in \cite{Warren} which originally provided motivation for this work and a process recently constructed by Cerenzia in \cite{Cerenzia} that involves a hard wall fit in the framework introduced here. The techniques developed in this paper also allow us to study at the process level (and not just at fixed times) the process constructed by Ferrari and Frings in \cite{FerrariFrings}. The main new examples considered in this paper are:
 \begin{itemize}
\item Interlacing diffusion processes built from non-intersecting squared Bessel processes, that are related to the $LUE$ matrix diffusion process minors studied by K$\ddot{o}$nig and O'Connell in \cite{O Connell} and a dynamical version of a model considered by Dieker and Warren in \cite{DiekerWarren}. More generally, we study all diffusion processes associated with the classical orthogonal polynomials in a uniform way. This includes non-intersecting Jacobi diffusions and is related to the $JUE$ matrix diffusion, see \cite{Doumerc}. 
\item Interlacing Brownian motions in an interval, related to the eigenvalue processes of Brownian motions on some classical compact groups.
\item  A general study of interlacing diffusion processes with discrete spectrum and connections to the classical theory of total positivity and Chebyshev systems, see for example the monograph of Karlin \cite{Karlin}. 
 \end{itemize}

 We now mention a couple of recent works in the literature that are related to ours. Firstly a different approach based on generators for obtaining couplings of intertwined multidimensional diffusion processes via hard reflection is investigated in Theorem 3 of \cite{PalShkolnikov}. This has subsequently been extended by Sun \cite{Sun} to isotropic diffusion coefficients, who making use of this has independently obtained similar results to us for the specific $LUE$ and $JUE$ processes. Moreover, a general $\beta$ extension of the intertwining relations for the random matrix related aforementioned processes was also established in the note \cite{Assiotis} by one of us. Finally, some results from this paper have been used recently in \cite{HuaPickrell} to construct an infinite dimensional Feller process on the so called \textit{graph of spectra}, that is the continuum analogue of the Gelfand-Tsetlin graph, which leaves the celebrated Hua-Pickrell measures invariant.

We also study the interacting particle systems with one-sided collisions at either edge of such Gelfand-Tsetlin pattern valued processes and give explicit Schutz-type determinantal transition densities for them in terms of derivatives and integrals of the one dimensional kernels. This also leads to formulas for the largest and smallest eigenvalues of the $LUE$ and $JUE$ ensembles in analogy to the ones obtained in \cite{Warren} for the $GUE$.

Finally, we briefly explain how this work is connected to superpositions/ decimations
of random matrix ensembles (see e.g.\cite{ForresterRains}) and in a different direction to the study of strong stationary duals. This notion was considered by Fill and Lyzinski in \cite{FillLyzinski} motivated in turn by the study of strong stationary times for diffusion processes (first introduced by Diaconis and Fill in \cite{DiaconisFill} in the Markov chain setting).

The rest of this paper is organised as follows:
\begin{enumerate}[(i)]
\item In Section 2 we introduce the basic setup of dual/conjugate diffusion processes, give the transition kernels on interlacing spaces and our main results on intertwinings and Markov functions.
\item In Section 3 we apply the theory developed in this paper to show how known examples easily fit into this framework and construct new ones, among others the ones alluded to above. 
\item In Section 4 we study the interacting particle systems at the edges of the Gelfand-Tsetlin patterns.
\item In Section 5 we prove well-posedness of the simple systems of $SDEs$ with reflection described informally in the first paragraphs of the introduction and under assumptions that their transition kernels are given by those in Section 2.
\item In the Appendix we elaborate on and give proofs of some of the facts stated about dual diffusion processes in Section 2 and also discuss entrance laws.
\end{enumerate}

\paragraph{Acknowledgements.}Research of N.O'C. supported by ERC Advanced Grant 669306. Research of T.A. supported through the MASDOC DTC grant number EP/HO23364/1. We would like to thank an anonymous referee for many useful comments and suggestions which have led to many improvements in presentation.

\section{Two-level construction}

\subsection{Set up of conjugate diffusions}
Since our basic building blocks will be one dimensional diffusion processes and their conjugates we introduce them here and collect a number of facts about them (for justifications and proofs see the Appendix). The majority of the facts below can be found in the seminal book of Ito and McKean \cite{ItoMckean}, and also more specifically regarding the transition densities of general one dimensional diffusion processes, in the classical paper of McKean \cite{McKean} and also section 4.11 of \cite{ItoMckean} which we partly follow at various places.

 We consider  $(X_t)_{t\ge 0}$ a time homogeneous one dimensional diffusion process with state space an interval $I$  with endpoints $l<r$ which can be open or closed, finite or infinite (interior denoted by $I^{\circ}$) with infinitesimal generator given by,
\begin{align*}
L=a(x)\frac{d^2}{dx^2}+b(x)\frac{d}{dx},
\end{align*}
with domain to be specified later in this section. In order to be more concise, we will frequently refer to such a diffusion process with generator $L$ as an $L$-diffusion. We make the following regularity assumption throughout the paper.
\begin{defn}[Assumption (\textbf{R})]
We assume that $a(\cdot)\in C^1(I^{\circ})$ with $a(x)>0$ for $x\in I^{\circ}$ and $b(\cdot)\in C(I^{\circ})$. 
\end{defn}

 We start by giving the very convenient description of the generator $L$ in terms of its speed measure and scale function. Define its scale function $s(x)$ by
$s'(x)=\exp\big(-\int_{c}^{x}\frac{b(y)}{a(y)}dy\big)$ (the scale function is defined up to affine transformations) where $c$ is an arbitrary point in $I^\circ$, its speed measure with density $m(x)=\frac{1}{s'(x)a(x)}$ in $I^\circ$ with respect to the Lebesgue measure (note that it is a Radon measure in $I^\circ$ and also strictly positive in $I^{\circ}$) and speed function $M(x)=\int_{c}^{x}m(y)dy$. With these definitions the formal infinitesimal generator $L$ can be written as,
\begin{align*}
L=\mathcal{D}_m \mathcal{D}_s \ ,
\end{align*}
where $\mathcal{D}_m=\frac{1}{m(x)}\frac{d}{dx}=\frac{d}{dM}$ and $\mathcal{D}_s=\frac{1}{s'(x)}\frac{d}{dx}=\frac{d}{ds}$.

We now define the conjugate diffusion (see \cite{ConjugateToth}) or Siegmund dual (see \cite{Siegmund}) $(\hat{X}_t)_{t \ge 0}$ of $X$  to be a diffusion process with generator,
\begin{align*}
\hat{L}=a(x)\frac{d^2}{dx^2}+(a'(x)-b(x))\frac{d}{dx},
\end{align*}
and domain to be given shortly.

 The following relations are easy to verify and are key to us.
 \begin{align*}
\hat{s}'(x)=m(x) \ and \ \hat{m}(x)=s'(x) .
 \end{align*}
So the conjugation operation swaps the scale functions and speed measures. In particular
 \begin{align*}
\hat{L}=\mathcal{D}_{\hat{m}}\mathcal{D}_{\hat{s}}=\mathcal{D}_s\mathcal{D}_m \ .
 \end{align*}
 
 Using Feller's classification of boundary points (see Appendix) we obtain the following table for the boundary behaviour of the diffusion processes with generators $L$ and $\hat{L}$ at $l$ or $r$,
 \begin{align*}
\begin{tabular}{ l | r }
   Bound. Class. of L & Bound. Class. of $\hat{L}$ \\
  \hline			
  natural & natural \\
  entrance & exit  \\
  exit & entrance \\
  regular & regular \\
  \hline  
\end{tabular}
 \end{align*}
We briefly explain what these boundary behaviours mean. A process can neither be started at, nor reach in finite time a \textit{natural} boundary point. It can be started from an \textit{entrance} point but such a boundary point cannot be reached from the interior $I^\circ$. Such points are called \textit{inaccessible} and can be removed from the state space. A diffusion can reach an \textit{exit} boundary point from $I^\circ$ and once it does it is absorbed there. Finally, at a \textit{regular} (also called entrance and exit) boundary point a variety of behaviours is possible and we need to \textit{specify} one such. We will only be concerned with the two extreme possibilities namely \textit{instantaneous reflection} and \textit{absorption} ( sticky behaviour interpolates between the two and is not considered here). Furthermore, note that if $l$ is \textit{instantaneously reflecting} then (see for example Chapter 2 paragraph 7 in \cite{BorodinSalminen}) $Leb\{t: X_t=l \}=0 \ a.s.$ and analogously for the upper boundary point $r$.

 Now in order to describe the domain, $Dom(L)$, of the diffusion process with formal generator $L$ we first define the following function spaces (with the obvious abbreviations),
\begin{align*}
C(\bar{I})&=\{f\in C(I^\circ):\lim_{x \downarrow l}f(x), \lim_{x \uparrow r}f(x)\textnormal{ exist and are finite}\} \ , \\
\mathfrak{D}&=\{f\in C(\bar{I})\cap C^2(I^\circ):Lf\in C(\bar{I})\} \ ,\\
\mathfrak{D}_{nat}&=\mathfrak{D}\ ,\\
\mathfrak{D}_{entr}&=\mathfrak{D}_{refl}=\{f\in \mathfrak{D}:(\mathcal{D}_sf)(l^+)=0 \}\ ,\\
\mathfrak{D}_{exit}&=\mathfrak{D}_{abs}=\{f\in \mathfrak{D}:(Lf)(l^+)=0 \} .
\end{align*}

Similarly, define $\mathfrak{D}^{nat},\mathfrak{D}^{entr},\mathfrak{D}^{refl},\mathfrak{D}^{exit},\mathfrak{D}^{abs}$ by replacing $l$ with $r$ in the definitions above. Then the domain of the generator of the $\left(X_t\right)_{t\ge 0}$ diffusion process (with generator $L$) with boundary behaviour $i$ at $l$ and $j$ at $r$ where $i,j \in \{nat, entr, refl, exit, abs\}$ is given by,
\begin{align*}
Dom(L)=\mathfrak{D}_i \cap \mathfrak{D}^j \ .
\end{align*}
For justifications see for example Chapter 8 in \cite{EthierKurtz} and for an entrance boundary point also Theorem 12.2 of \cite{StochasticBook} or page 122 of \cite{McKean}.

 Coming back to conjugate diffusions note that the boundary behaviour of $X_t$, the $L$-diffusion, determines the boundary behaviour of $\hat{X}_t$, the $\hat{L}$-diffusion, except at a regular point. At such a point we define the boundary behaviour of the $\hat{L}$-diffusion to be dual to that of the $L$-diffusion. Namely, if $l$ is regular reflecting for $L$ then we define it to be regular absorbing for $\hat{L}$. Similarly, if $l$ is regular absorbing for $L$ we define it to be regular reflecting for $\hat{L}$. The analogous definition being enforced at the upper boundary point $r$. Furthermore, we denote the semigroups associated with $X_t$ and $\hat{X}_t$ by $\mathsf{P}_t$ and $\mathsf{\hat{P}}_t$ respectively and note that $\mathsf{P}_t1=\mathsf{\hat{P}}_t1=1$. We remark that at an \textit{exit} or \textit{regular absorbing} boundary point the transition \textit{kernel} $p_t(x,dy)$ associated with $\mathsf{P}_t$ has an \textit{atom} there with  mass (depending on $t$ and $x$) the probability that the diffusion has reached that point by time $t$ started from $x$.
 
 We finally arrive at the following duality relation, going back in some form to Siegmund. This is proven via an approximation by birth and death chains in Section 4 of \cite{CoxRosler}. We also give a proof in the Appendix in Section \ref{Appendix} following \cite{WarrenWatanabe} (where the proof is given in a special case). The reader should note the restriction to the interior $I^{\circ}$.

\begin{lem} \label{ConjugacyLemma}
$\mathsf{P}_t \textbf{1}_{[l,y]}(x)=\mathsf{\hat{P}}_t \textbf{1}_{[x,r]}(y)$ for $x,y \in I^{\circ}$.
\end{lem}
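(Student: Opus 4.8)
The plan is to reduce the statement to a PDE/semigroup identity that can be verified by differentiating in $t$ and integrating by parts, using the fact that conjugation swaps scale and speed. Introduce the function $u(t,x,y) = \mathsf{P}_t\textbf{1}_{[l,y]}(x)$, viewed for fixed $y\in I^\circ$ as a function of $(t,x)$, and $v(t,x,y) = \mathsf{\hat P}_t\textbf{1}_{[x,r]}(y)$, viewed for fixed $x\in I^\circ$ as a function of $(t,y)$. Both are bounded, both equal $\textbf{1}_{[l,y]}(x)=\textbf{1}_{[x,r]}(y)$ at $t=0$ (the two indicators agree off the diagonal $x=y$, which is Lebesgue-null and irrelevant once the semigroup smooths), so it suffices to show they solve the same evolution equation in an appropriate weak sense and invoke uniqueness. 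The natural route is to show that $w(t,x,y) := u(t,x,y)$ satisfies $\partial_t w = L_x w = \hat L_y w$ with the correct lateral boundary conditions in both the $x$ and $y$ variables, since $v$ manifestly satisfies $\partial_t v = \hat L_y v$.

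The key computation is the identity $L_x \textbf{1}_{[l,y]}(x) = \hat L_y \textbf{1}_{[x,r]}(y)$ in the distributional sense, which is where conjugation enters. Writing $L = \mathcal D_m \mathcal D_s$ and $\hat L = \mathcal D_s \mathcal D_m$ with $\hat s' = m$, $\hat m = s'$, one checks that $\partial_x \textbf{1}_{[l,y]}(x) = -\delta_y(x)$ and $\mathcal D_s^{(x)}\textbf{1}_{[l,y]}(x) = -\frac{1}{s'(y)}\delta_y(x)$, so $L_x\textbf{1}_{[l,y]}(x) = -\mathcal D_m^{(x)}\big(\tfrac{1}{s'(y)}\delta_y\big)$; symmetrically $\hat L_y \textbf{1}_{[x,r]}(y)$ produces the transpose object. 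More robustly, I would avoid distributions altogether: fix a smooth compactly supported test function $\phi$ on $I^\circ$, integrate $u(t,x,y)$ against $\phi(x)\,dx$ — better, against $\phi(x)\,m(x)\,dx$ to exploit self-adjointness of $L$ in $L^2(m)$ — and compute $\frac{d}{dt}\int u(t,x,y)\phi(x)m(x)dx = \int (L_x u)\phi\, m\,dx = \int u\, (L\phi)\, m\,dx$. Then express $\int \textbf{1}_{[l,y]}(x)\,\mathcal D_m\mathcal D_s\phi(x)\,m(x)\,dx = \int_l^y \mathcal D_m\mathcal D_s\phi\,dM = [\mathcal D_s\phi]_l^y$, and play the analogous game with $v$ and $\hat L$ acting in the $y$ variable against a test function $\psi(y)\hat m(y)dy = \psi(y)s'(y)dy$, reducing everything to the fundamental-theorem-of-calculus identity $[\mathcal D_s\phi]_l^y = -\int_l^y \phi'(x)\,dx \cdot(\ldots)$ matched against $[\mathcal D_{\hat s}\psi]_x^r$; the swap $\hat s'=m$ is exactly what makes the two sides coincide. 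The boundary terms at $l$ and $r$ are controlled by the domain specifications $\mathfrak D_i\cap\mathfrak D^j$ for $L$ and the dual specification for $\hat L$: a reflecting (entrance-type) endpoint for $L$ forces $(\mathcal D_s\phi)(l^+)=0$, which matches absorption for $\hat L$ killing the corresponding term on the dual side, and vice versa — this is precisely why the boundary correspondence table was set up the way it is.

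The main obstacle I anticipate is the rigorous handling of the boundary behaviour and of the atoms: at an exit or regular-absorbing point the kernel $p_t(x,dy)$ has an atom at the boundary, so $\textbf{1}_{[l,y]}$ with $y\in I^\circ$ does not see that atom but the semigroup identity must still be internally consistent, and one has to be careful that "integrating by parts" does not secretly drop mass into the boundary atom. I would handle this by an approximation argument: approximate $I^\circ$ by compact subintervals $[l_n,r_n]\uparrow I^\circ$, run the $L$- and $\hat L$-diffusions with matched reflecting/absorbing conditions at $l_n, r_n$ (where everything is a genuine smooth parabolic problem on a compact interval with classical uniqueness), establish the identity there by the test-function computation above, and pass to the limit using monotone/dominated convergence together with Feller's boundary classification to confirm that killed mass escapes to a natural boundary (no atom, so nothing to track) or accumulates correctly at an exit/absorbing boundary (the atom on one side corresponding to reflection on the dual side). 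An alternative, cleaner path — and the one the authors say they will take in the Appendix following Warren–Watanabe — is to realize both processes simultaneously via a single stochastic coalescing/reflecting flow, so that the event $\{X_t\le y\}$ started at $x$ and the event $\{\hat X_t\ge x\}$ started at $y$ are literally the same event sample-path-wise; but proving the flow exists and has the required duality is itself real work, so for a self-contained sketch I would favour the analytic approximation argument and flag the flow construction as the slicker route deferred to the Appendix.
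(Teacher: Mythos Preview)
Your PDE-based approach is plausible and can in principle be made to work --- essentially it amounts to showing that $u(t,x,y)=\int_l^y p_t(x,z)\,dz$ satisfies $\partial_t u = \hat L_y u$ via the forward Kolmogorov equation for $p_t(x,\cdot)$ together with an integration by parts, the boundary term at $l$ being controlled by the boundary classification --- but it is genuinely different from what the paper actually does, and your sketch of the test-function step is too vague at the point where the two integrations-by-parts are supposed to match.

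The paper works at the \emph{resolvent} level rather than the semigroup level. After reducing the possible boundary configurations by symmetry to four cases, it takes $u=\mathcal R_\lambda f$ solving $\mathcal D_m\mathcal D_s u-\lambda u=-f$ with the appropriate boundary conditions, applies the integration operator $\mathcal D_m^{-1}g(y)=\int_l^y m(z)g(z)\,dz$, and observes that $v=\mathcal D_m^{-1}u$ solves the dual Poisson equation $\mathcal D_{\hat m}\mathcal D_{\hat s}v-\lambda v=-\mathcal D_m^{-1}f$ with the \emph{dual} boundary conditions. Uniqueness for this ODE gives the intertwining $\mathcal D_m^{-1}\mathcal R_\lambda=\hat{\mathcal R}_\lambda\mathcal D_m^{-1}$; specialising $f\to\delta_x$ and using symmetry of the resolvent density in the speed measure yields $\mathcal R_\lambda\textbf{1}_{[l,y]}(x)=\hat{\mathcal R}_\lambda\textbf{1}_{[x,r]}(y)$, and the lemma follows by uniqueness of Laplace transforms.

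The resolvent route buys an ODE rather than a parabolic PDE: the boundary conditions are checked once, for a single Poisson equation, and there is no need for the compact-exhaustion approximation you propose. Your route has the virtue of making the duality $L_x\leftrightarrow\hat L_y$ visible directly at the level of the evolution, but would require more care to make rigorous. One correction: the Appendix does \emph{not} realise the duality via a coalescing flow --- the flow appears elsewhere in the paper to motivate the two-level kernels, but the proof of this lemma is purely analytic, following the Warren--Watanabe resolvent argument.
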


Now, it is well known that, the transition density $p_t(x,y):(0,\infty)\times I^\circ \times I^\circ \to (0,\infty)$ of any one dimensional diffusion process with a speed measure which has a continuous density with respect to the Lebesgue measure in $I^\circ$ (as is the case in our setting) is continuous in $(t,x,y)$. Moreover, under our assumptions $\partial_xp_t(x,y)$ exists for $x\in I^\circ$ and as a function of $(t,y)$ is continuous in $(0,\infty)\times I^\circ$ (see Theorem 4.3 of \cite{McKean}). 

This fact along with Lemma \ref{ConjugacyLemma} gives the following relationships between the transition densities for $x,y \in I^{\circ}$,
\begin{align}
p_t(x,y)&=\partial_y\mathsf{\hat{P}}_t \textbf{1}_{[x,r]}(y)=\partial_y\int_{x}^{r}\hat{p}_t(y,dz) \ ,  \\
\hat{p}_t(x,y)&=-\partial_{y} \mathsf{P}_t \textbf{1}_{[l,x]}(y)=-\partial_y\int_{l}^{x}p_t(y,dz). \label{conjtrans}
\end{align}

Before closing this section, we note that the speed measure is the \textit{symmetrizing} measure of the diffusion process and this shall be useful in what follows. In particular, for $x,y \in I^\circ$ we have,
 \begin{align}\label{symmetrizing}
 \frac{m(y)}{m(x)}p_t(y,x)=p_t(x,y).
 \end{align}

\subsection{Transition kernels for two-level processes}

First, we recall the definitions of the interlacing spaces our processes will take values in,
\begin{align*}
W^{n}(I^{\circ})&=((x):l < x_1 \le \cdots \le x_{n}< r ) \ ,\\
W^{n,n+1}(I^{\circ})&=((x,y):l < x_1 \le y_1 \le x_2 \le \cdots \le x_{n+1}< r ) \ ,\\
W^{n,n}(I^{\circ})&=((x,y):l < y_1 \le x_1 \le y_2 \le \cdots \le x_{n}< r ) \ , \\
W^{n+1,n}(I^{\circ})&=((x,y):l < y_1 \le x_1 \le y_2 \le \cdots \le y_{n+1}< r ) .
\end{align*}
Note that, for $(x,y)\in W^{n,n+1}(I^{\circ})$ we have $x\in W^{n+1}(I^{\circ})$ and $y\in W^{n}(I^{\circ})$, this is a minor difference in notation to the one used in \cite{Warren}; in the notations of that paper $W^{n+1,n}$ is our $W^{n,n+1}\left(\mathbb{R}\right)$.
Also define for $x\in W^{n}(I^\circ)$,
\begin{align*}
 W^{\bullet,n}(x)=\{y\in W^{\bullet}(I^{\circ}):(x,y)\in W^{\bullet,n}(I^{\circ})\}.
\end{align*}

We now make the following standing assumption, enforced throughout the paper, on the boundary behaviour of the one dimensional diffusion process with generator $L$, depending on which interlacing space our two-level process defined next takes values in. Its significance will be explained later on. Note that any possible combination is allowed between the behaviour at $l$ and $r$.

\begin{defn}[Assumption (\textbf{BC})] Assume the $L$-diffusion has the following boundary behaviour:\\
When considering $\boldsymbol{W^{n,n+1}(I^{\circ})}$:
\begin{align}
& l \textnormal{ is either } Natural \textnormal{ or } Entrance \textnormal{ or } Regular \ Reflecting\label{bc1l} \ , \\
& r \textnormal{ is either } Natural \textnormal{ or } Entrance \textnormal{ or } Regular \ Reflecting\label{bc1r}.
\end{align}
When considering $\boldsymbol{W^{n,n}(I^{\circ})}$:
\begin{align}
& l \textnormal{ is either } Natural \textnormal{ or } Exit \textnormal{ or } Regular \ Absorbing\label{bc2l} \ ,\\
& r \textnormal{ is either } Natural \textnormal{ or } Entrance \textnormal{ or } Regular \ Reflecting\label{bc2r} .
\end{align}
When considering $\boldsymbol{W^{n+1,n}(I^{\circ})}$:
\begin{align}
& l \textnormal{ is either } Natural \textnormal{ or } Exit \textnormal{ or } Regular \ Absorbing\label{bc3l} \ ,\\
& r \textnormal{ is either } Natural \textnormal{ or } Exit \textnormal{ or } Regular \ Absorbing\label{bc3r} .
\end{align}
\end{defn}

We will need to enforce a further regularity and non-degeneracy  assumption at regular boundary points for some of our results. This is a technical condition and presumably can be removed.

\begin{defn}[Assumption (\textbf{BC+})] Assume condition (\textbf{BC}) above. Moreover, if a boundary point $\mathfrak{b} \in \{l,r\}$ is regular we assume that 
$\underset{x \to \mathfrak{b}}{\lim} a(x)>0$ and the limits $\underset{x \to \mathfrak{b}}{\lim} b(x)$, $\underset{x \to \mathfrak{b}}{\lim} \left(a'(x)-b(x)\right)$ exist and are finite.
\end{defn}

We shall begin by considering the following stochastic process which we will denote by $\left(\boldsymbol{\Phi}_{0,t}(x_1),\cdots,\boldsymbol{\Phi}_{0,t}(x_n);t \ge 0\right)$. It consists of a system of $n$ independent $L$-diffusions started from $x_1\le \cdots \le x_n$ which \textit{coalesce} and move together once they meet. This is a process in $W^n(I)$ which once it reaches any of the hyperplanes $\{x_i=x_{i+1}\}$ continues there forever. We have the following proposition for the finite dimensional distributions of the coalescing process:
\begin{prop}\label{flowfdd}
For $z,z'\in W^n(I^{\circ})$,
\begin{align*}
\mathbb{P}\big(\boldsymbol{\Phi}_{0,t}(z_i)\le z_i' \ \ \text{for} \ 1 \le i \le n\big)=\det \big(\mathsf{P}_t\textbf{1}_{[l,z_j']}(z_i)-\textbf{1}(i<j)\big)_{i,j=1}^n \ .
\end{align*}
\end{prop}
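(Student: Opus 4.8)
The plan is to establish the formula by induction on $n$, using the Lindström–Gessel–Viennot-type reasoning adapted to the coalescing (rather than killed) flow, together with an inclusion–exclusion decomposition according to the first collision. For $n=1$ the statement is the definition of the one-dimensional transition kernel: $\mathbb{P}(\boldsymbol{\Phi}_{0,t}(z_1)\le z_1')=\mathsf{P}_t\textbf{1}_{[l,z_1']}(z_1)$, which matches the $1\times 1$ determinant (here $\textbf{1}(i<j)$ contributes nothing). For the inductive step I would exploit the key structural feature of the coalescing flow: because the individual $L$-diffusions are driven by a single coalescing stochastic flow, the ordering is preserved, $\boldsymbol{\Phi}_{0,t}(z_1)\le\cdots\le\boldsymbol{\Phi}_{0,t}(z_n)$, and moreover the event $\{\boldsymbol{\Phi}_{0,t}(z_i)\le z_i'\ \text{for all }i\}$ can be analysed by conditioning on whether or not any pair has coalesced by time $t$.

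The core identity I want is the ``coalescing flow = killed flow plus diagonal corrections'' relation. Concretely, for independent (non-coalescing) diffusions $X^{(1)},\dots,X^{(n)}$ started at $z$, reflection-type arguments à la Karlin–McGregor give
\begin{align*}
\mathbb{P}\big(X^{(i)}_t\le z_i'\ \forall i,\ \text{no crossing}\big)=\det\big(\mathsf{P}_t\textbf{1}_{[l,z_j']}(z_i)\big)_{i,j=1}^n
\end{align*}
only in the strictly ordered regime; the coalescing flow differs precisely on the set where particles have merged, and the combinatorial bookkeeping of these merges is exactly what produces the subtracted indicators $\textbf{1}(i<j)$. I would make this precise by writing $\mathsf{P}_t\textbf{1}_{[l,z_j']}(z_i)-\textbf{1}(i<j)=\mathsf{P}_t\textbf{1}_{[l,z_j']}(z_i)-\mathsf{P}_t\textbf{1}_{[l,r]}(z_i)\cdot\textbf{1}(i<j)$ (using $\mathsf{P}_t1=1$), so the matrix entry is $-\mathsf{P}_t\textbf{1}_{(z_j',r]}(z_i)$ when $i<j$ and $\mathsf{P}_t\textbf{1}_{[l,z_j']}(z_i)$ otherwise; this sign pattern is the signature of a reflection principle across the order-preserving flow. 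An alternative, cleaner route is to verify the formula directly: show the right-hand side, as a function of $z'$, is the distribution function of $(\boldsymbol{\Phi}_{0,t}(z_1),\dots,\boldsymbol{\Phi}_{0,t}(z_n))$ by checking (a) it is nondecreasing and right-continuous in each $z_i'$, (b) it tends to $1$ as all $z_i'\uparrow r$ and to $0$ as $z_1'\downarrow l$ with the others fixed, and (c) its mixed increments correctly assign mass to the diagonal strata $\{z_i=z_{i+1}\}$ where the coalescing process lives — the latter being where the determinant structure interacts with the indicator corrections.

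The main obstacle I anticipate is step (c): correctly accounting for the atoms of the coalescing flow on the lower-dimensional faces of $W^n(I)$. Unlike the killed semigroup, whose density is a clean determinant on $W^n(I^\circ)$, the coalescing flow places positive mass on $\{z_i=z_{i+1}\}$, and one must check that the block-determinant/indicator formula automatically encodes these boundary contributions. I expect this is handled by a telescoping argument: conditioning on the time and location of the first coalescence, applying the strong Markov property, and invoking the $(n-1)$-particle inductive hypothesis for the post-coalescence evolution, then observing that the resulting sum of determinants collapses (via multilinearity and the cofactor expansion along the row/column of the merged pair) to the claimed $n$-particle determinant. The continuity and differentiability properties of $p_t$ recorded after Lemma \ref{ConjugacyLemma}, together with Lemma \ref{ConjugacyLemma} itself and the boundary-behaviour assumptions \eqref{bc1l}–\eqref{bc1r} ensuring the relevant boundary points are not exit-type (so no spurious absorption atoms appear), are what make these manipulations legitimate.
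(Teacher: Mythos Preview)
The paper's own proof is a one-line citation to Warren's Proposition~9, which uses a direct path-switching (reflection) argument for continuous paths: expand the determinant as a signed sum over permutations, interpret each term as a probability for $n$ \emph{independent} $L$-diffusions, and observe that swapping labels at the first crossing time gives a sign-reversing involution on all contributions except the one corresponding to the coalescing flow. No induction, no conditioning on the first coalescence time. The only new remark beyond the Brownian case is that a possible atom of $\mathsf{P}_t(z_i,\cdot)$ at $l$ does not spoil the bijection.

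Your route is genuinely different and substantially harder. The induction via conditioning on the first coalescence and applying the strong Markov property is in principle a valid strategy, but the step where you assert that ``the resulting sum of determinants collapses (via multilinearity and the cofactor expansion along the row/column of the merged pair) to the claimed $n$-particle determinant'' is precisely the nontrivial combinatorial identity that needs proof; it is not an automatic consequence of multilinearity, and you have not indicated how the integration over the random coalescence time and location would produce the required algebraic cancellation. Likewise your alternative approach of verifying that the right-hand side is a bona fide CDF runs into the same difficulty at step~(c): showing that the determinant assigns the correct mass to the diagonal faces $\{z_i'=z_{i+1}'\}$ is essentially equivalent to the original statement. The path-switching argument bypasses all of this in one stroke, which is why the paper simply cites it; I would recommend you read Warren's Proposition~9 and adapt that argument rather than pursue the induction.

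One minor correction: your displayed Karlin--McGregor identity $\mathbb{P}(X^{(i)}_t\le z_i'\ \forall i,\ \text{no crossing})=\det(\mathsf{P}_t\textbf{1}_{[l,z_j']}(z_i))$ is not correct as stated; the Karlin--McGregor determinant gives the sub-probability \emph{density} of non-intersecting endpoints, not the joint CDF, so this line of reasoning would need repair even before the induction begins.
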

\begin{proof}
This is done for Brownian motions in Proposition 9 of \cite{Warren} using a generic argument based on continuous non-intersecting paths. The only variation here is that there might be an atom at $l$ which however does not alter the proof.
\end{proof}

We now define the kernel $q_t^{n,n+1}((x,y),(x',y'))dx'dy'$ on $W^{n,n+1}(I^{\circ})$ as follows:

\begin{defn}
For $(x,y),(x',y')\in W^{n,n+1}(I^{\circ})$ define $q_t^{n,n+1}((x,y),(x',y'))$ by,
\begin{align*}
& q_t^{n,n+1}((x,y),(x',y'))=\\
&=\frac{\prod_{i=1}^{n}\hat{m}(y'_i)}{\prod_{i=1}^{n}\hat{m}(y_i)}(-1)^n\frac{\partial^n}{\partial_{y_1}\cdots\partial_{y_n}}\frac{\partial^{n+1}}{\partial_{x'_1}\cdots\partial_{x'_{n+1}}}\mathbb{P}\big(\boldsymbol{\Phi}_{0,t}(x_i)\le x_i',\boldsymbol{\Phi}_{0,t}(y_j)\le y_j' \ \ \text{for all} \ \ i,j \big) \ .
\end{align*}
\end{defn}
This density exists by virtue of the regularity of the one dimensional transition densities. It is then an elementary computation using Proposition \ref{flowfdd} and Lemma \ref{ConjugacyLemma}, along with relation (\ref{conjtrans}), that $q_t^{n,n+1}$ can be written out explicitly as shown below. Note that each $y_i$ and $x_j'$ variable appears only in a certain row or column respectively.
\begin{align}
{q}_t^{n,n+1}((x,y),(x',y'))=\det\
 \begin{pmatrix}
{A}_t(x,x') & {B}_t(x,y')\\
  {C}_t(y,x') & {D}_t(y,y') 
 \end{pmatrix} \ 
\end{align}
where,
\begin{align*}
{A}_t(x,x')_{ij} &=\partial_{x'_j}\mathsf{P}_t \textbf{1}_{[l,x_j']} (x_i)= p_t(x_i,x_j')  \ , \\
{B}_t(x,y')_{ij}&=\hat{m}(y'_j)(\mathsf{P}_t \textbf{1}_{[l,y'_j]}(x_i) -\textbf{1}(j\ge i)) \ ,\\
{C}_t(y,x')_{ij}&=-\hat{m}^{-1}(y_i)\partial_{y_i}\partial_{x'_j}\mathsf{P}_t \textbf{1}_{[l,x'_j]}(y_i)=-\mathcal{D}_s^{y_i}p_t(y_i,x_j') \ ,\\
{D}_t(y,y')_{ij}&=-\frac{\hat{m}(y'_j)}{\hat{m}(y_i)}\partial_{y_i} \mathsf{P}_t \textbf{1}_{[l,y'_j]}(y_i)=\hat{p}_t(y_i,y_j').
\end{align*}

We now define for $t>0$ the operators $Q_t^{n,n+1}$  acting on the bounded Borel functions on $W^{n,n+1}(I^{\circ})$ by,
\begin{align}\label{definitionofsemigroup1}
(Q_t^{n,n+1}f)(x,y)=\int_{W^{n,n+1}(I^{\circ})}^{}q_t^{n,n+1}((x,y),(x',y'))f(x',y')dx'dy'.
\end{align}
Then the following facts hold:
\begin{lem} Assume (\textbf{R}) and (\textbf{BC}) hold for the $L$-diffusion. Then,
\begin{align*}
&Q_t^{n,n+1} 1\le 1 ,\\
& Q_t^{n,n+1}f \ge 0 \ \textnormal{for } f \ge 0.
\end{align*}
\end{lem}

\begin{proof}
The first property will follow from performing the $dx'$ integration first in equation (\ref{definitionofsemigroup1}) with $f\equiv 1$. This is easily done by the very structure of the entries of $q_t^{n,n+1}$: noting that each $x_i'$ variable appears in a single column, then using multilinearity to bring the integrals inside the determinant and the relations:
\begin{align*}
\int_{y_{j-1}'}^{y_j'}{A}_t(x,x')_{ij}dx_j'&=\mathsf{P}_t \textbf{1}_{[l,y'_j]}(x_i)-\mathsf{P}_t \textbf{1}_{[l,y'_{j-1}]}(x_i),\\
\int_{y_{j-1}'}^{y_j'}{C}_t(y,x')_{ij}dx_j'&=-\frac{1}{\hat{m}(y_i)}\partial_{y_i} \mathsf{P}_t \textbf{1}_{[l,y'_j]}(y_i)+\frac{1}{\hat{m}(y_i)}\partial_{y_i} \mathsf{P}_t \textbf{1}_{[l,y'_{j-1}]}(y_i),
\end{align*}
and observing that by (\textbf{BC}) the boundary terms are:
\begin{align*}
\int_{y_N'}^{r}A_t(x,x')_{iN+1}dx_{N+1}'&=1-\mathsf{P}_t \textbf{1}_{[l,y'_{N}]}(x_i), &\int_{y_N'}^{r}C_t(y,x')_{iN+1}dx_{N+1}'=\frac{1}{\hat{m}(y_i)}\partial_{y_i} \mathsf{P}_t \textbf{1}_{[l,y'_N]}(y_i),\\
\int_{l}^{y_1'}A_t(x,x')_{i1}dx_{1}'&=\mathsf{P}_t \textbf{1}_{[l,y'_{1}]}(x_i), &\int_{l}^{y_1'}C_t(y,x')_{i1}dx_{1}'=-\frac{1}{\hat{m}(y_i)}\partial_{y_i} \mathsf{P}_t \textbf{1}_{[l,y'_1]}(y_i),
\end{align*}
we are left with the integral:
\begin{align*}
Q_t^{n,n+1} 1=\int_{W^n(I^{\circ})}^{}\det(\hat{p}_t(y_i,y'_j))^{n}_{i,j=1}dy'\le 1 .
\end{align*}
This is just a restatement of the fact that a Karlin-McGregor semigroup, to be defined shortly in this subsection, is sub-Markov.

The \textit{positivity} preserving property also follows immediately from the original definition, since $\mathbb{P}\big(\boldsymbol{\Phi}_{0,t}(x_i)\le x_i',\boldsymbol{\Phi}_{0,t}(y_j)\le y_j' \ \ \text{for all} \ \ i,j \big)$ is increasing in the $x'_i$ and decreasing in the $y_j$ respectively: Obviously for any $k$ with $x_k'\le \tilde{x}_k'$ and $\tilde{y}_k\le y_k$ each of the events:
\begin{align*}
&\big\{\boldsymbol{\Phi}_{0,t}(x_i)\le x_i',  i \ne k, \boldsymbol{\Phi}_{0,t}(x_k)\le \tilde{x}_k', \boldsymbol{\Phi}_{0,t}(y_j)\le y_j', \text{ for all} \ j \big\},\\
&\big\{\boldsymbol{\Phi}_{0,t}(x_i)\le x_i',\text{ for all} \ i,  \boldsymbol{\Phi}_{0,t}(y_j)\le y_j', j\ne k, \boldsymbol{\Phi}_{0,t}(\tilde{y}_k)\le y_k'\big\},
\end{align*}
contain the event:
\begin{align*}
\big\{\boldsymbol{\Phi}_{0,t}(x_i)\le x_i',\boldsymbol{\Phi}_{0,t}(y_j)\le y_j' \ \ \text{for all} \ \ i,j \big\}.
\end{align*}
Thus, the partial derivatives $\partial_{x_i'}$ and $-\partial_{y_j}$ of  $\mathbb{P}\big(\boldsymbol{\Phi}_{0,t}(x_i)\le x_i',\boldsymbol{\Phi}_{0,t}(y_j)\le y_j' \ \ \text{for all} \ \ i,j \big)$ are positive.
\end{proof}

In fact, $Q_t^{n,n+1}$ defined above, forms a sub-Markov semigroup, associated with a Markov process $Z=(X,Y)$, with possibly finite lifetime, described informally as follows:  the $X$ components follow independent $L$-diffusions reflected off the $Y$ components. More precisely assume that the $L$-diffusion is given as the pathwise unique solution $\mathsf{X}$ to the $SDE$,
  \begin{align*}
  d\mathsf{X}(t)=\sqrt{2a(\mathsf{X}(t))}d\beta(t)+b(\mathsf{X}(t))dt+dK^l(t)-dK^r(t)
  \end{align*}
  where $\beta$ is a standard Brownian motion and $K^l$ and $K^r$ are (possibly zero) positive finite variation processes that only increase when $\mathsf{X}=l$ or $\mathsf{X}=r$, so that $\mathsf{X}\in I$ and $Leb\{t:\mathsf{X}(t)=l \textnormal{ or } r \}=0$ a.s. We write $\mathfrak{s}_{L}$ for the corresponding measurable solution map on path space, namely so that $\mathsf{X}=\mathfrak{s}_{L}\left(\beta\right)$.
  
Consider the following system of $SDEs$ with reflection in $W^{n,n+1}$ which can be described in words as follows. The $Y$ components evolve as $n$ autonomous $\hat{L}$-diffusions stopped when they collide or when (if) they hit $l$ or $r$, and we denote this time by $T^{n,n+1}$. The $X$ components evolve as $n+1$ $L$-diffusions reflected off the $Y$ particles.
  \begin{align}\label{System1SDEs}
  dX_1(t)&=\sqrt{2a(X_1(t))}d\beta_1(t)+b(X_1(t))dt+dK^l(t)-dK_1^+(t)\nonumber ,\\
  dY_1(t)&=\sqrt{2a(Y_1(t))}d\gamma_1(t)+(a'(Y_1(t))-b(Y_1(t)))dt\nonumber ,\\
  dX_2(t)&=\sqrt{2a(X_2(t))}d\beta_2(t)+b(X_2(t))dt+dK_2^-(t)-dK_2^+(t)\nonumber ,\\
  \vdots\\
  dY_n(t)&=\sqrt{2a(Y_n(t))}d\gamma_n(t)+(a'(Y_n(t))-b(Y_n(t)))dt\nonumber , \\
  dX_{n+1}(t)&=\sqrt{2a(X_{n+1}(t))}d\beta_{n+1}(t)+b(X_{n+1}(t))dt+dK_{n+1}^-(t)-dK^r(t)\nonumber.
  \end{align} 
  Here $\beta_1,\cdots,\beta_{n+1},\gamma_1,\cdots,\gamma_n$ are independent standard Brownian motions and the positive finite variation processes $K^l,K^r,K_i^+,K_i^-$ are such that $K^l$ (possibly zero) increases only when $X_1=l$, $K^r$  (possibly zero) increases only when $X_{n+1}=r$,
    $K^+_i(t)$ increases only when $Y_i=X_i$ and $K^-_{i}(t)$ only when $Y_{i-1}=X_i$, so that  $\left(X_1(t)\le Y_1(t)\le \cdots \le X_{n+1}(t);t\ge 0\right)\in W^{n,n+1}(I)$ up to time $T^{n,n+1}$. Note that, $X$ either reflects at $l$ or $r$ or does not visit them at all by our boundary conditions (\ref{bc1l}) and (\ref{bc1r}). The problematic possibility of an $X$ component being trapped between a $Y$ particle and a boundary point and pushed in opposite directions does not arise, since the whole process is then instantly stopped.
    
 The fact that these $SDEs$ are well-posed, so that in particular $(X,Y)$ is Markovian, is proven in Proposition \ref{wellposedness} under a Yamada-Watanabe condition (incorporating a linear growth assumption), that we now define precisely and abbreviate throughout by $(\textbf{YW})$. Note that, the functions $a(\cdot)$ and $b(\cdot)$ initially defined in $I^{\circ}$ can in certain circumstances be continuously extended to the boundary points $l$ and $r$ and this is implicit in assumption (\textbf{YW}).
 \begin{defn}[Assumption (\textbf{YW})]
Let $I$ be an interval with endpoints $l<r$ and suppose $\rho$ is a non-decreasing function from $(0,\infty)$ to itself such that $\int_{0^+}^{}\frac{dx}{\rho(x)}=\infty$. Consider the following condition on functions $a:I\to \mathbb{R}_+$ and $b: I\to \mathbb{R}$,
\begin{align*}
&|\sqrt {a(x)}-\sqrt {a(y)}|^2 \le \rho(|x-y|),\\
&|b(x)-b(y)|\le C|x-y|.
\end{align*}
Moreover, we assume that the functions $\sqrt {a(\cdot)}$ and $b(\cdot)$ are of at most linear growth (for $b(\cdot)$ this is immediate by Lipschitz continuity above). 

We will say that the $L$-diffusion satisfies (\textbf{YW}) if its diffusion and drift coefficients $a$ and $b$ satisfy (\textbf{YW}).
 \end{defn}  
 
 Moreover, by virtue of the following result these $SDEs$ provide a precise description of the dynamics of the two-level process $Z=(X,Y)$ associated with $Q_t^{n,n+1}$. Proposition \ref{dynamics1} below will be proven in Section \ref{SectionTransitionDensities} as either Proposition \ref{transititiondensities1} or Proposition \ref{transitiondensitiesreflecting}, depending on the boundary conditions.
 
 \begin{prop}\label{dynamics1}
Assume $(\textbf{R})$ and $(\textbf{BC+})$ hold for the $L$-diffusion and $(\textbf{YW})$ holds for both the $L$ and $\hat{L}$ diffusions. Then, $Q_t^{n,n+1}$ is the sub-Markov semigroup associated with the (Markovian) system of $SDEs$ (\ref{System1SDEs}) in the sense that if $\textbf{Q}^{n,n+1}_{x,y}$ governs the processes $(X,Y)$ satisfying the $SDEs$ (\ref{System1SDEs}) and with initial condition $(x,y)$ then for any $f$ continuous with compact support and fixed $T>0$,
 \begin{align*}
 \int_{W^{n,n+1}(I^\circ)}^{}q_T^{n,n+1}((x,y),(x',y'))f(x',y')dx'dy'=\textbf{Q}^{n,n+1}_{x,y}\big[f(X(T),Y(T))\textbf{1}(T<T^{n,n+1})\big].
 \end{align*}
 \end{prop}

For further motivation regarding the definition of $Q_t^{n,n+1}$ and moreover, a completely different argument for its semigroup property, that however does not describe explicitly the dynamics of $X$ and $Y$, we refer the reader to the next subsection \ref{coalescinginterpretation}.

We now briefly study some properties of $Q_t^{n,n+1}$, that are immediate from its algebraic structure (with no reference to the $SDEs$ above required). In order to proceed and fix notations for the rest of this section, start by defining the Karlin-McGregor semigroup $P_t^n$ associated with $n$ $L$-diffusions in $I^\circ$ given by the transition density, with $x,y \in W^n(I^\circ)$,
 \begin{align}\label{KM1}
 p^n_t(x,y)dy=\det(p_t(x_i,y_j))_{i,j=1}^ndy .
 \end{align}
Note that, in the case an exit or regular absorbing boundary point exists, $P_t^1$ is the semigroup of the $L$-diffusion \textit{killed} and not absorbed at that point. In particular it is not the same as $\mathsf{P}_t$ which is a Markov semigroup. Similarly, define the Karlin-McGregor semigroup $\hat{P}_t^n$ associated with $n$ $\hat{L}$-diffusions by,
 \begin{align}\label{KM2}
 \hat{p}^n_t(x,y)dy=\det(\hat{p}_t(x_i,y_j))_{i,j=1}^ndy  ,
 \end{align}
 with $x,y \in W^n(I^\circ)$. The same comment regarding absorbing and exit boundary points applies here as well.
 
 Now, define the operators $\Pi_{n,n+1}$, induced by the projections on the $Y$ level as follows with $f$ a bounded Borel function on $W^{n}(I^{\circ})$,
 \begin{align*}
 (\Pi_{n,n+1}f)(x,y)=f(y).
 \end{align*}
 The following proposition immediately follows by performing the $dx'$ integration in the explicit formula for the block determinant (as already implied in the proof that $Q_t^{n,n+1}1\le1$). 
 
 \begin{prop} Assume $(\textbf{R})$ and $(\textbf{BC})$ hold for the $L$-diffusion. For $t>0$ and $f$ a bounded Borel function on $W^{n}(I^{\circ})$ we have,
 \begin{align}\label{firstdynkinintertwining}
 \Pi_{n,n+1}\hat{P}_t^{n}f&=Q_t^{n,n+1}\Pi_{n,n+1}f .
 \end{align}
 \end{prop}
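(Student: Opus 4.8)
The plan is to turn the operator identity into a pointwise identity between integral kernels and then verify the latter by integrating the explicit block determinant one column at a time. Unwinding the definitions, at a point $(x,y)\in W^{n,n+1}(I^{\circ})$ the left-hand side equals $(\hat{P}_t^{n}f)(y)=\int_{W^{n}(I^{\circ})}\det(\hat{p}_t(y_i,y_j'))_{i,j=1}^{n}f(y')\,dy'$, while, since $(\Pi_{n,n+1}f)(x',y')=f(y')$, the right-hand side equals $\int_{W^{n,n+1}(I^{\circ})}q_t^{n,n+1}((x,y),(x',y'))f(y')\,dx'dy'$. As $q_t^{n,n+1}\geq 0$ and $Q_t^{n,n+1}1\leq 1$, Fubini applies for bounded Borel $f$, so it suffices to show, for fixed $y'\in W^{n}(I^{\circ})$,
\[
\int_{\mathcal{R}(y')}q_t^{n,n+1}((x,y),(x',y'))\,dx_1'\cdots dx_{n+1}'=\det(\hat{p}_t(y_i,y_j'))_{i,j=1}^{n},
\]
where $\mathcal{R}(y')$ is the slice $\{x':l<x_1'\leq y_1'\leq x_2'\leq\cdots\leq y_n'\leq x_{n+1}'<r\}$ of $W^{n,n+1}(I^{\circ})$ over $y'$; the point is that $\mathcal{R}(y')$ is exactly the product of the intervals $[y_{j-1}',y_j']$, $j=1,\dots,n+1$, with the conventions $y_0'=l$, $y_{n+1}'=r$.

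To prove the slice identity I would start from $q_t^{n,n+1}=\det\!\left(\begin{smallmatrix}A_t&B_t\\ C_t&D_t\end{smallmatrix}\right)$ and use multilinearity of the determinant in the columns: since $\mathcal{R}(y')$ is a product region and only the first $n+1$ columns (those of $A_t$ stacked on those of $C_t$) depend on $x'$, integration enters the determinant column by column, integrating the $j$-th of these columns over $x_j'\in[y_{j-1}',y_j']$. This replaces $p_t(x_i,x_j')$ by $\mathsf{P}_t\textbf{1}_{[l,y_j']}(x_i)-\mathsf{P}_t\textbf{1}_{[l,y_{j-1}']}(x_i)$ and $-\mathcal{D}_s^{y_i}p_t(y_i,x_j')$ by $-\mathcal{D}_s^{y_i}(\mathsf{P}_t\textbf{1}_{[l,y_j']}(y_i)-\mathsf{P}_t\textbf{1}_{[l,y_{j-1}']}(y_i))$; here one uses the standing assumptions (\ref{bc1l}), (\ref{bc1r}) to know that $p_t(x,\cdot)$ carries no atom at $l$ or $r$ and that $\mathsf{P}_t\textbf{1}_{[l,r]}\equiv 1$, together with the quoted regularity of $p_t$ to differentiate under the integral sign. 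I would then perform determinant-preserving column operations: first replace the $j$-th integrated column by the sum of the first $j$ integrated columns (the sums telescope into the columns with entries $\mathsf{P}_t\textbf{1}_{[l,y_j']}(x_i)$ on top and $-\mathcal{D}_s^{y_i}\mathsf{P}_t\textbf{1}_{[l,y_j']}(y_i)$ below, the $(n+1)$-st becoming $(1,\dots,1,0,\dots,0)^{\top}$); then, using $D_t(y,y')_{ij}=\hat{m}(y_j')\cdot(-\mathcal{D}_s^{y_i}\mathsf{P}_t\textbf{1}_{[l,y_j']}(y_i))$ and $B_t(x,y')_{ij}=\hat{m}(y_j')(\mathsf{P}_t\textbf{1}_{[l,y_j']}(x_i)-\textbf{1}(j\geq i))$, subtract $\hat{m}(y_j')$ times the $j$-th telescoped column from the $(n+1+j)$-th column for $j=1,\dots,n$. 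This annihilates the whole bottom-right $n\times(n+1)$ block and leaves the $(n+1+j)$-th column with top entries $-\hat{m}(y_j')\textbf{1}(j\geq i)$ and zero bottom.

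After these operations the bottom $n$ rows are supported only in the first $n$ columns, where they form the matrix $S$ with $S_{ij}=-\mathcal{D}_s^{y_i}\mathsf{P}_t\textbf{1}_{[l,y_j']}(y_i)$, and a Laplace expansion along those bottom rows yields a single surviving term, $\det S$ times the determinant of the top-right $(n+1)\times(n+1)$ block whose columns are $(1,\dots,1)^{\top}$ followed by $(-\hat{m}(y_j')\textbf{1}(j\geq i))_{i}$, $j=1,\dots,n$. A short computation (subtract the last row from the others and expand along the first column) evaluates this last determinant to $\prod_{j=1}^{n}\hat{m}(y_j')$, and one checks the overall Laplace sign is $+1$. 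Since $\hat{m}(y_j')S_{ij}=D_t(y,y')_{ij}=\hat{p}_t(y_i,y_j')$, this gives $\prod_{j}\hat{m}(y_j')\det S=\det(\hat{p}_t(y_i,y_j'))_{i,j=1}^{n}$, i.e. the slice identity; integrating against $f$ over $W^{n}(I^{\circ})$ then yields (\ref{firstdynkinintertwining}).

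I do not expect a genuine obstacle here — this is essentially the computation already alluded to in the proof that $Q_t^{n,n+1}1\leq 1$. The only things requiring care are the bookkeeping of which column operations to carry out and in which order, so that the bottom block collapses cleanly, and the minor measure-theoretic justifications (absence of atoms at the endpoints, differentiation under the integral sign, Fubini), all of which are supplied by the standing boundary assumptions and by the regularity of one-dimensional transition densities recorded earlier in this section.
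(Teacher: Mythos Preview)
Your proposal is correct and follows exactly the approach the paper indicates: the paper simply states that the proposition ``immediately follows by performing the $dx'$ integration (which is easily done by the very structure of the entries of $q_t^{n,n+1}$),'' and you have carried out that integration in full detail via the natural column operations. There is nothing to add except that the paper treats this as a one-line remark rather than a worked computation.
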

 The fact that $Y$ is distributed as $n$ independent $\hat{L}$-diffusions killed when they collide or when they hit $l$ or $r$ is already implicit in the statement of Proposition \ref{dynamics1}. However, it is also the probabilistic consequence of the proposition above. Namely, the intertwining relation (\ref{firstdynkinintertwining}), being an instance of Dynkin's criterion (see for example Exercise 1.17 Chapter 3 of \cite{RevuzYor}), implies that the evolution of $Y$ is Markovian with respect to the joint filtration of $X$ and $Y$ i.e. of the process $Z$ and we take this as the definition of $Y$ being autonomous. Moreover, $Y$ is evolving according to $\hat{P}_t^n$. Thus, the $Y$ components form an \textit{autonomous} \textit{diffusion} process. Finally, by taking $f\equiv 1$ above we get that the finite lifetime of $Z$ exactly corresponds to the killing time of $Y$, which we denote by $T^{n,n+1}$.

Similarly, we define the kernel $q_t^{n,n}((x,y),(x',y'))dx'dy'$ on $W^{n,n}(I^\circ)$ as follows:
\begin{defn}
For $(x,y),(x',y')\in W^{n,n}(I^\circ)$ define $q_t^{n,n}((x,y),(x',y'))$ by,
\begin{align*}
& q_t^{n,n}((x,y),(x',y'))=\\
&=\frac{\prod_{i=1}^{n}\hat{m}(y'_i)}{\prod_{i=1}^{n}\hat{m}(y_i)}(-1)^n\frac{\partial^n}{\partial_{y_1}\cdots\partial_{y_n}}\frac{\partial^{n}}{\partial_{x'_1}\cdots\partial_{x'_{n}}}\mathbb{P}\big(\boldsymbol{\Phi}_{0,t}(x_i)\le x_i',\boldsymbol{\Phi}_{0,t}(y_j)\le y_j' \ \ \text{for all} \ \ i,j \big).
\end{align*}
\end{defn}
We note that as before $q_t^{n,n}$ can in fact be written out explicitly,
\begin{align}
{q}_t^{n,n}((x,y),(x',y'))=\det\
 \begin{pmatrix}
{A}_t(x,x') & {B}_t(x,y')\\
  {C}_t(y,x') & {D}_t(y,y') 
 \end{pmatrix}
\end{align}
where,
\begin{align*}
{A}_t(x,x')_{ij} &=\partial_{x'_j}\mathsf{P}_t \textbf{1}_{[l,x_j']} (x_i) = p_t(x_i,x_j') ,\\
{B}_t(x,y')_{ij}&=\hat{m}(y'_j)(\mathsf{P}_t \textbf{1}_{[l,y'_j]}(x_i) -\textbf{1}(j> i)),\\
{C}_t(y,x')_{ij}&=-\hat{m}^{-1}(y_i)\partial_{y_i}\partial_{x'_j}\mathsf{P}_t \textbf{1}_{[l,x'_j]}(y_i)=-\mathcal{D}_s^{y_i}p_t(y_i,x_j'),\\
{D}_t(y,y')_{ij}&=-\frac{\hat{m}(y'_j)}{\hat{m}(y_i)}\partial_{y_i} \mathsf{P}_t \textbf{1}_{[l,y'_j]}(y_i)=\hat{p}_t(y_i,y_j').
\end{align*}

\begin{rmk}
Comparing with the $q_t^{n,n+1}$ formulae everything is the same except for the indicator function being $\textbf{1}(j> i)$ instead of $\textbf{1}(j \ge i)$.
\end{rmk}

Define the operator $Q_t^{n,n}$ for $t>0$ acting on bounded Borel functions on $W^{n,n}(I^\circ)$ by,
\begin{align}\label{semigroupdefinition2}
(Q_t^{n,n}f)(x,y)=\int_{W^{n,n}(I^\circ)}^{}q_t^{n,n}((x,y),(x',y'))f(x',y')dx'dy'.
\end{align}
Then with the analogous considerations as for $Q_t^{n,n+1}$ (see subsection \ref{coalescinginterpretation} as well), we can see that $Q_t^{n,n}$ should form a sub-Markov semigroup, to which we can associate a Markov process $Z$, with possibly finite lifetime, taking values in $W^{n,n}(I^\circ)$, the evolution of which we now make precise. 

To proceed as before, we assume that the $L$-diffusion is given by an $SDE$ and we consider the following system of $SDEs$ with reflection in $W^{n,n}$ which can be described as follows. The $Y$ components evolve as $n$ autonomous $\hat{L}$-diffusions killed when they collide or when (if) they hit the boundary point $r$, a time which we denote by $T^{n,n}$. The $X$ components evolve as $n$ $L$-diffusions being kept apart by hard reflection on the $Y$ particles. 
  \begin{align}\label{System2}
  dY_1(t)&=\sqrt{2a(Y_1(t))}d\gamma_1(t)+(a'(Y_1(t))-b(Y_1(t)))dt+dK^l(t),\nonumber\\
  dX_1(t)&=\sqrt{2a(X_1(t))}d\beta_1(t)+b(X_1(t))dt+dK_1^+(t)-dK_1^-(t),\nonumber\\
  \vdots\\
  dY_n(t)&=\sqrt{2a(Y_n(t))}d\gamma_n(t)+(a'(Y_n(t))-b(Y_n(t)))dt,\nonumber\\
  dX_{n}(t)&=\sqrt{2a(X_{n}(t))}d\beta_{n}(t)+b(X_{n}(t))dt+dK_{n}^+(t)-dK^r(t).\nonumber
  \end{align} 
  Here $\beta_1,\cdots,\beta_{n},\gamma_1,\cdots,\gamma_n$ are independent standard Brownian motions and the positive finite variation processes $K^l,K^r, K_i^+,K_i^-$ are such that $\bar{K}^l$ (possibly zero) increases only when $Y_1=l$, $K^r$ (possibly zero) increases only when $X_n=r$,
  $K^+_i(t)$ increases only when $Y_i=X_i$ and $K^-_{i}(t)$ only when $Y_{i-1}=X_i$, so that  $\left(Y_1(t)\le X_1(t)\le \cdots \le X_{n}(t);t\ge 0\right)\in W^{n,n}(I)$ up to $T^{n,n}$. Note that, $Y$ reflects at the boundary point $l$ or does not visit it all and similarly $X$ reflects at $r$ or does not reach it all by our boundary assumptions (\ref{bc2l}) and (\ref{bc2r}). The intuitively problematic issue of $Y_n$ pushing $X_n$ upwards at $r$ does not arise since the whole process is stopped at such instance. 
    
 That these $SDEs$ are well-posed, so that in particular $(X,Y)$ is Markovian, again follows from the arguments of Proposition \ref{wellposedness}. As before, we have the following precise description of the dynamics of the two-level process $Z=(X,Y)$ associated with $Q_t^{n,n}$.

 \begin{prop}\label{dynamics2}
Assume $(\textbf{R})$ and $(\textbf{BC+})$ hold for the $L$-diffusion and $(\textbf{YW})$ holds for both the $L$ and $\hat{L}$ diffusions. Then, $Q_t^{n,n}$ is the sub-Markov semigroup associated with the (Markovian) system of $SDEs$ (\ref{System2}) in the sense that if $\textbf{Q}^{n,n}_{x,y}$ governs the processes $(X,Y)$ satisfying the $SDEs$ (\ref{System2}) with initial condition $(x,y)$ then for any $f$ continuous with compact support and fixed $T>0$,
 \begin{align*}
 \int_{W^{n,n}(I^\circ)}^{}q_T^{n,n}((x,y),(x',y'))f(x',y')dx'dy'=\textbf{Q}^{n,n}_{x,y}\big[f(X(T),Y(T))\textbf{1}(T<T^{n,n})\big].
 \end{align*}
 \end{prop}
 
 We also define, analogously to before, an operator $\Pi_{n,n}$, induced by the projection on the $Y$ level by,
 \begin{align*}
 (\Pi_{n,n}f)(x,y)=f(y).
 \end{align*}
We have the following proposition which immediately follows by performing the $dx'$ integration in equation (\ref{semigroupdefinition2}).
 
 \begin{prop} Assume $(\textbf{R})$ and $(\textbf{BC})$ hold for the $L$-diffusion. For $t>0$ and $f$ a bounded Borel function on $W^{n}(I^\circ)$ we have,
 \begin{align}
 \Pi_{n,n}\hat{P}_t^{n}f&=Q_t^{n,n}\Pi_{n,n}f.
 \end{align}
 \end{prop}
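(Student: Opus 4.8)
The plan is to reduce the statement, exactly as in the discussion establishing $Q_t^{n,n+1}\textbf{1}\le\textbf{1}$ and the intertwining (\ref{firstdynkinintertwining}), to the single identity
\[
\int_{\{x':(x',y')\in W^{n,n}(I^\circ)\}} q_t^{n,n}\big((x,y),(x',y')\big)\,dx' \;=\; \det\big(\hat p_t(y_i,y'_j)\big)_{i,j=1}^{n}\;=\;\hat p^{\,n}_t(y,y') ,
\]
valid for all $x,y,y'\in W^n(I^\circ)$. Granting this, the proposition is immediate: for $f$ bounded Borel on $W^n(I^\circ)$ the function $\Pi_{n,n}f$ depends on $y'$ only, so by (\ref{semigroupdefinition2}) and Fubini --- legitimate since $q_t^{n,n}\ge 0$ (shown exactly as for $q_t^{n,n+1}$) and $\int_{W^{n,n}(I^\circ)}q_t^{n,n}((x,y),(x',y'))\,dx'dy'\le 1$ --- one gets $(Q_t^{n,n}\Pi_{n,n}f)(x,y)=\int_{W^n(I^\circ)}\big(\int q_t^{n,n}((x,y),(x',y'))\,dx'\big)f(y')\,dy'=\int_{W^n(I^\circ)}\hat p^{\,n}_t(y,y')f(y')\,dy'=(\hat P_t^n f)(y)=(\Pi_{n,n}\hat P_t^n f)(x,y)$.

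For the boxed identity, fix $y'\in W^n(I^\circ)$ and put $y'_{n+1}:=r$. The $x'$-fibre over $y'$ is the box $\prod_{j=1}^n[y'_j,y'_{j+1}]$. In the block-determinant form of $q_t^{n,n}$, only the $n$ columns carrying the $x'$-variables (the columns of the blocks $A_t$ and $C_t$) depend on $x'$, and the $j$-th of them depends on $x'_j$ alone; so by multilinearity of the determinant the fibre integral equals the determinant of the matrix obtained by replacing, for each $j$, that column by its integral over $[y'_j,y'_{j+1}]$. Using $\int_{y'_j}^{y'_{j+1}}p_t(z,w)\,dw=\mathsf{P}_t\textbf{1}_{[l,y'_{j+1}]}(z)-\mathsf{P}_t\textbf{1}_{[l,y'_j]}(z)$ (there is no atom of $p_t(z,\cdot)$ in $(y'_j,y'_{j+1}]$), and commuting $\mathcal{D}_s^{y_i}$ past the $x'_j$-integral, the integrated $A_t$-column becomes $\big(\mathsf{P}_t\textbf{1}_{[l,y'_{j+1}]}(x_i)-\mathsf{P}_t\textbf{1}_{[l,y'_j]}(x_i)\big)_i$ and the integrated $C_t$-column becomes $\big(-\mathcal{D}_s^{y_i}[\mathsf{P}_t\textbf{1}_{[l,y'_{j+1}]}(y_i)-\mathsf{P}_t\textbf{1}_{[l,y'_j]}(y_i)]\big)_i$.

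I would then finish by column reduction. Recall $\hat m=s'$, so $\hat m(y_i)^{-1}\partial_{y_i}=\mathcal{D}_s^{y_i}$, whence $\hat m(y'_j)^{-1}D_t(y,y')_{ij}=-\mathcal D_s^{y_i}\mathsf P_t\textbf{1}_{[l,y'_j]}(y_i)$; also $\textbf{1}(j{+}1>i)-\textbf{1}(j>i)=\delta_{ij}$ for $j\le n-1$ and $1-\textbf{1}(n>i)=\delta_{in}$. A direct check then shows that adding to the $j$-th $x'$-column the combination $-\hat m(y'_{j+1})^{-1}(\text{the }(n{+}j{+}1)\text{-th column})+\hat m(y'_j)^{-1}(\text{the }(n{+}j)\text{-th column})$ for $j\le n-1$, and adding $\hat m(y'_n)^{-1}(\text{the }2n\text{-th column})$ to the $n$-th $x'$-column (here one also uses $\mathsf{P}_t\textbf{1}_{[l,r]}\equiv1$ and $\mathcal{D}_s(1)=0$), turns every such column into a vector with a single $1$ in the top $n$-block and zeros elsewhere; the entries in the $C_t$/$D_t$ rows cancel precisely by linearity of $\mathcal D_s^{y_i}$. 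These operations do not change the determinant and leave the block matrix equal to $\begin{pmatrix} I_n & B_t(x,y')\\ 0 & D_t(y,y')\end{pmatrix}$, whose determinant is $\det D_t(y,y')=\det(\hat p_t(y_i,y'_j))_{i,j=1}^n=\hat p^{\,n}_t(y,y')$ by (\ref{KM2}).

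The one place to be careful --- and the only spot where the standing hypotheses enter --- is the endpoint bookkeeping in the last $x'$-column: one needs $\int_{y'_n}^{r}p_t(x_n,w)\,dw = 1-\mathsf{P}_t\textbf{1}_{[l,y'_n]}(x_n)$, i.e.\ that no mass of $p_t(x,\cdot)$ escapes to $r$ other than through a genuine density, which is exactly what (\ref{bc2r}) ($r$ natural, entrance, or regular reflecting) guarantees. An atom at $l$ is allowed by (\ref{bc2l}) but is harmless, since $l$ is never an endpoint of any $[y'_j,y'_{j+1}]$ ($y'_1>l$). The remaining analytic points --- interchanging $\mathcal{D}_s^{y_i}$ with $\int dx'_j$ and applying Fubini --- are routine given the joint continuity of $p_t$ and of $\partial_x p_t$ recorded in Section 2 together with the bound $\mathsf P_t\textbf{1}\le\textbf{1}$, which dominates all the integrands involved.
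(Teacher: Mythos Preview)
Your argument is correct and is exactly the computation the paper has in mind when it says the result ``immediately follows by performing the $dx'$ integration in equation (\ref{semigroupdefinition2})''; you have simply written out in full the column-by-column integration and the ensuing column reduction that collapses the block determinant to $\det D_t(y,y')$. The only addition worth noting is that your careful handling of the $j=n$ column (using $\mathsf{P}_t\textbf{1}_{[l,r]}\equiv 1$ and the absence of an atom at $r$ under (\ref{bc2r})) makes explicit precisely where the standing boundary assumptions enter, which the paper leaves implicit.
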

  This, again implies that the evolution of $Y$ is Markovian with respect to the joint filtration of $X$ and $Y$. Furthermore, $Y$ is distributed as $n$ $\hat{L}$-diffusions killed when they collide or when (if) they hit the boundary point $r$ (note the difference here to $W^{n,n+1}$ is because of the asymmetry between $X$ and $Y$ and our standing assumption (\ref{bc2l}) and (\ref{bc2r})). Hence, the $Y$ components form a \textit{diffusion} process and they are \textit{autonomous}. The finite lifetime of $Z$ analogously to before (by taking $f\equiv 1$ in the proposition above), exactly corresponds to the killing time of $Y$ which we denote by $T^{n,n}$. As before, this is already implicit in the statement of Proposition \ref{dynamics2}.

Finally, we can define the kernel $q_t^{n+1,n}((x,y),(x',y'))dx'dy'$ on $W^{n+1,n}(I^\circ)$ in an analogous way and also the operator $Q_t^{n+1,n}$ for $t>0$ acting on bounded Borel functions on $W^{n+1,n}(I^\circ)$ as well. The description of the associated process $Z$ in $W^{n+1,n}(I^\circ)$ in words is as follows. The $Y$ components evolve as $n+1$ autonomous $\hat{L}$-diffusions killed when they collide (by our boundary conditions (\ref{bc3l}) and (\ref{bc3r}) if the $Y$ particles do visit $l$ or $r$ they are reflecting there) and the $X$ components evolve as $n$ $L$-diffusions reflected on the $Y$ particles. These dynamics can be described in terms of $SDEs$ with reflection under completely analogous assumptions. The details are omitted.

\subsection{Stochastic coalescing flow interpretation} \label{coalescinginterpretation}

The definition of $q_t^{n,n+1}$, and similarly of $q_t^{n,n}$, might look rather mysterious and surprising. It is originally motivated from considering stochastic coalescing flows. Briefly, the finite system $\left(\boldsymbol{\Phi}_{0,t}(x_1),\cdots,\boldsymbol{\Phi}_{0,t}(x_n);t \ge 0\right)$ can be extended to an infinite system of coalescing $L$-diffusions starting from each space time point and denoted by $(\boldsymbol{\Phi}_{s,t}(\cdot),s\le t)$. This is well documented in Theorem 4.1 of \cite{LeJan} for example. The random family of maps $(\boldsymbol{\Phi}_{s,t},s\le t)$ from $I$ to $I$ enjoys among others the following natural looking and intuitive properties: the \textit{cocycle} or \textit{flow} property $\boldsymbol{\Phi}_{t_1,t_3}=\boldsymbol{\Phi}_{t_2,t_3}\circ\boldsymbol{\Phi}_{t_1,t_2}$, \textit{independence of its increments} $\boldsymbol{\Phi}_{t_1,t_2}\perp\boldsymbol{\Phi}_{t_3,t_4}$ for $t_2\le t_3$ and \textit{stationarity} $\boldsymbol{\Phi}_{t_1,t_2}\overset{law}{=}\boldsymbol{\Phi}_{0,t_2-t_1}$. Finally, we can consider its generalized inverse by $\boldsymbol{\Phi}^{-1}_{s,t}(x)=sup\{w:\boldsymbol{\Phi}_{s,t}(w)\le x\}$ which is well defined since $\boldsymbol{\Phi}_{s,t}$ is non-decreasing.

With these notations in place $q_t^{n,n+1}$ can also be written as,

\begin{align}\label{flowtransition1}
 q_t^{n,n+1}((x,y),(x',y'))dx'dy=\frac{\prod_{i=1}^{n}\hat{m}(y'_i)}{\prod_{i=1}^{n}\hat{m}(y_i)}\mathbb{P}\big(\boldsymbol{\Phi}_{0,t}(x_i)\in dx_i',\boldsymbol{\Phi}^{-1}_{0,t}(y'_j)\in dy_j \ \ \text{for all} \ \ i,j \big).
\end{align}

We now sketch an argument that gives the semigroup property $ Q_{t+s}^{n,n+1}=Q_t^{n,n+1}Q_s^{n,n+1}$. We do not try to give all the details that would render it completely rigorous, mainly because it cannot be used to precisely describe the dynamics of $Q_t^{n,n+1}$, but nevertheless all the main steps are spelled out.

All equalities below should be understood after being integrated with respect to $dx''$ and $dy$ over arbitrary Borel sets. The first equality is by definition. The second equality follows from the \textit{cocycle} property and conditioning on the values of $\boldsymbol{\Phi}_{0,s}(x_i)$ and $\boldsymbol{\Phi}^{-1}_{s,s+t}(y''_j)$. Most importantly, this is where the \textit{boundary behaviour assumptions} (\ref{bc1l}) and (\ref{bc1r}) we made at the beginning of this subsection are used. These ensure that no possible contributions from atoms on $\partial I$ are missed; namely the random variable $\boldsymbol{\Phi}_{0,s}(x_i)$ is supported (its distribution gives full mass) in $I^\circ$. Moreover, it is not too hard to see from the coalescing property of the flow that, we can restrict the integration over $(x',y')\in W^{n,n+1}(I^\circ)$ for otherwise the integrand vanishes. Finally, the third equality follows from \textit{independence} of the increments and the fourth one by \textit{stationarity} of the flow.
\begin{align*}
 &q_{s+t}^{n,n+1}((x,y),(x'',y''))dx''dy=\frac{\prod_{i=1}^{n}\hat{m}(y''_i)}{\prod_{i=1}^{n}\hat{m}(y_i)}\mathbb{P}\big(\boldsymbol{\Phi}_{0,s+t}(x_i)\in dx_i'',\boldsymbol{\Phi}^{-1}_{0,s+t}(y''_j)\in dy_j \ \ \text{for all} \ \ i,j \big) \\
 &=\frac{\prod_{i=1}^{n}\hat{m}(y''_i)}{\prod_{i=1}^{n}\hat{m}(y_i)}\int_{(x',y')\in W^{n,n+1}(I^\circ)}^{}\mathbb{P}\big(\boldsymbol{\Phi}_{0,s}(x_i)\in dx_i',\boldsymbol{\Phi}_{s,s+t}(x_i')\in dx_i'',\boldsymbol{\Phi}^{-1}_{0,s}(y'_j)\in dy_j,\boldsymbol{\Phi}^{-1}_{s,s+t}(y''_j)\in dy_j' \big)
 \\ 
& =\int_{(x',y')\in W^{n,n+1}(I^\circ)}^{} \frac{\prod_{i=1}^{n}\hat{m}(y'_i)}{\prod_{i=1}^{n}\hat{m}(y_i)}\mathbb{P}\big(\boldsymbol{\Phi}_{0,s}(x_i)\in dx_i',\boldsymbol{\Phi}^{-1}_{0,s}(y'_j)\in dy_j \big)\\
&\times \frac{\prod_{i=1}^{n}\hat{m}(y''_i)}{\prod_{i=1}^{n}\hat{m}(y'_i)}\mathbb{P}\big(\boldsymbol{\Phi}_{s,s+t}(x'_i)\in dx_i'',\boldsymbol{\Phi}^{-1}_{s,s+t}(y''_j)\in dy_j' \big)\\
&=\int_{(x',y')\in W^{n,n+1}(I^\circ)}^{} q_s^{n,n+1}((x,y),(x',y'))q_t^{n,n+1}((x',y'),(x'',y''))dx'dy' dx''dy .
\end{align*}

\subsection{Intertwining and Markov functions}
In this subsection $(n_1,n_2)$ denotes one of $\{(n,n-1),(n,n),(n,n+1)\}$. First, recall the definitions of $P_t^n$ and $\hat{P}_t^n$ given in (\ref{KM1}) and (\ref{KM2}) respectively. Similarly, we record here again, the following proposition and recall that it can in principle completely describe the evolution of the $Y$ particles and characterizes the finite lifetime of the process $Z$ as the killing time of $Y$.

\begin{prop}\label{DynkinProposition}Assume $(\textbf{R})$ and $(\textbf{BC})$ hold for the $L$-diffusion. For $t>0$ and $f$ a bounded Borel function on $W^{n_1}(I^\circ)$ we have,
\begin{align}
\Pi_{n_1,n_2}\hat{P}_t^{n_1}f&=Q_t^{n_1,n_2}\Pi_{n_1,n_2}f.
\end{align}
\end{prop}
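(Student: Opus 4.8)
The plan is to establish the intertwining $\Pi_{n_1,n_2}\hat{P}_t^{n_1}f = Q_t^{n_1,n_2}\Pi_{n_1,n_2}f$ by a direct computation from the explicit block-determinant formula for $q_t^{n_1,n_2}$, following the recipe already invoked twice in the excerpt (``this immediately follows by performing the $dx'$ integration''). First I would unwind the definitions: $\big(Q_t^{n_1,n_2}\Pi_{n_1,n_2}f\big)(x,y) = \int q_t^{n_1,n_2}((x,y),(x',y'))\, f(y')\, dx'\,dy'$, since $\Pi_{n_1,n_2}f$ depends only on the $Y$-coordinates. The key observation is that the integrand, as a function of $x'$, is exactly the block determinant with rows indexed by $(x\text{-rows},y\text{-rows})$ and columns indexed by $(x'\text{-columns},y'\text{-columns})$, and the $x'$-dependence sits only in the first block of columns, namely in the $A_t(x,x')$ and $C_t(y,x')$ entries.

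The main step is the $dx'$ integration. Expanding the determinant along the $x'$-columns (a Laplace/Cauchy--Binet-type expansion grouping the $n_2$ columns carrying $x'$-dependence against the remaining $n_1$ columns), one integrates each $x'_j$ over its range determined by the interlacing constraints in $W^{n_1,n_2}(I^\circ)$. The crucial point — and this is where the \emph{boundary behaviour assumptions} (\ref{bc1l})--(\ref{bc3r}) enter — is that the entries $A_t(x,x')_{ij}=p_t(x_i,x'_j)$ and $C_t(y,x')_{ij}=-\mathcal{D}_s^{y_i}p_t(y_i,x'_j)$ are precisely derivatives $\partial_{x'_j}$ of $\mathsf{P}_t\mathbf{1}_{[l,x'_j]}$ and of $-\mathcal{D}_s^{y_i}\mathsf{P}_t\mathbf{1}_{[l,x'_j]}$; hence integrating $x'_j$ against these columns telescopes, and the contributions at the interlacing endpoints either cancel in pairs between adjacent columns, are killed by the $\mathbf{1}(j\ge i)$ (resp. $\mathbf{1}(j>i)$) shift built into $B_t$ and $D_t$, or vanish at the boundary points $l,r$ by the assumed boundary behaviour (no atoms escape to $\partial I$ because the relevant evaluation at a natural/entrance/reflecting point gives zero, matching the reasoning already used to prove $Q_t^{n,n+1}1\le 1$ and in the coalescing-flow argument). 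After this cancellation the $B_t$ and $A_t$ columns collapse and one is left with the determinant of the lower-right $\hat{D}$-type block evaluated against $f$, i.e. $\int_{W^{n_1}(I^\circ)} \det(\hat p_t(y_i,y'_j))_{i,j=1}^{n_1}\, f(y')\, dy' = \big(\hat P_t^{n_1} f\big)(y)$, which is exactly $\big(\Pi_{n_1,n_2}\hat P_t^{n_1}f\big)(x,y)$.

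I expect the main obstacle to be bookkeeping the telescoping/cancellation in the $dx'$ integration cleanly and uniformly across the three cases $(n,n-1),(n,n),(n,n+1)$ — in particular verifying that the boundary terms at the interlacing junctions match up correctly against the off-by-one indicator shifts in $B_t,D_t$, and that the endpoint terms at $l$ and $r$ genuinely vanish under each of the allowed boundary classifications (natural, entrance/exit, regular reflecting/absorbing) as dictated by the standing assumptions for the corresponding interlacing space. Once that combinatorial cancellation is set up correctly, the identity drops out; since the paper explicitly flags this as immediate, I would present the argument at the level of ``perform the $dx'$ integration column-group by column-group, using that $p_t(x_i,\cdot)$ and $\mathcal D_s^{y_i}p_t(y_i,\cdot)$ are $\partial_{x'}$-derivatives of quantities whose boundary and interlacing-endpoint evaluations cancel by (\ref{bc1l})--(\ref{bc3r}) and the indicator shifts, leaving $\det(\hat p_t(y_i,y'_j))$,'' rather than writing out every sign.
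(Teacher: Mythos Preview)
Your proposal is correct and follows exactly the route the paper takes: the paper's entire proof is the remark that the identity ``immediately follows by performing the $dx'$ integration in the explicit formula for the block determinant (as already implied in the proof that $Q_t^{n,n+1}1\le 1$)'', and your plan is precisely a fleshed-out version of that computation. Your identification of where the boundary assumptions and the indicator shifts in $B_t$ enter to kill the endpoint contributions is the only nontrivial bookkeeping, and it matches what the paper leaves implicit.
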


Now, we define the following integral operator $\Lambda_{n_1,n_2}$ acting on Borel functions on $W^{n_1,n_2}(I^\circ)$, whenever $f$ is integrable as,
\begin{align}\label{PreIntertwiningKernel}
(\Lambda_{n_1,n_2}f)(x) &= \int_{W^{n_1,n_2}(x)}^{}\prod_{i=1}^{n_1}\hat{m}(y_i)f(x,y)dy,
\end{align}
where we remind the reader that $\hat{m}(\cdot)$ is the density with respect to Lebesgue measure of the speed measure of the diffusion with generator $\hat{L}$.

The following intertwining relation is the fundamental ingredient needed for applying the theory of Markov functions, originating with the seminal paper of Rogers and Pitman \cite{RogersPitman}. This proposition directly follows by performing the $dy$ integration in the explicit formula of the block determinant (or alternatively by invoking the coalescing property of the stochastic flow $\left(\boldsymbol{\Phi}_{s,t}(\cdot);s \le t\right)$ and the original definitions).

 \begin{prop} Assume $(\textbf{R})$ and $(\textbf{BC})$ hold for the $L$-diffusion. For $t>0$ we have the following equality of positive kernels,
 \begin{align} 
 P_t^{n_2}\Lambda_{n_1,n_2}&=\Lambda_{n_1,n_2}Q_t^{n_1,n_2}.
 \end{align}
 \end{prop}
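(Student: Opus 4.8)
The plan is to prove the identity $P_t^{n_2}\Lambda_{n_1,n_2} = \Lambda_{n_1,n_2}Q_t^{n_1,n_2}$ as an equality of positive kernels by computing both sides explicitly against the block-determinant formula for $q_t^{n_1,n_2}$ and the Karlin--McGregor determinant for $p_t^{n_2}$. I will treat the case $(n_1,n_2)=(n,n+1)$; the cases $(n,n)$ and $(n,n-1)$ are identical after the cosmetic change of indicator functions noted in the remark. Written out, the claim is that for $x' \in W^{n_2}(I^\circ)$,
\begin{align*}
\int_{W^{n_2}(I^\circ)} p_t^{n_2}(x',x)\left(\int_{W^{n_1,n_2}(x)}\prod_i \hat m(y_i)\, f(x,y)\,dy\right)dx = \int_{W^{n_1,n_2}(x')}\prod_i \hat m(y'_i)\,(Q_t^{n_1,n_2}f)(x',y')\,dy',
\end{align*}
i.e., after unwinding $Q_t^{n_1,n_2}$ and Fubini, it suffices to establish the kernel identity
\begin{align*}
\int_{W^{n_2}(x\text{-constraint})} p_t^{n_2}(x',x)\, q_t^{n_1,n_2}((x,y),(x',y'))\,dx = p_t^{n_2}(x',x')\cdot(\text{something}),
\end{align*}
which is not quite the right shape — so the cleaner route, and the one I would actually follow, is the \textbf{stochastic coalescing flow argument} already flagged parenthetically in the statement.

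Concretely: using representation \eqref{flowtransition1}, $\prod_i\hat m(y_i)\,q_t^{n_1,n_2}((x,y),(x',y'))\,dx'dy = \prod_i\hat m(y'_i)\,\mathbb{P}(\boldsymbol{\Phi}_{0,t}(x_i)\in dx'_i,\ \boldsymbol{\Phi}_{0,t}^{-1}(y'_j)\in dy_j)$. Applying $\Lambda_{n_1,n_2}$ on the left integrates out the $y$-variables subject to $(x,y)\in W^{n_1,n_2}(I^\circ)$, while applying $P_t^{n_2}$ on the right against this should integrate out the initial $x$-points of the flow. The key combinatorial-probabilistic fact is a ballot-type / reflection identity: because $\boldsymbol{\Phi}_{0,t}$ is a monotone coalescing flow, the event that $n_1$ intermediate interlacing points $y$ exist with $(x,y)\in W^{n_1,n_2}$ and $\boldsymbol{\Phi}_{0,t}^{-1}(y'_j)=y_j$ corresponds exactly, after integrating $y$ against $\prod\hat m(y_j)\,dy_j = \prod d\hat M(y_j)$, to a determinantal expression in the one-dimensional flow distributions; the factor $\hat m$ appears precisely because $\hat M$ is the natural (scale) coordinate for the inverse flow $\boldsymbol{\Phi}^{-1}$, and pushing forward $\text{Leb}$ in the $y'$-coordinate under $\boldsymbol{\Phi}_{0,t}$ produces $d\hat M$ in the $y$-coordinate. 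I would then recognize the resulting object as $\Lambda_{n_1,n_2}$ applied to $Q_t$, reading the flow decomposition $\boldsymbol{\Phi}_{0,t}$ via the cocycle property $\boldsymbol{\Phi}_{0,t}$ acting first, then the interlacing structure being preserved — essentially the same chain of equalities (cocycle, independence of increments, stationarity) displayed in subsection~\ref{coalescinginterpretation}, but now with one time-step and with the $\Lambda$/$P_t$ bookkeeping replacing the $Q_t Q_t$ bookkeeping.

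Alternatively — and this is the version I would write up in full, since it avoids flow technicalities — one proves it by \textbf{direct linear algebra on the block determinant}. Expand $q_t^{n_1,n_2}$ by the formula with blocks $A,B,C,D$. Applying $\Lambda_{n_1,n_2}$ means multiplying by $\prod\hat m(y_i)$ and integrating $dy$ over the simplex cut out by interlacing with $x$; the crucial elementary identities are $\int_{x_j}^{x_{j+1}} \hat m(y)\,(-\mathcal{D}_s^{y} p_t(y,\cdot))\,dy = p_t(x_j,\cdot)-p_t(x_{j+1},\cdot)$ coming from the fundamental theorem of calculus in the $s$-scale (since $-\mathcal D_s^y p_t(y,z) = -\partial_s p_t(\cdot,z)$ and $\hat m\,dy$ is compatible), together with the analogous telescoping for the $B,D$ rows using Lemma~\ref{ConjugacyLemma}. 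These row integrations collapse the $(n_1+n_2)\times(n_1+n_2)$ determinant, via multilinearity and column operations that cancel the telescoped boundary terms, into an $n_2\times n_2$ determinant whose $(i,j)$ entry is $\sum_k \pm p_t(x'_?, \cdot)\cdots$ — precisely $\det(p_t(\tilde x_i, x'_j))$ convolved appropriately, which is $(P_t^{n_2}\,\cdot\,)$ acting on the collapsed $\Lambda$-image. The \textbf{main obstacle} is bookkeeping the boundary terms in these telescoping integrals: at $l$ and $r$ one must use the boundary behaviour assumptions \eqref{bc1l}--\eqref{bc1r} to guarantee the relevant boundary contributions ($\mathcal D_s p_t$ at a reflecting/natural endpoint, or the atom at an exit/absorbing endpoint for the $(n,n)$ and $(n,n-1)$ cases) vanish or land correctly, and to check the collapsed determinant lives on the right interlacing simplex without leftover constraints; getting the signs from the $(-1)^n$, the $\textbf{1}(j\ge i)$ shift, and the determinant expansion to line up is where the care is needed. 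I would organize the write-up as: (1) state the telescoping lemma for a single $dy_i$-integration; (2) show one integration reduces the block structure by one row; (3) induct; (4) identify the limit with $P_t^{n_2}\Lambda_{n_1,n_2}$; (5) remark that positivity of the kernels is already known from the coalescing-flow monotonicity, so the equality of signed kernels upgrades to equality of positive kernels.
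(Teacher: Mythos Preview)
Your second approach --- integrating out the $y$-variables directly in the block determinant via the identity $\hat m(y)\,\mathcal{D}_s^y=\partial_y$ --- is exactly the paper's proof, which it records in one line (``directly follows by performing the $dy$ integration in the explicit formula of the block determinant''). Concretely, multiplying the $i$-th $y$-row by $\hat m(y_i)$ turns its $C$- and $D$-entries into $-\partial_{y_i}$ of the corresponding $A$- and $B$-entries, so $\int_{x_i}^{x_{i+1}}dy_i$ yields a difference of two $x$-rows; row operations then reduce the lower block to zeros in the $A$-columns and $\mathrm{diag}(\hat m(y'_1),\dots,\hat m(y'_{n_1}))$ in the $B$-columns, and the determinant factorises as $\det(p_t(x_i,x'_j))_{i,j=1}^{n_2}\cdot\prod_i\hat m(y'_i)$, which is the kernel of $P_t^{n_2}\Lambda_{n_1,n_2}$.

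One clarification: for $(n_1,n_2)=(n,n+1)$ there are \emph{no} boundary contributions at $l$ or $r$ to worry about, since every $y_i$-integral runs over $[x_i,x_{i+1}]\subset I^\circ$; the boundary assumptions only enter in the $(n,n)$ and $(n+1,n)$ variants, where the outermost $y$-integral hits $l$ or $r$, and even there the check is a one-line application of the relevant vanishing, not a ``main obstacle''. Your initial attempt to phrase the kernel identity as an $x$-integration was indeed the wrong shape; the identity that makes the proof immediate is the $dy$-integrated one you eventually write down.
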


 Combining the two propositions above gives the following relation for the Karlin-McGregor semigroups,
\begin{align}\label{KMintertwining}
P_t^{n_2}\Lambda_{n_1,n_2}\Pi_{n_1,n_2}&=\Lambda_{n_1,n_2}\Pi_{n_1,n_2}\hat{P}_t^{n_1}.
\end{align}
Namely, the two semigroups are themselves intertwined with kernel,
\begin{align*}
\left(\Lambda_{n_1,n_2}\Pi_{n_1,n_2}f\right)(x)=\int_{W^{n_1,n_2}(x)}^{}\prod_{i=1}^{n_1}\hat{m}(y_1)f(y)dy.
\end{align*}
This implies the following. Suppose $\hat{h}_{n_1}$ is a strictly positive (in $\mathring{W}^{n_1}$) eigenfunction for $\hat{P}_t^{n_1}$ namely, $\hat{P}_t^{n_1}\hat{h}_{n_1}=e^{\lambda_{n_1}t}\hat{h}_{n_1}$, then (with both sides possibly being infinite),
\begin{align*}
(P_t^{n_2}\Lambda_{n_1,n_2}\Pi_{n_1,n_2}\hat{h}_{n_1})(x)&=e^{\lambda_{n_1}t}(\Lambda_{n_1,n_2}\Pi_{n_1,n_2}\hat{h}_{n_1})(x).
\end{align*}

We are interested in strictly positive eigenfunctions because they allow us to define Markov processes, however non positive eigenfunctions can be built this way as well. 

We now finally arrive at our main results. We need to make precise one more notion, already referenced several times in the introduction. For a possibly sub-Markov semigroup $\left(\mathfrak{P}_t;t \ge 0\right)$ or more generally, for fixed $t$, a sub-Markov kernel with eigenfunction $\mathfrak{h}$ with eigenvalue $e^{ct}$ we define the Doob's $h$-transform by $
e^{-ct}\mathfrak{h}^{-1}\circ \mathfrak{P}_t \circ \mathfrak{h}$. 
Observe that, this is now an honest Markov semigroup (or Markov kernel). 

If $\hat{h}_{n_1}$ is a strictly positive in $\mathring{W}^{n_1}$ eigenfunction for $\hat{P}_t^{n_1}$ then so is the function $\hat{h}_{n_1,n_2}(x,y)=\hat{h}_{n_1}(y)$ for $Q_t^{n_1,n_2}$ from Proposition \ref{DynkinProposition}. We can thus define the proper Markov kernel $Q_t^{n_1,n_2,\hat{h}_{n_1}}$ which is the $h$-transform of $Q_t^{n_1,n_2}$ by $\hat{h}_{n_1}$. Define $h_{n_2}(x)$, strictly positive in $\mathring{W}^{n_2}$, as follows, assuming that the integrals are finite in the case of $W^{n,n}(I^\circ)$ and $W^{n+1,n}(I^\circ)$,
\begin{align*}
h_{n_2}(x)=(\Lambda_{n_1,n_2}\Pi_{n_1,n_2}\hat{h}_{n_1})(x),
\end{align*}
and the Markov Kernel $\Lambda^{\hat{h}_{n_1}}_{n_1,n_2}(x,\cdot)$ with $x \in \mathring{W}^{n_2}$ by,
\begin{align*}
(\Lambda^{\hat{h}_{n_1}}_{n_1,n_2}f)(x)=\frac{1}{h_{n_2}(x)}\int_{W^{n_1,n_2}(x)}^{}\prod_{i=1}^{n_1}\hat{m}(y_i)\hat{h}_{n_1}(y)f(x,y)dy.
\end{align*}
Finally, defining $P_t^{n_2,h_{n_2}}$ to be the Karlin-McGregor semigroup $P_t^{n_2}$  $h$-transformed by $h_{n_2}$ we obtain:
\begin{prop} \label{Master} Assume $(\textbf{R})$ and $(\textbf{BC})$ hold for the $L$-diffusion. Let $Q_t^{n_1,n_2}$ denote one of the operators induced by the sub-Markov kernels on $W^{n_1,n_2}(I^\circ)$ defined in the previous subsection. Let $\hat{h}_{n_1}$ be a strictly positive eigenfunction for $\hat{P}_t^{n_1}$ and assume that $h_{n_2}(x)=(\Lambda_{n_1,n_2}\Pi_{n_1,n_2}\hat{h}_{n_1})(x)$ is finite in $W^{n_2}(I^\circ)$, so that in particular $\Lambda^{\hat{h}_{n_1}}_{n_1,n_2}$ is a Markov kernel. Then, with the notations of the preceding paragraph we have the following relation for $t>0$,
\begin{align} \label{MasterIntertwining}
P_t^{n_2,h_{n_2}}\Lambda^{\hat{h}_{n_1}}_{n_1,n_2}f&=\Lambda^{\hat{h}_{n_1}}_{n_1,n_2}Q_t^{n_1,n_2,\hat{h}_{n_2}}f ,
\end{align}
with $f$ a bounded Borel function in $W^{n_1,n_2}(I^{\circ})$.
\end{prop}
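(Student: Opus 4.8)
The plan is to derive \eqref{MasterIntertwining} purely algebraically from the two intertwining relations already established, namely $P_t^{n_2}\Lambda_{n_1,n_2}=\Lambda_{n_1,n_2}Q_t^{n_1,n_2}$ (Proposition with the equality of positive kernels) together with the Dynkin-type relation $\Pi_{n_1,n_2}\hat P_t^{n_1}f=Q_t^{n_1,n_2}\Pi_{n_1,n_2}f$ of Proposition \ref{DynkinProposition}, and to push these through Doob's $h$-transform. First I would record the bookkeeping: since $\hat h_{n_1}$ is an eigenfunction of $\hat P_t^{n_1}$ with eigenvalue $e^{\lambda_{n_1}t}$, Proposition \ref{DynkinProposition} gives that $\Pi_{n_1,n_2}\hat h_{n_1}$ is an eigenfunction of $Q_t^{n_1,n_2}$ with the same eigenvalue, and then the positive-kernel intertwining together with $h_{n_2}=\Lambda_{n_1,n_2}\Pi_{n_1,n_2}\hat h_{n_1}$ shows $P_t^{n_2}h_{n_2}=e^{\lambda_{n_1}t}h_{n_2}$, so all three $h$-transforms $P_t^{n_2,h_{n_2}}$, $Q_t^{n_1,n_2,\hat h_{n_1}}$ and $\Lambda^{\hat h_{n_1}}_{n_1,n_2}$ are bona fide Markov objects (the last being Markov exactly because $h_{n_2}$ normalizes it; finiteness of $h_{n_2}$ is assumed in the statement). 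Note that here $Q_t^{n_1,n_2,\hat h_{n_2}}$ in the displayed equation should read $Q_t^{n_1,n_2,\hat h_{n_1}}$.

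Next I would observe the key compatibility of $\Lambda_{n_1,n_2}$ with the $h$-transform of $Q$: because the kernel of $\Lambda_{n_1,n_2}$ involves integration against $\prod_i\hat m(y_i)$ in the $y$-variable and the $h$-transform of $Q$ by $\hat h_{n_1}$ multiplies test functions by $\hat h_{n_1}(y)$, we have the operator identity $\Lambda_{n_1,n_2}\big(\hat h_{n_1}\cdot g\big)=h_{n_2}\cdot\big(\Lambda^{\hat h_{n_1}}_{n_1,n_2}g\big)$, i.e. $\Lambda_{n_1,n_2}\circ M_{\hat h_{n_1}}=M_{h_{n_2}}\circ\Lambda^{\hat h_{n_1}}_{n_1,n_2}$ where $M_\phi$ denotes multiplication by $\phi$; crucially $\hat h_{n_1}$ here is to be read as $\Pi_{n_1,n_2}\hat h_{n_1}$, the pullback to $W^{n_1,n_2}(I^\circ)$, since $\Lambda^{\hat h_{n_1}}_{n_1,n_2}$ acts on functions of $(x,y)$. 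With this, starting from $P_t^{n_2}\Lambda_{n_1,n_2}=\Lambda_{n_1,n_2}Q_t^{n_1,n_2}$, I would conjugate both sides: apply $M_{h_{n_2}}^{-1}$ on the left and $M_{\hat h_{n_1}}$ on the right (the latter meaning $M_{\Pi_{n_1,n_2}\hat h_{n_1}}$), insert $e^{-\lambda_{n_1}t}$, and use the eigenfunction relations to recognize $M_{h_{n_2}}^{-1}P_t^{n_2}M_{h_{n_2}}e^{-\lambda_{n_1}t}=P_t^{n_2,h_{n_2}}$ and $e^{-\lambda_{n_1}t}M_{\Pi\hat h_{n_1}}^{-1}Q_t^{n_1,n_2}M_{\Pi\hat h_{n_1}}=Q_t^{n_1,n_2,\hat h_{n_1}}$, while the middle $M_{h_{n_2}}^{-1}\Lambda_{n_1,n_2}M_{\Pi\hat h_{n_1}}$ collapses to $\Lambda^{\hat h_{n_1}}_{n_1,n_2}$ by the compatibility identity above. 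This yields exactly \eqref{MasterIntertwining}.

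The main obstacle is not the algebra but the analytic hygiene: the positive-kernel intertwining and the eigenfunction manipulations involve possibly infinite quantities (the excerpt repeatedly flags ``both sides possibly being infinite''), and applying $M_{h_{n_2}}^{-1}$ requires $0<h_{n_2}<\infty$ on $\mathring W^{n_2}$, which is precisely the hypothesis imposed; one must also make sure that Fubini/Tonelli is legitimate when commuting the $dy$-integration defining $\Lambda_{n_1,n_2}$ past the $dx'$-type integrations inside $P_t^{n_2}$ and $Q_t^{n_1,n_2}$. Since all kernels in sight are nonnegative and $f$ is taken bounded Borel (and one may first verify the identity for $f\ge 0$ and then split a general $f$), Tonelli applies and no cancellation issues arise; the finiteness assumption on $h_{n_2}$ then guarantees every expression is finite after the $h$-transform, so the formal conjugation is rigorous. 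I would therefore present the argument as: (i) eigenfunction propagation through $\Pi$ and $\Lambda$; (ii) the multiplication-operator compatibility lemma for $\Lambda$; (iii) conjugation of the base intertwining, invoking Tonelli for the interchange and the finiteness of $h_{n_2}$ for the division.
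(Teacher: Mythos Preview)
Your proposal is correct and follows exactly the route the paper takes: the paper presents Proposition~\ref{Master} as an immediate consequence of the base intertwining $P_t^{n_2}\Lambda_{n_1,n_2}=\Lambda_{n_1,n_2}Q_t^{n_1,n_2}$, the Dynkin relation, and the definitions of the $h$-transforms, without writing out a separate proof. You have simply made the conjugation by multiplication operators explicit (and correctly flagged the typo $\hat h_{n_2}\to\hat h_{n_1}$), together with the Tonelli/finiteness bookkeeping that the paper leaves implicit.
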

This intertwining relation and the theory of Markov functions (see Section 2 of \cite{RogersPitman} for example) immediately imply the following corollary:
\begin{cor}\label{mastercorollary}
Assume $Z=(X,Y)$ is a Markov process with semigroup $Q_t^{n_1,n_2,\hat{h}_{n_2}}$, then the $X$ component is distributed as a Markov process with semigroup $P_t^{n_2,h_{n_2}}$ started from $x$ if $(X,Y)$ is started from $\Lambda^{\hat{h}_{n_1}}_{n_1,n_2}(x,\cdot)$. Moreover, the conditional distribution of $Y(t)$ given $\left(X(s);s \le t\right)$ is $\Lambda^{\hat{h}_{n_1}}_{n_1,n_2}(X(t),\cdot)$.
\end{cor}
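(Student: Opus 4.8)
The assertion is an instance of the theory of Markov functions of Rogers and Pitman \cite{RogersPitman}, so the plan is simply to identify the data and check the hypotheses of that theory. Take the ``large'' state space to be $E=W^{n_1,n_2}(I^{\circ})$, carrying the Markov semigroup $Q:=Q_t^{n_1,n_2,\hat{h}_{n_1}}$ of $Z=(X,Y)$; the ``small'' state space to be $F=\mathring{W}^{n_2}$, with the candidate semigroup $P:=P_t^{n_2,h_{n_2}}$; the linking map to be the projection $\varphi:E\to F$, $\varphi(x,y)=x$; and the link kernel to be $\Lambda:=\Lambda_{n_1,n_2}^{\hat{h}_{n_1}}$. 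First I would record that $Q$ and $P$ are genuine (conservative) Markov semigroups and not merely sub-Markov: for any sub-Markov kernel $\mathfrak{P}_t$ with strictly positive finite eigenfunction $\mathfrak{h}$ of eigenvalue $e^{ct}$, the $h$-transform applied to the constant function $1$ is $e^{-ct}\mathfrak{h}^{-1}(\mathfrak{P}_t\mathfrak{h})=1$; here the relevant eigenfunctions are $\hat{h}_{n_1}$ for $\hat{P}_t^{n_1}$ (hence for $Q_t^{n_1,n_2}$, by Proposition \ref{DynkinProposition}) and $h_{n_2}$ for $P_t^{n_2}$ (by (\ref{KMintertwining}) applied to $\hat{h}_{n_1}$), both finite by hypothesis.

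Next I would verify the two conditions required in \cite{RogersPitman}. The first is that $\Lambda(x,\cdot)$ be a probability kernel carried by the fibre $\varphi^{-1}(\{x\})=\{(x,y):y\in W^{n_1,n_2}(x)\}$, equivalently that $\Lambda$ reproduce functions pulled back from $F$, i.e. $\bigl(\Lambda(f\circ\varphi)\bigr)(x)=f(x)$ for bounded Borel $f$ on $F$. This is immediate from the definition of $\Lambda_{n_1,n_2}^{\hat{h}_{n_1}}$ together with $h_{n_2}(x)=(\Lambda_{n_1,n_2}\Pi_{n_1,n_2}\hat{h}_{n_1})(x)=\int_{W^{n_1,n_2}(x)}\prod_{i=1}^{n_1}\hat{m}(y_i)\hat{h}_{n_1}(y)\,dy$, which makes the normalising constant cancel. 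The second condition is the intertwining $P_t\Lambda=\Lambda Q_t$, which is exactly Proposition \ref{Master}, equation (\ref{MasterIntertwining}) (the eigenfunction used to $h$-transform $Q_t^{n_1,n_2}$ there being $\hat{h}_{n_1}$, transported to a function of $(x,y)$ through $\Pi_{n_1,n_2}$).

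For completeness I would also indicate why (\ref{MasterIntertwining}) holds, since it is the only substantive input. Starting from the positive-kernel identity $P_t^{n_2}\Lambda_{n_1,n_2}=\Lambda_{n_1,n_2}Q_t^{n_1,n_2}$ and the fact (via Proposition \ref{DynkinProposition}) that $\Pi_{n_1,n_2}\hat{h}_{n_1}$ is an eigenfunction of $Q_t^{n_1,n_2}$ with the same eigenvalue $e^{\lambda_{n_1}t}$ as $h_{n_2}$ is for $P_t^{n_2}$, one conjugates: replace $f$ by $\hat{h}_{n_1}f$, divide by the appropriate eigenfunctions on each side, and use the identity $h_{n_2}(x)\bigl(\Lambda_{n_1,n_2}^{\hat{h}_{n_1}}g\bigr)(x)=\bigl(\Lambda_{n_1,n_2}(\hat{h}_{n_1}\,g)\bigr)(x)$; the factors $e^{\pm\lambda_{n_1}t}$ cancel and (\ref{MasterIntertwining}) falls out.

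With the two hypotheses in hand, the Markov-functions theorem of \cite{RogersPitman} applies on the Borel state space $W^{n_1,n_2}(I^{\circ})\subset\mathbb{R}^{n_1+n_2}$ and yields both conclusions at once: started from $\Lambda_{n_1,n_2}^{\hat{h}_{n_1}}(x,\cdot)$, the image $\varphi(Z)=X$ is Markov with semigroup $P_t^{n_2,h_{n_2}}$ started from $x$, and for every $t$ the regular conditional law of $Z(t)$ — equivalently of $Y(t)$, since $X(t)$ is then fixed — given $\sigma(X(s):s\le t)$ is $\Lambda_{n_1,n_2}^{\hat{h}_{n_1}}(X(t),\cdot)$. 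I do not expect a genuine obstacle here: the only points needing care are the conservativeness bookkeeping around the $h$-transforms, the finiteness of the integrals defining $\Lambda_{n_1,n_2}^{\hat{h}_{n_1}}$ (which is precisely the running assumption $h_{n_2}<\infty$), and the mild regularity of the state space ensuring existence of regular conditional distributions — all routine.
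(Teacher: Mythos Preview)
Your proposal is correct and follows exactly the same approach as the paper: the corollary is deduced directly from the intertwining relation (\ref{MasterIntertwining}) via the Rogers--Pitman theory of Markov functions \cite{RogersPitman}. In fact you spell out considerably more detail than the paper, which simply states that the intertwining together with \cite{RogersPitman} immediately implies the result.
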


We give a final definition in the case of $W^{n,n+1}$ only, that has a natural analogue for  $W^{n,n}$ and  $W^{n+1,n}$ (we shall elaborate on the notion introduced below in Section 5.1 on well-posedness of $SDEs$ with reflection). Take $Y=(Y_1,\cdots,Y_n)$ to be an $n$-dimensional system of \textit{non-intersecting} paths in $\mathring{W}^n(I^{\circ})$, so that in particular $Y_1<Y_2<\cdots<Y_n$. Then, by \textit{$X$ is a system of $n+1$ $L$-diffusions reflected off $Y$} we mean processes $\left(X_1(t),\cdots,X_{n+1}(t);t \ge 0\right)$, satisfying $X_1(t)\le Y_1(t) \le X_2(t) \le \cdots \le X_{n+1}(t)$ for all $t \ge 0$, and so that the following $SDEs$ hold,
\begin{align}\label{systemofreflectingLdiffusions}
dX_1(t)&=\sqrt{2a(X_1(t))}d\beta_1(t)+b(X_1(t))dt+dK^l(t)-{dK_1^+(t)},\nonumber\\
\vdots\nonumber\\
dX_j(t)&=\sqrt{2a(X_j(t))}d\beta_j(t)+b(X_j(t))dt+{dK_j^-(t)}-{dK_j^+(t)},\\
\vdots\nonumber\\
dX_{n+1}(t)&=\sqrt{2a(X_{n+1}(t))}d\beta_{n+1}(t)+b(X_{n+1}(t))dt+{dK_{n+1}^-(t)}-dK^r(t),\nonumber
\end{align}
where the positive finite variation processes $K^l,K^r,K_i^+,K_i^-$ are such that $K^l$ increases only when $X_1=l$, $K^r$ increases only when $X_{n+1}=r$,
$K^+_i(t)$ increases only when $Y_i=X_i$ and $K^-_{i}(t)$ only when $Y_{i-1}=X_i$, so that  $(X_1(t)\le Y_1(t)\le \cdots \le X_{n+1}(t))\in W^{n,n+1}(I)$ forever. Here $\beta_1,\cdots,\beta_{n+1}$ are independent standard Brownian motions which are moreover \textit{independent} of $Y$. The reader should observe that the dynamics between $(X,Y)$ are exactly the ones prescribed in the system of $SDEs$ (\ref{System1SDEs}) with the difference being that now the process has infinite lifetime. This can be achieved from (\ref{System1SDEs}) by $h$-transforming the $Y$ process as explained in this section to have infinite lifetime. By pathwise uniqueness of solutions to reflecting $SDEs$, with coefficients satisfying $(\mathbf{YW})$, in continuous time-dependent domains proven in Proposition \ref{wellposedness}, under any absolutely continuous change of measure for the $(X,Y)$-process that depends only on $Y$ (a Doob $h$-transform in particular), the equations (\ref{systemofreflectingLdiffusions}) still hold with the $\beta_i$ independent Brownian motions which moreover remain independent of the $Y$ process. We thus arrive at our main theorem:

\begin{thm} \label{MasterDynamics}
Assume $(\textbf{R})$ and $(\textbf{BC+})$ hold for the $L$-diffusion and $(\textbf{YW})$ holds for both the $L$ and $\hat{L}$ diffusions. Moreover, assume $\hat{h}_n$ is a strictly positive eigenfunction for $\hat{P}_t^{n}$. Suppose $Y$ consists of $n$ non-intersecting $\hat{L}$-diffusions $h$-transformed by $\hat{h}_n$, with transition semigroup $\hat{P}_{t}^{n,\hat{h}_n}$, and $X$ is a system of $n+1$ $L$-diffusions reflected off $Y$ started according to the distribution $\Lambda^{\hat{h}_{n}}_{n,n+1}(x,\cdot)$ for some $x\in\mathring{W}^{n+1}(I)$. Then $X$ is distributed as a diffusion process with semigroup $P_t^{n+1,h_{n+1}}$ started from $x$, where $h_{n+1}=\Lambda_{n,n+1}\Pi_{n,n+1}\hat{h}_{n}$.
\end{thm}
\begin{proof}
By Proposition \ref{dynamics1} and the discussion above, the process (X,Y) evolves according to the Markov semigroup $Q_t^{n_1,n_2,\hat{h}_{n_2}}$. Then, an application of the Rogers-Pitman Markov functions criterion in \cite{RogersPitman} with the function $\phi(x,y)=x$ and the intertwining (\ref{MasterIntertwining}) gives that, under the initial law $\Lambda^{\hat{h}_{n}}_{n,n+1}(x,\cdot)$ for $(X,Y)$, $\left(X(t);t\ge 0\right)$ is a Markov process with semigroup $P_t^{n+1,h_{n+1}}$ started from $x$, in particular a diffusion.
\end{proof}

The statement and proof of the result for $W^{n,n}$ and $W^{n+1,n}$ is completely analogous.

 Finally, the intertwining relation (\ref{MasterIntertwining}) also allows us to start the two-level process $(X,Y)$ from a degenerate point, in particular the system of reflecting $SDEs$ when some of the $Y$ coordinates coincide, as long as starting the process with semigroup $P_t^{n_2,h_{n_2}}$ from such a degenerate point is valid. Suppose $\big(\mu_t^{n_2,h_{n_2}}\big)_{t>0}$ is an entrance law for $P_t^{n_2,h_{n_2}}$, namely for $t,s>0$,
\begin{align*}
\mu_s^{n_2,h_{n_2}}P_t^{n_2,h_{n_2}}=\mu_{t+s}^{n_2,h_{n_2}} ,
\end{align*}
then we have the following corollary, which is obtained immediately by applying $\mu_t^{n_2,h_{n_2}}$ to both sides of (\ref{MasterIntertwining}):
\begin{cor}\label{EntranceLawCorollary}
Under the assumptions above, if $\left(\mu_s^{n_2,h_{n_2}}\right)_{s>0}$ is an entrance law for the process with semigroup $P_t^{n_2,h_{n_2}}$ then $\left(\mu_s^{n_2,h_{n_2}}\Lambda^{\hat{h}_{n_1}}_{n_1,n_2}\right)_{s>0}$
forms an entrance law for process $(X,Y)$ with semigroup $Q_t^{n_1,n_2,\hat{h}_{n_1}}$.
\end{cor}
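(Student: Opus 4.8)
The plan is to take the intertwining relation (\ref{MasterIntertwining}) of Proposition \ref{Master} and simply push an entrance law through it. First I would write out what needs to be checked: by definition $\left(\mu_s^{n_2,h_{n_2}}\Lambda^{\hat{h}_{n_1}}_{n_1,n_2}\right)_{s>0}$ is an entrance law for $Q_t^{n_1,n_2,\hat{h}_{n_1}}$ provided that for all $t,s>0$,
\begin{align*}
\left(\mu_s^{n_2,h_{n_2}}\Lambda^{\hat{h}_{n_1}}_{n_1,n_2}\right)Q_t^{n_1,n_2,\hat{h}_{n_1}}=\mu_{t+s}^{n_2,h_{n_2}}\Lambda^{\hat{h}_{n_1}}_{n_1,n_2}.
\end{align*}
So the computation is: apply $\mu_s^{n_2,h_{n_2}}$ on the left of both sides of (\ref{MasterIntertwining}), which reads $P_t^{n_2,h_{n_2}}\Lambda^{\hat{h}_{n_1}}_{n_1,n_2}=\Lambda^{\hat{h}_{n_1}}_{n_1,n_2}Q_t^{n_1,n_2,\hat{h}_{n_1}}$ (noting the typo $\hat h_{n_2}$ in the statement should be $\hat h_{n_1}$), to get $\mu_s^{n_2,h_{n_2}}P_t^{n_2,h_{n_2}}\Lambda^{\hat{h}_{n_1}}_{n_1,n_2}=\mu_s^{n_2,h_{n_2}}\Lambda^{\hat{h}_{n_1}}_{n_1,n_2}Q_t^{n_1,n_2,\hat{h}_{n_1}}$, and then use the entrance-law property $\mu_s^{n_2,h_{n_2}}P_t^{n_2,h_{n_2}}=\mu_{t+s}^{n_2,h_{n_2}}$ on the left-hand side to conclude.

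The one genuine point to be careful about — and the main obstacle — is the interchange of the integration against $\mu_s^{n_2,h_{n_2}}$ with the (already somewhat delicate, only $\sigma$-finite) operators $\Lambda^{\hat{h}_{n_1}}_{n_1,n_2}$ and $Q_t^{n_1,n_2,\hat{h}_{n_1}}$, i.e. an application of Fubini/Tonelli. Here the sign structure helps: $\Lambda^{\hat{h}_{n_1}}_{n_1,n_2}$ is a genuine Markov kernel (it was normalized by $h_{n_2}$, finite by the standing assumption of Proposition \ref{Master}), $Q_t^{n_1,n_2,\hat{h}_{n_1}}$ is a genuine Markov kernel (it is the $h$-transform of a sub-Markov kernel, hence sub-Markov, and in the $W^{n,n+1}$ case Markov), and $\mu_s^{n_2,h_{n_2}}$ is a (finite or $\sigma$-finite) positive measure; so all integrands are nonnegative and Tonelli applies without any additional hypothesis, making the associativity $\left(\mu_s^{n_2,h_{n_2}}\Lambda^{\hat{h}_{n_1}}_{n_1,n_2}\right)Q_t^{n_1,n_2,\hat{h}_{n_1}}=\mu_s^{n_2,h_{n_2}}\left(\Lambda^{\hat{h}_{n_1}}_{n_1,n_2}Q_t^{n_1,n_2,\hat{h}_{n_1}}\right)$ legitimate. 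It is worth also recording that $\mu_s^{n_2,h_{n_2}}\Lambda^{\hat{h}_{n_1}}_{n_1,n_2}$ is itself a sensible ($\sigma$-finite, in fact total mass $\le$ that of $\mu_s$) measure on $W^{n_1,n_2}(I^\circ)$, so the left-hand side is well-defined to begin with.

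Thus the proof is essentially three lines of operator algebra together with one appeal to Tonelli; I would present it by first noting well-definedness of $\mu_s^{n_2,h_{n_2}}\Lambda^{\hat{h}_{n_1}}_{n_1,n_2}$, then displaying the chain of equalities
\begin{align*}
\left(\mu_s^{n_2,h_{n_2}}\Lambda^{\hat{h}_{n_1}}_{n_1,n_2}\right)Q_t^{n_1,n_2,\hat{h}_{n_1}}
&=\mu_s^{n_2,h_{n_2}}\left(\Lambda^{\hat{h}_{n_1}}_{n_1,n_2}Q_t^{n_1,n_2,\hat{h}_{n_1}}\right)\\
&=\mu_s^{n_2,h_{n_2}}\left(P_t^{n_2,h_{n_2}}\Lambda^{\hat{h}_{n_1}}_{n_1,n_2}\right)\\
&=\left(\mu_s^{n_2,h_{n_2}}P_t^{n_2,h_{n_2}}\right)\Lambda^{\hat{h}_{n_1}}_{n_1,n_2}\\
&=\mu_{t+s}^{n_2,h_{n_2}}\Lambda^{\hat{h}_{n_1}}_{n_1,n_2},
\end{align*}
where the first and third steps are Tonelli/associativity, the second is (\ref{MasterIntertwining}), and the last is the defining property of the entrance law $\left(\mu_s^{n_2,h_{n_2}}\right)_{s>0}$. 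Since this holds for all $t,s>0$, the family $\left(\mu_s^{n_2,h_{n_2}}\Lambda^{\hat{h}_{n_1}}_{n_1,n_2}\right)_{s>0}$ is an entrance law for the semigroup $Q_t^{n_1,n_2,\hat{h}_{n_1}}$, which is exactly the assertion.
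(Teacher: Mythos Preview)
Your proof is correct and follows exactly the paper's approach: the paper states that the corollary ``is obtained immediately by applying $\mu_t^{n_2,h_{n_2}}$ to both sides of (\ref{MasterIntertwining}),'' which is precisely your argument. Your added care about Tonelli and well-definedness is a welcome elaboration but not a different route.
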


Hence, the statement of Theorem \ref{MasterDynamics} generalizes, so that if $X$ is a system of $L$-diffusions reflected off $Y$ started according to an entrance law, then $X$ is again itself distributed as a Markov process.

The entrance laws that we will be concerned with in this paper will correspond to starting the process with semigroup $P_t^{n_2,h_{n_2}}$ from a single point $(x,\cdots,x)$ for some $x\in I$. These will be given by so called time dependent \textit{biorthogonal ensembles}, namely measures of the form,
\begin{align}\label{biorthogonalensemble}
\det\left(f_i(t,x_j)\right)^{n_2}_{i,j=1}\det\left(g_i(t,x_j)\right)^{n_2}_{i,j=1} \ .
\end{align}
Under some further assumptions on the Taylor expansion of the one dimensional transition density $p_t(x,y)$ they will be given by so called \textit{polynomial ensembles}, where one of the determinant factors is the Vandermonde determinant,
\begin{align}\label{polynomialensemble}
\det\left(\phi_i(t,x_j)\right)^{n_2}_{i,j=1}\det\left(x_j^{i-1}\right)^{n_2}_{i,j=1} \ .
\end{align}
A detailed discussion is given in the Appendix.

\section{Applications and examples}

Applying the theory developed in the previous section we will now show how some of the known examples of diffusions in Gelfand-Tsetlin patterns fit into this framework and construct new processes of this kind. In particular we will treat all the diffusions associated with Random Matrix eigenvalues, a model related to Plancherel growth that involves a wall, examples coming from Sturm-Liouville semigroups and finally point out the connection to strong stationary times and superpositions and decimations of Random Matrix ensembles. 

First, recall that the space of Gelfand-Tsetlin patterns of depth $N$ denoted by $\mathbb{GT}(N)$ is defined to be,
\begin{align*}
\left\{\left(x^{(1)},\cdots,x^{(N)}\right):x^{(n)}\in W^{n}, \ x^{(n)}\prec x^{(n+1)}\right\},
\end{align*}
and also the space of symplectic Gelfand-Tsetlin patterns of depth $N$ denoted by $\mathbb{GT}_{\textbf{s}}(N)$ is given by,
\begin{align*}
\left\{\left(x^{(1)},\hat{x}^{(1)}\cdots,x^{(N)}, \hat{x}^{(N)}\right):x^{(n)},\hat{x}^{(n)}\in W^{n}, \ x^{(n)}\prec \hat{x}^{(n)}\prec x^{(n+1)} \right\}.
\end{align*}
Please note the minor discrepancy in the definition of $\mathbb{GT}(N)$ with the notation used for $W^{n,n+1}$: here for two consecutive levels $x^{(n)}\in W^n, x^{(n+1)}\in W^{n+1}$ in the Gelfand-Tsetlin pattern the pair $(x^{(n+1)},x^{(n)})\in W^{n,n+1}$ and not the other way round.
\subsection{Concatenating two-level processes} 
We will describe the construction for $\mathbb{GT}$, with the extension to $\mathbb{GT}_{\textbf{s}}$ being analogous. Let us fix an interval $I$ with endpoints $l<r$ and let $L_n$ for $n=1,\cdots, N$ be a sequence of diffusion process generators in $I$ (satisfying (\ref{bc1l}) and (\ref{bc1r})) given by,
\begin{align}
L_n=a_n(x)\frac{d^2}{dx^2}+b_n(x)\frac{d}{dx}.
\end{align}
We will moreover denote their transition densities with respect to Lebesgue measure by $p_t^n(\cdot,\cdot)$.

We want to consider a process $\left(\mathbb{X}(t);t \ge 0\right)=\left(\left(\mathbb{X}^{(1)}(t),\cdots,\mathbb{X}^{(N)}(t)\right);t \ge 0\right)$ taking values in $\mathbb{GT}(N)$ so that, for each $2 \le n \le N ,\ \mathbb{X}^{(n)}$ consists of $n$ independent $L_n$ diffusions reflected off the paths of $\mathbb{X}^{(n-1)}$. More precisely we consider the following system of reflecting $SDEs$, with $1 \le i \le n \le N$, initialized in $\mathbb{GT}(N)$ and stopped at the stopping time $\tau_{\mathbb{GT}(N)}$ to be defined below,
\begin{align}\label{GelfandTsetlinSDEs}
d\mathbb{X}_i^{(n)}(t)=\sqrt{2a_n\left(\mathbb{X}_i^{(n)}(t)\right)}d\beta_i^{(n)}(t)+b_n\left(\mathbb{X}_i^{(n)}(t)\right)dt+dK_i^{(n),-}-dK_i^{(n),+},
\end{align}
driven by an array $\left(\beta_i^{(n)}(t);t\ge 0, 1 \le i \le n \le N\right)$ of $\frac{N(N+1)}{2}$ independent standard Brownian motions. The positive finite variation processes $K_i^{(n),-}$ and $K_i^{(n),+}$ are such that $K_i^{(n),-}$ increases only when $\mathbb{X}_i^{(n)}=\mathbb{X}_{i-1}^{(n-1)}$, $K_i^{(n),+}$ increases only when $\mathbb{X}_i^{(n)}=\mathbb{X}_{i}^{(n-1)}$ with $K_1^{(N),-}$ increasing when $\mathbb{X}_1^{(N)}=l$ and $K_N^{(N),+}$  increasing when $\mathbb{X}_N^{(N)}=r$, so that $\mathbb{X}=\left(\mathbb{X}^{(1)},\cdots,\mathbb{X}^{(N)}\right)$ stays in $\mathbb{GT}(N)$ forever. The stopping $\tau_{\mathbb{GT}(N)}$ is given by,
\begin{align*}
\tau_{\mathbb{GT}(N)}=\inf \big\{ t \ge 0: \exists \ (n,i,j) \ 2 \le n \le N-1, 1 \le i < j\le n \textnormal{ s.t. } \mathbb{X}_i^{(n)}(t)=\mathbb{X}_j^{(n)}(t) \big\}.
\end{align*}
Stopping at $\tau_{\mathbb{GT}(N)}$ takes care of the problematic possibility of two of the time dependent barriers coming together. It will turn out that $\tau_{\mathbb{GT}(N)}=\infty$ almost surely under certain initial conditions of interest to us given in Proposition \ref{multilevelproposition} below; this will be the case since then each level $\mathbb{X}^{(n)}$ will evolve according to a Doob's $h$-transform and thus consisting of non-intersecting paths.
That the system of reflecting $SDEs$ (\ref{GelfandTsetlinSDEs}) above is well-posed, under a Yamada-Watanabe condition on the coefficients $\left(\sqrt{a_n},b_n\right)$ for $1\le n \le N$, follows (inductively) from Proposition \ref{wellposedness}.

We would like Theorem \ref{MasterDynamics} to be applicable to each pair $(\mathbb{X}^{(n-1)},\mathbb{X}^{(n)})$, with $X=\mathbb{X}^{(n)}$ and $Y=\mathbb{X}^{(n-1)}$. To this end, for $n=2,\cdots,N$, suppose that $\mathbb{X}^{(n-1)}$ is distributed according to the following $h$-transformed Karlin-McGregor semigroup by the strictly positive in $\mathring{W}^{n-1}$ eigenfunction $g_{n-1}$ with eigenvalue $e^{c_{n-1}t}$,
\begin{align*}
e^{-c_{n-1}t}\frac{g_{n-1}(y_1,\cdots,y_{n-1})}{g_{n-1}(x_1,\cdots,x_{n-1})}\det\left(\widehat{p}_t^{n}(x_i,y_j)\right)_{i,j=1}^{n-1} ,
\end{align*}
where $\widehat{p}^{n}_t(\cdot,\cdot)$ denotes the transition density associated with the dual $\widehat{L}_n$ (killed at an exit of regular absorbing boundary point) of $L_n$. We furthermore, denote by $\widehat{m}^{n}(\cdot)$ the density with respect to Lebesgue measure of the speed measure of $\widehat{L}_n$. Then, Theorem \ref{MasterDynamics} gives that under a special initial condition (stated therein) for the joint dynamics of $(\mathbb{X}^{(n-1)},\mathbb{X}^{(n)})$, with $X=\mathbb{X}^{(n)}$ and $Y=\mathbb{X}^{(n-1)}$, the projection on $\mathbb{X}^{(n)}$ is distributed as the $G_{n-1}$ $h$-transform of $n$ independent $L_n$ diffusions, thus consisting of non-intersecting paths, where $G_{n-1}$ is given by,
\begin{align}\label{eigenGelfand}
G_{n-1}(x_1,\cdots,x_{n})=\int_{W^{n-1,n}(x)}^{}\prod_{i=1}^{n-1}\widehat{m^n}(y_i)g_{n-1}(y_1,\cdots,y_{n-1})dy_1\cdots dy_{n-1}.
\end{align}
Consistency then demands, by comparing $(\mathbb{X}^{(n-1)},\mathbb{X}^{(n)})$ and $(\mathbb{X}^{(n)},\mathbb{X}^{(n+1)})$, the following condition between the transition kernels (which is also sufficient as we see below for the construction of a consistent process $(\mathbb{X}^{(1)},\cdots,\mathbb{X}^{(N)})$), for $t>0,x,y \in \mathring{W}^n$,
\begin{align}\label{consistencyGelfand}
e^{-c_{n-1}t}\frac{G_{n-1}(y_1,\cdots,y_{n})}{G_{n-1}(x_1,\cdots,x_{n})}\det\left(p_t^n(x_i,y_j)\right)_{i,j=1}^{n}=e^{-c_nt}\frac{g_n(y_1,\cdots,y_{n})}{g_n(x_1,\cdots,x_{n})}\det\left(\widehat{p}_t^{n+1}(x_i,y_j)\right)_{i,j=1}^{n}.
\end{align}
Denote the semigroup associated with these densities by $\left(\mathfrak{P}^{(n)}(t);t >0\right)$ and also define the Markov kernels $\mathfrak{L}_{n-1}^n(x,dy)$ for $x \in \mathring{W}^n$ by,
\begin{align*}
\mathfrak{L}_{n-1}^n(x,dy)=\frac{\prod_{i=1}^{n-1}\widehat{m^n}(y_i)g_{n-1}(y_1,\cdots,y_{n-1})}{G_{n-1}(x_1,\cdots,x_{n})}\textbf{1}\left(y \in W^{n-1,n}(x)\right)dy_1\cdots dy_{n-1}.
\end{align*}

Then, by inductively applying Theorem \ref{MasterDynamics}, we easily see the following Proposition holds:

\begin{prop}\label{multilevelproposition}
Assume $(\textbf{R})$ and $(\textbf{BC+})$ hold for the $L_n$-diffusion and $(\textbf{YW})$ holds for the pairs of $(L_n,\hat{L}_n)$-diffusions for $2\le n \le N$. Moreover, suppose that there exist functions $g_n$ and $G_n$ so that the consistency relations (\ref{eigenGelfand}) and (\ref{consistencyGelfand}) hold. Let $\nu_N(dx)$ be a measure supported in $\mathring{W}^N$. Consider the process $\left(\mathbb{X}(t);t \ge 0\right)=\left(\left(\mathbb{X}^{(1)}(t),\cdots,\mathbb{X}^{(N)}(t)\right);t \ge 0\right)$ in $\mathbb{GT}(N)$ satisfying the $SDEs$ (\ref{GelfandTsetlinSDEs}) and initialized according to,
\begin{align}\label{Gibbsinitial}
\nu_N(dx^{(N)})\mathfrak{L}_{N-1}^N(x^{(N)},dx^{(N-1)}) \cdots \mathfrak{L}_{1}^2(x^{(2)},dx^{(1)}).
\end{align}
Then $\tau_{\mathbb{GT}(N)}=\infty$ almost surely, $\left(\mathbb{X}^{(n)}(t);t \ge 0\right)$ for $1 \le n \le N$ evolves according to $\mathfrak{P}^{(n)}(t)$ and for fixed $T>0$ the law of $\left(\mathbb{X}^{(1)}(T),\cdots,\mathbb{X}^{(N)}(T)\right)$ is given by,
\begin{align}\label{Gibbsevolved}
\left(\nu_N\mathfrak{P}^{(N)}_T\right)(dx^{(N)})\mathfrak{L}_{N-1}^N(x^{(N)},dx^{(N-1)}) \cdots \mathfrak{L}_{1}^2(x^{(2)},dx^{(1)}).
\end{align}
\end{prop}

\begin{proof}
For $n=2$ this is the statement of Theorem \ref{MasterDynamics}. Assume that the proposition is proven for $n=N-1$. Observe that, an initial condition of the form (\ref{Gibbsinitial}) in $\mathbb{GT}(N)$ gives rise to an initial condition of the same form in $\mathbb{GT}(N-1)$:
\begin{align*}
\tilde{\nu}_{N-1}(dx^{(N-1)})\mathfrak{L}_{N-2}^{N-1}(x^{(N-1)},dx^{(N-2)}) \cdots \mathfrak{L}_{1}^2(x^{(2)},dx^{(1)}),\\
\tilde{\nu}_{N-1}(dx^{(N-1)})=\int_{\mathring{W}^N}^{}\nu_N(dx^{(N)})\mathfrak{L}_{N-1}^N(x^{(N)},dx^{(N-1)}).
\end{align*}
Then, by the inductive hypothesis $\left(\mathbb{X}^{(N-1)}(t);t \ge 0\right)$ evolves according to $\mathfrak{P}^{(N-1)}(t)$, with the joint evolution of $(\mathbb{X}^{(N-1)},\mathbb{X}^{(N)})$, by (\ref{eigenGelfand}) and (\ref{consistencyGelfand}) with $n=N-1$, as in Theorem \ref{MasterDynamics}, with $X=\mathbb{X}^{(N)}$ and $Y=\mathbb{X}^{(N-1)}$ and with initial condition $\nu_N(dx^{(N)})\mathfrak{L}_{N-1}^N(x^{(N)},dx^{(N-1)})$. We thus obtain that  $\left(\mathbb{X}^{(N)}(t);t \ge 0\right)$ evolves according to $\mathfrak{P}^{(N)}(t)$ and for fixed $T$ the conditional distribution of $\mathbb{X}^{(N-1)}(T)$ given $\mathbb{X}^{(N)}(T)$ is $\mathfrak{L}_{N-1}^N\left(\mathbb{X}^{(N)}(T),dx^{(N-1)}\right)$. This, along with the inductive hypothesis on the law of $\mathbb{GT}(N-1)$ at time $T$ yields (\ref{Gibbsevolved}). The fact that $\tau_{\mathbb{GT}(N)}=\infty$ is also clear since each $\left(\mathbb{X}^{(n)}(t);t \ge 0\right)$ is governed by a Doob transformed Karlin-McGregor semigroup.
\end{proof}

Similarly, the result above holds by replacing $\nu_N(dx^{(N)})$ by an entrance law $\left(\nu_t^{(N)}(dx^{(N)})\right)_{t\ge 0}$ for $\mathfrak{P}^{(N)}(t)$, in which case $\left(\nu_N\mathfrak{P}^{(N)}_T\right)(dx^{(N)})$ gets replaced by $\nu_T^{(N)}(dx^{(N)})$.

The consistency relations (\ref{eigenGelfand}) and (\ref{consistencyGelfand}) and the implications for which choices of $L_1,\cdots, L_N$ to make will not be studied here. These questions are worth further investigation and will be addressed in future work.

\subsection{Brownian motions in full space}\label{MultilevelDBM}
The process considered here was first constructed by Warren in \cite{Warren}. Suppose in our setup of the previous section we take as the $L$-diffusion a standard Brownian motion with generator $\frac{1}{2}\frac{d^2}{dx^2}$, speed measure with density $m(x)=2$ and scale function $s(x)=x$. Then, its conjugate diffusion with generator $\hat{L}$ from the results of the previous section is again a standard Brownian motion, so that in particular $P_t^n=\hat{P}_t^n$. Recall that the Vandermonde determinant $h_n(x)=\prod_{1 \le i<j \le n}^{}(x_j-x_i)$ is a positive harmonic function for $P_t^n$ (see for example \cite{Warren} or by iteration from the results here). Moreover, the $h$-transformed semigroup $P_t^{n,h_n}$ is exactly the semigroup of $n$ particle Dyson Brownian motion.

\begin{prop}\label{DysonProposition}
Let $x \in \mathring{W}^{n+1}(\mathbb{R})$ and consider a process $(X,Y)\in W^{n,n+1}(\mathbb{R})$ started from the distribution  $\left(\delta_x,\frac{n!h_n(y)}{h_{n+1}(x)}\mathbf{1}(y\prec x)dy\right)$ with the $Y$ particles evolving as $n$ particle Dyson Brownian motion and the $X$ particles as $n+1$ standard Brownian motions reflected off the $Y$ particles. Then, the $X$ particles are distributed as $n+1$ Dyson Brownian motion started from $x$.
\end{prop}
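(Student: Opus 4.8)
The plan is to read the Proposition as the specialization of Theorem~\ref{MasterDynamics} to the generator $L=\tfrac12\frac{d^2}{dx^2}$ on $I=\mathbb{R}$, so that the proof consists entirely of checking the hypotheses of that theorem and matching up the objects it produces. First I would record the elementary data of this $L$: $s(x)=x$, $m\equiv 2$, hence $\hat s'=m\equiv 2$, $\hat m=s'\equiv 1$, and the conjugate generator is $\hat L=L$, so $\hat P_t^k=P_t^k$ for all $k$. The boundary points $l=-\infty$ and $r=+\infty$ are natural, so the standing assumptions (\ref{bc1l})--(\ref{bc1r}) hold; and since $\sqrt{a}\equiv 1/\sqrt2$ and $b\equiv0$ are trivially Yamada--Watanabe, the reflected system (\ref{System1SDEs}) / (\ref{systemofreflectingLdiffusions}) is well posed. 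In this language the phrase ``$X$ is a system of $n+1$ $L$-diffusions reflected off $Y$'' reads literally as ``$n+1$ standard Brownian motions reflected off the paths of $Y$'', matching the statement.

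Second I would take $\hat h_n(y)=h_n(y)=\prod_{1\le i<j\le n}(y_j-y_i)$, the Vandermonde, which is strictly positive on $\mathring W^n$ and harmonic for $\hat P_t^n=P_t^n$ (eigenvalue $1$, i.e.\ $\lambda_n=0$): this is the classical identification of the Doob $h$-transform of killed Brownian motion by $h_n$ with $n$-particle Dyson Brownian motion, which the excerpt already takes as known. Thus $P_t^{n,h_n}$ is $n$-particle Dyson Brownian motion and, once we know the level-$(n+1)$ eigenfunction, $P_t^{n+1,h_{n+1}}$ is $(n+1)$-particle Dyson Brownian motion. Since $\hat m\equiv1$, Proposition~\ref{Master} produces
\[
h_{n+1}(x)=(\Lambda_{n,n+1}\Pi_{n,n+1}\hat h_n)(x)=\int_{W^{n,n+1}(x)}h_n(y)\,dy ,
\]
and the one genuinely computational point of the proof is the identity
\[
\int_{W^{n,n+1}(x)}h_n(y)\,dy=\frac{1}{n!}\prod_{1\le i<j\le n+1}(x_j-x_i),
\]
which I would prove by induction on $n$, integrating out the interlacing coordinates $y_1,\dots,y_n$ one at a time (the base case $n=1$ being $\int_{x_1}^{x_2}dy_1=x_2-x_1$). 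In particular $h_{n+1}$ is finite and strictly positive on $\mathring W^{n+1}$, so Proposition~\ref{Master} applies; and because a Doob $h$-transform is unchanged if the eigenfunction is multiplied by a positive constant, we may as well take $h_{n+1}(x)=\prod_{1\le i<j\le n+1}(x_j-x_i)$ as in the statement, at the cost of the explicit factor $n!$ then appearing in $\Lambda_{n,n+1}^{h_n}$.

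Third I would unwind the initial kernel: by the definition of $\Lambda^{\hat h_n}_{n,n+1}$ together with $\hat m\equiv1$ and the identity above,
\[
\Lambda^{h_n}_{n,n+1}(x,dy)=\frac{n!\,h_n(y)}{h_{n+1}(x)}\,\mathbf 1\big(y\in W^{n,n+1}(x)\big)\,dy ,
\]
which for $x\in\mathring W^{n+1}$ is a genuine probability measure on $\mathring W^n$ — exactly the law $\big(\delta_x,\tfrac{n!h_n(y)}{h_{n+1}(x)}\big)$ appearing in the statement. Hence no entrance-law extension is needed: $(X,Y)$ is started from $\Lambda^{h_n}_{n,n+1}(x,\cdot)$ in precisely the sense required by Theorem~\ref{MasterDynamics}, with $Y$ being $n$ $\hat L=\tfrac12\Delta$-diffusions $h$-transformed by $\hat h_n=h_n$, i.e.\ $n$-particle Dyson Brownian motion (in particular non-intersecting, so the reflection of $X$ off $Y$ is well defined). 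Theorem~\ref{MasterDynamics} then gives directly that $X$, in its own filtration, is distributed as $P_t^{n+1,h_{n+1}}$ started from $x$, that is, as $(n+1)$-particle Dyson Brownian motion. The main obstacle is purely the Vandermonde/interlacing integral identity; everything else is bookkeeping inside the already-established framework.
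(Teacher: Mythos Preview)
Your proposal is correct and follows essentially the same route as the paper: specialize the general framework to $L=\tfrac12\frac{d^2}{dx^2}$, observe that $\hat L=L$ and $\hat m\equiv 1$, take $\hat h_n=h_n$ the Vandermonde (harmonic for $P_t^n$), compute $\Lambda_{n,n+1}h_n=\tfrac{1}{n!}h_{n+1}$, and invoke Theorem~\ref{MasterDynamics}. The paper is terser --- it simply records $h_{n+1}=\Lambda_{n,n+1}h_n$ and the form of $\Lambda^{h_n}_{n,n+1}$ without spelling out the interlacing integral or the hypothesis checks --- but the logic is identical.
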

\begin{proof}
We apply Theorem (\ref{MasterDynamics}) with the $L$-diffusion being a standard Brownian motion. Observe that, $(\mathbf{R})$, $(\mathbf{BC+})$ and $(\mathbf{YW})$ are easily seen to be satisfied. Finally, as recalled above the Vandermonde determinant $h_n(x)=\prod_{1 \le i<j \le n}^{}(x_j-x_i)$ is a positive harmonic function for $n$ independent Brownian motions killed when they intersect and the semigroup $P_t^{n,h_n}$ is the one associated to $n$ particle Dyson Brownian motion.
\end{proof}

In fact, we can start the process from the boundary of $W^{n,n+1}(\mathbb{R})$ via an entrance law as described in the previous section. To be more concrete, an entrance law for $P_t^{n+1,h_{n+1}}$ describing the process starting from the origin, which can be obtained via a limiting procedure detailed in the Appendix is the following:
\begin{align*}
\mu_t^{n+1,h_{n+1}}(dx)=C_{n+1}t^{-(n+1)^2/2}\exp\big(-\frac{1}{2t}\sum_{i=1}^{n+1}x_i^2\big)h_{n+1}^2(x)dx.
\end{align*}
Thus, from the previous section's results 
\begin{align*}
\nu_t^{n,n+1,h_{n+1}}(dx,dy)=\mu_t^{n+1,h_{n+1}}(dx) \frac{n!h_n(y)}{h_{n+1}(x)}\mathbf{1}(y\prec x)dy,
\end{align*} 
forms an entrance law for the semigroup associated to the two-level process in Proposition \ref{DysonProposition}. Hence, we obtain the following:
\begin{prop}\label{superpositionref1}
 Consider a Markovian process $(X,Y)\in W^{n,n+1}(\mathbb{R})$ initialized according to the entrance law $\nu_t^{n,n+1,h_{n+1}}(dx,dy)$ with the $Y$ particles evolving as $n$ particle Dyson Brownian motion and the $X$ particles as $n+1$ standard Brownian motions reflected off the $Y$ particles. Then, the $X$ particles are distributed as $n+1$ Dyson Brownian motion started from the origin.
\end{prop}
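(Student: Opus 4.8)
The plan is to derive Proposition~\ref{superpositionref1} directly from the fixed-initial-condition statement in the preceding Proposition by integrating against the entrance law $\mu_t^{n+1,h_{n+1}}$, exactly the mechanism already set up in the corollary to Proposition~\ref{Master}. First I would observe that the measure $\nu_t^{n,n+1,h_{n+1}}(x,y)\,dx\,dy = \mu_t^{n+1,h_{n+1}}(x)\,\Lambda^{h_n}_{n,n+1}(x,dy)\,dx$ is, by construction, the pushforward of the $P_t^{n+1,h_{n+1}}$-entrance law through the Markov kernel $\Lambda^{h_n}_{n,n+1}$. The general corollary to Proposition~\ref{Master} tells us that $\big(\mu_s^{n+1,h_{n+1}}\Lambda^{h_n}_{n,n+1}\big)_{s>0}$ is an entrance law for $Q_t^{n,n+1,h_n}$, so this family is indeed an admissible starting mechanism for the two-level process $(X,Y)$. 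The only genuinely new point to check is that $\mu_t^{n+1,h_{n+1}}$ as written really is an entrance law for Dyson Brownian motion on $n+1$ points started from the origin, i.e.\ that $\mu_s^{n+1,h_{n+1}} P_t^{n+1,h_{n+1}} = \mu_{t+s}^{n+1,h_{n+1}}$; this is the standard Gaussian/Vandermonde computation (the density is $h_{n+1}(x)^{-1}$ times the Karlin--McGregor heat kernel out of the origin times $h_{n+1}(x)$, up to the stated constant), and the excerpt explicitly defers it to the Appendix, so I would simply cite that.

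Next I would invoke the Markov-functions argument underlying Corollary~\ref{mastercorollary}, but now in the entrance-law form. Since $(X,Y)$ has semigroup $Q_t^{n,n+1,h_n}$ and is started from the entrance law $\nu_t^{n,n+1,h_{n+1}}$, which projects under the ``initial randomization'' $\Lambda^{h_n}_{n,n+1}$ onto the $P_t^{n+1,h_{n+1}}$-entrance law $\mu_t^{n+1,h_{n+1}}$, the intertwining $P_t^{n+1,h_{n+1}}\Lambda^{h_n}_{n,n+1} = \Lambda^{h_n}_{n,n+1}Q_t^{n,n+1,h_n}$ of Theorem~\ref{MasterDynamics}/Proposition~\ref{Master} propagates: applying $\mu_s^{n+1,h_{n+1}}$ to both sides and using the semigroup property gives that the law of $X(t)$ under this entrance law is $\mu_t^{n+1,h_{n+1}}$, and more generally the Rogers--Pitman criterion (Section~2 of \cite{RogersPitman}) yields that $\big(X(t);t\ge 0\big)$ is Markov with semigroup $P_t^{n+1,h_{n+1}}$, i.e.\ $(n+1)$-particle Dyson Brownian motion started from the origin, with the conditional law of $Y(t)$ given $\big(X(s);s\le t\big)$ equal to $\Lambda^{h_n}_{n,n+1}(X(t),\cdot)$. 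I would also remark that the event $\{\tau_{\mathbb{GT}}=\infty\}$ (no collision of the $Y$-barriers) has full probability here precisely because, under this initialization, $Y$ evolves as an $h$-transformed, hence non-colliding, process --- so the SDE description of the $X$-dynamics is unambiguous for all time.

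A small amount of care is needed at $t\downarrow 0$: the claim ``started from the origin'' should be read as a statement about the entrance law, namely $\nu_t^{n,n+1,h_{n+1}} \to \delta_0 \otimes (\text{point mass at the interlacing cone vertex})$ weakly as $t\downarrow 0$, which follows from the corresponding limit for $\mu_t^{n+1,h_{n+1}}$ together with the fact that $\Lambda^{h_n}_{n,n+1}(x,\cdot)$ concentrates on $W^{n,n+1}$ near the diagonal as $x\to 0$. This degeneration is exactly the situation covered by the discussion preceding the corollary in the excerpt, so I would phrase the argument so as to quote it rather than reprove it.

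The main obstacle, such as it is, is not conceptual but bookkeeping: one must make sure the finiteness hypothesis $h_{n+1}=\Lambda_{n,n+1}h_n <\infty$ and the Markov-kernel property of $\Lambda^{h_n}_{n,n+1}$ --- both of which hold here with the explicit Vandermonde identity $h_{n+1}(x)=\frac{1}{n!}\int_{W^{n,n+1}(x)} h_n(y)\,dy$ (hence the $n!$ in the formula for $\Lambda^{h_n}_{n,n+1}$) --- genuinely apply so that Proposition~\ref{Master} and Theorem~\ref{MasterDynamics} are legitimately in force, and then verify the entrance-law intertwining limit is taken in a setting where all integrals converge. Everything else is a direct transcription of the general machinery of this section to the Brownian case, using $P_t^n = \hat P_t^n$ and the harmonicity of $h_n$.
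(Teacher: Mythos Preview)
Your proposal is correct and follows essentially the same approach as the paper: the proposition is stated without a separate proof, being derived immediately from the entrance-law corollary to Proposition~\ref{Master} (and the remark following Theorem~\ref{MasterDynamics}) applied in the Brownian case where $P_t^n=\hat P_t^n$ and $h_{n+1}=\Lambda_{n,n+1}h_n$. Your additional care with the $t\downarrow 0$ limit and the finiteness/normalization bookkeeping is sound but goes slightly beyond what the paper spells out.
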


It can be seen that we are in the setting of Proposition \ref{multilevelproposition} with the $L_k\equiv L$-diffusion a standard Brownian motion and the functions $g_k, G_k$ being up to a multiplicative constant equal to the Vandermonde determinant $\prod_{1 \le i<j \le k}^{}(x_j-x_i)$. Thus, we can concatenate these two-level processes to build a process $\left(\mathbb{X}^{n}(t);t \ge 0\right)=(X_i^{(k)}(t);t \ge 0, 1 \le i \le k \le n)$ taking values in $\mathbb{GT}(n)$ recovering Proposition 6 of \cite{Warren}. Being more concrete, the dynamics of $\mathbb{X}^{n}(t)$ are as follows: level $k$ of this process consists of $k$ independent standard Brownian motions reflected off the paths of level $k-1$. Then, from Proposition \ref{multilevelproposition} we get:
\begin{prop}
If $\mathbb{X}^{n}$ is started from the origin then the $k^{th}$ level process $X^{(k)}$ is distributed as $k$ particle Dyson Brownian motion started from the origin.
\end{prop}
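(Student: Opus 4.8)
The plan is to deduce this from the concatenation procedure of Section~3.1, specialised to $L_1=\cdots=L_n=\tfrac12\tfrac{d^2}{dx^2}$, rather than re-running the two-level argument $n$ times by hand. First I would verify that the hypotheses of Proposition \ref{multilevelproposition} hold for this choice. The conjugate of standard Brownian motion is again standard Brownian motion, so $\widehat L_k=L_k$ and $\widehat p^k_t=p^k_t$ for each $k$, and the speed/scale data entering the $\Lambda$-operators are constant. The Vandermonde $h_k(x)=\prod_{1\le i<j\le k}(x_j-x_i)$ is strictly positive in $\mathring W^k$ and harmonic for the Karlin-McGregor semigroup $P_t^k$ (an eigenfunction with eigenvalue $1$, so $c_k=0$ for all $k$), and $h_{k+1}=\Lambda_{k,k+1}h_k$. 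Taking $g_{k-1}=h_{k-1}$ in (\ref{eigenGelfand}) then gives $G_{k-1}=h_k$ up to the combinatorial constant absorbed into $\Lambda^{h_{k-1}}_{k-1,k}$, and with $g_k=G_{k-1}=h_k$, $c_{k-1}=c_k=0$ and $\widehat p^{k+1}_t=p^{k+1}_t$ the consistency relation (\ref{consistencyGelfand}) becomes a trivial identity. Hence $\mathfrak P^{(k)}_t$ is exactly the $h_k$-transform $P_t^{k,h_k}$ of the Karlin-McGregor semigroup, i.e.\ the semigroup of $k$-particle Dyson Brownian motion, and $\mathfrak L_{k-1}^k=\Lambda^{h_{k-1}}_{k-1,k}$.

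Next I would pin down what ``$\mathbb X^n$ started from the origin'' means. The Dyson entrance law $\mu_t^{k,h_k}(x)=C_k\,t^{-k^2/2}\exp\!\big(-\tfrac{1}{2t}\sum_{i=1}^{k}x_i^2\big)h_k^2(x)$ satisfies $\mu_s^{k,h_k}P_t^{k,h_k}=\mu_{s+t}^{k,h_k}$; applying $\mu_t^{k,h_k}$ to the intertwining (\ref{MasterIntertwining}) at each level (the entrance-law corollary of Section~2) shows that the nested measures $\mu_t^{n,h_n}(dx^{(n)})\,\mathfrak L_{n-1}^{n}(x^{(n)},dx^{(n-1)})\cdots\mathfrak L_1^{2}(x^{(2)},dx^{(1)})$ form an entrance law for the $\mathbb{GT}(n)$-valued process governed by (\ref{GelfandTsetlinSDEs}). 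This is precisely the meaning of initialising $\mathbb X^n$ at the origin, and the limiting construction of $\mu_t^{k,h_k}$ in the Appendix identifies the degenerate limit as this nested law. Given this, the entrance-law extension of Proposition \ref{multilevelproposition} yields at once that $\tau_{\mathbb{GT}(n)}=\infty$ almost surely and that each $X^{(k)}$ evolves according to $\mathfrak P^{(k)}_t=P_t^{k,h_k}$, that is, as $k$-particle Dyson Brownian motion started from the origin --- which is the assertion. (The depth-$2$ instance of this scheme is exactly Proposition \ref{superpositionref1}.)

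I expect the only step requiring genuine care to be the identification of ``$\mathbb X^n$ started from the origin'' with the nested entrance law: one must check that collapsing all coordinates of $\mathbb X^n$ to $0$ produces the joint law $\mu_t^{n,h_n}(dx^{(n)})\prod_{k=2}^{n}\mathfrak L_{k-1}^{k}$ and that the reflected system (\ref{GelfandTsetlinSDEs}) is well posed when so initialised. The former is the entrance-law compatibility guaranteed by (\ref{consistencyGelfand}) together with the Appendix's construction of the Dyson entrance law; the latter follows, as noted after Proposition \ref{multilevelproposition}, from $\tau_{\mathbb{GT}(n)}=\infty$ and the Yamada-Watanabe well-posedness of Section~5, since under this initialisation each level consists of non-intersecting paths. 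Everything else --- self-conjugacy of Brownian motion, harmonicity of the Vandermonde, the triviality of the consistency relations, and the identification of $\mathfrak P^{(k)}$ with Dyson Brownian motion --- is immediate from the material already developed.
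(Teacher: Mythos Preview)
Your proposal is correct and follows exactly the route the paper indicates: the paper does not give a separate proof of this proposition but simply remarks that one concatenates the two-level processes ``in the fashion described in the opening subsection'' (i.e.\ via Proposition~\ref{multilevelproposition} and its entrance-law extension), which is precisely what you carry out in detail. Your identification of the one delicate point---that ``started from the origin'' must be interpreted via the nested entrance law $\mu_t^{n,h_n}\mathfrak L_{n-1}^{n}\cdots\mathfrak L_1^{2}$ rather than a point mass in $\mathring W^N$---is apt, and the paper handles this in the same way through Proposition~\ref{superpositionref1} at the two-level stage.
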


\paragraph{Connection to Hermitian Brownian motion} We now point out the well known connection to the minor process of a Hermitian valued Brownian motion.It is a well known fact that the eigenvalues of minors of Hermitian matrices interlace. In particular, for any $n\times n$ Hermitian valued diffusion the eigenvalues of the $k\times k$ minor $\left(\lambda^{(k)}(t);t\ge 0\right)$ and of the $(k-1)\times (k-1)$ minor $\left(\lambda^{(k-1)}(t);t\ge0\right)$ interlace: $\left(\lambda^{(k)}_1(t)\le \lambda_2^{(k-1)}(t)\le \cdots \le \lambda_k^{(k)}(t);t\ge 0\right)$. Now, let $\left(H(t);t\ge 0\right)$ be an $n\times n$ Hermitian valued Brownian motion. Then $\left(\lambda^{(k)}(t);t\ge0\right)$ evolves as $k$ particle Dyson Brownian motion. Also for any \textit{fixed} time $T$ the vector $(\lambda^{(1)}(T),\cdots,\lambda^{(n)}(T))$ has the same distribution as $\mathbb{X}(T)$, namely it is uniform on the space of $\mathbb{GT}(n)$ with bottom level $\lambda^{(n)}(T)$. However the evolution of these processes is different, in fact the interaction between two levels of the minor process $\left(\lambda^{(k-1)}(t),\lambda^{(k)}(t);t\ge0\right)$ is quite complicated involving long range interactions and not the local reflection as in our case as shown in \cite{Adler}. In fact, the evolution of $\left(\lambda^{(k-1)}(t),\lambda^{(k)}(t),\lambda^{(k+1)}(t);t\ge 0\right)$ is not even Markovian at least for some initial conditions (again see \cite{Adler}).

\subsection{Brownian motions in half line and BES(3)}
The process we will consider here, taking values in a symplectic Gelfand-Tsetlin pattern, was first constructed by Cerenzia in \cite{Cerenzia} as the diffusive scaling limit of the symplectic Plancherel growth model. It is built from reflecting and killed Brownian motions in the half line. We begin in the simplest possible setting:

\begin{prop}\label{Bes3}
 Consider a process $(X,Y)\in W^{1,1}([0,\infty))$ started according to the distribution $(\delta_x,\mathbf{1}_{[0,x]}dy)$ for $x>0$ with the $Y$ particle evolving as a reflecting Brownian motion in $[0,\infty)$ and the $X$ particle as a Brownian motion in $(0,\infty)$ reflected upwards off the $Y$ particle. Then, the $X$ particle is distributed as a $BES(3)$ process (Bessel process of dimension 3) started from $x$.
\end{prop}

\begin{proof}
Take as the $L$-diffusion a Brownian motion absorbed when it reaches $0$ and let $P_t^1$ be the semigroup of Brownian motion \textit{killed} (not absorbed) at 0. Then, its dual diffusion $\hat{L}$ is a reflecting Brownian motion in the positive half line and let $\hat{P}_t^1$ be the semigroup it gives rise to. Observe that, $(\mathbf{R})$, $(\mathbf{BC+})$ and $(\mathbf{YW})$ are easily seen to be satisfied.  Letting, $\hat{h}_{1,1}(x)=1$ which is clearly a positive harmonic function for $\hat{L}$, we get that $h_{1,1}(x)=\int_{0}^{x}1dx=x$. Now, note that $P_t^{1,h_{1,1}}$ is exactly the semigroup of a $BES(3)$ process. As is well known, a Bessel process of dimension 3 is a Brownian motion conditioned to stay in $(0,\infty)$ by an $h$-transform with the function $x$. Then, from the analogue of Theorem \ref{MasterDynamics} in $W^{n,n}$ we obtain the statement.
\end{proof}

Now we move to the next stage of $2$ particles evolving as reflecting Brownian motions being reflected off a $BES(3)$ process. 
\begin{prop}
 Consider a process $(X,Y)\in W^{1,2}([0,\infty))$ started according to the following distribution $\left(\delta_{(x_1,x_2)},\frac{2y}{x_2^2-x_1^2}1_{[x_1,x_2]}dy\right)$ for $x_1<x_2$ with the $Y$ particle evolving as a BES(3) process and the $X$ particles as reflecting Brownian motions in $[0,\infty)$ reflected off the $Y$ particles. Then, the $X$ particles are distributed as two non-intersecting reflecting Brownian motions started from $(x_1,x_2)$.
\end{prop}

\begin{proof}
We apply Theorem \ref{MasterDynamics}. We take as the $L$-diffusion a reflecting Brownian motion. Write $P_t^{2}$ for the Karlin-McGregor semigroup associated to 2 reflecting Brownian motions killed when they intersect. Note that, $(\mathbf{R})$, $(\mathbf{BC+})$ and $(\mathbf{YW})$ are clearly satisfied. Observe that with $\hat{h}_{1,2}(x)=x$, which is a positive harmonic function for a Brownian motion killed at 0, we have:
\begin{align*}
h_{1,2}(x_1,x_2)=\int_{x_1}^{x_2}xdx=\frac{1}{2}(x_2^2-x_1^2).
\end{align*}
Finally note that, $P_t^{2,h_{1,2}}$ is exactly the semigroup of $2$ non-intersecting reflecting Brownian motions in $[0,\infty)$.
\end{proof}

These relations can be iterated to $n$ and $n$ and also $n$ and $n+1$ particles. Define the functions:
\begin{align*}
\hat{h}_{n,n}(x)&=\prod_{1 \le i < j \le n}^{}(x_j^2-x_i^2),\\
\hat{h}_{n,n+1}(x)&=\prod_{1 \le i < j \le n}^{}(x_j^2-x_i^2)\prod_{i=1}^{n}x_i.
\end{align*}
Also, consider the positive kernels $\Lambda_{n_1,n_2}$, defined in (\ref{PreIntertwiningKernel}), with $\hat{m}\equiv 2$. Then, an easy calculation (after writing these functions as determinants) gives that up to a constant $h_{n,n}=\Lambda_{n,n}\hat{h}_{n,n}$ is equal to $\hat{h}_{n,n+1}$ and $h_{n,n+1}=\Lambda_{n,n+1}\hat{h}_{n,n+1}$ is equal to $\hat{h}_{n+1,n+1}$. Finally, let $\Lambda^{\hat{h}_{n,n}}_{n,n}$ and $\Lambda^{\hat{h}_{n,n+1}}_{n,n+1}$ denote the corresponding normalized Markov kernels.

\begin{prop}
 Consider a process $(X,Y)\in W^{n,n}([0,\infty))$ started according to the distribution $(\delta_x,\Lambda^{\hat{h}_{n,n}}_{n,n}(x,\cdot))$ for $x\in\mathring{W}^{n}([0,\infty))$ with the $Y$ particles evolving as $n$ reflecting Brownian motions conditioned not to intersect in $[0,\infty)$ and the $X$ particles as $n$ Brownian motion in $(0,\infty)$ reflected off the $Y$ particles. Then, the $X$ particles are distributed as $n$ $BES(3)$ processes conditioned never to intersect started from $x$.
\end{prop}

\begin{proof}
We take as the $L$-diffusion a Brownian motion absorbed at $0$. Then, the $\hat{L}$-diffusion is a reflecting Brownian motion. As before, $(\mathbf{R})$, $(\mathbf{BC+})$ and $(\mathbf{YW})$ are clearly satisfied. Note that, $\hat{h}_{n,n}$ is a harmonic function for $n$ reflecting Brownian motions killed when they intersect. Moreover, note that $P_t^{n,h_{n,n}}$ is exactly the semigroup of $n$ non-intersecting $BES(3)$ processes (note that the $n$ particle Karlin-McGregor semigroup $P_t^n$ is that of $n$ killed at zero Brownian motions). The statement follows from the analogue of Theorem \ref{MasterDynamics} in $W^{n,n}$.
\end{proof}

\begin{prop}
 Consider a process $(X,Y)\in W^{n,n+1}([0,\infty))$ started according to the following distribution $\left(\delta_x,\Lambda^{\hat{h}_{n,n+1}}_{n,n+1}(x,\cdot)\right)$ for $x \in\mathring{W}^{n+1}([0,\infty))$ with the $Y$ particles evolving as $n$ BES(3) processes conditioned not to intersect and the $X$ particles as $n+1$ reflecting Brownian motions in $[0,\infty)$ reflected off the $Y$ particles. Then, the $X$ particles are distributed as $n+1$ non-intersecting reflecting Brownian motions started from $x$.
\end{prop}

\begin{proof}
We take as the $L$-diffusion a reflecting Brownian motion. Then, the $\hat{L}$-diffusion is a Brownian motion absorbed at 0. As before, the assumptions $(\mathbf{R})$, $(\mathbf{BC+})$ and $(\mathbf{YW})$ are clearly satisfied. Note that, $\hat{h}_{n,n+1}$ is harmonic for the corresponding Karlin-McGregor semigroup $\hat{P}_t^n$, associated with $n$ Brownian motions killed at zero and when they intersect. Moreover, note that the semigroup $\hat{P}_t^{n, \hat{h}_{n,n+1}}$, namely the semigroup $\hat{P}_t^{n}$ $h$-transformed by $\hat{h}_{n,n+1}$, gives the semigroup of the process $Y$. Finally, observe that $P_t^{n+1,h_{n,n+1}}$ is exactly the semigroup of $n+1$ non-intersecting reflecting Brownian motions. The statement follows from Theorem \ref{MasterDynamics}.
\end{proof}

\begin{rmk}
The semigroups considered above are also the semigroups of $n$ Brownian motions conditioned to stay in a Weyl Chamber of type $B$ and type $D$ (after we disregard the sign of the last coordinate) respectively (see for example \cite{JonesO'Connell} where such a study was undertaken). 
\end{rmk}

We can in fact start these processes from the origin, by using the following explicit entrance law (see for example \cite{Cerenzia} or the Appendix for the general recipe) for $P_t^{n,h_{n,n}}$ and $P_t^{n,h_{n-1,n}}$ issued from zero,
\begin{align*}
\mu_t^{n,h_{n,n}}(dx)&=C_{n,n}'t^{-n(n+\frac{1}{2})}\exp\bigg(-\frac{1}{2t}\sum_{i=1}^{n}x_i^2\bigg)h^2_{n,n}(x)dx,\\
\mu_t^{n,h_{n-1,n}}(dx)&=C'_{n-1,n}t^{-n(n-\frac{1}{2})}\exp\bigg(-\frac{1}{2t}\sum_{i=1}^{n}x_i^2\bigg)h_{n-1,n}^2(x)dx.
\end{align*}

Concatenating these two-level processes, we construct a process $\left(\mathbb{X}_s^{(n)}(t);t \ge 0\right)=(X^{(1)}(t)\prec \hat{X}^{(1)}(t)\prec \cdots \prec X^{(n)}(t)\prec \hat{X}^{(n)}(t);t\ge 0)$ in $\mathbb{GT}_{\textbf{s}}(n)$ with dynamics as follows: Firstly, $X_1^{(1)}$ is a Brownian motion reflecting at the origin. Then, for each $k$, the $k$ particles corresponding to $\hat{X}^{(k)}$ perform independent Brownian motions reflecting off the $X^{(k)}$ particles to maintain interlacing. Finally, for $k\ge 2$ the $k$ particles corresponding to $X^{(k)}$ reflect off $\hat{X}^{(k-1)}$ and also in the case of $X^{(k)}_1$ reflecting at the origin.

 Then, the symplectic analogue of Proposition \ref{multilevelproposition} (which is again proven in the same way by consistently patching together two-level processes) implies the following, recovering the results of Section 2.3 of \cite{Cerenzia}:

\begin{prop}\label{CerenziaGelfand}
If $\mathbb{X}_s^{n}$ is started from the origin then the projections onto $X^{(k)}$ and $\hat{X}^{(k)}$ are distributed as $k$ non-intersecting reflecting Brownian motions and $k$ non-intersecting $BES(3)$ processes respectively started from the origin.
\end{prop}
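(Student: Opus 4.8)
The plan is to realise $\mathbb{X}^{(n)}_{\textbf{s}}$ as a \emph{concatenation} — in the $\mathbb{GT}_{\textbf{s}}$-version of the procedure at the beginning of this section, i.e.\ the $\mathbb{GT}_{\textbf{s}}$-analogue of Proposition~\ref{multilevelproposition} — of the two-level dynamics $Q_t^{k,k,\hat h_{k,k}}$, coupling $X^{(k)}$ to $\hat X^{(k)}$ for $1\le k\le n$, and $Q_t^{k,k+1,\hat h_{k,k+1}}$, coupling $\hat X^{(k)}$ to $X^{(k+1)}$ for $1\le k\le n-1$, where the underlying one-dimensional diffusion is reflecting Brownian motion on $[0,\infty)$ (whose conjugate is Brownian motion killed at $0$, the $h$-transform of the latter by $x$ being a $\mathrm{BES}(3)$ process) and the harmonic functions are, up to irrelevant positive constants, $\hat h_{k,k}(x)=h_{k-1,k}(x)=\prod_{1\le i<j\le k}(x_j^2-x_i^2)$ and $\hat h_{k,k+1}(x)=h_{k,k}(x)=\prod_{1\le i<j\le k}(x_j^2-x_i^2)\prod_{i=1}^{k}x_i$. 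Granting this identification, applying Theorem~\ref{MasterDynamics} and Corollary~\ref{mastercorollary} rung by rung will show that, from an initial law of the product form produced by the relevant kernels $\Lambda^{\hat h}$, each marginal level is Markovian, with $X^{(k)}$ evolving according to $P_t^{k,h_{k-1,k}}$ and $\hat X^{(k)}$ according to $P_t^{k,h_{k,k}}$.

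The substantive point is to verify that this tower is \emph{consistent}, i.e.\ that the $\mathbb{GT}_{\textbf{s}}$-versions of the relations (\ref{eigenGelfand})--(\ref{consistencyGelfand}) hold. By the intertwining (\ref{KMintertwining}) this reduces to the identities $h_{k,k}=\Lambda_{k,k}\Pi_{k,k}\hat h_{k,k}$, $h_{k,k+1}=\Lambda_{k,k+1}\Pi_{k,k+1}\hat h_{k,k+1}$, together with the matchings $h_{k,k}=\hat h_{k,k+1}$ and $h_{k,k+1}=\hat h_{k+1,k+1}$. Since the conjugate speed density $\hat m$ is constant for both reflecting and killed Brownian motion, the operators $\Lambda_{k_1,k_2}\Pi_{k_1,k_2}$ are plain integration over the interlacing simplex $W^{k_1,k_2}(x)$, which here is a product of intervals; writing each harmonic function as an alternating determinant — $\prod_{i<j}(x_j^2-x_i^2)=\det\!\big(x_j^{2(i-1)}\big)_{i,j}$ and $\prod_{i<j}(x_j^2-x_i^2)\prod_i x_i=\det\!\big(x_j^{2i-1}\big)_{i,j}$ up to a constant — and performing this staircase integral term by term (the block-determinant $dy$-integration underlying Proposition~\ref{Master}, after telescoping column operations) reproduces the next harmonic function up to a positive constant, giving all four identities. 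One also needs that $\prod_{1\le i<j\le k}(x_j^2-x_i^2)$ is harmonic for the Karlin--McGregor semigroup of $k$ reflecting Brownian motions on $[0,\infty)$ killed on collision: it is an alternating polynomial annihilated by the Laplacian, vanishing on each wall $\{x_i=x_j\}$ and with vanishing normal derivative on each wall $\{x_i=0\}$ (the product of positive roots of type $D$; cf.\ \cite{JonesO'Connell}), and every remaining harmonicity claim then follows automatically from (\ref{KMintertwining}) together with the coincidence of the relevant one-dimensional Karlin--McGregor semigroups as one moves along the tower.

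With consistency established, the hypotheses of the $\mathbb{GT}_{\textbf{s}}$-analogue of Proposition~\ref{multilevelproposition} are met — well-posedness of the reflecting $SDEs$ from Section~\ref{SectionWellposedness}, and the analogue of $\tau_{\mathbb{GT}(N)}$ infinite a.s.\ because every $h$-transformed level consists of non-intersecting paths — so under the product initial law each level is Markov with the asserted semigroup. It remains to identify the semigroups and to start from the origin: $P_t^{k,h_{k-1,k}}$ is the law of $k$ non-intersecting reflecting Brownian motions and $P_t^{k,h_{k,k}}$ that of $k$ non-intersecting $\mathrm{BES}(3)$ processes — respectively Brownian motions conditioned to stay in a Weyl chamber of type $D$ and of type $B$, exactly as in the $W^{n,n}$ and $W^{n,n+1}$ discussion preceding this proposition. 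For the start from $(0,\dots,0)$ one takes the explicit entrance law $\mu_t^{n,h_{n,n}}$ for $P_t^{n,h_{n,n}}$ recorded above (the time-$t$ law of $n$ non-intersecting $\mathrm{BES}(3)$ issued from $0$) and pushes it up the tower through the Markov kernels $\Lambda^{\hat h_{k,k}}_{k,k}$ and $\Lambda^{\hat h_{k,k+1}}_{k,k+1}$; by the entrance-law corollary of Section~2 this is an entrance law for the $\mathbb{GT}_{\textbf{s}}(n)$-valued process, and since $\mu_t^{n,h_{n,n}}$ and every kernel $\Lambda$ concentrate at $(0,\dots,0)$ as $t\downarrow 0$ it is the one issued from the origin. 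Feeding this into the entrance-law generalisation of Theorem~\ref{MasterDynamics} yields the claim, recovering Section~2.3 of \cite{Cerenzia}.

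The main obstacle is the consistency verification of the second paragraph: establishing simultaneously that the Vandermonde-in-squares functions, with and without the factor $\prod_i x_i$, are harmonic for the correct \emph{killed} Karlin--McGregor semigroups and are carried into one another by the operators $\Lambda_{k_1,k_2}\Pi_{k_1,k_2}$, so that the entire tower of intertwinings (\ref{MasterIntertwining}) closes up. Everything else — the rung-by-rung use of Theorem~\ref{MasterDynamics}, the bookkeeping identifying the $P_t^{k,\bullet}$ with non-intersecting reflecting Brownian motions and non-intersecting $\mathrm{BES}(3)$ processes, and the passage to the entrance law from the origin — is routine given the machinery of Section~2 and the $\mathbb{GT}_{\textbf{s}}$-version of the construction at the beginning of this section.
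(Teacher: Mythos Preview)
Your proposal is correct and follows essentially the same approach as the paper: the proposition is stated without a separate proof, as the direct outcome of concatenating the two-level processes $Q_t^{k,k,\hat h_{k,k}}$ and $Q_t^{k,k+1,\hat h_{k,k+1}}$ established in the preceding paragraphs, using the iteratively-built harmonic functions $h_{n-1,n}=\hat h_{n,n}=\prod_{i<j}(x_j^2-x_i^2)$ and $h_{n,n}=\hat h_{n,n+1}=\prod_{i<j}(x_j^2-x_i^2)\prod_i x_i$, together with the explicit entrance laws $\mu_t^{n,h_{n,n}}$ and $\mu_t^{n,h_{n-1,n}}$ displayed just before the statement. Your write-up simply makes explicit the consistency check (that the $\Lambda_{k_1,k_2}\Pi_{k_1,k_2}$ integrals carry one harmonic function to the next, here trivial since $\hat m$ is constant) and the invocation of the $\mathbb{GT}_{\textbf{s}}$-analogue of Proposition~\ref{multilevelproposition}, which the paper leaves implicit.
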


\subsection{Brownian motions in an interval}
Let $I=[0,\pi]$ for concreteness and let the $L$-diffusion be a reflecting Brownian motion in $I$. Then its dual, the $\hat{L}$-diffusion is a Brownian motion absorbed at $0$ or $\pi$. It will be shown in Corollary \ref{minimal}, that the minimal positive eigenfunction, is given up to a (signed) constant factor by,
\begin{align}\label{killedinterval}
\hat{h}_n(x)=\det(\sin(kx_j))_{k,j=1}^n.
\end{align}
This is the eigenfunction that corresponds to conditioning these Brownian motions to stay in the interval $(0,\pi)$ and not intersect forever. Also, observe that up to a constant factor $\hat{h}_n$ is given by (see the notes \cite{Conrey}, \cite{Meckes} and Remark \ref{RemarkClassicalGroups} below for the connection to classical compact groups),
\begin{align*}
\prod_{i=1}^{n}\sin(x_i)\prod_{1 \le i < j \le n}^{}\big(\cos(x_i)-\cos(x_j)\big).
\end{align*}

Now, via the iterative procedure of producing eigenfunctions, namely by taking $\Lambda_{n,n+1}\hat{h}_n$, where $\Lambda_{n,n+1}$ is defined in (\ref{PreIntertwiningKernel}), we obtain that up to a (signed) constant factor,
\begin{align}\label{reflecting intervale}
h_{n+1}(x)=\det(\cos((k-1)x_j))_{k,j=1}^{n+1},
\end{align}
is a strictly positive eigenfunction for $P_t^{n+1}$. In fact, it is the minimal positive eigenfunction (again this follows from Corollary \ref{minimal}) of $P_t^{n+1}$ and it corresponds to conditioning these reflected Brownian motions in the interval to not intersect. This is also (see \cite{Conrey},\cite{Meckes} and Remark \ref{RemarkClassicalGroups}) given up to a constant factor by,
\begin{align*}
\prod_{1 \le i < j \le n+1}^{}\big(\cos(x_i)-\cos(x_j)\big).
\end{align*}
Define the Markov kernel:
\begin{align*}
(\Lambda^{\hat{h}_n}_{n,n+1}f)(x)=\frac{n!}{h_{n+1}(x)}\int_{W^{n,n+1}(x)}^{}\hat{h}_n(y)f(x,y)dy.
\end{align*}
Then we have the following result:
\begin{prop}
Let $x \in \mathring{W}^{n+1}([0,\pi])$. Consider a process $(X,Y)\in W^{n,n+1}([0,\pi])$ started at $\left(\delta_x,\Lambda^{\hat{h}_n}_{n,n+1}(x,\cdot)\right)$ with the $Y$ particles evolving as n Brownian motions 
conditioned to stay in $(0,\pi)$ and conditioned to not intersect and the $X$ particles as $n+1$ reflecting Brownian motions in $[0,\pi]$ reflected off the $Y$ particles. Then the $X$ particles are distributed as $n+1$ non-intersecting Brownian motions reflected at the boundaries of $[0,\pi]$ started from $x$.
\end{prop}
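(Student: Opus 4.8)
The plan is to read this proposition off as a special case of Proposition \ref{Master} and Theorem \ref{MasterDynamics}, specialised to $(n_1,n_2)=(n,n+1)$, with $L=\tfrac12\tfrac{d^2}{dx^2}$ the generator of reflecting Brownian motion on $I=[0,\pi]$, $\hat L$ its conjugate (Brownian motion absorbed at $0$ and $\pi$), $\hat h_n$ the determinant (\ref{killedinterval}), and $h_{n+1}$ the determinant (\ref{reflecting intervale}). Accordingly the work splits into four checks: (i) that $L$ satisfies the standing boundary hypotheses (\ref{bc1l})--(\ref{bc1r}) for $W^{n,n+1}$ and the regularity/Yamada--Watanabe hypotheses of Section \ref{SectionTransitionDensities} and Proposition \ref{wellposedness}; (ii) that $\hat h_n(x)=\det(\sin(kx_j))_{k,j=1}^n$ is a strictly positive (on $\mathring W^n([0,\pi])$) eigenfunction of $\hat P_t^n$; (iii) that $(\Lambda_{n,n+1}\Pi_{n,n+1}\hat h_n)(x)$ is finite on $W^{n+1}([0,\pi])$ and equals, up to a nonzero constant, $h_{n+1}(x)=\det(\cos((k-1)x_j))_{k,j=1}^{n+1}$; and (iv) that the $Y$- and $X$-marginals produced by the two-level machine are the named processes.

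For (i): reflecting Brownian motion on $[0,\pi]$ has both endpoints regular reflecting, one of the cases permitted by (\ref{bc1l})--(\ref{bc1r}); its conjugate is then regular absorbing at $0$ and $\pi$, which is the required boundary behaviour for the autonomous $Y$-component, and the coefficients $\sqrt{2a}\equiv 1$, $b\equiv 0$ are Lipschitz, so (\ref{System1SDEs}) is well posed by Proposition \ref{wellposedness}. For (ii): the transition density of $\hat L$ has the Dirichlet spectral expansion $\hat p_t(x,y)=\tfrac2\pi\sum_{k\ge 1}e^{-k^2t/2}\sin(kx)\sin(ky)$, so each $\sin(k\cdot)$ is an eigenfunction of the killed one-dimensional semigroup with eigenvalue $e^{-k^2t/2}$; applying the Andr\'eief (Cauchy--Binet) integration identity to $\hat P_t^n\hat h_n(x)=\int_{W^n}\det(\hat p_t(x_i,y_j))_{i,j=1}^n\det(\sin(ky_j))_{k,j=1}^n\,dy$ reduces it to $\det\big(\mathsf{\hat{P}}_t\sin(k\cdot)(x_i)\big)_{i,k=1}^n=e^{-\frac t2\sum_{k=1}^n k^2}\hat h_n(x)$. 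Strict positivity on $\mathring W^n([0,\pi])$ is immediate from the factorisation $\hat h_n\propto\prod_i\sin(x_i)\prod_{i<j}(\cos x_i-\cos x_j)$ recorded in the text, since $\sin>0$ and $\cos$ is strictly decreasing on $(0,\pi)$; note also that $\hat h_n$ vanishes on the boundary of $\mathring W^n([0,\pi])$, which is what makes the $h$-transformed $Y$ the conditioned process. That $\hat h_n$ is in fact the \emph{minimal} positive eigenfunction --- needed to call the $h$-transformed $Y$ ``$n$ Brownian motions conditioned to stay in $(0,\pi)$ and not to intersect'' --- is Corollary \ref{minimal}.

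For (iii): since $\hat m$ is a positive constant, $(\Lambda_{n,n+1}\Pi_{n,n+1}\hat h_n)(x)$ is a constant multiple of $\int_{W^{n,n+1}(x)}\det(\sin(ky_j))_{k,j=1}^n\,dy$, which is finite (the region lies in $[0,\pi]^n$, the integrand is continuous) and strictly positive for $x\in\mathring W^{n+1}([0,\pi])$. Evaluating it is the one genuinely computational step: the elementary identity for integrating a determinant over an interlacing simplex,
\[
\int_{x_1\le y_1\le x_2\le\cdots\le y_n\le x_{n+1}}\det(\phi_k(y_j))_{k,j=1}^n\,dy
=\det\begin{pmatrix}1 & \cdots & 1\\ \Psi_1(x_1)&\cdots&\Psi_1(x_{n+1})\\ \vdots & & \vdots\\ \Psi_n(x_1)&\cdots&\Psi_n(x_{n+1})\end{pmatrix},
\]
with $\Psi_k$ any antiderivative of $\phi_k$, applied with $\phi_k(y)=\sin(ky)$, $\Psi_k(y)=-\tfrac1k\cos(ky)$, and followed by elementary row/column operations, identifies the integral with a nonzero constant times $\det(\cos((k-1)x_j))_{k,j=1}^{n+1}$, i.e.\ with $h_{n+1}$ of (\ref{reflecting intervale}). (Alternatively, that $h_{n+1}$ is an eigenfunction of $P_t^{n+1}$ with the same eigenvalue follows already from the general intertwining (\ref{KMintertwining}); only the explicit trigonometric form of $h_{n+1}$ and its minimality among positive eigenfunctions --- again Corollary \ref{minimal} --- are needed beyond this.)

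With (i)--(iii) in place the hypotheses of Proposition \ref{Master} hold, so that proposition gives the intertwining $P_t^{n+1,h_{n+1}}\Lambda^{\hat h_n}_{n,n+1}=\Lambda^{\hat h_n}_{n,n+1}Q_t^{n,n+1,\hat h_n}$ displayed before the statement, and Theorem \ref{MasterDynamics} then applies verbatim: starting $(X,Y)$ with semigroup $Q_t^{n,n+1,\hat h_n}$ from $\Lambda^{\hat h_n}_{n,n+1}(x,\cdot)$, the $Y$-component is $n$ copies of $\hat L$-diffusion $h$-transformed by $\hat h_n$ (the $n$ Brownian motions conditioned to stay in $(0,\pi)$ and not collide), the $X$-component is $n+1$ reflecting Brownian motions in $[0,\pi]$ reflected off $Y$ in the precise sense of (\ref{systemofreflectingLdiffusions}), and $X$ in its own filtration is the diffusion with semigroup $P_t^{n+1,h_{n+1}}$, which is the Karlin--McGregor semigroup of $n+1$ reflecting Brownian motions on $[0,\pi]$ $h$-transformed by the minimal positive eigenfunction $h_{n+1}$ --- i.e.\ $n+1$ reflecting Brownian motions conditioned never to intersect. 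I expect the only real obstacle to be step (iii): carrying out the determinant integration and pinning down the sign/constant, and --- more bookkeeping than difficulty --- confirming that reflecting Brownian motion on $[0,\pi]$ together with its conjugate meet exactly the standing boundary hypotheses attached to $W^{n,n+1}$; everything else is an application of the general results and of the spectral facts assembled in Corollary \ref{minimal}.
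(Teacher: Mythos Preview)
Your proposal is correct and follows essentially the same route as the paper: verify the boundary and regularity hypotheses for reflecting Brownian motion on $[0,\pi]$, identify $\hat h_n$ as the ground state of $\hat P_t^n$ via Corollary \ref{minimal}, compute $h_{n+1}=\Lambda_{n,n+1}\Pi_{n,n+1}\hat h_n$, and then read off the intertwining from Proposition \ref{Master} and the conclusion from Theorem \ref{MasterDynamics}. The paper presents steps (ii) and (iii) more tersely---it simply asserts the determinant identities and cites Corollary \ref{minimal}---whereas you supply the Andr\'eief argument and the interlacing-simplex integration explicitly, but this is added detail rather than a different approach.
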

\begin{proof}
Take as the $L$-diffusion a reflecting Brownian motion in $[0,\pi]$. The $\hat{L}$-diffusion is a Brownian motion absorbed at $0$ or $\pi$. Observe that, the assumptions $(\mathbf{R})$, $(\mathbf{BC+})$ and $(\mathbf{YW})$ are satisfied. Moreover, as noted above $\hat{h}_n$ is the ground state for $n$ Brownian motions killed when they hit $0$ or $\pi$ or when they intersect. The statement of the proposition then follows from Theorem \ref{MasterDynamics}.
\end{proof}

\begin{rmk}
The dual relation, in the following sense is also true: If we reflect $n$ Brownian motions between $n+1$ reflecting Brownian motions in $[0,\pi]$ conditioned not to intersect then we obtain $n$ Brownian motions conditioned to stay in $(0,\pi)$ and conditioned not to intersect. This is obtained by noting that up to a constant factor $\hat{h}_n$ defined in (\ref{killedinterval}) is given by $\Lambda_{n+1,n}h_{n+1}$, with $h_{n+1}$ as in (\ref{reflecting intervale}).
\end{rmk}

\begin{rmk}\label{RemarkClassicalGroups}
The processes studied above are related to the eigenvalue evolutions of Brownian motions on $SO(2(n+1))$ (reflecting Brownian motions in $[0,\pi]$) and $USp(2n)$ (conditioned Brownian motions in $[0,\pi]$) respectively (see e.g. \cite{PauwelsRogers} for skew product decompositions of Brownian motions on manifolds of matrices).
\end{rmk}

\begin{rmk}
It is also possible to build the following interlacing processes with equal number of particles. Consider as the $Y$ process $n$ Brownian motions in $[0,\pi)$ reflecting at $0$ and conditioned to stay away from $\pi$ and not to intersect. In our framework $\hat{L}=\frac{1}{2}\frac{d^2}{dx^2}$ with Neumann boundary condition at $0$ and Dirichlet at $\pi$. Then the minimal eigenfunction corresponding to this conditioning is given up to a sign by,
\begin{align*}
\det\bigg(\cos\bigg(\big(k-\frac{1}{2}\big)y_j\bigg)\bigg)_{k,j=1}^n.
\end{align*}
Now let $X$ be $n$ Brownian motions in $(0,\pi]$ reflecting at $\pi$ and reflected off the $Y$ particles. Then the projection onto the $X$ process (assuming the two levels $(X,Y)$ are started appropriately) evolves as $n$ Brownian motions in $(0,\pi]$ reflecting at $\pi$ and conditioned to stay away from $0$ and not to intersect. These processes are related to the eigenvalues of Brownian motions on $SO(2n+1)$ and $SO^-(2n+1)$ respectively.
\end{rmk}

\subsection{Brownian motions with drifts}\label{DriftingBrownianSection}
The processes considered here were first introduced by Ferrari and Frings in \cite{FerrariFrings} (there only the \textit{fixed time} picture was studied, namely no statement was made about the distribution of the projections on single levels as processes). They form a generalization of the process studied in the first subsection.

\subsubsection{Hermitian Brownian with drifts}
We begin by a brief study of the matrix valued process first. Let $\left(Y_t;t\ge 0\right)=\left(B_t;t \ge 0\right)$ be an $n\times n$ Hermitian Brownian motion. We seek to add a matrix of \textit{drifts} and study the resulting eigenvalue process. For simplicity let $M$ be a diagonal $n\times n$ Hermitian matrix with distinct ordered eigenvalues $\mu_1<\cdots<\mu_n$ and consider the Hermitian valued process $\left(Y_t^M;t\ge 0\right)=\left(B_t+tM;t\ge0\right)$. 

Then a computation that starts by applying Girsanov's theorem, using unitary invariance of Hermitian Brownian motion, integrating over $\mathbb{U}(n)$, the group of $n\times n $ unitary matrices, and then computing that integral using the classical Harish Chandra-Itzykson-Zuber (HCIZ) formula gives that the eigenvalues $(\lambda_1^M(t),\cdots,\lambda_n^M(t);t\ge0)$ of $\left(Y_t^M;t \ge0\right)$ form  a diffusion process with explicit transition density given by,
\begin{align*}
s_t^{n,M}(\lambda,\lambda')=\exp\big(-\frac{1}{2}\sum_{i=1}^{n}\mu_i^2t\big)\frac{\det\big(\exp(\mu_j\lambda'_i))\big)_{i,j=1}^n}{\det\big(\exp(\mu_j\lambda_i))\big)_{i,j=1}^n}\det\big(\phi_t(\lambda_i,\lambda_j')\big)_{i,j=1}^n,
\end{align*}
where $\phi_t$ is the standard heat kernel. For a proof of this fact, which uses the theory of Markov functions, see for example \cite{Chin}.

Observe that, $s_t^{n,M}$ is exactly the transition density of $n$ Brownian motions with drifts $\mu_1<\cdots<\mu_n$ conditioned to never intersect as studied in \cite{BBO}. More generally, if we look at the $k\times k$ minor of $\left(Y_t^M;t \ge 0\right)$ then its eigenvalues evolve as $k$ Brownian motions with drifts $\mu_1<\cdots<\mu_k$ conditioned to never intersect.

\begin{rmk}
These processes also appear in the recent work of Ipsen and Schomerus \cite{IpsenSchomerus} as the finite time Lyapunov exponents of "Isotropic Brownian motions".
\end{rmk}

Now, write $\mu^{(k)}$ for $(\mu_1,\cdots,\mu_k)$ and $P_t^{n,\mu^{(n)}}$ for the semigroup that arises from $s_t^{n,M}$. Then, $u_t^{n,\mu^{(n)}}(d\lambda)$ defined by,
\begin{align*}
u_t^{n,\mu^{(n)}}(d\lambda)=const_{n,t}\det(e^{-(\lambda_i-t\mu_j)^2/2t})^n_{i,j=1} \frac{\prod_{1 \le i < j \le n}^{}(\lambda_j-\lambda_i)}{\prod_{1 \le i < j \le n}^{}(\mu^{(n)}_j-\mu^{(n)}_i)}d\lambda,
\end{align*}
forms an entrance law for $P_t^{n,\mu^{(n)}}$ starting from the origin (see for example \cite{FerrariFrings} or the Appendix).

\subsubsection{Interlacing construction with drifting Brownian motions with reflection}

Now moving on to Warren's process with drifts (as referred to in \cite{FerrariFrings}). We seek to build $n+1$ Brownian motions with drifts $\mu_1<\cdots<\mu_{n+1}$ conditioned to never intersect by reflecting off $n$ Brownian motions with drifts $\mu_{1}<\cdots<\mu_{n}$ conditioned to never intersect $n+1$ independent Brownian motions each with drift $\mu_{n+1}$. We  prove the following: 
\begin{prop}
 Consider a Markov process $(X,Y)\in W^{n,n+1}(\mathbb{R})$ started from the origin with the $Y$ particles evolving as $n$ Brownian motions with drifts $\mu_1<\cdots<\mu_n$ conditioned to never intersect and the $X$ particles as $n+1$ Brownian motions all with drift $\mu_{n+1}$ reflected off the $Y$ particles. Then, the $X$ particles are distributed as $n+1$ Brownian motions with drifts $\mu_1<\cdots<\mu_{n+1}$ conditioned to never intersect started from the origin.
\end{prop}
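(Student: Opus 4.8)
The plan is to obtain the result as a direct specialisation of Theorem~\ref{MasterDynamics} and its entrance-law extension, taking $L=\tfrac12\tfrac{d^2}{dx^2}+\mu_{n+1}\tfrac{d}{dx}$, so that essentially all of the work lies in verifying the hypotheses of Proposition~\ref{Master}. First I would record the elementary facts: $l=-\infty$ and $r=+\infty$ are natural for this $L$, so (\ref{bc1l})--(\ref{bc1r}) hold and the $W^{n,n+1}$ construction of the previous subsections applies; the conjugate is $\hat L=\tfrac12\tfrac{d^2}{dx^2}-\mu_{n+1}\tfrac{d}{dx}$ with speed density $\hat m(x)=2e^{-2\mu_{n+1}x}$; and, writing $\hat h_n=\hat h_n^{\mu_{n+1},\mu^{(n)}}$, pulling the factor $e^{\mu_{n+1}y_i}$ into row $i$ gives $\hat h_n(y)=\det(e^{(\mu_{n+1}+\mu_j)y_i})_{i,j=1}^n$, a determinant built from the single-particle $\hat L$-eigenfunctions $y\mapsto e^{(\mu_{n+1}+\mu_j)y}$ with eigenvalues $\tfrac12\mu_j^2-\tfrac12\mu_{n+1}^2$. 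The first substantive step is then to check that $\hat h_n$ is a strictly positive eigenfunction of the Karlin--McGregor semigroup $\hat P_t^{n,\mu_{n+1}}$, with eigenvalue $e^{(\frac12\sum_{j\le n}\mu_j^2-\frac n2\mu_{n+1}^2)t}$, and that the associated Doob transform is precisely $P_t^{n,\mu^{(n)}}$, the semigroup of $n$ Brownian motions with drifts $\mu_1<\cdots<\mu_n$ conditioned never to intersect. This is the standard representation of that semigroup, for which I would cite \cite{BBO}; alternatively it follows by integrating $\det(\hat p_t(y_i,\cdot))$ against $\hat h_n$ and using the Andr\'eief identity together with $\hat{\mathsf P}_t e^{cy}=e^{(\frac12c^2-\mu_{n+1}c)t}e^{cy}$, the natural boundaries ensuring there are no boundary contributions.

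Second, I would carry out the interlacing integral identifying $h_{n+1}:=\Lambda_{n,n+1}^{\mu_{n+1}}\hat h_n$. Since $\prod_{i=1}^n\hat m(y_i)\,\hat h_n(y)=2^n\det(e^{(\mu_i-\mu_{n+1})y_j})_{i,j=1}^n$ and column $j$ of this matrix depends only on $y_j$, integrating each $y_j$ over $[x_j,x_{j+1}]$ commutes with the determinant; pulling $1/(\mu_i-\mu_{n+1})$ out of row $i$ and performing the usual column reduction on the resulting difference matrix gives
\begin{align*}
h_{n+1}(x)=\frac{2^n}{\prod_{i=1}^n(\mu_{n+1}-\mu_i)}\,\det\!\big(e^{(\mu_i-\mu_{n+1})x_j}\big)_{i,j=1}^{n+1},
\end{align*}
which is manifestly finite and strictly positive on $\mathring W^{n+1}(\mathbb{R})$, so the finiteness requirement in Proposition~\ref{Master} is met. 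Multiplying column $j$ by the positive function $e^{\mu_{n+1}x_j}$ turns this into $\prod_j e^{\mu_{n+1}x_j}\cdot\det(e^{\mu_ix_j})_{i,j=1}^{n+1}$, from which one reads off (again via \cite{BBO}) that the Doob transform of $P_t^{n+1,\mu_{n+1}}$ by $h_{n+1}$ is $P_t^{n+1,\mu^{(n+1)}}$; the eigenvalue $\tfrac12\sum_{i\le n+1}\mu_i^2-\tfrac{n+1}2\mu_{n+1}^2$ coincides with that of $\hat h_n$ above, as it must since $h_{n+1}=\Lambda_{n,n+1}\Pi_{n,n+1}\hat h_n$. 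At this point Proposition~\ref{Master} with $(n_1,n_2)=(n,n+1)$ and $\hat h_{n_1}=\hat h_n$ delivers $P_t^{n+1,\mu^{(n+1)}}\Lambda_{n,n+1}^{\mu_{n+1},\mu^{(n)}}=\Lambda_{n,n+1}^{\mu_{n+1},\mu^{(n)}}Q_t^{n,n+1,\hat h_n}$, while Proposition~\ref{DynkinProposition} identifies the $Y$-marginal of $Q_t^{n,n+1,\hat h_n}$ with $P_t^{n,\mu^{(n)}}$.

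Third, I would assemble the conclusion. The SDE system in the statement is exactly (\ref{System1SDEs}) for this $L$, with the $Y$ component $h$-transformed by $\hat h_n$ so as to have infinite lifetime; by Proposition~\ref{dynamics1} it is governed by $Q_t^{n,n+1,\hat h_n}$, so Theorem~\ref{MasterDynamics} gives that $X$ started from $\Lambda_{n,n+1}^{\mu_{n+1},\mu^{(n)}}(x,\cdot)$ is Markov with semigroup $P_t^{n+1,\mu^{(n+1)}}$. To start from the origin I would invoke that $u_t^{n+1,\mu^{(n+1)}}$ is an entrance law for $P_t^{n+1,\mu^{(n+1)}}$ (stated above, cf.\ \cite{FerrariFrings}): applying it to the intertwining of Proposition~\ref{Master} shows that $\nu_t^{n,n+1,\mu_{n+1},\mu^{(n)}}=u_t^{n+1,\mu^{(n+1)}}\Lambda_{n,n+1}^{\mu_{n+1},\mu^{(n)}}$ is an entrance law for $Q_t^{n,n+1,\hat h_n}$, and the entrance-law extension of Theorem~\ref{MasterDynamics} then yields that the $X$-projection of the two-level process started from $\nu_t^{n,n+1,\mu_{n+1},\mu^{(n)}}$ is distributed as $n+1$ Brownian motions with drifts $\mu_1<\cdots<\mu_{n+1}$ conditioned never to intersect, issued from the origin.

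I expect the main obstacle to be the bookkeeping in the second step: matching the two Doob transforms with the Ferrari--Frings/\cite{BBO} conditioned-drift semigroups \emph{with the correct eigenvalues}, so that the $e^{-\frac12\sum\mu_i^2t}$ normalisations propagate consistently between levels, together with the interlacing-integral determinant identity. Everything else is a formal, if bulky, application of results already established --- Propositions~\ref{DynkinProposition}, \ref{Master}, \ref{dynamics1} and Theorem~\ref{MasterDynamics} --- so the genuine content is localised there.
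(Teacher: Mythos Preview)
Your approach is correct and essentially identical to the paper's: the proposition is obtained there exactly by specialising Proposition~\ref{Master} and Theorem~\ref{MasterDynamics} (via its entrance-law corollary) to $L=\tfrac12\tfrac{d^2}{dx^2}+\mu_{n+1}\tfrac{d}{dx}$, after verifying that $\hat h_n^{\mu_{n+1},\mu^{(n)}}$ is a positive eigenfunction of $\hat P_t^{n,\mu_{n+1}}$ whose Doob transform is $P_t^{n,\mu^{(n)}}$, computing $\Lambda_{n,n+1}^{\mu_{n+1}}\hat h_n$ via the interlacing integral, and identifying the resulting $h$-transform of $P_t^{n+1,\mu_{n+1}}$ with $P_t^{n+1,\mu^{(n+1)}}$. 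Your write-up is in fact more explicit about the eigenvalue bookkeeping than the paper, which simply records the intermediate objects and states the conclusion.
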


\begin{proof}
Let the $L$-diffusion be a Brownian motion with drift $\mu_{n+1}$, namely with generator $L=\frac{1}{2}\frac{d^2}{dx^2}+\mu_{n+1}\frac{d}{dx}$. Then, its dual diffusion $\hat{L}=\frac{1}{2}\frac{d^2}{dx^2}-\mu_{n+1}\frac{d}{dx}$ has speed measure $\hat{m}(x)=2e^{-2\mu_{n+1}x}$. Note that, the assumptions $(\mathbf{R})$, $(\mathbf{BC+})$ and $(\mathbf{YW})$ are easily seen to be satisfied. Let $P_t^{n+1,\mu_{n+1}}$ and $\hat{P}_t^{n,\mu_{n+1}}$ denote the corresponding Karlin-McGregor semigroups. Consider the (not yet normalized) positive kernel $\Lambda^{\mu_{n+1}}_{n,n+1}$ given by,
\begin{align*}
(\Lambda^{\mu_{n+1}}_{n,n+1}f) (x)=\int_{W^{n,n+1}(x)}^{}f(x,y)\prod_{i=1}^{n}2e^{-2\mu_{n+1}y_i}dy_i.
\end{align*}
and define the function
\begin{align*}
\hat{h}_n^{\mu_{n+1},\mu^{(n)}}(y)=\prod_{i=1}^{n}e^{\mu_{n+1}y_i}\det(e^{\mu_iy_j} )_{ i,j=1}^{ n}.
\end{align*}
Note that, $\hat{h}_n^{\mu_{n+1},\mu^{(n)}}$ is a strictly positive eigenfunction for $\hat{P}_t^{n,\mu_{n+1}}$. Moreover, the $h$-transform of $\hat{P}_t^{n,\mu_{n+1}}$ with $\hat{h}_n^{\mu_{n+1},\mu^{(n)}}$ is exactly the semigroup $P_t^{n,\mu^{(n)}}$ of $n$ Brownian motions with drifts $(\mu_1,\cdots,\mu_n)$ conditioned to never intersect. By integrating the determinant we get,
\begin{align*}
(\Lambda^{\mu_{n+1}}_{n,n+1}\hat{h}_n^{\mu_{n+1},\mu^{(n)}}) (x)=\frac{2^n}{\prod_{i=1}^{n}(\mu_{n+1}-\mu_i)}\det(e^{(\mu_{i}-\mu_{n+1})x_j})_{i,j=1}^{n+1},
\end{align*}
and note that the $h$-transform of $P_t^{n+1,\mu_{n+1}}$ by $\Lambda^{\mu_{n+1}}_{n,n+1}\hat{h}_n^{\mu_{n+1},\mu^{(n)}}$ is $P_t^{n+1,\mu^{(n+1)}}$. 
Finally, defining the entrance law for the two-level process started from the origin by $\nu_t^{n,n+1,\mu_{n+1},\mu^{(n)}}=u_t^{n+1,\mu^{(n+1)}}\Lambda^{\mu_{n+1},\mu^{(n)}}_{n,n+1}$,
we obtain the statement of the proposition from Theorem \ref{MasterDynamics} (see also discussion after Corollary \ref{EntranceLawCorollary}).
\end{proof}

\begin{rmk}
A 'positive temperature' version of the proposition above appears as Proposition 9.1 in \cite{Toda}.
\end{rmk}

We can then iteratively apply the result above to concatenate two-level processes and build a process:
\begin{align*}
\left(\mathbb{X}_{(\mu_1,\cdots,\mu_n)}(t);t\ge 0\right)=\left(X^{(1)}_{\mu_1}(t)\prec X^{(2)}_{\mu_2}(t)\prec\cdots \prec X^{(n)}_{\mu_n}(t);t\ge0\right),
\end{align*}
in $\mathbb{GT}(n)$ as in Proposition \ref{multilevelproposition} whose joint dynamics are given as follows (this was also described in \cite{FerrariFrings}): Level $k$ consists of $k$ copies of independent Brownian motions all with drifts $\mu_k$ reflected off the paths of level $k-1$. Then, from Proposition \ref{multilevelproposition} one obtains:
\begin{prop}
Assume $\mu_1 < \mu_2< \cdots < \mu_n$. Consider the process $\left(\mathbb{X}_{(\mu_1,\cdots,\mu_n)}(t);t\ge 0\right)$ defined above started from the origin. Then, the projection on $X^{(k)}_{\mu_k}$ is distributed as $k$ Brownian motions with drifts $\mu_1<\cdots<\mu_k$ conditioned to never intersect, issueing from the origin.
\end{prop}

\begin{rmk}
Note that, the multilevel process whose construction is described above via the hard reflection dynamics and the minors of the Hermitian valued process $\left(Y_t^M;t \ge 0\right)$ coincide on each fixed level $k$ (as single level processes, this is what we have proven here) and also at fixed times (this is already part of the results of \cite{FerrariFrings}). However, they do not have the same law as processes. Finally, for the fixed time correlation kernel of this Gelfand-Tsetlin valued process see Theorem 1 of \cite{FerrariFrings}.
\end{rmk}

\subsection{Geometric Brownian motions and quantum Calogero-Sutherland}

A geometric Brownian motion of unit diffusivity and drift parameter $\alpha$ is given by the $SDE$,
\begin{align*}
ds(t)=s(t)dW(t)+\alpha s(t)dt ,
\end{align*}
which can be solved explicitly to give that,
\begin{align*}
s(t)=s(0)\exp\left(W(t)+\left(\alpha-\frac{1}{2}\right)t\right).
\end{align*}
We will assume that $s(0)>0$, so that the process lives in $(0,\infty)$. Its generator is given by,
\begin{align*}
L^{\alpha}=\frac{1}{2}x^2\frac{d^2}{dx^2}+\alpha x \frac{d}{dx} ,
\end{align*}
with both $0$ and $\infty$ being natural boundaries. With $h_n(x)=\prod_{1\le i< j \le n}^{}(x_j-x_i)$ denoting the Vandermonde determinant it can be easily verified (although it also follows by recursively applying the results below) that $h_n$ is a positive eigenfunction of $n$ independent geometric Brownian motions, namely that with,
\begin{align*}
L_n^{\alpha}=\sum_{i=1}^{n}\frac{1}{2}x_i^2\partial_{x_i}^2+\alpha\sum_{i=1}^{n}x_i\partial_{x_i} ,
\end{align*}
we have,
\begin{align*}
L_n^{\alpha}h_n=\frac{n(n-1)}{2}\left(\frac{n-2}{3}+\alpha\right)h_n=c_{n,\alpha}h_n.
\end{align*}
The quantum Calogero-Sutherland Hamiltonian $\mathcal{H}^{\theta}_{CS}$ (see \cite{Calogero}, \cite{Sutherland}) is given by,
\begin{align*}
\mathcal{H}^{\theta}_{CS}=\frac{1}{2}\sum_{i=1}^{n}\left(x_i\partial_{x_i}\right)^2+\theta \sum_{i=1}^{n}\sum_{j\ne i}^{} \frac{x_i^2}{x_i-x_j}\partial_{x_i} .
\end{align*}
Its relation to geometric Brownian motions lies in the following simple observation. For $\theta=1$ this quantum Hamiltonian coincides with the infinitesimal generator of $n$ independent geometric Brownian motions with drift parameter $\frac{1}{2}$ $h$-transformed by the Vandermonde determinant namely,
\begin{align*}
\mathcal{H}^{1}_{CS}=h_n^{-1}\circ L^{\frac{1}{2}}_{n} \circ h_n-c_{n,\frac{1}{2}}.
\end{align*}
We now show how one can construct a $\mathbb{GT}(n)$ valued process so that the $k^{th}$ level consists of $k$ geometric Brownian motions with drift parameter $n-k+\frac{1}{2}$ $h$-transformed by the Vandermonde determinant. The key ingredient is the following: 

\begin{prop}\label{GeometricProp}
Consider a process $(X,Y)\in W^{n,n+1}((0,\infty))$ started according to the following distribution $(\delta_x,\frac{n!h_n(y)}{h_{n+1}(x)}\mathbf{1}(y\prec x)dy)$ for $x\in \mathring{W}^{n+1}((0,\infty))$ with the $Y$ particles evolving as $n$ non-intersecting geometric Brownian motions with drift parameter $\alpha+1$ conditioned to not intersect via an $h$-transform by $h_n$ and the $X$ particles evolving as $n+1$ geometric Brownian motions with drift parameter $\alpha$ being reflected off the $Y$ particles. Then, the $X$ particles are distributed as $n+1$ non-intersecting geometric Brownian motions with drift parameter $\alpha$ conditioned to not intersect via an $h$-transform by $h_{n+1}$, started form $x\in \mathring{W}^{n+1}((0,\infty))$.
\end{prop}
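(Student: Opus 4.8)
The plan is to derive this as a direct application of Theorem \ref{MasterDynamics}, with the base $L$-diffusion taken to be the geometric Brownian motion $L^\alpha$; the work then consists of checking the hypotheses of Proposition \ref{Master} and identifying the objects it produces. First I would observe that $0$ and $\infty$ are natural for $L^\alpha$, so the standing boundary assumptions (\ref{bc1l})--(\ref{bc1r}) for $W^{n,n+1}$ hold. From $\widehat L = a(x)\tfrac{d^2}{dx^2}+(a'(x)-b(x))\tfrac{d}{dx}$ one reads off $\widehat{L^\alpha}=L^{1-\alpha}$, with conjugate speed density $\widehat{m^\alpha}(x)=x^{-2\alpha}$ up to a constant, and a one-line computation gives $L^{1-\alpha}(x^{2\alpha})=\alpha\,x^{2\alpha}$; thus $\widehat{m^\alpha}(x)^{-1}=x^{2\alpha}$ is a strictly positive eigenfunction of $\widehat{L^\alpha}$, and Doob-transforming the $\widehat{L^\alpha}$-diffusion by it produces the $L^{\alpha+1}$-diffusion.

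The eigenfunction of $\hat P_t^n$ to feed into the theorem is $\hat h_n(y)=\bigl(\prod_{i=1}^n y_i^{2\alpha}\bigr)h_n(y)$, with $h_n$ the Vandermonde. I would verify that it is a strictly positive eigenfunction of the sub-Markov Karlin--McGregor semigroup $\hat P_t^n$ on $\mathring W^n((0,\infty))$ in two steps. The determinantal identity $\det\!\bigl(\hat p_t(x_i,y_j)\bigr)\prod_i y_i^{2\alpha}=e^{n\alpha t}\bigl(\prod_i x_i^{2\alpha}\bigr)\det\!\bigl(p_t^{\alpha+1}(x_i,y_j)\bigr)$ shows that the formal $h$-transform of $\hat P_t^n$ by $\prod_i y_i^{2\alpha}$ is the Karlin--McGregor semigroup of $n$ $L^{\alpha+1}$-diffusions; and $h_n$, being a polynomial vanishing on $\partial\mathring W^n$, is --- via the relation $L_n^{\alpha+1}h_n=c_{n,\alpha+1}h_n$ recorded above --- a strictly positive eigenfunction of that killed semigroup. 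Composing the two, $\hat P_t^n\hat h_n=e^{(n\alpha+c_{n,\alpha+1})t}\hat h_n$, and the $\hat h_n$-transform of $\hat P_t^n$ is exactly the semigroup of $n$ non-intersecting $L^{\alpha+1}$-diffusions conditioned by $h_n$; in particular $Q_t^{n,n+1,\hat h_n}$ is a genuine Markov semigroup by Proposition \ref{DynkinProposition}, with this process as its $Y$-marginal.

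It then remains to compute $h_{n+1}(x):=(\Lambda_{n,n+1}\Pi_{n,n+1}\hat h_n)(x)$. Since $\widehat{m^\alpha}(y_i)=y_i^{-2\alpha}$ cancels the factor $y_i^{2\alpha}$ in $\hat h_n$, this collapses to $\int_{W^{n,n+1}(x)}h_n(y)\,dy$, which by the standard interlacing identity equals $\tfrac{1}{n!}\prod_{1\le i<j\le n+1}(x_j-x_i)$ --- finite on $\mathring W^{n+1}((0,\infty))$, so the hypotheses of Proposition \ref{Master} are met and $\Lambda^{\hat h_n}_{n,n+1}(x,\cdot)$ is a genuine Markov kernel whose density in $y$ is proportional to $n!\,h_n(y)/h_{n+1}(x)$ on $W^{n,n+1}(x)$ --- precisely the stated initial law $(\delta_x,\tfrac{n!h_n(y)}{h_{n+1}(x)})$. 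Finally, $h_{n+1}$ being (via $L_{n+1}^\alpha h_{n+1}=c_{n+1,\alpha}h_{n+1}$) a strictly positive eigenfunction of $P_t^{n+1}$, the transformed semigroup $P_t^{n+1,h_{n+1}}$ is that of $n+1$ non-intersecting $L^\alpha$-diffusions conditioned by $h_{n+1}$, and Theorem \ref{MasterDynamics} yields that the $X$-component evolves with this law started from $x$. I expect the only genuinely delicate point to be the verification that $\hat h_n$ and $h_{n+1}$ are eigenfunctions of the \emph{sub-Markov} Karlin--McGregor semigroups --- rather than merely of the free generators $L_n^{\alpha+1}$, $L_{n+1}^\alpha$ --- together with the finiteness of $\Lambda_{n,n+1}\Pi_{n,n+1}\hat h_n$; the rest is bookkeeping of constants and eigenvalues that cancel in the Doob transforms.
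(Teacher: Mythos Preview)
Your proposal is correct and follows essentially the same route as the paper: take the $L$-diffusion to be $L^\alpha$, identify $\widehat{L^\alpha}=L^{1-\alpha}$ with $\widehat{m^\alpha}(x)\propto x^{-2\alpha}$, perform the composite $h$-transform by $\widehat{m^\alpha}^{-1}$ (turning $\widehat{L^\alpha}$ into $L^{\alpha+1}$) and then by the Vandermonde $h_n$, observe that the $\widehat{m^\alpha}$ factors in $\Lambda_{n,n+1}$ cancel so that $h_{n+1}=\Lambda_{n,n+1}\Pi_{n,n+1}\hat h_n$ is proportional to the $(n+1)$-Vandermonde, and invoke Theorem~\ref{MasterDynamics}. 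The paper records this argument in the paragraph immediately preceding the proposition; you have simply spelled out the verifications (boundary behaviour, the eigenvalue computations, and the interlacing integral) that the paper leaves implicit.
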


\begin{proof}
Taking as the $L$-diffusion $L^{\alpha}$, and note that its speed measure is given by $m^{\alpha}(x)=2x^{2\alpha-2}$, the conjugate diffusion is $\widehat{L^{\alpha}}=L^{1-\alpha}$. Observe that, the assumptions $(\mathbf{R})$, $(\mathbf{BC+})$ and $(\mathbf{YW})$ are clearly satisfied.

First, note that an easy calculation gives that the $h$-transform of $\widehat{L^{\alpha}}$ by $\widehat{m^{\alpha}}^{-1}$ is an $L^{\alpha+1}$-diffusion, namely a geometric Brownian motion with drift parameter $\alpha+1$. Hence, an $h$-transform of $n$ $\widehat{L^{\alpha}}$-diffusions by the eigenfunction $\prod_{i=1}^{n}\widehat{m^{\alpha}}^{-1}(y_i)h_n(y)$ gives $n$ non-intersecting geometric Brownian motions with drift parameter $\alpha+1$ conditioned to not intersect via an $h$-transform by $h_n$. The statement of the proposition is then obtained from an application of Theorem \ref{MasterDynamics}.
\end{proof}

\begin{rmk}
Observe that, under an application of the exponential map the results of Section \ref{DriftingBrownianSection}, give a generalization of Proposition \ref{GeometricProp} above.
\end{rmk}

Using the proposition above it is straightforward, and we will not elaborate on, how to iterate to build the $\mathbb{GT}(n)$ valued process with the correct drift parameters on each level.

\begin{rmk}
The following geometric Brownian motion,
\begin{align*}
ds(t)=\sqrt{2}s(t)dW(t)-(u+u'+v+v') s(t)dt,
\end{align*}
also arises as a continuum scaling limit after we scale space by $1/N$ and send $N$ to infinity of the bilateral birth and death chain with birth rates $(x-u)(x-u')$ and death rates $(x+v)(x+v')$ considered by Borodin and Olshanski in \cite{BorodinOlshanski}.
\end{rmk}

\subsection{Squared Bessel processes and LUE matrix diffusions}
In this subsection we will first construct a process taking values in $\mathbb{GT}$ being the analogue of the Brownian motion model for squared Bessel processes and having close connections to the LUE matrix valued diffusion. We also build a process in $\mathbb{GT}_\textbf{s}$ generalizing the construction of Cerenzia (after a "squaring" transformation of the state space) for all dimensions $d\ge2$. We begin with a definition:

\begin{defn}
The squared Bessel process of dimension $d$, abbreviated from now on as $BESQ(d)$ process, is the one dimensional diffusion with generator in $(0,\infty)$,
\begin{align*}
L^{(d)}=2x\frac{d^2}{dx^2}+d\frac{d}{dx}.
\end{align*}
 The origin is an entrance boundary for $d\ge 2$, a regular boundary point for $0<d<2$ and an exit one for $d\le 0$. Define the index $\nu(d)=\frac{d}{2}-1$. The density of the speed measure of $L^{(d)}$ is $m_{\nu}(y)=c_{\nu}y^{\nu}$ and its scale function $s_{\nu}(x)=\bar{c}_{\nu}x^{-\nu}$, $\nu \ne 0$ and $s_0(x)=logx$. Then from the results of the previous section its conjugate, the $\widehat{L^{(d)}}$ diffusion, is a $BESQ(2-d)$ process with the dual boundary condition. Moreover, the following relation will be key, see \cite{SurveyBessel}: A Doob $h$-transform of a $BESQ(2-d)$ process by its scale function $x^{\nu+1}$ gives a $BESQ(d+2)$ process.
\end{defn}

 Note that, condition $(\mathbf{BC+})$ only holds for dimensions $d \in (-\infty,0] \cup [2,\infty)$; this is because for $0<d<2$, the origin is a regular boundary point and the diffusion coefficient degenerates (these values of the parameters will not be considered here). We use the following notation throughout, for $d \in (-\infty,0] \cup [2,\infty)$:  we write $P_t^{n,(d)}$ for the Karlin-McGregor semigroup of $n$ $BESQ(d)$ processes killed when they intersect or when they hit the origin, in case $d \le 0$.

We start in the simplest setting of $W^{1,1}$ and consider the situation of a single $BESQ(2-d)$ process being reflected upwards off a $BESQ(d)$ process: 
\begin{prop}\label{BESQ11}
 Let $d\ge 2$. Consider a process $(X,Y)\in W^{1,1}([0,\infty))$ started according to the distribution $(\delta_x,\frac{(\nu+1)y^{\nu}}{x^{\nu+1}}1_{[0,x]}dy)$ for $x>0$ with the $Y$ particle evolving as a $BESQ(d)$ process and the $X$ particle as a $BESQ(2-d)$ process in $(0,\infty)$ reflected off the $Y$ particle. Then, the $X$ particle is distributed as a $BESQ(d+2)$ process started from $x$.
\end{prop}

\begin{proof}
We take as the $L$-diffusion a $BESQ(2-d)$ process. Then, the $\hat{L}$-diffusion is a $BESQ(d)$ process. Note that, the assumptions $(\mathbf{R})$, $(\mathbf{BC+})$ and $(\mathbf{YW})$ are satisfied. Since $\hat{h}_{1,1}^{(d)}(x)=1$ is invariant for $BESQ(d)$, the following is invariant for $BESQ(2-d)$,
\begin{align*}
h^{(d)}_{1,1}(x)=\int_{0}^{x}c_{\nu}y^{\nu}dy=\frac{c_{\nu}}{\nu+1}x^{\nu+1}.
\end{align*}
Then, as already remarked above, see \cite{SurveyBessel}, the $h$-transformed process with semigroup $P_t^{1,(2-d),h^{(d)}_{1,1}}$ is exactly a $BESQ(d+2)$ process. The analogue of Theorem \ref{MasterDynamics} in $W^{n,n}$ gives the statement of the proposition.
\end{proof}

We expect that the restriction to $d\ge 2$ is not necessary for the result to hold (it should be true for $d>0$). In fact, Corollary \ref{Bes3}, corresponds to $d=1$, after we perform the transformation $x \mapsto \sqrt{x}$, which in particular maps $BESQ(1)$ and $BESQ(3)$ to reflecting Brownian motion and $BES(3)$ respectively. 

We now move on to an arbitrary number of particles. Define the functions,
\begin{align*}
\hat{h}^{(d)}_{n,n}(x)&=\prod_{1 \le i < j \le n}^{}(x_j-x_i)=\det\left(x_i^{j-1}\right)_{i,j=1}^n \ , \\
\hat{h}_{n,n+1}^{(d)}(x)&=\prod_{1 \le i < j \le n}^{}(x_j-x_i)\prod_{i=1}^{n}x_i^{\nu+1}=\det\left(x_i^{j+\nu}\right)_{i,j=1}^n \ .
\end{align*}
Moreover, let $\Lambda_{n-1,n}$ and $\Lambda_{n,n}$ be the following positive kernels, defined as in (\ref{PreIntertwiningKernel}), where we recall that $m_{\nu(d)}(\cdot)$ is the speed measure density with respect to Lebesgue measure of a $BESQ(d)$ process:
\begin{align*}
(\Lambda_{n-1,n}f)(x) &= \int_{W^{n-1,n}(x)}^{}\prod_{i=1}^{n-1}m_{\nu(2-d)}(y_i)f(x,y)dy,\\
(\Lambda_{n,n}f)(x) &= \int_{W^{n,n}(x)}^{}\prod_{i=1}^{n_1}m_{\nu(d)}(y_i)f(x,y)dy.
\end{align*}
 An easy calculation gives that $h^{(d)}_{n-1,n}(x)=c_{n-1,n}(\nu)(\Lambda_{n-1,n}\Pi_{n-1,n}\hat{h}_{n-1,n}^{(d)})(x)$ is equal to $\hat{h}^{(d)}_{n,n}(x)$ and $h_{n,n}^{(d)}(x)=c_{n,n}(\nu)(\Lambda_{n,n}\Pi_{n,n}\hat{h}_{n,n}^{(d)})(x)$ is equal $\hat{h}_{n,n+1}^{(d)}(x)$, where $c_{n-1,n}(\nu), c_{n,n}(\nu)$ are explicit constants whose exact values are not important in what follows. Then we have:

\begin{prop}\label{BesqProp1}
Let $d\ge 2$. Consider a process $(X,Y)\in W^{n,n+1}([0,\infty))$ started according to the following distribution $(\delta_x,\frac{n!\prod_{1 \le i < j \le n}^{}(y_j-y_i)}{\prod_{1 \le i < j \le n+1}^{}(x_j-x_i)}\mathbf{1}(y\prec x)dy)$ for $x\in \mathring{W}^{n+1}([0,\infty))$ with the $Y$ particles evolving as $n$ non-intersecting $BESQ(d+2)$ processes and the $X$ particles evolving as $n+1$ $BESQ(d)$ processes being reflected off the $Y$ particles. Then, the $X$ particles are distributed as $n+1$ non-intersecting $BESQ(d)$ processes started form $x\in \mathring{W}^{n+1}([0,\infty))$.
\end{prop}

\begin{proof}
Take as the $L$-diffusion a $BESQ(d)$ process. Then, the $\hat{L}$-diffusion is a $BESQ(2-d)$ process. Note that, the assumptions $(\mathbf{R})$, $(\mathbf{BC+})$ and $(\mathbf{YW})$ are satisfied. We use the positive harmonic function $\hat{h}_{n,n+1}^{(d)}(x)$ for the semigroup $P_t^{n,(2-d)}$ of $n$ independent $BESQ(2-d)$ processes killed when they hit 0 or when they intersect, which transforms them into $n$ non-intersecting $BESQ(d+2)$ processes.  
Finally observe that, $P_t^{n+1,(d),h^{(d)}_{n,n+1}}$ is exactly the semigroup of $n+1$ $BESQ(d)$ processes conditioned to never intersect (see e.g. \cite{O Connell}). Then, Theorem \ref{MasterDynamics} gives the statement of the proposition.
\end{proof}

\begin{prop}\label{BesqProp2}
Let $d\ge 2$. Consider a process $(X,Y)\in W^{n,n}([0,\infty))$ started according to the following distribution $\big(\delta_x,\frac{c_{n,n}(\nu)\hat{h}_{n,n}^{(d)}(y)\prod_{i=1}^{n}m_{\nu(d)}(y_i)}{h_{n,n}^{(d)}(x)}\mathbf{1}(y\prec x)dy\big)$ for $x \in \mathring{W}^{n}([0,\infty))$ with the $Y$ particles evolving as $n$ non-intersecting $BESQ(d)$ processes and the $X$ particles evolving as $n$ $BESQ(2-d)$ processes being reflected off the $Y$ particles. Then, the $X$ particles are distributed as $n$ non-intersecting $BESQ(d+2)$ processes started form $x\in \mathring{W}^{n}([0,\infty))$.
\end{prop}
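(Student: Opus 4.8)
\emph{Sketch.} This is an instance of the $W^{n,n}$ version of Theorem \ref{MasterDynamics} (equivalently Corollary \ref{mastercorollary}), so the work is to verify the hypotheses of Proposition \ref{Master} and to identify the objects there with the concrete Bessel processes in the statement. The plan is to take $L=L^{(2-d)}$, the $BESQ(2-d)$ generator, so that by the conjugation rules of Section 2.1 the conjugate diffusion is $\hat L=L^{(d)}$, the $BESQ(d)$ generator, with speed density $\hat m^{(d)}(y)\propto y^{\nu}$ --- equivalently $\hat m^{(d)}$ is the scale derivative $\propto y^{\nu}$ of $BESQ(2-d)$. Then $\hat P^{n,n,(d)}_t$ is the Karlin--McGregor semigroup of $n$ reflected $BESQ(d)$ and $P^{n,n,(d)}_t$ that of $n$ killed $BESQ(2-d)$, and the standing boundary assumptions (\ref{bc2l})--(\ref{bc2r}) hold: at $l=0$ the point is regular for $L$ when $0<d<2$ (we take it absorbing for $L$, hence reflecting for $\hat L$) and exit for $L$ when $d\ge 2$, while $r=\infty$ is natural throughout.

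First I would check that $\hat{h}_{n,n}^{(d)}(y)=\det(y_i^{j-1})_{i,j=1}^{n}$ is a strictly positive harmonic function for $\hat P^{n,n,(d)}_t$, i.e. has eigenvalue $0$. Since $L^{(d)}y^{k}=k(2k-2+d)y^{k-1}$, the operator $\sum_i L^{(d)}_{y_i}$ sends the $j$-th column of $\det(y_i^{j-1})$ to a scalar multiple of the $(j-1)$-st column and annihilates the first, so it vanishes on $\hat{h}_{n,n}^{(d)}$; positivity on $\mathring{W}^{n}([0,\infty))$ is the generalized Vandermonde positivity. Next I would compute $h_{n,n}^{(d)}(x):=c_{n,n}(\nu)\,(\Lambda_{n,n}\Pi_{n,n}\hat{h}_{n,n}^{(d)})(x)$ and check it is finite and strictly positive, so that $\Lambda^{\hat{h}_{n,n}^{(d)}}_{n,n}(x,\cdot)$ is a genuine Markov kernel. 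On $W^{n,n}(x)$ the $y$-coordinates satisfy $y_i\in[x_{i-1},x_i]$ with $x_0:=0$; expanding the determinant $\hat{h}_{n,n}^{(d)}(y)$ and integrating against $\prod_i\hat m^{(d)}(y_i)\propto\prod_i y_i^{\nu}$ term by term gives a constant times $\det\big(g_j(x_i)-g_j(x_{i-1})\big)_{i,j}$ with $g_j(x)=x^{\nu+j}/(\nu+j)$ and $g_j(0)=0$, and the row operations $R_i\mapsto R_1+\cdots+R_i$ telescope this to a constant multiple of the generalized Vandermonde $\det(x_i^{\nu+j})_{i,j=1}^{n}=h_{n,n}^{(d)}(x)$. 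In particular the displayed $y$-density is precisely $\Lambda^{\hat{h}_{n,n}^{(d)}}_{n,n}(x,dy)$, so the initial law is $\delta_x\otimes\Lambda^{\hat{h}_{n,n}^{(d)}}_{n,n}(x,\cdot)$ as required.

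It then remains to identify the two transformed semigroups in Proposition \ref{Master}. By Proposition \ref{dynamics2} and the fact that $\hat{h}_{n,n}^{(d)}$ depends on $y$ only, $Q^{n,n,(d),\hat{h}_{n,n}^{(d)}}_t$ is the law of $(X,Y)$ solving the $W^{n,n}$ reflecting system (\ref{System2}) with the $Y$-marginal $h$-transformed: $Y$ is $n$ non-intersecting $BESQ(d)$ and, conditionally on $Y$, $X$ is $n$ $BESQ(2-d)$ reflected off $Y$, exactly the description in the statement. On the other side, since $h_{n,n}^{(d)}(x)=\det(x_i^{j+\nu})=\det(x_i^{j-1})\prod_i x_i^{\nu+1}$, the $h$-transform of the Karlin--McGregor semigroup of $n$ killed $BESQ(2-d)$ by $h_{n,n}^{(d)}$ factors, after pulling $\prod_i x_i^{\nu+1}$ out row by row, as: first the coordinatewise $h$-transform by the scale function $\propto x^{\nu+1}$ of $BESQ(2-d)$, which conditions it to avoid the origin and turns it into $BESQ(d+2)$ (see \cite{SurveyBessel}); then the $h$-transform by the Vandermonde $\det(x_i^{j-1})$, harmonic for the $BESQ(d+2)$ Karlin--McGregor semigroup by the same column argument with $d$ replaced by $d+2$, which conditions the $n$ independent $BESQ(d+2)$ never to intersect --- this is the K\"onig--O'Connell construction \cite{O Connell}. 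Thus $P^{n,n,(d),h_{n,n}^{(d)}}_t$ is the semigroup of $n$ non-intersecting $BESQ(d+2)$, and substituting all of this into the $W^{n,n}$ version of Theorem \ref{MasterDynamics} gives the claim.

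The main difficulty is not conceptual but bookkeeping: making sure the origin behaviour of $BESQ(2-d)$ really falls under (\ref{bc2l}) for all $d$ in the relevant range and --- crucially --- that one uses the \emph{killed}, not the absorbed, Karlin--McGregor semigroup, so that no atom at $0$ is lost in the $dx'$-integration underlying Proposition \ref{DynkinProposition} and the intertwining of Proposition \ref{Master}. The one genuinely external ingredient is the identification, in the last step, of the iterated $h$-transform with $n$ non-intersecting $BESQ(d+2)$, for which I would invoke \cite{O Connell}.
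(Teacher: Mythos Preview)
Your proposal is correct and follows essentially the same route as the paper: set $L=BESQ(2-d)$ so that $\hat L=BESQ(d)$, verify that $\hat h_{n,n}^{(d)}$ is a positive eigenfunction for $\hat P_t^{n}$, compute $\Lambda_{n,n}\Pi_{n,n}\hat h_{n,n}^{(d)}=c\,h_{n,n}^{(d)}$, identify the resulting $h$-transforms, and apply the $W^{n,n}$ version of Theorem~\ref{MasterDynamics}. Your explicit determinant integration and the factorisation of the $h$-transform (scale-function transform $BESQ(2-d)\to BESQ(d+2)$ followed by the Vandermonde conditioning) spell out details the paper leaves to the reader; the paper simply records the identities $h_{n,n}^{(d)}=c_{n,n}(\nu)(\Lambda_{n,n}\Pi_{n,n}\hat h_{n,n}^{(d)})$ and that $P_t^{n,n,(d),h_{n,n}^{(d)}}$ is the semigroup of $n$ non-intersecting $BESQ(d+2)$, citing \cite{O Connell} and \cite{SurveyBessel}. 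One minor caveat worth making explicit: the transition-density identification in Section~\ref{SectionTransitionDensities} that feeds into Theorem~\ref{MasterDynamics} is only established for $d\ge 2$ (origin exit for $BESQ(2-d)$), since the regular-boundary proposition requires $a(x)$ non-degenerate at the boundary, which fails for $BESQ$ at $0$; the paper makes the same implicit restriction.
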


\begin{proof}
Take as the $L$-diffusion a $BESQ(2-d)$ process. Then, the $\hat{L}$-diffusion is a $BESQ(d)$ process. Note that, the assumptions $(\mathbf{R})$, $(\mathbf{BC+})$ and $(\mathbf{YW})$ are satisfied. We use the positive harmonic function $\hat{h}^{(d)}_{n,n}(x)$ for the semigroup $P_t^{n,(d)}$ of $n$ independent $BESQ(d)$ processes killed when they intersect. Furthermore note that, $P_t^{n,(2-d),h_{n,n}^{(d)}}$ is the semigroup of $n$  $BESQ(d+2)$ processes conditioned to never intersect (the transformation by $h_{n,n}^{(d)}$ corresponds to transforming the $BESQ(2-d)$ processes to $BESQ(d+2)$ and then conditioning these to never intersect). Then, the analogue of Theorem \ref{MasterDynamics} in $W^{n,n}$ gives the statement.
\end{proof}

It is possible to start both of these processes from the origin via the following explicit entrance law for $n$ non-intersecting $BESQ(d)$ processes (see for example \cite{O Connell}),
\begin{align*}
\mu_t^{n,(d)}(dx)=C_{n,d}t^{-n(n+\nu)}\prod_{1 \le i < j \le n}^{}(x_j-x_i)^2\prod_{i=1}^{n}x_i^{\nu}e^{-\frac{1}{2t}x_i}dx.
\end{align*}
Defining the two entrance laws,
\begin{align*}
\nu_t^{n,n,\hat{h}_{n,n}^{(d)}}(dx,dy)&=\mu_t^{n,(d+2)}(dx) \frac{c_{n,n}(\nu)\hat{h}_{n,n}^{(d)}(y)\prod_{i=1}^{n}m_{\nu(d)}(y_i)}{h_{n,n}^{(d)}(x)}\mathbf{1}(y\prec x) dy,\\
\nu_t^{n,n+1,\hat{h}^{(d)}_{n,n+1}}(dx,dy)&=\mu_t^{n+1,(d)}(dx) \frac{n!\prod_{1 \le i < j \le n}^{}(y_j-y_i)}{\prod_{1 \le i < j \le n+1}^{}(x_j-x_i)}\mathbf{1}(y\prec x)dy,
\end{align*}
for the processes with semigroups corresponding to the pair $(X,Y)$ described in Propositions \ref{BesqProp2} and \ref{BesqProp1} respectively, we immediately arrive at the following proposition in analogy to the case of Dyson's Brownian motion:

\begin{prop} \label{superpositionref2} \textbf{(a)}Let $d\ge 2$. Consider a process $(X,Y)\in W^{n,n+1}([0,\infty))$ started according to the entrance law $\nu_t^{n,n+1,\hat{h}^{(d)}_{n,n+1}}(dx,dy)$ with the $Y$ particles evolving as $n$ non-intersecting $BESQ(d+2)$ processes and the $X$ particles evolving as $n+1$ $BESQ(d)$ processes being reflected off the $Y$ particles. Then, the $X$ particles are distributed as $n+1$ non-intersecting $BESQ(d)$ processes issueing from the origin.\\
\textbf{(b)} Let $d\ge 2$. Consider a process $(X,Y)\in W^{n,n}([0,\infty))$ started according to the entrance law $\nu_t^{n,n,\hat{h}_{n,n}^{(d)}}(dx,dy)$ with the $Y$ particles which evolve as $n$ non-intersecting $BESQ(d)$ processes and the $X$ particles evolving as $n$ $BESQ(2-d)$ processes being reflected off the $Y$ particles. Then, the $X$ particles are distributed as $n$ non-intersecting $BESQ(d+2)$ processes issueing from the origin.
\end{prop}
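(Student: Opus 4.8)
The plan is to deduce both parts from Theorem~\ref{MasterDynamics} together with the entrance-law corollary that immediately follows it, specialising the one-dimensional diffusion $L$ to $BESQ(d)$ (reflected at $0$) in part~\textbf{(a)} and to $BESQ(2-d)$ (absorbed at $0$) in part~\textbf{(b)}. Under the relevant dimension restriction (e.g. $d\ge2$, so that the origin is an entrance point for $BESQ(d)$ and, dually, an exit point for $BESQ(2-d)$) the standing boundary assumptions (\ref{bc1l})--(\ref{bc1r}), respectively (\ref{bc2l})--(\ref{bc2r}), hold, so Propositions~\ref{Master}, \ref{dynamics1} and~\ref{dynamics2} apply. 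With the eigenfunctions $\hat h^{(d)}_{n,n+1}$ and $\hat h^{(d)}_{n,n}$ chosen as above, the displayed intertwinings preceding the statement are exactly the specialisations of Proposition~\ref{Master}, and the $h$-transformed two-level semigroups $Q_t^{n,n+1,\hat h^{(d)}_{n,n+1}}$ and $Q_t^{n,n,\hat h^{(d)}_{n,n}}$ are the ones governing, via Propositions~\ref{dynamics1} and~\ref{dynamics2}, the systems of $SDEs$ in which $X$ is reflected off $Y$; their $X$-marginals intertwine, through the Markov kernels $\Lambda^{\hat h^{(d)}_{n,n+1}}_{n,n+1}$ and $\Lambda^{\hat h^{(d)}_{n,n}}_{n,n}$, with the Karlin--McGregor semigroup of $n+1$ non-intersecting $BESQ(d)$ and of $n$ non-intersecting $BESQ(d+2)$ respectively.

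The first genuine point is to verify that $\mu^{n+1,(d)}_t$ is an entrance law for the $(n+1)$-particle non-intersecting $BESQ(d)$ semigroup, and $\mu^{n,(d+2)}_t$ for the $n$-particle non-intersecting $BESQ(d+2)$ semigroup. This is carried out in~\cite{O Connell}; alternatively one applies the general recipe of the Appendix, realising $\mu^{n+1,(d)}_t$ as the weak limit, as the common starting point tends to $0$, of the $h$-transformed Karlin--McGregor density of $n+1$ $BESQ(d)$ processes. The limit is transparent: expanding the $BESQ(d)$ transition density for small argument turns this density into a polynomial ensemble of the form (\ref{polynomialensemble}), whose Vandermonde factor and $\prod_i x_i^{\nu}$ weight reproduce $\mu^{n+1,(d)}_t$ up to the normalising constant $C_{n,d}$, and the entrance-law identity $\mu_s P_t=\mu_{s+t}$ is then inherited from the limit (or checked directly from the explicit formula).

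Next one identifies $\nu^{n,n+1,\hat h^{(d)}_{n,n+1}}_t=\mu^{n+1,(d)}_t\,\Lambda^{\hat h^{(d)}_{n,n+1}}_{n,n+1}$ and $\nu^{n,n,\hat h^{(d)}_{n,n}}_t=\mu^{n,(d+2)}_t\,\Lambda^{\hat h^{(d)}_{n,n}}_{n,n}$. This is just the definition of these measures combined with the determinant integrations already recorded, namely that $\Lambda_{n,n+1}\Pi_{n,n+1}\hat h^{(d)}_{n,n+1}$ and $\Lambda_{n,n}\Pi_{n,n}\hat h^{(d)}_{n,n}$ are, up to explicit constants, the Vandermonde-type eigenfunctions normalising $\Lambda^{\hat h}$ in those two cases, together with the elementary fact that the speed density $\hat m^{(d)}$ of $BESQ(2-d)$ obeys $\hat m^{(d)}(y)\,y^{\nu+1}=\mathrm{const}$, so that in $\Lambda^{\hat h^{(d)}_{n,n+1}}_{n,n+1}$ the $\hat m$-weights cancel the factor $\prod_i y_i^{\nu+1}$ in $\hat h^{(d)}_{n,n+1}$ and only the pure Vandermonde factor survives, exactly as in the stated $\nu^{n,n+1,\hat h^{(d)}_{n,n+1}}_t$.

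Finally, by the entrance-law corollary following Theorem~\ref{MasterDynamics}, $\nu^{n,n+1,\hat h^{(d)}_{n,n+1}}_t$ and $\nu^{n,n,\hat h^{(d)}_{n,n}}_t$ are entrance laws for $Q_t^{n,n+1,\hat h^{(d)}_{n,n+1}}$ and $Q_t^{n,n,\hat h^{(d)}_{n,n}}$; applying the corresponding one-dimensional entrance law to both sides of the intertwining (\ref{MasterIntertwining}), exactly as in the sentence before the statement, shows that the $X$-component of the two-level process started from $\nu^{n,n+1,\hat h^{(d)}_{n,n+1}}_t$ (resp. $\nu^{n,n,\hat h^{(d)}_{n,n}}_t$) is a Markov process whose semigroup is that of $n+1$ non-intersecting $BESQ(d)$ (resp. $n$ non-intersecting $BESQ(d+2)$) issuing from the origin; Propositions~\ref{dynamics1} and~\ref{dynamics2} then furnish the ``$X$ reflected off $Y$'' description. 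I expect the one substantive obstacle to be the entrance-law verification of the second step --- controlling the $x\downarrow0$ limit of the $h$-transformed $BESQ$ Karlin--McGregor kernel and checking $\mu_s P_t=\mu_{s+t}$; everything else is bookkeeping of constants and confirming that the $BESQ$ boundary classification meets (\ref{bc1l})--(\ref{bc2r}).
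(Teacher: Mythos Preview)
Your proposal is correct and follows essentially the same route as the paper: the paper does not spell out a separate proof of this proposition but simply records the explicit entrance law $\mu_t^{n,(d)}$ (citing \cite{O Connell}), defines $\nu_t^{n,n+1,\hat h^{(d)}_{n,n+1}}$ and $\nu_t^{n,n,\hat h^{(d)}_{n,n}}$ as $\mu\,\Lambda$, and then states the result as an immediate consequence of Theorem~\ref{MasterDynamics} together with its entrance-law corollary, exactly as you outline. Your additional verification that the $BESQ$ boundary classification matches (\ref{bc1l})--(\ref{bc2r}) and that the $\hat m^{(d)}$ weight cancels the $\prod y_i^{\nu+1}$ factor are the right bookkeeping checks, and the paper likewise defers the entrance-law property itself to \cite{O Connell} and the Appendix.
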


Making use of the proposition above we build two processes in Gelfand-Tsetlin patterns. First, the process in $\mathbb{GT}(n)$. To do this, we make repeated use of part \textbf{(a)} of Proposition \ref{superpositionref2} to consistently concatenate two-level processes. Note the fact that the dimension $d$, of the $BESQ(d)$ processes, decreases by $2$ at each stage that we increase the number of particles. So we fix $n$ the depth of the Gelfand-Tsetlin pattern and $d^*$ the dimension of the $BESQ$ processes at the bottom of the pattern. Then, we build a consistent process,
\begin{align*}
\left(\mathbb{X}^{n,(d^*)}(t);t\ge 0\right)=(X_i^{(k)}(t);t\ge 0, 1 \le i\le k \le n) ,
\end{align*}
 taking values in $\mathbb{GT}(n)$ with the joint dynamics described as follows: $X_1^{(1)}$ evolves as a $BESQ(d^*+2(n-1))$ process. Moreover, for $k\ge 2$ particles at level $k$ evolve as $k$ independent $BESQ(d^*+2(n-k))$ processes reflecting off the $(k-1)$ particles at the $(k-1)^{th}$ level to maintain the interlacing. Hence, from Proposition \ref{multilevelproposition} (see discussion following it regarding the entrance laws) we obtain:
\begin{prop}
Let $d \ge 2$. If $\mathbb{X}^{n,{(d^*)}}$ is started from the origin according to the entrance law then the projection onto the $k^{th}$ level process $X^{(k)}$ is distributed as $k$ $BESQ(d^*+2(n-k))$ processes conditioned to never intersect.
\end{prop}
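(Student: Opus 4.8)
The plan is to deduce this from the concatenation procedure of Section 3.1 --- that is, from Proposition \ref{multilevelproposition} together with the entrance-law version of Theorem \ref{MasterDynamics} --- applied with the sequence of generators $L_k := L^{(d_k)}$, the $BESQ(d_k)$ generator on $(0,\infty)$, where $d_k := d^{*} + 2(n-k)$ for $1\le k\le n$. Since $d^{*}\ge 2$ we have $d_k\ge 2$ for every $k\le n$, so the origin is an entrance boundary for each $L_k$, is never reached, and the boundary assumptions (\ref{bc1l})--(\ref{bc1r}) for the relevant spaces $W^{k-1,k}([0,\infty))$ hold. The conjugate of $L^{(d_k)}$ is $L^{(2-d_k)}=\widehat{L^{(d_k)}}$, and the whole point of the choice of $d_k$ is the telescoping $d_{k-1}=d_k+2$, which matches the fact that passing from the $Y$-level to the $X$-level raises the $BESQ$ dimension by $2$ (the $h$-transform of $BESQ(2-d)$ by its scale function is $BESQ(d+2)$).

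First I would set up each two-level block. For $2\le k\le n$, on $W^{k-1,k}([0,\infty))$ take $L=L^{(d_k)}$, $\widehat{L}=L^{(2-d_k)}$, and the $\widehat{L}$-eigenfunction $\widehat{h}_{k-1,k}^{(d_k)}(y)=\det(y_i^{\,j+\nu_k})_{i,j=1}^{k-1}=\prod_{1\le i<j\le k-1}(y_j-y_i)\prod_{i=1}^{k-1}y_i^{\nu_k+1}$ with $\nu_k=d_k/2-1$; this is exactly the eigenfunction whose $h$-transform turns the Karlin--McGregor semigroup of $k-1$ $BESQ(2-d_k)$ processes into that of $k-1$ non-intersecting $BESQ(d_k+2)=BESQ(d_{k-1})$ processes, so the $Y$-dynamics are those required in part (a) of Proposition \ref{superpositionref2}. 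As recorded in the excerpt, $\big(\Lambda_{k-1,k}\Pi_{k-1,k}\widehat{h}_{k-1,k}^{(d_k)}\big)(x)$ equals, up to a positive constant, $h^{(d_k)}_{k-1,k}(x)=\det(x_i^{\,j-1})_{i,j=1}^{k}=\prod_{1\le i<j\le k}(x_j-x_i)$, which is strictly positive on $\mathring{W}^{k}((0,\infty))$ and a (strictly positive) eigenfunction for the Karlin--McGregor semigroup of $k$ $BESQ(d_k)$ processes; hence Theorem \ref{MasterDynamics} applies and the $X$-level, started from $\Lambda^{\widehat{h}^{(d_k)}_{k-1,k}}_{k-1,k}(x,\cdot)$, evolves as $k$ non-intersecting $BESQ(d_k)$ processes.

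Next I would verify the consistency relations (\ref{eigenGelfand}) and (\ref{consistencyGelfand}) that glue consecutive blocks. Relation (\ref{eigenGelfand}) is the displayed identity just quoted. For (\ref{consistencyGelfand}) one must check that the semigroup of $k$ non-intersecting $BESQ(d_k)$ processes obtained as the $X$-output of the $(k-1,k)$ block coincides with the $Y$-input of the $(k,k+1)$ block; by part (a) of Proposition \ref{superpositionref2} with dimension $d_{k+1}$ the latter is $k$ non-intersecting $BESQ(d_{k+1}+2)$ processes, and $d_{k+1}+2=d_k$, so the two processes are literally the same. At the level of eigenfunctions the identity is the elementary fact that the Karlin--McGregor semigroup of $BESQ(d_k)$ is the $\prod_i x_i^{\nu_{k+1}+1}$-transform of that of $BESQ(2-d_{k+1})$ (again $d_k=d_{k+1}+2$, and $BESQ(2-d_{k+1})$ conditioned to avoid the origin is $BESQ(d_k)$), under which the Vandermonde $\prod_{i<j}(x_j-x_i)$ corresponds exactly to $\widehat{h}_{k,k+1}^{(d_{k+1})}=\det(x_i^{\,j+\nu_{k+1}})$. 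Thus Proposition \ref{multilevelproposition} applies with $\mathfrak{P}^{(k)}(t)$ the semigroup of $k$ non-intersecting $BESQ(d_k)$ processes and $\mathfrak{L}^{k}_{k-1}(x,dy)$ the kernel proportional to $\mathbf{1}(y\in W^{k-1,k}(x))\prod_{1\le i<j\le k-1}(y_j-y_i)\,dy$.

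Finally I would start from the origin. The entrance law $\mu_t^{n,(d^{*})}$ for $n$ non-intersecting $BESQ(d^{*})$ processes is given explicitly in the excerpt; by the entrance-law corollary following Theorem \ref{MasterDynamics}, the measure $\mu_t^{n,(d^{*})}(dx^{(n)})\,\mathfrak{L}^{n}_{n-1}(x^{(n)},dx^{(n-1)})\cdots\mathfrak{L}^{2}_{1}(x^{(2)},dx^{(1)})$ is an entrance law for the $\mathbb{GT}(n)$-valued process governed by the reflecting $SDEs$ (\ref{GelfandTsetlinSDEs}) with these coefficients, one has $\tau_{\mathbb{GT}(n)}=\infty$ a.s.\ because each level is an $h$-transform and hence consists of non-intersecting paths, and Proposition \ref{multilevelproposition} (equivalently, Corollary \ref{mastercorollary} iterated) gives that $X^{(k)}$ evolves according to $\mathfrak{P}^{(k)}$, i.e.\ as $k$ non-intersecting $BESQ(d^{*}+2(n-k))$ processes issuing from the origin; for $k=1$ the non-intersection condition is vacuous and this reads $X^{(1)}=BESQ(d^{*}+2(n-1))$. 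The main point requiring care, beyond this bookkeeping, is the identification --- uniformly in $d_k$ --- of the iteratively built eigenfunctions with the Vandermonde determinant: one checks that $\Lambda_{k-1,k}$ carries the speed-measure weight $\prod_i y_i^{-\nu_k-1}$ of $\widehat{L^{(d_k)}}=BESQ(2-d_k)$, so that $\prod_i y_i^{-\nu_k-1}\det(y_i^{\,j+\nu_k})_{i,j=1}^{k-1}=\det(y_i^{\,j-1})_{i,j=1}^{k-1}=\prod_{1\le i<j\le k-1}(y_j-y_i)$, and that integrating this Vandermonde over the interlacing simplex reproduces the Vandermonde one level up --- together with ensuring $d^{*}\ge 2$ so that all relevant entrance laws and boundary conditions are in force.
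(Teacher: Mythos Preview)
Your proposal is correct and follows exactly the route the paper takes: the proposition is stated in the paper without a separate proof, the justification being the preceding paragraph which says that $\mathbb{X}^{n,(d^*)}$ is built by concatenating two-level processes via repeated application of part \textbf{(a)} of Proposition~\ref{superpositionref2} with $d=d^*+2(n-k)$ at the $(k-1,k)$ stage, and then invoking the general concatenation machinery of Section~3.1. You have simply spelled out this bookkeeping in full --- identifying $L_k=L^{(d_k)}$ with $d_k=d^*+2(n-k)$, checking the telescoping $d_{k-1}=d_k+2$ that makes the $Y$-output at one stage match the $X$-input at the next, verifying the consistency relations (\ref{eigenGelfand})--(\ref{consistencyGelfand}) via the explicit eigenfunctions $\widehat{h}^{(d)}_{k-1,k}$ and $h^{(d)}_{k-1,k}$, and appealing to the entrance-law corollary to start from the origin --- which is precisely what the paper leaves implicit.
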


By making alternating use of parts \textbf{(a)} and \textbf{(b)} of Proposition \ref{superpositionref2} we construct a consistent process
\begin{align*}
\left(\mathbb{X}^{n,(d)}(t);t \ge 0\right)=(X^{(1)}(t)\prec \hat{X}^{(1)}(t)\prec \cdots \prec X^{(n)}(t)\prec \hat{X}^{(n)}(t);t\ge 0)
\end{align*}
in $\mathbb{GT}_\textbf{s}(n)$, for which Proposition \ref{CerenziaGelfand} can be viewed as the $d=1$ case, and whose joint dynamics are given as follows: $X_1^{(1)}$ evolves as a $BESQ(d)$ process. Then, for any $k$, the $k$ particles corresponding to $\hat{X}^{(k)}$ evolve as $k$ independent $BESQ(2-d)$ processes reflecting off the particles corresponding to $X^{(k)}$ in order for the interlacing to be maintained. Moreover, for $k\ge 2$ the $k$ particles corresponding to $X^{(k)}$ evolve as $k$ independent $BESQ(d)$ processes reflecting off the particles corresponding to $\hat{X}^{(k-1)}$ in order to maintain the interlacing.

 Then, it is a consequence of the symplectic analogue of Proposition \ref{multilevelproposition} (involving an entrance law, see the discussion following Proposition \ref{multilevelproposition}) that:

\begin{prop} \label{symplecticBESQ}
Let $d\ge 2$. If $\mathbb{X}^{n,{(d)}}$ is started from the origin then the projections onto $X^{(k)}$ and $\hat{X}^{(k)}$ are distributed as $k$ non-intersecting $BESQ(d)$ and $k$ non-intersecting $BESQ(d+2)$ processes respectively started from the origin.
\end{prop}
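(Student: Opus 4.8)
The plan is to obtain the statement by splicing together the two-level building blocks of Proposition~\ref{superpositionref2} along the chain of interlacing levels
$X^{(1)}\prec\hat X^{(1)}\prec X^{(2)}\prec\hat X^{(2)}\prec\cdots\prec X^{(n)}\prec\hat X^{(n)}$,
following the $\mathbb{GT}_{\textbf{s}}$ analogue of the concatenation procedure behind Proposition~\ref{multilevelproposition}. The blocks alternate exactly as in the diagram displayed just above the proposition: the pair $(X^{(k)},\hat X^{(k)})\in W^{k,k}([0,\infty))$ is the block of part~(b) of Proposition~\ref{superpositionref2} with $n$ replaced by $k$, in which $X^{(k)}$ is the autonomous $Y$-process ($k$ non-intersecting $BESQ(d)$) and $\hat X^{(k)}$ is the reflected $X$-process ($k$ copies of $BESQ(2-d)$ reflected off $X^{(k)}$); the pair $(\hat X^{(k)},X^{(k+1)})\in W^{k,k+1}([0,\infty))$ is the block of part~(a) with $n$ replaced by $k$, in which $\hat X^{(k)}$ is the autonomous $Y$-process ($k$ non-intersecting $BESQ(d+2)$) and $X^{(k+1)}$ is the reflected $X$-process ($k+1$ copies of $BESQ(d)$ reflected off $\hat X^{(k)}$).

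First I would check that these blocks chain together consistently. By part~(b) of Proposition~\ref{superpositionref2} the reflected $X$-process of the block $(X^{(k)},\hat X^{(k)})$, seen in its own filtration, is $k$ non-intersecting $BESQ(d+2)$, which is exactly the law prescribed for the autonomous $Y$-process entering the next block $(\hat X^{(k)},X^{(k+1)})$; and by part~(a) the reflected $X$-process of that block is $k+1$ non-intersecting $BESQ(d)$, which is the autonomous $Y$-process entering $(X^{(k+1)},\hat X^{(k+1)})$. At the level of kernels this is the statement that the conditioning eigenfunctions agree across consecutive levels, $h^{(d)}_{k,k}=\hat h^{(d)}_{k,k+1}$ and $h^{(d)}_{k-1,k}=\hat h^{(d)}_{k,k}$, together with the intertwining identities $h^{(d)}_{k-1,k}=c_{k-1,k}(\nu)\,(\Lambda_{k-1,k}\Pi_{k-1,k}\hat h^{(d)}_{k-1,k})$ and $h^{(d)}_{k,k}=c_{k,k}(\nu)\,(\Lambda_{k,k}\Pi_{k,k}\hat h^{(d)}_{k,k})$ already recorded in this subsection (obtained by integrating the determinantal forms of these functions); combined with $\widehat{L^{(d)}}=L^{(2-d)}$ and the fact that $h$-transforming $BESQ(2-d)$ by its scale function produces $BESQ(d+2)$, this is precisely the pair of consistency relations~(\ref{eigenGelfand})--(\ref{consistencyGelfand}) in their $\mathbb{GT}_{\textbf{s}}$ form for this family, with the $BESQ$ dimension staying equal to $d$ on the unhatted levels and $d+2$ on the hatted ones.

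Granted this, the $\mathbb{GT}_{\textbf{s}}$ version of Proposition~\ref{multilevelproposition} applies. The reflecting $SDE$ system for the whole pattern is well posed, since the $BESQ(d)$ coefficients $a(x)=2x$ (so $\sqrt{a}$ is $1/2$-H\"older) and $b(x)=d$ meet the Yamada--Watanabe hypothesis of Section~\ref{SectionWellposedness}; the collision time of two barriers on a common level is almost surely infinite because, under the prescribed initialization, every level is an $h$-transformed, hence non-intersecting, family of diffusions; and when the pattern is started from the origin one feeds in, in place of a genuine initial point, the composition through the Markov kernels $\Lambda^{\hat h}$ of the entrance laws $\nu_t^{n,n+1,\hat h^{(d)}_{n,n+1}}$ and $\nu_t^{n,n,\hat h^{(d)}_{n,n}}$ of Proposition~\ref{superpositionref2}, which is again an entrance law for the multilevel process by the corollary on entrance laws following Theorem~\ref{MasterDynamics}. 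Proposition~\ref{multilevelproposition} then gives that each level evolves under its own $h$-transformed Karlin--McGregor semigroup, i.e. $X^{(k)}_s$ is $k$ non-intersecting $BESQ(d)$ and $\hat X^{(k)}_s$ is $k$ non-intersecting $BESQ(d+2)$, both issued from the origin; taking $d=1$ recovers Proposition~\ref{CerenziaGelfand} after the change of variables $x\mapsto\sqrt{x}$.

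The step I expect to be the main obstacle is bookkeeping rather than analysis: at every splice one must verify that the conditional law of the freshly built (hatted or unhatted) level given the history of the level just above it is exactly the Markov kernel $\Lambda^{\hat h}$ of Theorem~\ref{MasterDynamics}, so that the inductive hypothesis of Corollary~\ref{mastercorollary} is available for the next block, and that all the normalizing integrals $h^{(d)}_{k,k}$, $h^{(d)}_{k-1,k}$ are finite on $\mathring{W}^{k}((0,\infty))$ --- this is where $d\ge 2$ is needed, so that the origin is inaccessible and the reflected $BESQ(2-d)$ particles genuinely remain in $(0,\infty)$. With the restriction $d\ge 2$ and these finiteness and consistency checks in hand, the argument is a direct transcription of the $\mathbb{GT}$-valued squared-Bessel construction carried out earlier in this subsection.
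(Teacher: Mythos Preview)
Your proposal is correct and follows essentially the same approach as the paper, which does not give a standalone proof but derives the proposition from the concatenation diagram and repeated application of Proposition~\ref{superpositionref2} parts~(a) and~(b). You have in fact spelled out more of the bookkeeping (consistency of eigenfunctions, well-posedness via Yamada--Watanabe, entrance-law compatibility) than the paper itself records; the only minor slip is the closing remark that $d=1$ recovers Proposition~\ref{CerenziaGelfand}, since the present construction requires $d\ge 2$ --- the $d=1$ case is treated separately in Section~3.3 rather than as a specialization here.
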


\paragraph{Connection to Wishart processes} We now spell out the connection between the processes constructed above and matrix valued diffusion processes by first considering the connection to $\mathbb{X}^{n,(d^*)}$, for $d^*$ even. Let $d^*=2$ for simplicity.

 Take $\left(A(t);t\ge 0\right)$ to be an $n\times n$ complex Brownian matrix and consider $\left(H(t);t\ge 0\right)=\left(A(t)A(t)^*;t \ge 0\right)$. This is called the Wishart process and was first studied in the real symmetric case by Marie-France Bru in \cite{Wishart}, see also \cite{Demni} for a detailed study in the Hermitian setting and some of its properties. Then, it is well known (first proven in \cite{O Connell}), we have that $\left(\lambda^{(k)}(t);t\ge0\right)$, the eigenvalues of the $k\times k$ minor of $\left(H(t);t \ge0 \right)$, evolve as $k$ non-colliding $BESQ(2(n-k+1))$ processes. These eigenvalues then interlace with $\left(\lambda^{(k-1)}(t);t\ge0\right)$ which evolve as $k-1$ non-colliding $BESQ(2(n-k+1)+1)$ processes with the \textit{fixed} time $T$ conditional density of $\lambda^{(k-1)}(T)$ given $\lambda^{(k)}(T)$ on $W^{k-1,k}(\lambda^{(k)}(T))$ being  $\Lambda^{\hat{h}^{(d)}_{k-1,k}}_{k-1,k}\left(\lambda^{(k)}(T),\cdot\right)$ (see Section 3 of \cite{FerrariFrings}, Section 3.3 of \cite{ForresterNagao}). Inductively (since for fixed $T$, $\lambda^{(n-k)}(T)$ is a Markov chain in $k$ see Section 4 of \cite{FerrariFrings}) this gives that the distribution at \textit{fixed} times $T$ of the vector $(\lambda^{(1)}(T),\cdots,\lambda^{(n)}(T))$ is uniform over the space of $\mathbb{GT}(n)$ with bottom level $\lambda^{(n)}(T)$. Moreover, by making use of this coincidence along \textit{space-like paths} one can write down the dynamical correlation kernel (along space-like paths) of the process we constructed from Theorem 1.3 of \cite{FerrariFringsPartial}.

\begin{rmk}
Although $\mathbb{X}^{n,(2)}$ and the minor process described in the preceding paragraph on single levels or at fixed times coincide, the interaction between consecutive levels of the minor process should be different from local hard reflection, although the dynamics of consecutive levels of the $LUE$ process have not been studied yet (as far as we know).
\end{rmk}

We now describe the random matrix model that parallels $\mathbb{X}_s^{n,{(d)}}$ for $d$ even. Start with a row vector $\left(A^{(d)}(t);t \ge 0\right)$ of $d/2$ independent standard complex Brownian motions, then $\left(X^{(d)}(t);t \ge 0\right)=\left(A^{(d)}(t)A^{(d)}(t)^*;t \ge 0\right)$ evolves as a one dimensional $BESQ(d)$ diffusion (this is really just the definition of a $BESQ(d)$ process). Now, add another independent complex Brownian motion to make $\left(A^{(d)}(t);t\ge 0\right)$ a row vector of length $d/2+1$. Then, $\left(X^{(d)}(t);t \ge 0\right)=\left(A^{(d)}(t)A^{(d)}(t)^*;t \ge 0\right)$ evolves as a $BESQ(d+2)$ process interlacing with the aforementioned $BESQ(d)$. At fixed times, the fact that the conditional distribution of the $BESQ(d)$ process given the position $x$ of the $BESQ(d+2)$ process is proportional to $y^{\frac{d}{2}-1} 1_{[0,x]}$ follows from the conditional laws in \cite{DiekerWarren} (see also \cite{Defosseux}) and will be spelled out in a few sentences. Now, make $\left(A^{(d)}(t);t \ge 0\right)$ a $2 \times \left(\frac{d}{2}+1\right)$ matrix by adding a row of $d/2+1$ independent complex Brownian motions, the eigenvalues of $\left(X^{(d)}(t);t\ge 0\right)=\left(A^{(d)}(t)A^{(d)}(t)^*;t \ge 0\right)$ evolve as $2$ $BESQ(d)$ processes which interlace with the $BESQ(d+2)$. We can continue this construction indefinitely by adding columns and rows successively of independent complex Brownian motions. As before, this eigenvalue process will coincide with $\mathbb{X}_s^{n,{(d)}}$ on single levels as stochastic processes but also at \textit{fixed} times as distributions of whole interlacing arrays. We elaborate a bit on this fixed time coincidence. For simplicity, let $T=1$. Let $A$ be an $n\times k$ matrix of independent standard complex normal random variables. Let $A'$ be the $n \times (k+1)$ matrix obtained from $A$ by adding to it a column of independent standard complex normal random variables. Let $\lambda$ be the $n$ eigenvalues of $AA^*$ and $\lambda'$ be the $n$ eigenvalues of $A'(A')^*$. We want the conditional density $\rho_{\lambda|\lambda'}(\lambda)$, of $\lambda$ given $\lambda'$, with respect to Lebesgue measure. From \cite{DiekerWarren} (see also \cite{Defosseux}) the conditional density $\rho_{\lambda'|\lambda}(\lambda)$ is given by,
 \begin{align*}
\rho_{\lambda'|\lambda}(\lambda)=\frac{\prod_{1\le i < j \le n}^{}(\lambda'_j-\lambda'_i)}{\prod_{1\le i < j \le n}^{}(\lambda_j-\lambda_i)}e^{-\sum_{i=1}^{n}(\lambda'_i-\lambda_i)}\textbf{1}(\lambda\prec \lambda')   \ .
 \end{align*}
Hence, by Bayes' rule, and recalling the law of the $LUE$ ensemble, we have,
\begin{align*}
\rho_{\lambda|\lambda'}(\lambda)=\left[\frac{\rho_{\lambda}}{\rho_{\lambda'}}\rho_{\lambda'|\lambda}\right](\lambda)=\frac{\prod_{1\le i < j \le n}^{}(\lambda_j-\lambda_i)\prod_{i=1}^{n}\lambda_i^{\frac{d}{2}-1}}{\prod_{1\le i < j \le n}^{}(\lambda'_j-\lambda'_i)\prod_{i=1}^{n}\lambda_i'^{\frac{d}{2}}}\textbf{1}(\lambda\prec \lambda') \ .
\end{align*}
Similarly to the case of $\mathbb{GT}$, by induction this gives fixed time coincidence of the two $\mathbb{GT}_\textbf{s}$ valued processes.

\subsection{Diffusions associated with orthogonal polynomials}

Here, we consider three diffusions in Gelfand-Tsetlin patterns associated with the classical orthogonal polynomials, Hermite, Laguerre and Jacobi. Although the one dimensional diffusion processes these are built from, the Ornstein-Uhlenbeck, the Laguerre and Jacobi are special cases of Sturm-Liouville diffusions with discrete spectrum, which we will consider in the next subsection, they are arguably the most interesting examples, with close connections to random matrices and so we consider them separately (for the classification of one dimensional diffusion operators with polynomial eigenfunctions see \cite{Mazet} and for a nice exposition Section 2.7 of \cite{MarkovDiffusion}). One of the common features of the Karlin-McGregor semigroups associated with them is that they all have the Vandermonde determinant as their ground state (this follows from Corollary \ref{minimal}). At the end of this subsection we describe the connection to eigenvalue processes of minors of matrix diffusions.

\begin{defn}
The Ornstein-Uhlenbeck (OU) diffusion process in $I=\mathbb{R}$ has generator and SDE description,
\begin{align*}
L_{OU}&=\frac{1}{2}\frac{d^2}{dx^2}-x\frac{d}{dx},\\
dX(t)&=dB(t)-X(t)dt,
\end{align*} 
with $m_{OU}(x)=e^{-x^2}$ and $-\infty$ and $\infty$ both natural boundaries. Its conjugate diffusion process $\hat{L}_{OU}$ has generator and SDE description,
\begin{align*}
\hat{L}_{OU}&=\frac{1}{2}\frac{d^2}{dx^2}+x\frac{d}{dx},\\
d\hat{X}(t)&=dB(t)+\hat{X}(t)dt,
\end{align*} 
and again $-\infty$ and $\infty$ are both natural boundaries and note the drift away from the origin.
\end{defn}

\begin{defn}
The Laguerre $Lag(\alpha)$ diffusion process in $I=[0,\infty)$ has generator and SDE description,
\begin{align*}
L_{Lag(\alpha)}&=2x\frac{d^2}{dx^2}+(\alpha-2x)\frac{d}{dx},\\ dX(t)&=2\sqrt{X(t)}dB(t)+(\alpha-2X(t))dt,
\end{align*} 
with $m_{Lag(\alpha)}(x)=x^{\alpha/2}e^{-x}$ and $\infty$ being natural and for $\alpha \ge 2$ the point $0$ is an entrance boundary. We will only be concerned with such values of $\alpha$ here.
\end{defn}

\begin{defn}
The Jacobi diffusion process $Jac(\beta,\gamma)$ in $I=[0,1]$ has generator and SDE description,
\begin{align*}
L_{Jac(\beta,\gamma)}&=2x(1-x)\frac{d^2}{dx^2}+2(\beta-(\beta+\gamma)x)\frac{d}{dx},\\
dX(t)&=2\sqrt{X(t)(1-X(t))}dB(t)+2(\beta-(\beta+\gamma)X(t))dt,
\end{align*}
with $m_{Jac(\beta,\gamma)}(x)=x^{\beta-1}(1-x)^{\gamma-1}$ and $0$ and $1$ being entrance for $\beta, \gamma \ge 1$. We will only be concerned with such values of $\beta$ and $\gamma$ in this section.
\end{defn}

The restriction of parameters $\alpha, \beta, \gamma$ for $Lag(\alpha)$ and $Jac(\beta,\gamma)$ is so that $(\mathbf{BC}+)$ is satisfied (for a certain range of the parameters the points $0$ and/or $1$ are regular boundaries in which case  $(\mathbf{BC}+)$ is no longer satisfied due to the fact that the diffusion coefficients degenerate at the boundary points).

We are interested in the construction of a process in $\mathbb{GT}(N)$, so that in particular at each stage the number of particles increases by one. We start in the simplest setting of $W^{1,2}$ and in particular the Ornstein-Uhlenbeck case to explain some subtleties. We will then treat all cases uniformly.

Consider a two-level process $(X,Y)$ with the $X$ particles evolving as two $OU$ processes being reflected off the $Y$ particle which evolves as an $\hat{L}_{OU}$ diffusion. Then, since this is an honest Markov process, Theorem \ref{MasterDynamics} (whose conditions are easily seen to be satisfied) gives that if started appropriately, the projection on the $X$ particles is Markovian with semigroup $P_t^{2,OU,\bar{h}_2}$. Here, $P_t^{2,OU,\bar{h}_2}$ is the Doob $h$-transformed semigroup of two independent $OU$ processes killed when they intersect by the harmonic function $\bar{h}_2$:
\begin{align*}
\bar{h}_2(x_1,x_2)=\int_{x_1}^{x_2}\hat{m}_{OU}(y)dy=s_{OU}(x_2)-s_{OU}(x_1),
\end{align*}
where $s_{OU}(x)=e^{x^2}F(x)$ is the scale function of the $OU$ process and $F(x)=e^{-x^2}\int_{0}^{x}e^{y^2}dy$ is the Dawson function. We note that, although this process is built from two $OU$ processes being kept apart (more precisely this diffusion lives in $\mathring{W}^2$), it is \textit{not two independent $OU$ processes conditioned to never intersect}.

However, we can initially $h$-transform the $\hat{L}_{OU}$ process to make it an $OU$ process with the $h$-transform given by $\hat{h}_1(x)=\hat{m}^{-1}_{OU}(x)$ with eigenvalue $-1$. Now, note that:
\begin{align*}
h_2(x_1,x_2)=\int_{x_1}^{x_2}\hat{m}_{OU}(y)\hat{m}^{-1}_{OU}(y)dy=(x_2-x_1).
\end{align*}
This, as we see later in Corollary \ref{minimal} is the ground state of the semigroup associated to two independent $OU$ processes killed when they intersect.
Thus, if we consider a two-level process $(X,Y)$ with the $X$ particles evolving as 2 $OU$ processes reflected off a single $OU$ process, we get from Theorem \ref{MasterDynamics} that the projection on the $X$ particles is distributed as two independent $OU$ processes conditioned to never intersect via a Doob $h$-transform by $h_2$.

 Similarly, an easy calculation gives that we can $h$-transform the $\hat{L}_{Lag(\alpha)}$-diffusion to make it a $Lag(\alpha+2)$ with the $h$-transform being $\hat{m}^{-1}_{Lag(\alpha)}(x)$ with eigenvalue $-2$ and $h$-transform with $\hat{m}^{-1}_{Jac(\beta,\gamma)}(x)$ with eigenvalue $-2(\beta+\gamma)$ the $\hat{L}_{Jac(\beta,\gamma)}$-diffusion to make it a $Jac(\beta+1,\gamma+1)$ to obtain the analogous result. Furthermore, this generalizes to arbitrary $n$. First, let 
\begin{align*}
h_{n+1}(x)&=\frac{1}{n!}\prod_{1 \le i < j \le n+1}^{}(x_j-x_i)
\end{align*}
denote the Vandermonde determinant. By Corollary \ref{minimal}, $h_{n+1}$ is the ground state of the semigroup associated to $n+1$ independent copies of an $OU$ or $Lag(\alpha)$ or $Jac(\beta,\gamma)$ diffusion killed when they intersect.

\begin{prop} 
Assume the constants $\alpha, \beta, \gamma$ satisfy $\alpha \ge 2, \beta \ge 1, \gamma \ge 1$. Let $(X,Y)$ be a two-level diffusion process in $W^{n,n+1}(I^{\circ})$ started according to the distribution $(\delta_x,\frac{n!\prod_{1 \le i < j \le n}^{}(y_j-y_i)}{\prod_{1 \le i < j \le n+1}^{}(x_j-x_i)}\mathbf{1}(y\prec x)dy)$, where $x\in \mathring{W}^{n+1}(I)$, and $X$ and $Y$ evolving as follows:\\
\textbf{OU}: $X$ as $n+1$ independent $OU$ processes reflected off $Y$ which evolves as $n$ $OU$ processes conditioned to never intersect via a Doob $h$-transform by $h_n$, \\
\textbf{Lag}: $X$ as $n+1$ independent $Lag(\alpha)$ processes reflected off  $Y$ which evolves as $n$ $Lag(\alpha+2)$ processes conditioned to never intersect via a Doob $h$-transform by $h_n$,\\
\textbf{Jac}: $X$ as $n+1$ independent $Jac(\beta,\gamma)$ processes reflected off $Y$ which evolves as $n$ $Jac(\beta+1,\gamma+1)$ processes conditioned to never intersect via a Doob $h$-transform by $h_n$.\\
Then, the $X$ particles are distributed as,\\
\textbf{OU}: $n+1$ $OU$ processes conditioned to never intersect via a Doob $h$-transform by $h_{n+1}$,\\
\textbf{Lag}: $n+1$ $Lag(\alpha)$ processes conditioned to never intersect via a Doob $h$-transform by $h_{n+1}$,\\
\textbf{Jac}: $n+1$ $Jac(\beta,\gamma)$ processes conditioned to never intersect via a Doob $h$-transform by $h_{n+1}$,\\
started from $x$.
\end{prop}

\begin{proof}
We take as the $L$-diffusion an $OU$ or $Lag(\alpha)$ or $Jac(\beta,\gamma)$ diffusion respectively. Note that, the assumptions $(\mathbf{R})$, $(\mathbf{BC+})$ and $(\mathbf{YW})$ are satisfied for $Lag(\alpha)$ for $\alpha \ge 2$ and for $Jac(\beta,\gamma)$ for $\beta \ge 1, \gamma \ge 1$ (also these assumptions are clearly satisfied for an $OU$ process). Furthermore, observe that with,
\begin{align*}
\hat{h}_n(x)&=\prod_{i=1}^{n}\hat{m}^{-1}(x)\prod_{1 \le i < j \le n}^{}(x_j-x_i),
\end{align*}
we have:
\begin{align*}
h_{n+1}(x)&=(\Lambda_{n,n+1}\Pi_{n,n+1}\hat{h}_n)(x)=\frac{1}{n!}\prod_{1 \le i < j \le n+1}^{}(x_j-x_i).
\end{align*}
Moreover, note that the semigroup of $n$ independent copies of an $\hat{L}$-diffusion (namely either an $\hat{L}_{OU}$ or $\hat{L}_{Lag(\alpha)}$ or $\hat{L}_{Jac(\beta,\gamma)}$ diffusion) killed when they intersect $h$-transformed by $\hat{h}_n$ is exactly the semigroup corresponding to $Y$ in the statement of the proposition. Finally, making use of Theorem \ref{MasterDynamics} we obtain the required statement.
\end{proof}

It is rather easy to see how to iterate this construction to obtain a consistent process in a Gelfand-Tsetlin pattern. To be precise, let us fix $N$ the depth of the pattern and constants $\alpha \ge 2$, $\beta \ge 1$ and $\gamma \ge 1$ that will be the parameters of the processes at the bottom row. Then, in the Ornstein-Uhlenbeck case level $k$ evolves as $k$ independent $OU$ processes reflected off the paths at level $k-1$. In the Laguerre case level $k$ evolves as $k$ independent $Lag(\alpha+2(N-k))$ processes reflected off the particles at level $k-1$. Finally, in the Jacobi case level $k$ evolves as $k$ independent $Jac(\beta+(N-k),\gamma+(N-k))$ processes reflected off the particles at level $(k-1)$. The result giving the distribution of the projection on each level (under certain initial conditions) is completely analogous to previous sections and we omit the statement.

\begin{rmk}
In the $Laguerre$ case we can build in a completely analogous way a process in $\mathbb{GT}_\textbf{s}$ in analogy to the $BESQ(d)$ case of Proposition \ref{symplecticBESQ}. In the Jacobi case (with $\beta,\gamma \ge 1$) we can build a process $(X,Y)\in W^{n,n}((0,1))$ started from the origin (according to the entrance law) with the $Y$ particles evolving as $n$ non-intersecting $Jac(\beta,\gamma+1)$ and the $X$ particles as $n$ $Jac(1-\beta,\gamma)$ in $(0,1)$ reflected off the $Y$ particles. Then, the $X$ particles are distributed as $n$ non-intersecting $Jac(\beta+1,\gamma)$ processes started from the origin. 
\end{rmk}

\paragraph{Connection to random matrices}
We now make the connection to the eigenvalues of matrix valued diffusion processes associated with orthogonal polynomials. The relation for the Ornstein-Uhlenbeck process and $Lag(d)$ processes we constructed is the same as for Brownian motions and $BESQ(d)$ processes. The only difference being, that we replace the complex Brownian motions by complex Ornstein-Uhlenbeck processes in the matrix valued diffusions (the only difference being, that this introduces a restoring $-x$ drift in both the matrix valued diffusion processes and the $SDEs$ for the eigenvalues). 

We now turn to the Jacobi minor process. First, following Doumerc's PhD thesis \cite{Doumerc} (see in particular Section 9.4.3 therein) we construct the matrix Jacobi diffusion as follows. Let $\left(U(t),t\ge0\right)$ be a Brownian motion on $\mathbb{U}(N)$, the manifold of $N \times N$ unitary matrices and let $p+q=N$. Let $n$ be such that $n\le p,q$ and consider $\left(H(t),t\ge0\right)$ the projection onto the first $n$ rows and $p$ columns of $\left(U(t),t\ge 0\right)$. Then $\left(J^{p,q}(t),t\ge0\right)=\left(H(t)H(t)^*,t\ge 0\right)$ is defined to be the $n \times n$ matrix Jacobi diffusion (with parameters $p,q$). Its eigenvalues evolve as $n$ non-colliding $Jac(p-(n-1),q-(n-1))$ diffusions. Its $k\times k$ minor is built by projecting onto the first $k$ rows of $\left(U(t),t\ge0 \right)$ and it has eigenvalues $\left(\lambda^{(k)}(t),t\ge 0\right)$ that evolve as $k$ non-colliding $Jac(p-(n-1)+n-k,q-(n-1)+n-k)$. For fixed times $T$, if $\left(U(t),t\ge0\right)$ is started according to Haar measure, the distribution of $\lambda^{(k-1)}(T)$ given $\lambda^{(k)}(T)$ on $W^{k-1,k}(\lambda^{(k)}(T))$ being $\Lambda_{k-1,k}^{\hat{h}_{k-1}}\left(\lambda^{(k)}(T),\cdot\right)$ see e.g. \cite{ForresterNagao}. For the connection to the process in $W^{n,n}$ described in the remark, we could have projected on the first $n$ rows and $p+1$ columns of $\left(U(t),t\ge 0\right)$ and denoting that by $\left(H(t)',t\ge 0\right)$, then $\left(J^{p+1,q-1}(t),t\ge 0\right)=\left(H(t)'(H(t)')^*,t\ge 0\right)$ has eigenvalues evolving as $n$ non-colliding $Jac(p-(n-1)+1,q-(n-1)-1)$ and those interlace with the eigenvalues of $\left(J^{p,q}(t),t\ge 0\right)$. 

\begin{rmk}
Non-colliding Jacobi diffusions have also appeared in the work of Gorin \cite{Gorin} as the scaling limits of some natural Markov chains on the Gelfand-Tsetlin graph in relation to the harmonic analysis of the infinite unitary group $\mathbb{U}(\infty)$.
\end{rmk}

\subsection{Diffusions with discrete spectrum}
\subsubsection{Spectral expansion and ground state of the Karlin-McGregor semigroup}
In this subsection, we show how the diffusions associated with the classical orthogonal polynomials and the Brownian motions in an interval are special cases of a wider class of one dimensional diffusion processes with explicitly known minimal eigenfunctions for the Karlin-McGregor semigroups associated with them. We start by considering the diffusion process generator $L$ with \textit{discrete spectrum} $0\ge -\lambda_1>-\lambda_2>\cdots$ (the absence of natural boundaries is sufficient for this, see for example Theorem 3.1 of \cite{McKean}) with speed measure $m$ and transition density given by $p_t(x,dy)=q_t(x,y)m(dy)$ where,
\begin{align*}
L\phi_k(x)&=-\lambda_k\phi_k(x),\\
q_t(x,y)&=\sum_{k=1}^{\infty}e^{-\lambda_kt}\phi_k(x)\phi_k(y).
\end{align*}
The eigenfunctions $\{\phi_k\}_{k\ge 1}$ form an orthonormal basis of $L^2(I,m(dx))$ and the expansion $\sum_{k=1}^{\infty}e^{-\lambda_kt}\phi_k(x)\phi_k(y)$ converges uniformly on compact squares in $I^{\circ}\times I^{\circ}$. Furthermore, the Karlin-McGregor semigroup transition density with respect to $\prod_{i=1}^{n}m(dy_i)$ is given by,
\begin{align*}
\det(q_t(x_i,y_j))_{i,j=1}^n.
\end{align*}
 We now obtain an analogous spectral expansion for this. We start by expanding the determinant to get,
\begin{align*}
\det(q_t(x_i,y_j))_{i,j=1}^n&=\sum_{\sigma \in \mathfrak{S}_n}^{}sign(\sigma)\prod_{i=1}^{n}q_t(x_i,y_{\sigma(i)})\\ &=\sum_{k_1,\cdots,k_n}^{}\prod_{i=1}^{n}\phi_{k_i}(x_i)e^{-\lambda_{k_i}t}\sum_{\sigma \in \mathfrak{S}_n}^{}sign(\sigma)\prod_{i=1}^{n}\phi_{k_i}(y_{\sigma(i)})\\
&=\sum_{k_1,\cdots,k_n}^{}\prod_{i=1}^{n}\phi_{k_i}(x_i)e^{-\lambda_{k_i}t}\det(\phi_{k_i}(y_j))_{i,j=1}^n. 
\end{align*}
Write  $\phi_{\mathsf{k}}(y)$  for $\det(\phi_{k_i}(y_j))_{i,j=1}^n$ for an n-tuple $\mathsf{k}=(k_1,\cdots,k_n)$ and also $\lambda_{\mathsf{k}}$ for $(\lambda_{k_1},\cdots, \lambda_{k_n})$ and note that we can restrict to $k_1,\cdots, k_n$ distinct otherwise the determinant vanishes. In fact we can restrict to $k_1,\cdots,k_n$ ordered by replacing $k_1,\cdots,k_n$ by $k_{\tau(1)},\cdots,k_{\tau(n)}$ and summing over $\tau \in \mathfrak{S}_n$ to obtain, with $|\lambda_{\mathsf{k}}|=\sum_{i=1}^{n}\lambda_{k_i}$:
\begin{align}
\det(q_t(x_i,y_j))_{i,j=1}^n=\sum_{1\le k_1 < \cdots <k_n}^{}e^{-|\lambda_{\mathsf{k}}|t}\phi_{\mathsf{k}}(x)\phi_{\mathsf{k}}(y).
\end{align}
The expansion is converging uniformly on compacts in $W^n(I^\circ)\times W^n(I^\circ)$ for $t>0$. Now, denoting by $T$ the lifetime of the process we obtain, for $x=(x_1,\cdots,x_n) \in \mathring{W}^n(I)$, the following spectral expansion that converges uniformly on compacts in $x\in W^n(I^\circ)$,
\begin{align}
\mathbb{P}_x(T>t)=\left[P_t^n\textbf{1}\right](x)=\sum_{1\le k_1 < \cdots <k_n}^{}e^{-|\lambda_{\mathsf{k}}|t}\phi_{\mathsf{k}}(x)\langle \phi_{\mathsf{k}},\textbf{1}\rangle_{W_{}^{n}(m)}
\end{align}
where we used the notation:
\begin{align*}
\langle f,g\rangle_{W_{}^{n}(m)}=\int_{W^n(I^\circ)}^{}f(x_1,\cdots,x_n)g(x_1,\cdots,x_n)\prod_{i=1}^{n}m(x_i)dx_i.
\end{align*}
So, as $t \to \infty$ by the fact that the eigenvalues are distinct and ordered the leading exponential term is forced to be $k_i=i$ and thus:
\begin{align*}
\mathbb{P}_x(T>t)= \langle \phi_{(1,\cdots,n)},\textbf{1}\rangle_{W_{}^{n}(m)} \times \ e^{-\sum_{i=1}^{n}\lambda_it}\det(\phi_i(x_j))_{i,j=1}^n +o\left(e^{-\sum_{i=1}^{n}\lambda_it}\right), \ \text{as} \ t \to \infty .
\end{align*} 
Hence, we can state the following corollary.
\begin{cor}\label{minimal} The function,
\begin{align}\label{GroundState}
h_n(x)=\det(\phi_i(x_j))_{i,j=1}^n
\end{align}
is the ground state of $P_t^n$.
\end{cor}

The above argument proves that $h_n(x)\ge 0$ but in fact the positivity is strict, $h_n(x)> 0$ for all $x \in  W^n(I^\circ)$ which can be seen as follows. We have the eigenfunction relation, by the Andreif (or generalized Cauchy-Binet) identity:
\begin{align*}
\int_{W^n(I^\circ)}^{}\det(q_t(x_i,y_j))_{i,j=1}^n\det(\phi_i(y_j))_{i,j=1}^n\prod_{i=1}^{n}m(y_i)dy_i=e^{-\sum_{i=1}^{n}\lambda_it}\det(\phi_i(x_j))_{i,j=1}^n.
\end{align*}
Assume that $\det(\phi_i(x_j))_{i,j=1}^n=0$ for some $x \in  W^n(I^\circ)$. Then, by the \textit{strict} positivity of $\det(q_t(x_i,y_j))_{i,j}^n\prod_{i=1}^{n}m(y_i)>0$ and continuity of $h_n(x)$ (see Theorem 4 of \cite{KarlinMcGregor}, also Problem 6 and its solution on pages 158-159 of \cite{ItoMckean}), the determinant $\det(\phi_i(y_j))_{i,j=1}^n$ must necessarily vanish everywhere in $W^n(I^\circ)$. Hence, we can write  for all $x \in I^{\circ}$ $\phi_n(x)=\sum_{i=1}^{n-1}a_i \phi_i(x)$ for some constants $a_i$. However, this contradicts the orthonormality of the eigenfunctions and so $h_n(x)> 0$ for all $x \in  W^n(I^\circ)$.

A different way to see that $h_n(x)$ is strictly positive (up to a constant) in $\mathring{W}^n(I)$ is the well known fact (see paragraph immediately after Theorem 6.2 of Chapter 1 on page 36 of \cite{Karlin}) that the eigenfunctions coming from Sturm-Liouville operators form a Complete $T$-system ($CT$-system) or $Chebyshev$ system namely $\forall n \ge 1$,
\begin{align*}
h_{n}(x)=\det(\phi_i(x_j))_{i,j=1}^{n}>0, \ x\in \mathring{W}^n(I).
\end{align*}

\begin{rmk}
In fact a $CT$-system requires that the determinant does not vanish in $W^n(I)$ so w.l.o.g multiplying by -1 if needed we can assume it is positive.
\end{rmk}

For the orthogonal polynomial diffusions and Brownian motions in an interval taking the $\phi_j$'s to be the Hermite, Laguerre, Jacobi polynomials (which via row and column operations give the Vandermonde determinant) and trigonometric functions (of increasing frequencies) we obtain the minimal eigenfunction.

Following this discussion, we can thus define the \textit{conditioned semigroup} with transition kernel $p_t^{n,h_n}$ with respect to Lebesgue measure in $W^n(I^\circ)$ as follows,
\begin{align*}
p_t^{n,h_n}(x,y)=e^{\sum_{i=1}^{n}\lambda_it}\frac{\det(\phi_i(y_j))_{i,j=1}^n}{\det(\phi_i(x_j))_{i,j=1}^n}\det(p_t(x_i,y_j))_{i,j=1}^n.
\end{align*}
\subsubsection{Conditioning diffusions for non-intersection through local interactions}
Now, a natural question arising is the following. When is it possible to obtain $n$ conservative (by that we mean in case $l$ or $r$ can be reached then they are forced to be regular reflecting) $L$-diffusions \textit{conditioned via the minimal positive eigenfunction} to never intersect through the hard reflection interactions we have been studying in this work? We are able to provide an answer in Proposition \ref{conditioningviareflection} below under a certain assumption that we now explain. 

First, note that $L$ being conservative implies $\phi_1=1$. Furthermore, assuming that the $\phi_k\in C^{n-1}(I^\circ)$ for $1\le k\le n$ and denoting by $\phi_k^{(j)}$ their $j^{th}$ derivative we define the Wronskian $W(\phi_1,\cdots,\phi_n)(x)$ of $\phi_1,\cdots,\phi_n$ by,
\begin{align*}
W(\phi_1,\cdots,\phi_n)(x)=\det\big(\phi_i^{(j-1)}(x)\big)_{i,j=1}^n.
\end{align*}

Then, we say that $\{\phi_j\}^n_{j=1}$ form a (positive) Extended Complete $T$-system or $ECT$-system if for all $1\le k \le n$, 
\begin{align*}
W(\phi_1,\cdots,\phi_k)(x)>0, \ \forall x \in I^\circ .
\end{align*}

This is a stronger property, in particular implying that $\{\phi_j\}^n_{j=1}$ form a $CT$-system (see Theorem 2.3 of Chapter 2 of \cite{Karlin}). Assuming that the eigenfunctions in question $\{\phi_j\}^n_{j=1}$ form a (positive) $ECT$-system then since $\phi_1=1$,
\begin{align*}
W(\phi_2^{(1)},\cdots,\phi_n^{(1)})(x)>0, \ \forall x \in I^\circ,
\end{align*}
and hence,
\begin{align}\label{DiscreteSpectrumEigenfunction}
\hat{h}_{n-1}(x):=\det(\mathcal{D}_{\hat{m}}\phi_{i+1}(x_j))_{i,j=1}^{n-1}>0, \ x\in \mathring{W}^{n-1}(I).
\end{align}
We then have the following positive answer for the question we stated previously:

\begin{prop}\label{conditioningviareflection}
Under the conditions of Theorem \ref{MasterDynamics}, furthermore assume that the generator $L$ has discrete spectrum and its first $n$ eigenfunctions $\{\phi_j\}^n_{j=1}$ form an ECT-system. Now assume that the $X$ particles consist of $n$ independent $L$-diffusions reflected off the $Y$ particles which evolve as an $n-1$ dimensional diffusion with semigroup $P_t^{n-1,\hat{h}_{n-1}}$, where $\hat{h}_{n-1}$ is defined in (\ref{DiscreteSpectrumEigenfunction}). Then, the $X$ particles (if the two-level process is started appropriately) are distributed as $n$ independent $L$-diffusions conditioned to never intersect with semigroup $P_t^{n,h_n}$, where $h_n$ is defined by (\ref{GroundState}).
\end{prop}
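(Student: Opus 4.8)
The plan is to verify that the hypotheses of Proposition \ref{Master} and Theorem \ref{MasterDynamics} are met with $(n_1,n_2)=(n-1,n)$ and $\hat{h}_{n_1}=\hat{h}_{n-1}$, and then to read off the conclusion; all of the analytic input has in fact already been assembled in the discussion preceding the statement, so the remaining task is largely to put the pieces together in the right order and to check a couple of compatibility points.

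First I would confirm that $\hat{h}_{n-1}(x)=det(\mathcal{D}_{\hat{m}}\phi_{i+1}(x_j))_{i,j=1}^{n-1}$ is strictly positive on $\mathring{W}^{n-1}(I)$ and is a genuine eigenfunction of the Karlin--McGregor semigroup $\hat{P}_t^{n-1}$, and not merely of the formal operator $\hat{L}$. For positivity, conservativity of $L$ forces $\phi_1\equiv 1$, so the $ECT$-property of $\{\phi_j\}_{j\le n}$ applied to $\phi_2^{(1)},\dots,\phi_n^{(1)}$, equivalently to $\mathcal{D}_{\hat{m}}\phi_2,\dots,\mathcal{D}_{\hat{m}}\phi_n$, shows that the latter form a $CT$-system, whence $\hat{h}_{n-1}>0$ on $\mathring{W}^{n-1}(I)$. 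For the eigenfunction property, the relations $\mathcal{D}_{\hat{m}}=\mathcal{D}_s$, $\mathcal{D}_{\hat{s}}=\mathcal{D}_m$ give $\hat{L}\mathcal{D}_{\hat{m}}\phi_i=-\lambda_i\mathcal{D}_{\hat{m}}\phi_i$, so $(e^{\lambda_i t}\mathcal{D}_{\hat{m}}\phi_i(\hat{X}(t)))_{t\ge0}$ is a local martingale; conservativity of $L$ forces $\mathcal{D}_{\hat{m}}\phi_i=\mathcal{D}_s\phi_i\to0$ at $l$ and $r$, so this process is bounded on finite time intervals, hence a true martingale, yielding $\hat{P}_t^1\mathcal{D}_{\hat{m}}\phi_i=e^{-\lambda_i t}\mathcal{D}_{\hat{m}}\phi_i$. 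Feeding this into the spectral expansion of $det(\hat{q}_t(x_i,y_j))_{i,j=1}^{n-1}$ and using the Andreief (generalized Cauchy--Binet) identity promotes it to $\hat{P}_t^{n-1}\hat{h}_{n-1}=e^{-\sum_{i=1}^{n-1}\lambda_{i+1}t}\hat{h}_{n-1}$.

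Next I would check that $\Lambda^{\hat{h}_{n-1}}_{n-1,n}$ is an honest Markov kernel by showing $h_n:=\Lambda_{n-1,n}\Pi_{n-1,n}\hat{h}_{n-1}$ is finite: a term-by-term integration of $det(\mathcal{D}_{\hat{m}}\phi_{i+1}(y_j))_{i,j=1}^{n-1}$ against $\prod_{i=1}^{n-1}\hat{m}(y_i)\,dy$ over $W^{n-1,n}(x)$ antidifferentiates each $\mathcal{D}_{\hat{m}}\phi_{i+1}=\mathcal{D}_s\phi_{i+1}$ back to $\phi_{i+1}$ up to constants, giving $h_n(x)=const_n\,det(\phi_i(x_j))_{i,j=1}^n$; the interchange of the finite sum over permutations with the integral is legitimate after transforming to a compact state space, which alters neither the lifetime nor the eigenfunctions. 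Proposition \ref{Master} then yields the intertwining $P_t^{n,h_n}\Lambda^{\hat{h}_{n-1}}_{n-1,n}=\Lambda^{\hat{h}_{n-1}}_{n-1,n}Q_t^{n-1,n,\hat{h}_{n-1}}$. Under $Q_t^{n-1,n,\hat{h}_{n-1}}$ the $Y$ level is $n-1$ $\hat{L}$-diffusions $h$-transformed by $\hat{h}_{n-1}$, i.e. a diffusion with semigroup $P_t^{n-1,\hat{h}_{n-1}}$, while $X$ consists of $n$ independent $L$-diffusions reflected off $Y$ (Proposition \ref{dynamics1} with $n$ replaced by $n-1$); conservativity of $L$ ensures $l$ and $r$ are natural, entrance or regular reflecting, so the boundary hypotheses (\ref{bc1l})--(\ref{bc1r}) underlying $Q_t^{n-1,n}$ hold. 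Theorem \ref{MasterDynamics}, equivalently Corollary \ref{mastercorollary} through the Rogers--Pitman theory of Markov functions, then shows that, started from $\Lambda^{\hat{h}_{n-1}}_{n-1,n}(x,\cdot)$ (or more generally from an entrance law), the $X$-marginal is Markov with semigroup $P_t^{n,h_n}$. Finally, by Corollary \ref{minimal} $h_n=det(\phi_i(x_j))_{i,j=1}^n$ is the ground state of $P_t^n$, so $P_t^{n,h_n}$ is precisely the law of $n$ independent $L$-diffusions conditioned via the minimal positive eigenfunction never to intersect, which is the assertion.

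The genuinely substantive step, and the one I expect to be the main obstacle, is the local-to-true-martingale upgrade --- passing from the formal eigen-relation $\hat{L}\mathcal{D}_{\hat{m}}\phi_i=-\lambda_i\mathcal{D}_{\hat{m}}\phi_i$ to the semigroup identity $\hat{P}_t^1\mathcal{D}_{\hat{m}}\phi_i=e^{-\lambda_i t}\mathcal{D}_{\hat{m}}\phi_i$; this is exactly where conservativity of $L$ (hence $\phi_1\equiv1$ and the decay of $\mathcal{D}_s\phi_i$ at the boundary) is used, and it is also the point at which one must be careful about the boundary classification of the $\hat{L}$-diffusion. Everything else --- the two determinant identities and the verification of (\ref{bc1l})--(\ref{bc1r}) --- is routine bookkeeping.
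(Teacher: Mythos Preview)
Your proposal is correct and follows essentially the same route as the paper: the argument in the paper is precisely the discussion immediately preceding the proposition (positivity of $\hat{h}_{n-1}$ from the ECT-property with $\phi_1\equiv1$, the local-to-true martingale upgrade giving $\hat{P}_t^{1}\mathcal{D}_{\hat{m}}\phi_i=e^{-\lambda_i t}\mathcal{D}_{\hat{m}}\phi_i$, Andreief to pass to $\hat{P}_t^{n-1}$, the integration yielding $(\Lambda_{n-1,n}\Pi_{n-1,n}\hat{h}_{n-1})=const_n\,h_n$, and then Proposition~\ref{Master} and Theorem~\ref{MasterDynamics}). Your additional bookkeeping on the boundary conditions (\ref{bc1l})--(\ref{bc1r}) and the finiteness of $h_n$ is harmless, though the latter is automatic since $W^{n-1,n}(x)$ is compact in $I^\circ$.
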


\begin{proof}
Making use of the relations $\mathcal{D}_{\hat{m}}=\mathcal{D}_{s}$ and $\mathcal{D}_{\hat{s}}=\mathcal{D}_{m}$ between the diffusion process generator $L$ and its dual we obtain,
\begin{align*}
\hat{L}\mathcal{D}_{\hat{m}}\phi_{i}=\mathcal{D}_{\hat{m}}\mathcal{D}_{\hat{s}}\mathcal{D}_{\hat{m}}\phi_i=\mathcal{D}_{\hat{m}}\mathcal{D}_{m}\mathcal{D}_{s}\phi_i=-\lambda_i\mathcal{D}_{\hat{m}}\phi_i.
\end{align*} 
Thus, $\left(e^{\lambda_it}\mathcal{D}_{\hat{m}}\phi_{i}(\hat{X}(t));t\ge 0\right)$ for each $1\le i \le n$ is a local martingale. By virtue of boundedness (since we assume that the $L$-diffusion is conservative we have $\underset{x\to l,r}{\lim}\mathcal{D}_{\hat{m}}\phi_i(x)=\underset{x\to l,r}{\lim}\mathcal{D}_{s}\phi_i(x)=0$) it is in fact a true martingale and so for $1\le i \le n$,
\begin{align}\label{truemartingale}
\hat{P}_t^1\mathcal{D}_{\hat{m}}\phi_{i}=e^{-\lambda_it}\mathcal{D}_{\hat{m}}\phi_{i}.
\end{align}
Then, by the well-known Andreif (or generalized Cauchy-Binet) identity we obtain,
\begin{align*}
\hat{P}_t^{n-1}\hat{h}_{n-1}=e^{-\sum_{i=1}^{n-1}\lambda_{i+1}t}\hat{h}_{n-1}
\end{align*}
and thus $\hat{h}_{n-1}$ is a strictly positive eigenfunction for $\hat{P}_t^{n-1}$. Finally, by performing a simple integration we see that,
\begin{align*}
(\Lambda_{n-1,n}\Pi_{n-1,n}\hat{h}_{n-1})(x)=const_n h_n(x), \ x \in W^n(I).
\end{align*} 
Using Theorem \ref{MasterDynamics} we obtain the statement of the proposition.
\end{proof}

Obviously the diffusions associated with orthogonal polynomials and Brownian motions in an interval fall under this framework.

\subsection{Eigenfunctions via intertwining} \label{subsectioneigen}

In this short subsection we point out that all eigenfunctions for $n$ copies of a diffusion process with generator $L$ in $W^n$ (not necessarily diffusions with discrete spectrum e.g. Brownian motions or $BESQ(d)$ processes) that are obtained by iteration of the intertwining kernels considered in this work, or equivalently from building a process in a Gelfand-Tsetlin pattern, are of the form,
\begin{align}\label{eigenfunctionrepresentation1}
\mathfrak{H}_n(x_1,\cdots,x_n)=\det\left(h_i^{(n)}(x_j)\right)^n_{i,j=1},
\end{align} 
for functions $\left(h_1^{(n)},\cdots,h_n^{(n)}\right)$ (not necessarily the eigenfunctions of a one dimensional diffusion operator) given by,
\begin{align}\label{eigenfunctionrepresentation2}
h_i^{(n)}(x)=w^{(n)}_1(x)\int_{c}^{x}w^{(n)}_2(\xi_1)\int_{c}^{\xi_1}w^{(n)}_3(\xi_2)\cdots \int_{c}^{\xi_{i-2}}w^{(n)}_i(\xi_{i-1})d\xi_{i-1}\cdots d\xi_1,
\end{align}
for some weights $w_i^{(n)}(x)>0$ and $c\in I^\circ$. An easy consequence of the representation above (see e.g. Theorem 1.1 of Chapter 6 of \cite{Karlin}) and assuming $w_i^{(n)}\in C^{n-i}(l,r)$ ($n-i$ times continuously differentiable) is that the Wronskian $W\left(h_1^{(n)},\cdots,h_n^{(n)}\right)$ is given by for $x \in I^{\circ}$,
\begin{align}
W\left(h_1^{(n)},\cdots,h_n^{(n)}\right)(x)=\left[w_1^{(n)}(x)\right]^n\left[w_2^{(n)}(x)\right]^{n-1}\cdots\left[w_n^{(n)}(x)\right],
\end{align}
so that in particular $W\left(h_1^{(n)},\cdots,h_n^{(n)}\right)(x)>0$.

We shall restrict to the case of $\mathbb{GT}(n)$ (where the number of particles on each level increases by $1$) for simplicity and prove claims (\ref{eigenfunctionrepresentation1}) and (\ref{eigenfunctionrepresentation2}) by induction. For $n=1$ there is nothing to prove. We conclude by stating and proving the inductive step as a precise proposition:

\begin{prop}
Assume that the input, strictly positive, eigenfunction $\mathfrak{H}_{n-1}$ for $n-1$ copies of a one dimensional diffusion process is of the form (\ref{eigenfunctionrepresentation1}) and (\ref{eigenfunctionrepresentation2}). Then, the eigenfunction $\mathfrak{H}_{n}$ built from the intertwining relation of Karlin-McGregor semigroups (\ref{KMintertwining}) for $n$ copies of its dual diffusion has the same form (\ref{eigenfunctionrepresentation1}) and (\ref{eigenfunctionrepresentation2}), with the weights $\{w_i^{(n)}\}_{i=1}^n$ satisfying an explicit recursion in terms of the $\{w_i^{(n-1)}\}_{i=1}^{n-1}$.
\end{prop}

\begin{proof}
In order to obtain a strictly positive eigenfunction for $n$ copies of an $L$-diffusion, we can in fact start more generally with $n$ copies of an $L$-diffusion $h$-transformed by a one dimensional strictly positive eigenfunction $h$ (denoting by $L^h$ such a diffusion process where we assume that $L^h$ satisfies the boundary conditions of Section 2 in order for the intertwining (\ref{KMintertwining}) to hold). It is then clear that:
\begin{align}
\mathfrak{H}_n(x_1,\cdots,x_n)=\prod_{i=1}^{n}h(x_i)(\Lambda_{n-1,n}\mathfrak{H}_{n-1})(x_1,\cdots,x_n),
\end{align}
where now $\mathfrak{H}_{n-1}(x_1,\cdots,x_{n-1})$ is a strictly positive eigenfunction of $n-1$ copies of an $\widehat{L^h}$ diffusion and which by our hypothesis is given by,
\begin{align}
\mathfrak{H}_{n-1}(x_1,\cdots,x_{n-1})=\det\left(h_i^{(n-1)}(x_j)\right)^{n-1}_{i,j=1},
\end{align}
for some functions $\left(h_1^{(n-1)},\cdots,h_{n-1}^{(n-1)}\right)$ with a representation as in (\ref{eigenfunctionrepresentation2}) for some weights $\{w_{i}^{(n-1)}\}_{i\le n-1}$. A simple integration now gives,
\begin{align*}
h^{(n)}_1(x)&=h(x),\\
h_i^{(n)}(x)&=h(x)\int_{c}^{x}\widehat{m^h}(y)h_{i-1}^{(n-1)}(y)dy, \ \  \textnormal{for} \ i \ge 2,
\end{align*}
where $\widehat{m^h}(x)=h^{-2}(x)s'(x)$ is the density of the speed measure of a $\widehat{L^h}$ diffusion. We thus obtain the following recursive representation for the weights $\{w_i^{(n)}\}_{i\le n}$,
\begin{align}
w_1^{(n)}(x)&=h(x),\\
w_2^{(n)}(x)&=h^{-2}(x)s'(x)w_1^{(n-1)}(x),\\
w_i^{(n)}(x)&=w_{i-1}^{(n-1)}(x), \ \  \textnormal{for} \ i \ge 3.
\end{align}
\end{proof}

\subsection{Connection to superpositions and decimations}
For particular entrance laws, the joint law of X and Y at a fixed time can be interpreted in terms of superpositions/decimations
of random matrix ensembles (see e.g. \cite{ForresterRains}). For example, in the context of Proposition \ref{superpositionref1}, the joint law of X and Y at time 1 agrees with the joint law of the odd (respectively even) 
eigenvalues in a superposition of two independent samples from the $GOE_{n+1}$ and $GOE_n$ ensembles, consistent with the fact that 
in such a superposition, the odd (respectively even) eigenvalues are distributed according to the $GUE_{n+1}$ (respectively $GUE_n$) 
ensembles, see Theorem 5.2 in \cite{ForresterRains}.  In the BESQ/Laguerre case, our Proposition \ref{superpositionref2} is similarly related to recent work on GOE singular 
values by Bornemann and La Croix \cite{BornemannLaCroix} and Bornemann and 
Forrester \cite{BornemannForrester}.

\subsection{Connection to strong stationary duals}

Strong stationary duality (SSD) first introduced by Diaconis and Fill \cite{DiaconisFill} in the discrete state space setting is a fundamental notion in the study of strong stationary times which are a key tool in understanding mixing times of Markov Chains. More recently, Fill and Lyzinski \cite{FillLyzinski} developed an analogous theory for diffusion processes in compact intervals. Given a conservative diffusion $\mathcal{G}$ one associates to it a SSD $\mathcal{G}^*$ such that the two semigroups are intertwined (see Definition 3.1 there). In Theorem 3.4 therein the form of the dual generator is derived and as already indicated in Remark 5.4 in the same paper this is exactly the dual diffusion $\hat{\mathcal{G}}$ $h$-transformed by its scale function. 

In our framework, considering a two-level process in $W^{1,1}$ with $L=\hat{\mathcal{G}}$ and so $\hat{L}=\mathcal{G}$ and using the positive harmonic function $\hat{h}_1\equiv 1$, the distribution of the projection on the $X$ particle (under certain initial conditions) coincides with the SSD $\mathcal{G}^*$ diffusion.
 Hence this provides a coupling of a diffusion $\mathcal{G}$ and its strong stationary dual $\mathcal{G}^*$ respecting the intertwining between $\mathcal{G}$ and $\mathcal{G}^*$.

\section{Edge particle systems}

In this section we will study the autonomous particle systems at either edge of the Gelfand-Tsetlin pattern valued processes we have constructed. In the figure below, the particles we will be concerned with are denoted in $\bullet$.
\begin{center}
\begin{tabular}{ c c c c c c c c c }
 &  &  &&$\overset{X^{(1)}_1}{\bullet}$ &&&&\\ 
 &  &  & $\overset{X^{(2)}_1}{\bullet} $&&$\overset{X^{(2)}_2}{\bullet} $&&&  \\ 
     &  &$\overset{X^{(3)}_1}{\bullet} $   &&$\overset{X^{(3)}_2}{\circ} $ &&$\overset{X^{(3)}_3}{\bullet} $&&  \\ 
  &$\iddots$ & & &$\vdots$& &&$\ddots$&\\
$\overset{X^{(N)}_1}{\bullet} $ &  $\overset{X^{(N)}_2}{\circ} $  &$\overset{X^{(N)}_3}{\circ} $&&$\cdots\cdots$&& $\overset{X^{(N)}_{N-2}}{\circ} $& $\overset{X^{(N)}_{N-1}}{\circ} $&$\overset{X^{(N)}_N}{\bullet} $
\end{tabular}
\end{center}
Our goal is to derive determinantal expressions for their transition densities. Such expressions were derived by Schutz for TASEP in \cite{Schutz} and later Warren \cite{Warren} for Brownian motions. See also Johansson's work in \cite{Johansson}, for an analogous formula for a Markov chain related to the Meixner ensemble and finally Dieker and Warren's investigation in \cite{DiekerWarren2}, for formulae in the discrete setting based on the RSK correspondence. These so called Schutz-type formulae were the starting points for the recent complete solution of TASEP in \cite{KPZfixedpoint} which led to the KPZ fixed point and also for the recent progress \cite{JohanssonPercolation} in the study of the two time joint distribution in Brownian directed percolation. For a detailed investigation of the Brownian motion model the reader is referred to the book \cite{ReflectedBrownianKPZ}.

We will mainly restrict ourselves to the consideration of Brownian motions, $BESQ(d)$ processes and the diffusions associated with orthogonal polynomials. In a little bit more generality we will assume that the interacting diffusions have generators of the form,
\begin{align*}
L=a(x)\frac{d^2}{dx^2}+b(x)\frac{d}{dx},
\end{align*}
with,
\begin{align*}
a(x)=a_0+a_1x+a_2x^2 \ \ \ b(x)=b_0+b_1x.
\end{align*}
We will also make the following \textbf{standing assumption} in this section. We restrict to the case of the boundaries of the state space $I$ being either \textit{natural} or \textit{entrance} thus the state space is an open interval $(l,r)$. Under these assumptions the transition densities will be smooth in $(l,r)$ in both the backwards and forwards variables (possibly blowing up as we approach $l$ or $r$ see e.g \cite{Stroock} and for a detailed study of the transition densities of the Wright-Fisher diffusion see \cite{WrightFisher}). This covers all the processes we built that relate to minor processes of matrix diffusions. This interacting particle system can also be seen as the solution to the following system of $SDE$'s with one-sided collisions with $(x_1^1 \le \cdots \le x_n^n)$,
\begin{align}\label{edgesystem1}
X_1^{(1)}(t)&=x^1_1+\int_{0}^{t}\sqrt{2a(X_1^{(1)}(s))}d\gamma_1^1(s)+\int_{0}^{t}b^{(1)}(X_1^{(1)}(s))ds,\nonumber\\
&\vdots\nonumber\\
X_m^{(m)}(t)&=x_m^m+\int_{0}^{t}\sqrt{2a(X_m^{(m)}(s))}d\gamma_m^m(s)+\int_{0}^{t}b^{(m)}(X_m^{(m)}(s))ds+K_m^{m,-}(t),\\
&\vdots\nonumber\\
X_n^{(n)}(t)&=x_n^n+\int_{0}^{t}\sqrt{2a(X_n^{(n)}(s))}d\gamma_n^n(s)+\int_{0}^{t}b^{(n)}(X_n^{(n)}(s))ds+K_n^{n,-}(t).\nonumber
\end{align}
where $\gamma_i^i$ are independent standard Brownian motions and $K_i^{i,-}$ are positive finite variation processes with the measure $dK_i^{i,-}$ supported on $\left\{t:X_i^{(i)}(t)=X_{i-1}^{(i-1)}(t)\right\}$ and
\begin{align*}
b^{(k)}(x)=b(x)+(n-k)a'(x)=b_0+(n-k)a_1+(b_1+2(n-k)a_2)x.
\end{align*}
That these $SDE$'s are well-posed, so that in particular the solution is Markov, follows from the same arguments as in Section \ref{SectionWellposedness}. Note that, a quadratic diffusion coefficient $a(\cdot)$ and linear drift $b(\cdot)$ satisfy $(\mathbf{YW})$. See the following figure for a description of the interaction. The arrows indicate the direction of the 'pushing force' (with magnitude the finite variation process $K$) applied when collisions occur between the particles so that the ordering is maintained.
\begin{center}
\begin{tabular}{ c c c c c c c c c }
 $\overset{X^{(1)}_1}{\bullet}$&$\longrightarrow$ &$\overset{X^{(2)}_2}{\bullet}$&$\longrightarrow$ &$\overset{X^{(3)}_3}{\bullet}$ &$\cdots$ &$\overset{X^{(n-1)}_{n-1}}{\bullet}$&$\longrightarrow$&$\overset{X^{(n)}_{n}}{\bullet}$.
 \end{tabular}
\end{center}
Note that our assumption that the boundary points are either \textit{entrance} or \textit{natural} does not always allow for an \textit{infinite} such particle system,in particular think of the $BESQ(d)$ case where $d$ drops down by $2$ each time we add a particle. Denote by $p_t^{(k)}(x,y)$ the transition kernel associated with  the $L^{(k)}$-diffusion with generator,
\begin{align*}
L^{(k)}=a(x)\frac{d^2}{dx^2}+b^{(k)}(x)\frac{d}{dx}.
\end{align*}
Defining,
\begin{align*}
\mathcal{S}_t^{(k),j}(x,x')=\begin{cases}
\int_{l}^{x'}\frac{(x'-z)^{j-1}}{(j-1)!}p_t^{(k)}(x,z)dz \ \ j\ge 1\\
\partial_{x'}^{-j}p_t^{(k)}(x,x') \ \ j \le 0
\end{cases},
\end{align*}
and with $x=(x_1,\cdots,x_n)$, $x'=(x_1',\cdots,x_n')$,
\begin{align}
s_t(x,x')=\det\left(\mathcal{S}_t^{(i),i-j}(x_i,x_j')\right)^n_{i,j=1},
\end{align}
we arrive at the following proposition.
\begin{prop}\label{PropositionEdgeParticle}
Assume that the diffusion and drift coefficients of the generators $L^{(k)}$ are of the form $a(x)=a_0+a_1x+a_2x^2$ and $b^{(k)}(x)=b_0+(n-k)a_1+(b_1+2(n-k)a_2)x$ and moreover assume that the boundaries of the state space are either natural or entrance for the $L^{(k)}$-diffusion; in particular this implies certain constraints on the constants $a_0, a_1, a_2, b_0, b_1$. Then, the process $(X_1^{(1)}(t),\cdots,X_n^{(n)}(t))$ satisfying the $SDEs$ (\ref{edgesystem1}), in which $X^{(k)}_k$ is an $L^{(k)}$-diffusion reflected off $X^{(k-1)}_{k-1}$, has transition densities $s_t(x,x')$.
\end{prop}
\begin{proof}
First, we make the following crucial observation. Define the constant $c_{k,n}=2(n-k-1)a_2+b_1$ and note that the $L^{(k)}$-diffusion is the $h$-transform of the conjugate $\widehat{L^{(k+1)}}$ with $\widehat{m^{(k+1)}}^{-1}(x)$ with eigenvalue $c_{k,n}$, so that $L^{(k)}=\left(\widehat{L^{(k+1)}}\right)^{*}-c_{k,n}$ which is again a bona fide diffusion process generator (with $\mathsf{L}^{*}$ denoting the formal adjoint of $\mathsf{L}$ with respect to Lebesgue measure). Thus, making use of (\ref{conjtrans}) and (\ref{symmetrizing}) we obtain the following relation between the transition densities,
\begin{align}
p_t^{(k)}(x,z)=-e^{c_{k,n}t}\int_{l}^{z}\partial_xp_t^{(k+1)}(x,w)dw, \label{onesidedtransitionrelation}\\
\partial_z^jp_t^{(k)}(x,z)=-e^{c_{k,n}t}\partial_z^{j-1}\partial_xp_t^{(k+1)}(x,z).\nonumber
\end{align}

Now, let $f:{W}^n(I^\circ)\mapsto \mathbb{R}$ be continuous with compact support. Then, we have the following $t=0$ boundary condition,
\begin{align}\label{timezeroedge}
\lim_{t \to 0}\int_{W^n(I^\circ)}^{}s_t(x,x')f(x')dx'=f(x),
\end{align}
which formally can easily be seen to hold since the transition densities along the main diagonal approximate delta functions and all other contributions vanish. We spell this out now. Let $\epsilon>0$ and suppose $f$ is zero in a $2\epsilon$ neighbourhood of $\partial W^n(I^\circ)$. We consider a contribution to the Leibniz expansion of the determinant coming from a permutation $\rho$ that is not the identity. Hence there exist $i<j$ so that $\rho(i)>i$ and $\rho(j)\le i$ and note that the factors $\mathcal{S}_t^{(i),i-\rho(i)}\left(x_i,x_{\rho(i)}'\right)$ and $\mathcal{S}_t^{(j),j-\rho(j)}\left(x_j,x_{\rho(j)}'\right)$ are contained in the contribution corresponding to $\rho$. Since $j-\rho(j)>0$ and $i-\rho(i)<0$ observe that on the set $\left\{x_{\rho(i)}'-x_i>\epsilon\right\}\cup\left\{x_{\rho(j)}'-x_j<-\epsilon\right\}$ at least one of these factors and so the whole contribution as $t \downarrow 0$
vanishes uniformly. On the other hand on the complement of this set we have $x_{\rho(i)}'\le x_i+\epsilon\le x_j+\epsilon\le x'_{\rho(j)}+2\epsilon$. Since $\rho(j)<\rho(i)$ so that $x'_{\rho(j)}\le x'_{\rho(i)}$ we thus obtain that if $x'$ is in the complement of $\left\{x_{\rho(i)}'-x_i>\epsilon\right\}\cup\left\{x_{\rho(j)}'-x_j<-\epsilon\right\}$ it also belongs to some $2\epsilon$ neighbourhood of $\partial W^n(I^\circ)$ and hence outside the support of $f$. (\ref{timezeroedge}) then follows.

Now by multilinearity of the determinant the equation in $(0,\infty)\times \mathring{W}^n(I)\times \mathring{W}^n(I)$,
\begin{align*}
\partial_ts_t(x,x')=\sum_{i=1}^{n}L^{(k)}_{x_i}s_t(x,x'),
\end{align*}
 is satisfied since we have  $\partial_t\mathcal{S}_t^{(k),j}(x,x')=L_x^{(k)}\mathcal{S}_t^{(k),j}(x,x')$ for all $k$. Here, $L^{(k)}_{x_i}$ is simply a copy of the differential operator $L^{(k)}$ acting in the $x_i$ variable.
 
Moreover, for the Neumann/reflecting boundary conditions we need to check the following conditions $\partial_{x_i}s_t(x,x')|_{x_i=x_{i-1}}=0$ for  $i=2,\cdots,n$.\\
This follows from,
\begin{align*}
\partial_{x_i}\mathcal{S}_t^{(i),i-j}(x_i,x_j')|_{x_i=x_{i-1}}=-e^{-c_{i-1,n}t}\mathcal{S}_t^{(i-1),i-1-j}(x_{i-1},x_j').
\end{align*}
This is true because of the following observations. For $j \le -1$
\begin{align*}
 \partial_z^{-j}p_t^{(i-1)}(x,z)=-e^{c_{i-1,n}t}\partial_z^{-j-1}\partial_xp_t^{(i)}(x,z).
\end{align*}
For $j \ge 1$
\begin{align*}
 &\int_{l}^{x'}\frac{(x'-z)^{j-1}}{(j-1)!}p_t^{(i-1)}(x,z)dz=-e^{c_{i-1,n}t}\partial_x\int_{l}^{x'}\frac{(x'-z)^{j-1}}{(j-1)!}\int_{l}^{z}p_t^{(i)}(x,w)dwdz\\
 &=-e^{c_{i-1,n}t}\partial_x \bigg[\bigg[-\frac{(x'-z)^j}{j!}\int_{l}^{z}p_t^{(k)}(x,w)dw\bigg]_{l}^{x'}-\int_{l}^{x'}-\frac{(x'-z)^j}{j!}p_t^{(i)}(x,z)dz\bigg]\\
 &=-e^{c_{i-1,n}t}\partial_x\int_{l}^{x'}\frac{(x'-z)^{j}}{j!}p_t^{(i)}(x,z)dz.
\end{align*}
Hence $\mathcal{S}_t^{(i-1),j}(x,x')=-e^{c_{i-1,n}t}\partial_x\mathcal{S}_t^{(i),j+1}(x,x')$ and thus
\begin{align*}
\partial_{x_i}s_t(x,x')|_{x_i=x_{i-1}}=0,
\end{align*}
 for $i=2,\cdots,n$.\\
 Define for $f$ as in the first paragraph,
 \begin{align*}
 F(t,x)=\int_{W^n(I^\circ)}^{}s_t(x,x')f(x')dx'.
 \end{align*}
 Let $\textbf{S}_x$ denote the law of $(X_1^{(1)},\cdots,X_n^{(n)})$ started from $x=(x_1,\cdots,x_n) \in W^n$. Fixing $T,\epsilon$ and applying Ito's formula to the process $(F(T+\epsilon-t,x), t \le T)$ we obtain that it is a local martingale and by virtue of boundedness indeed a true martingale. Hence,
 \begin{align*}
 F(T+\epsilon,x)&=\textbf{S}_x\left[F\left(\epsilon,\left(X_1^{(1)}(T),\cdots,X_n^{(n)}(T)\right)\right)\right].
 \end{align*}
  Now letting $\epsilon \downarrow 0$ we obtain,
 \begin{align*}
 F(T,x)=\textbf{S}_x\left[f\left(X_1^{(1)}(T),\cdots,X_n^{(n)}(T)\right)\right].
 \end{align*}
 The result follows since the process spends zero Lebesgue time on the boundary so that in particular such $f$ determine its distribution.
\end{proof}
In the standard Brownian motion case with $p_t^{(k)}$ the heat kernel this recovers Proposition 8 from \cite{Warren}.

Now, we consider the interacting particle system at the other edge of the pattern with the $i^{th}$ particle getting reflected downwards from the $i-1^{th}$, namely with $(x_1^1\ge \cdots\ge x_1^n)$ this is given by the following system of $SDEs$ with reflection,
\begin{align}\label{edgesystem2}
X_1^{(1)}(t)&=x^1_1+\int_{0}^{t}\sqrt{2a(X_1^{(1)}(s))}d\gamma_1^1(s)+\int_{0}^{t}b^{(1)}(X_1^{(1)}(s))ds,\nonumber\\
&\vdots\nonumber\\
X_1^{(m)}(t)&=x_1^m+\int_{0}^{t}\sqrt{2a(X_1^{(m)}(s))}d\gamma_1^m(s)+\int_{0}^{t}b^{(m)}(X_1^{(m)}(s))ds-K_1^{m,+}(t),\\
&\vdots\nonumber\\
X_1^{(n)}(t)&=x_1^n+\int_{0}^{t}\sqrt{2a(X_1^{(n)}(s))}d\gamma_1^n(s)+\int_{0}^{t}b^{(n)}(X_1^{(n)}(s))ds-K_1^{n,+}(t),\nonumber
\end{align}
where $\gamma_1^i$ are independent standard Brownian motions and $K_1^{i,+}$ are positive finite variation processes with the measure $dK_1^{i,+}$ supported on $\left\{t:X_i^{(i)}(t)=X_{i-1}^{(i-1)}(t)\right\}$. Again see the figure below,
\begin{center}
\begin{tabular}{ c c c c c c c c c }
 $\overset{X^{(n)}_1}{\bullet}$&$\longleftarrow$ &$\overset{X^{(n-1)}_1}{\bullet}$&$\longleftarrow$ &$\overset{X^{(n-2)}_1}{\bullet}$ &$\cdots$ &$\overset{X^{(2)}_1}{\bullet}$&$\longleftarrow$&$\overset{X^{(1)}_1}{\bullet}$.
 \end{tabular}
\end{center}
Define,
\begin{align*}
\bar{\mathcal{S}}_t^{(k),j}(x,x')=\begin{cases}
-\int_{x'}^{r}\frac{(x'-z)^{j-1}}{(j-1)!}p_t^{(k)}(x,z)dz \ \ j\ge 1\\
\partial_
{x'}^{-j}p_t^{(k)}(x,x') \ \ j \le 0
\end{cases},
\end{align*}
Then letting, with $x=(x_1,\cdots,x_n)$, $x'=(x_1',\cdots,x_n')$,
\begin{align}
\bar{s}_t(x,x')=\det(\bar{\mathcal{S}}_t^{(i),i-j}(x_i,x_j'))^n_{i,j=1},
\end{align} 
we arrive at the following proposition.
\begin{prop}
Assume that the diffusion and drift coefficients of the generators $L^{(k)}$ are of the form $a(x)=a_0+a_1x+a_2x^2$ and $b^{(k)}(x)=b_0+(n-k)a_1+(b_1+2(n-k)a_2)x$ and moreover assume that the boundaries of the state space are either natural or entrance for the $L^{(k)}$-diffusion. Then, the process $(X_1^{(1)}(t),\cdots,X_1^{(n)}(t))$ satisfying the $SDEs$ (\ref{edgesystem2}), in which $X^{(k)}_1$ is an $L^{(k)}$-diffusion reflected off $X^{(k-1)}_{1}$, has transition densities $\bar{s}_t(x,x')$.
\end{prop}
\begin{proof}
The key observation in this setting is the following relation between the transition kernels:
\begin{align*}
p_t^{(k)}(x,z)=e^{c_{k,n}t}\int_{z}^{r}\partial_xp_t^{(k+1)}(x,w)dw.
\end{align*}
This is immediate from (\ref{onesidedtransitionrelation}) since each diffusion process in this section is an honest Markov process.

Then, checking the parabolic equation with the correct spatial boundary conditions is as before. Now the $t=0$ boundary condition, again follows from the fact that all contributions from off diagonal terms in the determinant have at least one term vanishing uniformly in this new domain $(x_1 \ge \cdots \ge x_n) $.
\end{proof}
Via a simple integration, we obtain the following formulae for the distributions of the leftmost and rightmost particles in the Gelfand-Tsetlin pattern,
\begin{cor}
\begin{align*}
\mathbb{P}_{x^{(0)}}(X_n^{(n)}(t)\le z)&=\det\big(\mathcal{S}_t^{(i),i-j+1}(x^{(0)}_i,z)\big)_{i,j=1}^n ,\\
\mathbb{P}_{\bar{x}^{(0)}}(X_1^{(n)}(t)\ge z)&=\det\big(-\bar{\mathcal{S}}_t^{(i),i-j+1}(\bar{x}^{(0)}_i,z)\big)_{i,j=1}^n,
\end{align*}
where $x^{(0)}=(x^{(0)}_1\le \cdots \le x^{(0)}_n)$ and $\bar{x}^{(0)}=(\bar{x}^{(0)}_1\ge \cdots \ge \bar{x}^{(0)}_n)$.
\end{cor}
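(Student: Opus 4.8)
The plan is to obtain both formulae by integrating the transition densities supplied by Proposition \ref{PropositionEdgeParticle} over the appropriate region. Since the components of the solution of (\ref{edgesystem1}) stay ordered, $X_1^{(1)}(t)\le\cdots\le X_n^{(n)}(t)$, the event $\{X_n^{(n)}(t)\le z\}$ coincides with $\{l<X_1^{(1)}(t)\le\cdots\le X_n^{(n)}(t)\le z\}$, so that
\begin{align*}
\mathbb{P}_{x^{(0)}}\big(X_n^{(n)}(t)\le z\big)=\int_{\{l<x'_1\le\cdots\le x'_n\le z\}}s_t(x^{(0)},x')\,dx'_1\cdots dx'_n ,
\end{align*}
and, using the reverse ordering of (\ref{edgesystem2}),
\begin{align*}
\mathbb{P}_{\bar x^{(0)}}\big(X_1^{(n)}(t)\ge z\big)=\int_{\{z\le x'_n\le\cdots\le x'_1<r\}}\bar s_t(\bar x^{(0)},x')\,dx'_1\cdots dx'_n .
\end{align*}

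First I would record the elementary ``raising'' identities
\begin{align*}
\int_{l}^{b}\mathcal{S}_t^{(k),j}(x,w)\,dw=\mathcal{S}_t^{(k),j+1}(x,b),\qquad \int_{b}^{r}\bar{\mathcal{S}}_t^{(k),j}(x,w)\,dw=-\bar{\mathcal{S}}_t^{(k),j+1}(x,b),
\end{align*}
valid for every integer $j$. For $j\ge 0$ this is immediate from the definitions and Fubini --- it is essentially the computation already carried out in the proof of Proposition \ref{PropositionEdgeParticle} to verify the reflecting boundary condition --- while for $j\le -1$ it is the fundamental theorem of calculus applied to $\partial_w^{-j}p_t^{(k)}(x,w)$, the contribution of the lower (respectively upper) endpoint vanishing because of the standing assumption of this section that the boundaries of $I$ are natural or entrance, so that the transition densities and their derivatives of the relevant order decay there.

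Next I would carry out the iterated integration one variable at a time, integrating $x'_n$ first, then $x'_{n-1}$, and so on down to $x'_1$. When integrating $x'_k$ over $(x'_{k-1},z)$ --- at which stage every other column is independent of $x'_k$ --- multilinearity of the determinant together with the raising identity turns the $k$-th column $\big(\mathcal{S}_t^{(i),i-k}(x^{(0)}_i,x'_k)\big)_{i=1}^n$ into
\begin{align*}
\big(\mathcal{S}_t^{(i),i-k+1}(x^{(0)}_i,z)\big)_{i=1}^n-\big(\mathcal{S}_t^{(i),i-(k-1)}(x^{(0)}_i,x'_{k-1})\big)_{i=1}^n ,
\end{align*}
and the subtracted vector is literally the $(k-1)$-st column of the determinant, so it contributes nothing. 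Hence each step replaces column $k$ by the vector $\big(\mathcal{S}_t^{(i),i-k+1}(x^{(0)}_i,z)\big)_i$, which no longer depends on any integration variable, leaves columns $1,\dots,k-1$ untouched, and shrinks the region to $\{l<x'_1\le\cdots\le x'_{k-1}\le z\}$; the last integration, of $x'_1$ over $(l,z)$, has no lower-endpoint term and directly produces column $1$. Collecting the outcome gives $\mathbb{P}_{x^{(0)}}(X_n^{(n)}(t)\le z)=\det\big(\mathcal{S}_t^{(i),i-j+1}(x^{(0)}_i,z)\big)_{i,j=1}^n$. The second identity is the mirror image: each of the $n$ column integrations now contributes the minus sign coming from $\int_b^r\bar{\mathcal{S}}_t^{(k),j}=-\bar{\mathcal{S}}_t^{(k),j+1}$, while the same cancellation of lower-order columns occurs, producing exactly $\det\big(-\bar{\mathcal{S}}_t^{(i),i-j+1}(\bar x^{(0)}_i,z)\big)_{i,j=1}^n$.

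The only step that is more than bookkeeping is the raising identity for negative index $j$: one must be sure that $\lim_{w\downarrow l}\partial_w^{-j-1}p_t^{(k)}(x,w)=0$ for every pair $(k,j)$ that actually occurs in the telescoping --- these are the rows $i<k$, so the order $-j-1=k-i-1$ of the derivative is at most $n-2$. For the explicit families in play, namely Brownian motion, $BESQ(d)$, and the orthogonal-polynomial diffusions, this is read off directly from the known transition densities; in general it is precisely what the boundary classification assumption and the transition-density regularity recalled at the start of this section are there to guarantee.
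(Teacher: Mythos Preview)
Your proof is correct and is exactly the ``simple integration'' the paper alludes to; the paper gives no further detail, so your write-up is in fact more explicit than the original.

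One small remark: the worry in your final paragraph is unnecessary. In the iterated integration you never actually use the raising identity with lower limit $l$ for negative $j$; you only use that $\mathcal{S}_t^{(k),j+1}$ is an antiderivative of $\mathcal{S}_t^{(k),j}$, so that $\int_{x'_{k-1}}^{z}\mathcal{S}_t^{(i),i-k}(x_i^{(0)},w)\,dw=\mathcal{S}_t^{(i),i-k+1}(x_i^{(0)},z)-\mathcal{S}_t^{(i),i-k+1}(x_i^{(0)},x'_{k-1})$, and the subtracted vector is column $k-1$ and drops out. The only time the lower limit is $l$ is the final step $k=1$, where the boundary contribution is $\mathcal{S}_t^{(i),i}(x_i^{(0)},l)$ with $i\ge 1$, which vanishes trivially as an integral $\int_l^l$. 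So no decay of $\partial_w^{m}p_t^{(k)}(x,w)$ at the boundary is ever invoked.
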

 For $p_t^{(k)}$ the heat kernel and $x^{(0)}=(0, \cdots ,0)$ this recovers a formula from \cite{Warren}. In the $BESQ(d)$ case and $t=1$ the above give expressions for the largest and smallest eigenvalues for the $LUE$ ensemble. We obtain the analogous expressions in the Jacobi case as $t\to \infty$ since the $JUE$ is the invariant measure of non-intersecting Jacobi processes.

\section{Well-posedness and transition densities for SDEs with reflection}

\subsection{Well-posedness of reflecting SDEs}\label{SectionWellposedness}
We will prove well-posedness (existence and uniqueness) for the systems of reflecting $SDEs$ (\ref{System1SDEs}), (\ref{System2}), (\ref{GelfandTsetlinSDEs}), (\ref{edgesystem1}) and (\ref{edgesystem2}) considered in this work. It will be more convenient, although essentially equivalent for our purposes, to consider reflecting $SDEs$ for $X$ in the time dependent domains (or between barriers) given by $Y$ i.e. in the form of (\ref{systemofreflectingLdiffusions}). More precisely we will consider $SDEs$ with reflection for a single particle $X$ in the time dependent domain $[\mathsf{Y}^-,\mathsf{Y}^+]$ where $\mathsf{Y}^-$ is the lower time dependent boundary and  $\mathsf{Y}^+$ is the upper time dependent boundary. This covers all the cases of interest to us by taking $\mathsf{Y}^-=Y_{i-1}$ and $\mathsf{Y}^+=Y_{i}$ with the possibility $\mathsf{Y}^-\equiv l$ and/or $\mathsf{Y}^+\equiv r$.

We will first obtain weak existence, for coefficients $\sigma(x)=\sqrt{2a(x)},b(x)$ continuous and of at most linear growth, the precise statement to found in Proposition \ref{WeakExistenceProp} below. We begin by recalling the definition and some properties of the Skorokhod problem in a time dependent domain. We will use the following notation, $\mathbb{R}_+=[0,\infty)$. Suppose we are given continuous functions $z,\mathsf{Y}^-,\mathsf{Y}^+ \in C\left(\mathbb{R}_+;\mathbb{R}\right)$ such that $\forall T\ge 0$,
\begin{align*}
 \underset{t\le T}{\inf}\left(\mathsf{Y}^+(t)-\mathsf{Y}^-(t)\right)>0,
\end{align*} 
a condition to be removed shortly by a stopping argument. We then say that the pair $(x,k)\in C\left(\mathbb{R}_+;\mathbb{R}\right) \times C\left(\mathbb{R}_+;\mathbb{R}\right)$ is a solution to the Skorokhod problem for $\left(z,\mathsf{Y}^-,\mathsf{Y}^+\right)$ if for every $t\ge 0$ we have $x(t)=z(t)+k(t)\in [\mathsf{Y}^-(t),\mathsf{Y}^+(t)]$ and $k(t)=k^-(t)-k^+(t)$ where $k^+$ and $k^-$ are non decreasing, in particular bounded variation functions, such that $\forall t \ge 0$ ,
\begin{align*}
\int_{0}^{t}\textbf{1}\left(z(s)>\mathsf{Y}^-(s)\right)dk^-(s)=0 \ \textnormal{ and }\ \int_{0}^{t}\textbf{1}\left(z(s)<\mathsf{Y}^+(s)\right)dk^+(s)=0.
\end{align*}
Observe that the constraining terms $k^+$ and $k^-$ only increase on the boundaries of the time dependent domain, namely at $\mathsf{Y}^+$ and $\mathsf{Y}^-$ respectively. Now, consider the \textit{solution} map denoted by $\mathcal{S}$,
\begin{align*}
\mathcal{S}:C\left(\mathbb{R}_+;\mathbb{R}\right) \times C\left(\mathbb{R}_+;\mathbb{R}\right)\times C\left(\mathbb{R}_+;\mathbb{R}\right) \to C\left(\mathbb{R}_+;\mathbb{R}\right) \times C\left(\mathbb{R}_+;\mathbb{R}\right)
\end{align*}
given by,
\begin{align*}
\mathcal{S}:\left(z,\mathsf{Y}^-,\mathsf{Y}^+\right)\mapsto \left(x,k\right).
\end{align*}
Then the key fact is that the map $\mathcal{S}$ is Lipschitz continuous in the supremum norm and there exists a unique solution to the Skorokhod problem, see for example Proposition 2.3 and Corollary 2.4 of \cite{SDER} (also Theorem 2.6 of \cite{RamananBurdzySkorokhod}). Below we will sometimes abuse notation and write $x=\mathcal{S}\left(z,\mathsf{Y}^-,\mathsf{Y}^+\right)$ just for the $x$-component of the solution $(x,k)$.

Now suppose $\sigma:\mathbb{R} \to \mathbb{R}$ and $b:\mathbb{R}\to \mathbb{R}$ are Lipschitz continuous functions. Then by a classical argument based on Picard iteration, see for example Theorem 3.3 of \cite{SDER}, we obtain that there exists a unique strong solution to the $SDER$ ($SDE$ with reflection) for $\mathsf{Y}^-(0) \le X(0) \le \mathsf{Y}^+(0)$,
\begin{align*}
X(t )=X(0)+\int_{0}^{t}\sigma\left(X(s)\right)d\beta(s)+\int_{0}^{t  }b\left(X(s)\right)ds+K^-(t )-K^+(t ),
\end{align*}
where $\beta$ is a standard Brownian motion and $\left(K^+(t);t \ge 0\right)$ and $\left(K^-(t );t \ge 0\right)$ are non decreasing processes that increase only when $X(t)=\mathsf{Y}^+(t)$ and $X(t)=\mathsf{Y}^-(t)$ respectively so that for all $t \ge0$ we have $X(t ) \in [\mathsf{Y}^-(t),\mathsf{Y}^+(t)]$. Here, by strong solution we mean that on the filtered probability space $\left(\Omega,\mathcal{F},\{\mathcal{F}_t\},\mathbb{P}\right)$ on which $\left(X,K,\beta\right)$ is defined, the process $(X,K)$ is adapted with respect to the filtration $\mathcal{F}_t^{\beta}$ generated by the Brownian motion $\beta$. Equivalently $(X,K)$ where $K=K^+-K^-$ solves the Skorokhod problem for $(z,\mathsf{Y}^-,\mathsf{Y}^+)$ where,
\begin{align*}
z\left(\cdot\right)\overset{def}{=}X(0)+\int_{0}^{\cdot}\sigma\left(X(s)\right)d\beta(s)+\int_{0}^{\cdot}b\left(X(s)\right)ds.
\end{align*}
We write $\mathfrak{s}_L^R$ for the corresponding measurable solution map on path space, namely so that $X=\mathfrak{s}_L^R\left(\beta;\mathsf{Y}^-,\mathsf{Y}^+\right)$.

Now, suppose $\sigma:\mathbb{R} \to \mathbb{R} $ and $b:\mathbb{R}\to \mathbb{R}$ are merely continuous and of at most linear growth, namely:
\begin{align*}
|\sigma(x)|,|b(x)| \le C\left(1+|x|\right),
\end{align*}
for some constant $C$. We will abbreviate this assumption by $(\textbf{CLG})$. Then, we can still obtain weak existence using the following rather standard argument. Take $\sigma^{(n)}:\mathbb{R} \to \mathbb{R} $ and $b^{(n)}:\mathbb{R}\to \mathbb{R}$ to be Lipschitz, converging uniformly to $\sigma$ and $b$ and satisfying a uniform linear growth condition. More precisely:
\begin{align}
&\sigma^{(n)}\overset{\textnormal{unif}}{\longrightarrow}\sigma, \ b^{(n)}\overset{\textnormal{unif}}{\longrightarrow}b\nonumber,\\
&|\sigma^{(n)}(x)|,|b^{(n)}(x)|\le \tilde{C}\left(1+|x|\right)\label{uniformlineargrowth},
\end{align}
for some constant $\tilde{C}$ that is independent of $n$. For example, we could take the mollification $\sigma^{(n)}=\phi_n*\sigma$, with $\phi_n(x)=n\phi(nx)$ where $\phi$ is a smooth bump function: $\phi \in C^{\infty}, \phi \ge 0, \int \phi=1$ and $\textnormal{ supp}(\phi) \subset [-1,1]$. Then, if $|\sigma(x)| \le C\left(1+|x|\right)$ we easily get $|(\phi_n*\sigma)(x)|\le \left(2+|x|\right)$ uniformly in $n$. Let $\left(X^{(n)},K^{(n)}\right)$ be the corresponding strong solution to the $SDER$ above with coefficients $\sigma^{(n)}$ and $b^{(n)}$. Then the laws of,
\begin{align*}
X(0)+\int_{0}^{\cdot}\sigma^{(n)}\left(X^{(n)}(s)\right)d\beta(s)+\int_{0}^{\cdot}b^{(n)}\left(X^{(n)}(s)\right)ds,
\end{align*}
are easily seen to be tight by applying Aldous' tightness criterion (see for example Chapter 16 of \cite{Kallenberg} or Chapter 3 of \cite{EthierKurtz}) using the uniformity in $n$ of the linear growth condition (\ref{uniformlineargrowth}). Hence, from the Lipschitz continuity of $\mathcal{S}$ we obtain that the laws of $\left(X^{(n)},K^{(n)}\right)$ are tight as well. 

Thus, we can choose a subsequence $\left(n_i;i \ge 1\right)$ such that the laws of $\left(X^{(n_i)},K^{(n_i)}\right)$ converge weakly to some $\left(X,K\right)$. Using the Skorokhod representation theorem we can upgrade this to joint almost sure convergence on a new probability space $\left(\tilde{\Omega},\tilde{\mathcal{F}},\{\tilde{\mathcal{F}}_t\},\tilde{\mathbb{P}}\right)$. More precisely, we can define processes $\left(\tilde{X}^{(i)},\tilde{K}^{(i)}\right)_{i\ge 1},\left(\tilde{X},\tilde{K}\right)$ on $\left(\tilde{\Omega},\tilde{\mathcal{F}},\{\tilde{\mathcal{F}}_t\},\tilde{\mathbb{P}}\right)$ so that:
\begin{align*}
\left(\tilde{X}^{(i)},\tilde{K}^{(i)}\right)\overset{\textnormal{d}}{=}\left(X^{(n_i)},K^{(n_i)}\right), \ \left(\tilde{X},\tilde{K}\right)\overset{\textnormal{d}}{=}\left(X,K\right), \ \left(\tilde{X}^{(i)},\tilde{K}^{(i)}\right)\overset{\textnormal{a.s.}}{\longrightarrow}(\tilde{X},\tilde{K}).
\end{align*}
Now, the stochastic processes:
\begin{align*}
M_n(t)=\tilde{X}^{(n)}(t)-\tilde{X}^{(n)}(0)-\int_{0}^{t}b^{(n)}\left(\tilde{X}^{(n)}(s)\right)ds-\left(\tilde{K}^{(n)}\right)^-(t)+\left(\tilde{K}^{(n)}\right)^+(t)
\end{align*}
are martingales with quadratic variation:
\begin{align*}
\langle M_n, M_n\rangle(t)=\int_{0}^{t}\left(\sigma^{(n)}\left(\tilde{X}^{(n)}(s)\right)\right)^2ds.
\end{align*}
By the following convergences:
\begin{align*}
\sigma^{(n)}\overset{\textnormal{unif}}{\longrightarrow}\sigma, \ b^{(n)}\overset{\textnormal{unif}}{\longrightarrow}b, \ \left(\tilde{X}^{(i)},\tilde{K}^{(i)}\right)\overset{\textnormal{a.s.}}{\longrightarrow}(\tilde{X},\tilde{K})
\end{align*}
we obtain that $M_n \overset{\textnormal{a.s.}}{\longrightarrow}M$ where,
\begin{align*}
M(t)=\tilde{X}(t)-\tilde{X}(0)-\int_{0}^{t}b\left(\tilde{X}(s)\right)ds-\tilde{K}^-(t)+\tilde{K}^+(t)
\end{align*}
is a martingale with quadratic variation given by:
\begin{align*}
\langle M, M\rangle(t)=\int_{0}^{t}\sigma^2\left(\tilde{X}(s)\right)ds.
\end{align*}
Then, by the martingale representation theorem there exists a standard Brownian motion $\tilde{\beta}$, that is defined on a possibly enlarged probability space, so that $M(t)=\int_{0}^{t}\sigma\left(\tilde{X}(s)\right)d\tilde{\beta}(s)$ and thus:
\begin{align*}
\tilde{X}(t)=\tilde{X}(0)+\int_{0}^{t}\sigma\left(\tilde{X}(s)\right)d\tilde{\beta}(s)+\int_{0}^{t }b\left(\tilde{X}(s)\right)ds+\tilde{K}^-(t )-\tilde{K}^+(t),
\end{align*}
where again the non decreasing processes $\left(\tilde{K}^+(t);t \ge 0\right)$ and $\left(\tilde{K}^-(t);t \ge 0\right)$ increase only when $\tilde{X}(t)=\mathsf{Y}^+(t)$ and $\tilde{X}(t)=\mathsf{Y}^-(t)$ respectively so that $\tilde{X}(t) \in [\mathsf{Y}^-(t),\mathsf{Y}^+(t)] \ \forall t \ge0$. Hence, we have obtained the existence of a weak solution to the $SDER$ for $\sigma$ and $b$ continuous and of at most linear growth.

We now remove the condition that $\mathsf{Y}^-,\mathsf{Y}^+$ never collide by stopping the process at the first time $\tau=\inf\{t\ge0:\mathsf{Y}^-(t)=\mathsf{Y}^+(t)\}$ that they do. First we note that, there exists an extension to the Skorokhod problem and to $SDER$, allowing for reflecting barriers $\mathsf{Y}^-,\mathsf{Y}^+$ that come together, see \cite{RamananBurdzySkorokhod}, \cite{SDER} for the detailed definition. Both results used in the previous argument, namely the Lipschitz continuity of the solution map, which we still denote by $\mathcal{S}$, and existence and uniqueness of strong solutions to $SDER$ extend to this setting, see e.g. Theorem 2.6, also Corollary 2.4 and Theorem 3.3 in \cite{SDER}. The difference of the extended problem to the classical one described at the beginning, being that $k=k^--k^+$ is allowed to have infinite variation. However, as proven in Proposition 2.3 and Corollary 2.4 in \cite{RamananBurdzySkorokhod} (see also Remark 2.2 in \cite{SDER}) the unique solution to the extended Skorokhod problem coincides with the one of the classical one in $[0,T]$ while $\underset{t\le T}{\inf}\left(\mathsf{Y}^+(t)-\mathsf{Y}^-(t)\right)>0$. Thus, by the previous considerations, for any $T<\tau$, we still have a weak solution to the $SDER$ above, with bounded variation local terms $K$; the final statement more precisely given as:

\begin{prop}\label{WeakExistenceProp}
Assume $\mathsf{Y}^-,\mathsf{Y}^+$ are continuous functions such that $\mathsf{Y}^-(t)\le \mathsf{Y}^+(t), \forall t \ge 0$ and let $\tau=\inf\{t\ge0:\mathsf{Y}^-(t)=\mathsf{Y}^+(t)\}$. Assume (\textbf{CLG}), namely that $\sigma(\cdot),b(\cdot)$ are continuous functions satisfying an at most linear growth condition, for some positive constant $C$:
\begin{align*}
|\sigma(x)|,|b(x)| \le C\left(1+|x|\right).
\end{align*}
Then, there exists a filtered probability space $\left(\Omega,\mathcal{F},\{\mathcal{F}_t\},\mathbb{P}\right)$ on which firstly an adapted Brownian motion $\beta$ is defined (not necessarily generating the filtration). Moreover, for $\mathsf{Y}^-(0)\le X(0) \le \mathsf{Y}^+(0)$ the adapted process $(X,K)$ satisfies:
\begin{align}\label{prototypeSDER}
X(t\wedge \tau)=X(0)+\int_{0}^{t\wedge  \tau}\sigma\left(X(s)\right)d\beta(s)+\int_{0}^{t \wedge  \tau}b\left(X(s)\right)ds+K^-(t \wedge  \tau)-K^+(t \wedge  \tau),
\end{align}
such that for all $t \ge0$ we have $X(t\wedge \tau) \in [\mathsf{Y}^-(t\wedge \tau),\mathsf{Y}^+(t \wedge \tau)]$ and for any $T< \tau$ the non decreasing processes $\left(K^+(t);t \le T\right)$ and $\left(K^-(t );t \le T\right)$ increase only when $X(t)=\mathsf{Y}^+(t)$ and $X(t)=\mathsf{Y}^-(t)$ respectively.
\end{prop}

We will now be concerned with pathwise uniqueness. Due to the intrinsic one-dimensionality of the problem we can fortunately apply a simple Yamada-Watanabe type argument. For the convenience of the reader we now recall assumption $(\mathbf{YW})$, defined in Section 2: Let $I$ be an interval with endpoints $l<r$ and suppose $\rho$ is a non-decreasing function from $(0,\infty)$ to itself such that $\int_{0^+}^{}\frac{dx}{\rho(x)}=\infty$. Consider, the following condition on functions $a:I\to \mathbb{R}_+$ and $b: I\to \mathbb{R}$, where we implicitly assume that $a$ and $b$ initially defined in $I^\circ$ can be extended continuously to the boundary points $l$ and $r$ (in case these are finite),
\begin{align*}
&|\sqrt {a(x)}-\sqrt {a(y)}|^2 \le \rho(|x-y|),\\
&|b(x)-b(y)|\le C|x-y|.
\end{align*}
Moreover, we assume that $\sqrt {a(\cdot)}$ is of at most linear growth. Note that, for $b(\cdot)$ this is immediate by Lipschitz continuity. 

Also, observe that since $\rho$ is continuous at $0$ with $\rho(0)=0$ (the assumption on $\rho$ implies this) we get that $\sqrt {a(\cdot)}$ is continuous. Thus, $(\mathbf{YW})$ implies (\textbf{CLG}) and in particular the existence result above applies under $(\mathbf{YW})$. We are now ready to state and prove our well-posedness result.

\begin{prop}\label{wellposedness}
Under the $(\mathbf{YW})$ assumption the $SDER$ (\ref{prototypeSDER}) with $(\sigma,b)=(\sqrt{2a},b)$ has a pathwise unique solution.
\end{prop}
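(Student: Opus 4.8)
The plan is to establish weak existence first (already carried out in the preceding discussion for continuous bounded coefficients, then extended by localization), so the remaining task is pathwise uniqueness; by the Yamada--Watanabe theorem this upgrades weak existence to existence of a strong solution and gives well-posedness. So I would fix a probability space carrying a Brownian motion $\hat\beta$ together with the obstacle processes $\mathsf{Y}^-,\mathsf{Y}^+$, and suppose $(X,K^-,K^+)$ and $(\tilde X,\tilde K^-,\tilde K^+)$ are two solutions of (\ref{prototypeSDER}) driven by the same $\hat\beta$ with the same initial condition. The goal is to show $X\equiv\tilde X$ up to $\tau$.

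The key step is the classical Yamada--Watanabe estimate adapted to the reflecting setting. Using the hypothesis $\int_{0^+}\rho(x)^{-1}dx=\infty$, I would pick a sequence of smooth functions $\psi_m\uparrow |\cdot|$ with $\psi_m''(x)$ supported near $0$ and $0\le\psi_m''(x)\le \tfrac{2}{m\rho(|x|)}$, in the standard way. Writing $D(t)=X(t)-\tilde X(t)$ and applying It\^o's formula to $\psi_m(D(t\wedge\tau))$, the second-order term contributes
\begin{align*}
\int_0^{t\wedge\tau}\psi_m''(D(s))\,\bigl|\sqrt{2a(X(s))}-\sqrt{2a(\tilde X(s))}\bigr|^2\,ds \le \int_0^{t\wedge\tau}\frac{2}{m\rho(|D(s)|)}\,\rho(|D(s)|)\,ds \le \frac{2t}{m},
\end{align*}
which vanishes as $m\to\infty$; the drift term is handled by the Lipschitz bound $|b(x)-b(y)|\le C|x-y|$ together with $\psi_m'$ bounded, giving a contribution controlled by $C\int_0^{t\wedge\tau}|D(s)|\,ds$ after letting $m\to\infty$. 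The genuinely new feature is the contribution of the reflection terms $dK^-,dK^+$ and $d\tilde K^-,d\tilde K^+$. Here the crucial observation is a sign/convexity argument: because $K^-$ increases only when $X=\mathsf{Y}^-$ and $X\ge\mathsf{Y}^-$ always (similarly for the other three terms), one checks that each of the four cross terms $\psi_m'(D(s))\,d\bigl(K^- - K^+ - \tilde K^- + \tilde K^+\bigr)(s)$ has the favourable sign, so that it is $\le 0$ in the limit $m\to\infty$ where $\psi_m'\to\operatorname{sgn}$; equivalently, one uses that $x\mapsto |x|$ composed with the Skorokhod map is non-expansive, which is exactly the Lipschitz continuity of $\mathcal{S}$ invoked above. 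Passing to the limit yields
\begin{align*}
\mathbb{E}\,|D(t\wedge\tau)| \le C\int_0^t \mathbb{E}\,|D(s\wedge\tau)|\,ds,
\end{align*}
and Gronwall's lemma forces $D\equiv 0$ on $[0,\tau]$, i.e. pathwise uniqueness. Combined with the weak existence already established and with the Yamada--Watanabe principle, this proves well-posedness of (\ref{prototypeSDER}) and hence of all the systems (\ref{System1SDEs}), (\ref{System2}), (\ref{GelfandTsetlinSDEs}), (\ref{edgesystem1}), (\ref{edgesystem2}), since each of those is obtained by running such one-particle reflecting $SDEs$ sequentially against previously constructed barriers.

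The main obstacle I anticipate is the bookkeeping of the four reflection terms and making rigorous the claim that they contribute non-positively in the limit; one must be careful that the local-time-type pushing at $\mathsf{Y}^-$ and at $\mathsf{Y}^+$ do not interact badly, and that the stopping at $\tau_\epsilon$ (before the barriers collide) keeps everything finite. A clean way to package this is to invoke directly the Lipschitz property of the Skorokhod solution map $\mathcal{S}$ from Proposition 2.3 / Corollary 2.4 of \cite{SDER}: conditionally on the paths of $(\mathsf{Y}^-,\mathsf{Y}^+)$ the two solutions correspond to $\mathcal{S}$ applied to the two perturbed input paths $z,\tilde z$, whose difference is controlled pathwise by $\int_0^{\cdot}\bigl(\sqrt{2a(X)}-\sqrt{2a(\tilde X)}\bigr)d\hat\beta + \int_0^{\cdot}\bigl(b(X)-b(\tilde X)\bigr)ds$, so the whole argument reduces to the Yamada--Watanabe estimate on the inputs with no explicit manipulation of the $K$'s at all. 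I would present it in that form.
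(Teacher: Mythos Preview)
Your main argument is correct and is essentially the paper's own proof. The paper first invokes the standard Yamada--Watanabe local-time estimate (citing Revuz--Yor IX.3.4) to get $L^0(X-\tilde X)=0$, then applies Tanaka's formula to $|X-\tilde X|$, carries out exactly the sign analysis you describe on the four reflection increments $dK^\pm,d\tilde K^\pm$, takes expectations, and concludes by Gronwall; your $\psi_m$-approximation is simply the unpacked version of that same Tanaka step, and your treatment of the constraining terms matches the paper's line by line.

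One caution about the alternative packaging you propose at the end. Writing $X=\mathcal{S}(z)$ and $\tilde X=\mathcal{S}(\tilde z)$ and invoking the sup-norm Lipschitz bound for $\mathcal{S}$ gives $\|X-\tilde X\|_{\infty,[0,t]}\le L\|z-\tilde z\|_{\infty,[0,t]}$, but $z-\tilde z$ contains the stochastic integral $\int_0^{\cdot}\bigl(\sqrt{2a(X)}-\sqrt{2a(\tilde X)}\bigr)d\hat\beta$, whose sup norm is not directly controlled by the Yamada--Watanabe hypothesis on $\rho$; the whole point of the $\psi_m$/Tanaka machinery is to circumvent a sup-norm bound on this martingale term by killing its quadratic variation contribution instead. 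So the ``clean'' Skorokhod-map route works for Lipschitz $\sigma$ (and is indeed how the paper obtains strong existence in that case), but for genuine $\mathbf{YW}$ coefficients you should stick with the explicit sign argument, as the paper does.
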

\begin{proof}
Suppose that $X$ and $\tilde{X}$ are two solutions of (\ref{prototypeSDER}) with respect to the same noise. Then the argument given at Chapter IX Corollary 3.4 of \cite{RevuzYor} shows that $L^0(X_i-\tilde{X}_i)=0$ where for a semimartingale $Z$, $L^{a}(Z)$ denotes its semimartingale local time at $a$ (see for example Section 1 Chapter VI of \cite{RevuzYor}). Hence by Tanaka's formula we get,\\

$|X(t\wedge \tau)-\tilde{X}(t\wedge \tau)|=\int_{0}^{t\wedge\tau}\textnormal{sgn}(X(s)-\tilde{X}(s))d(X(s)-\tilde{X}(s))$\\$=\int_{0}^{t \wedge \tau}\textnormal{sgn}(X(s)-\tilde{X}(s))\left(\sqrt{ 2a(X(s))}-\sqrt{ 2a(\tilde{X}(s))}\right)d\beta(s)$\\
$+\int_{0}^{t\wedge \tau}\textnormal{sgn}(X(s)-\tilde{X}(s))(b(X(s))-b (\tilde{X}(s)))ds$\\
$-\int_{0}^{t\wedge \tau}\textnormal{sgn}(X(s)-\tilde{X}(s))d(K^+(s)-\tilde{K}^+(s))+\int_{0}^{t\wedge \tau}\textnormal{sgn}(X(s)-\tilde{X}(s))d(K^-(s)-\tilde{K}^-(s))$.\\

Note that $\mathsf{Y}^- \le X,\tilde{X}\le \mathsf{Y}^+$, $dK^+$ is supported on $\{t:X(t)=\mathsf{Y}^+(t)\}$ and $d\tilde{K}^+$ is supported on $\{t:\tilde{X}(t)=\mathsf{Y}^+(t)\}$. So if $\tilde{X}<X\le \mathsf{Y}^+$ then $dK^+-d\tilde{K}^+\ge 0$ and if $X<\tilde{X}\le \mathsf{Y}^+$ then $dK^+-d\tilde{K}^+\le 0$. Hence $\int_{0}^{t\wedge \tau}\textnormal{sgn}(X(s)-\tilde{X}(s))d(K^+(s)-\tilde{K}^+(s))\ge 0$. With similar considerations $\int_{0}^{t\wedge \tau}\textnormal{sgn}(X(s)-\tilde{X}(s))d(K^-(s)-\tilde{K}^-(s))\le 0$. Taking expectations we obtain,
\begin{align*}
\mathbb{E}[|X(t\wedge \tau)-\tilde{X}(t\wedge\tau)|]\le \mathbb{E}\left[\int_{0}^{t\wedge \tau}\textnormal{sgn}(X(s)-\tilde{X}(s))(b(X(s))-b (\tilde{X}(s)))ds\right]\\ \le C\int_{0}^{t\wedge \tau} \mathbb{E} [|X(s)-\tilde{X}(s)|]ds.
\end{align*}
The statement of the proposition then follows from Gronwall's lemma.
\end{proof}

Under the pathwise uniqueness obtained in Proposition \ref{wellposedness} above, if the evolution $\left(\mathsf{Y}^-(t\wedge \tau),\mathsf{Y}^+(t\wedge \tau);t\ge 0\right)$ is Markovian, then standard arguments (see for example Section 1 of Chapter IX of \cite{RevuzYor}) imply that $\left(\mathsf{Y}^-(t\wedge \tau),\mathsf{Y}^+(t\wedge \tau),X(t \wedge \tau);t\ge 0\right)$ is Markov as well. Moreover, under this $(\mathbf{YW})$ condition we still have the solution map $X=\mathfrak{s}_L^R\left(\beta;\mathsf{Y}^-,\mathsf{Y}^+\right)$.

The reader should note that Proposition \ref*{wellposedness} covers in particular \textbf{all} the cases of Brownian motions, Ornstein-Uhlenbeck, $BESQ(d)$ , $Lag(\alpha)$ and  $Jac(\beta,\gamma)$ diffusions considered in the Applications and Examples section.

\subsection{Transition densities for SDER}\label{SectionTransitionDensities}
The aim of this section is to prove under some conditions that $q_t^{n,n+1}$ and $q_t^{n,n}$ form the transition kernels for the two-level systems of $SDEs$ (\ref{System1SDEs}) and (\ref*{System2}) in $W^{n,n+1}$ and $W^{n,n}$ respectively. For the sake of exposition we shall mainly restrict our attention to (\ref*{System1SDEs}). In the sequel, $\tau$ will denote the stopping time $T^{n,n+1}$ (or $T^{n,n}$ respectively).

Throughout this section we assume $(\textbf{R})$ and $(\textbf{BC+})$ hold for the $L$-diffusion and $(\textbf{YW})$ holds for both the $L$ and $\hat{L}$ diffusions. In particular, there exists a Markov semimartingale  $(X,Y)$ satisfying equation (\ref{System1SDEs}) (or respectively (\ref*{System2})).

To begin with we make a few simple but important observations. First, note that if the $L$-diffusion does not hit $l$ (i.e. $l$ is natural or entrance), then $X_{1}$ doesn't hit $l$ either before being driven to $l$ by $Y_1$ (in case $l$ is exit for $\hat{L}$). Similarly, it is rather obvious, since the particles are ordered, that in case $l$ is regular reflecting for the $L$-diffusion the time spent at $l$ up to time $\tau$ by the $SDEs$ (\ref{System1SDEs}) is equal to the time spent by $X_1$ at $l$. This is in turn equal to the time spent at $l$ by the excursions of $X_1$ between collisions with $Y_1$ (and before $\tau$) during which the evolution of $X_1$ coincides with the unconstrained $L$-diffusion which spends zero Lebesgue time at $l$ (e.g. see Chapter 2 paragraph 7 in \cite{BorodinSalminen}). Hence the system of reflecting $SDEs$ (\ref{System1SDEs}) spends zero Lebesgue time at either $l$ or $r$ up to time $\tau$. Since in addition to this, the noise driving the $SDEs$ is uncorrelated and the diffusion coefficients do not vanish in $I^\circ$ we get that,
 \begin{align}\label{boundaryLebesgue}
 \int_{0}^{\tau}\mathbf{1}_{\partial W^{n,n+1}(I)}\left(X(t),Y(t)\right)dt=0 \ \ \textnormal{  a.s.} \ .
 \end{align}
 
 We can now in fact relate the constraining finite variation terms $K$ to the semimartingale local times of the gaps between particles (although this will not be essential in what follows). Using the observation (\ref{boundaryLebesgue}) above and Exercise 1.16 ($3^\circ$) of Chapter VI of \cite{RevuzYor}, which states that for a positive semimartingale $Z=M+V \ge 0$ (where $M$ is the martingale part) its local time at $0$ is equal to $2\int_{0}^{\cdot}1\left(Z_s=0\right)dVs$, we get that for the $SDEs$ (\ref{System1SDEs}) the semimartingale local time of $Y_i-X_i$ at 0 up to time $\tau$ is,
\begin{align*}
2\int_{0}^{t\wedge \tau}\textbf{1}(Y_i(s)=X_i(s))dK_i^+(s)=2K_i^+(t\wedge \tau),
\end{align*}
and similarly the semimartingale local time of $X_{i+1}-Y_i$ at $0$ up to $\tau$ is,
 \begin{align*}
 2\int_{0}^{t\wedge \tau}\textbf{1}(X_{i+1}(s)=Y_i(s))dK_{i+1}^-(s)=2K_{i+1}^-(t\wedge \tau).
 \end{align*}
Now, we state a lemma corresponding to the \textit{time 0} boundary condition.

\begin{lem}\label{time0lemma}
For any $f:{W^{n,n+1}(I^\circ)}\to \mathbb{R}$ continuous with compact support we have,
\begin{align*}
\lim_{t\to 0}\int_{W^{n,n+1}(I^\circ)}^{}q_t^{n,n+1}((x,y),(x',y'))f(x',y')dx'dy'=f(x,y).
\end{align*}
\end{lem}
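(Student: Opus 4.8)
The plan is to exploit the explicit block-determinant formula for $q_t^{n,n+1}$ together with the known short-time behaviour of the one-dimensional transition density $p_t$ and its conjugate $\hat{p}_t$. Expanding the determinant by the Leibniz formula, each term is a product $\prod_i M_{i\sigma(i)}$ where the entries $M_{ij}$ come from the four blocks $A_t, B_t, C_t, D_t$. The diagonal-type contributions — those pairing an $x_i$-row with the $x'_i$-column and a $y_i$-row with the $y'_i$-column — involve factors $p_t(x_i,x'_i)$ and $\hat{p}_t(y_i,y'_i)$, which as $t \to 0$ converge (in the sense of integration against a continuous compactly supported function) to the delta masses $\delta_{x_i}(x'_i)$ and $\delta_{y_i}(y'_i)$; this uses the standing assumption (\ref{bc1l})–(\ref{bc1r}) so that there are no atoms at the boundary to worry about, and the continuity of $p_t$, $\hat{p}_t$ in $(t,x,y)$ recorded just before Lemma \ref{ConjugacyLemma}. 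The identity permutation then produces exactly $f(x,y)$ in the limit.

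The substance of the argument is to show that every non-identity permutation $\sigma$ contributes nothing in the limit. Here I would adapt the combinatorial argument already used in the proof of Proposition \ref{PropositionEdgeParticle}: fix $\epsilon>0$, assume $f$ vanishes in a $2\epsilon$-neighbourhood of $\partial W^{n,n+1}(I)$, and show that for each $\sigma \neq \mathrm{id}$ there is a pair of "crossing" indices forcing at least one matrix entry in the product to be supported, for $x',y'$ in the support of $f$, on a region where $p_t$ (or $\hat p_t$, or an off-diagonal entry of $B_t$ or $C_t$) is uniformly small as $t\downarrow 0$. The off-diagonal entries of $A_t$ and $D_t$ are $p_t(x_i,x'_j)$ and $\hat p_t(y_i,y'_j)$ with $i\neq j$, which decay uniformly on compacta bounded away from the diagonal; the entries $C_t(y,x')_{ij} = -\mathcal{D}_s^{y_i} p_t(y_i,x'_j)$ decay similarly away from the diagonal by the regularity of $\partial_x p_t$; and the entries $B_t(x,y')_{ij} = \hat m(y'_j)(\mathsf{P}_t\mathbf{1}_{[l,y'_j]}(x_i) - \mathbf{1}(j\ge i))$ tend to $\hat m(y'_j)(\mathbf{1}(x_i \le y'_j) - \mathbf{1}(j\ge i))$, which — because on the support of $f$ we have a genuine interlacing $x_1 \le y_1 \le x_2 \le \cdots$ — vanishes precisely in the relevant off-diagonal positions. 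Combining these, any $\sigma \neq \mathrm{id}$ picks up a vanishing factor unless $(x',y')$ already lies in the $2\epsilon$-neighbourhood of the boundary, i.e. outside $\mathrm{supp}\, f$; letting $\epsilon \downarrow 0$ finishes.

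I expect the main obstacle to be the bookkeeping of which block each entry of a given permutation term belongs to, and checking that for \emph{every} non-identity $\sigma$ at least one bad factor genuinely appears — the interlacing structure of $W^{n,n+1}$ makes the indicator patterns in $B_t$ and $C_t$ slightly delicate, and one must verify that a crossing in the permutation always lands in a position where either a transition density is evaluated off-diagonal or an indicator difference is zero. A secondary technical point is justifying the interchange of limit and integral: this is handled by dominated convergence once one has the uniform-in-$t$ (for $t$ in a neighbourhood of $0$) bounds on the kernel entries over the compact support of $f$, using again the joint continuity and the explicit form of the blocks. The analogous statement for $q_t^{n,n}$ is proved verbatim, the only change being the indicator $\mathbf{1}(j>i)$ in place of $\mathbf{1}(j\ge i)$ in the $B_t$-block, which is exactly what is needed to match the (shifted) interlacing in $W^{n,n}$.
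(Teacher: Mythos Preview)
Your proposal is correct and follows essentially the same approach as the paper, which in fact gives even less detail: the paper's proof simply refers to Lemma~1 of \cite{Warren} and to the beginning of the proof of Proposition~\ref{PropositionEdgeParticle}. Your elaboration of the Leibniz expansion, the identification of the diagonal term as the approximate identity, and the $2\epsilon$-neighbourhood argument to kill the off-diagonal permutations is precisely what those references amount to.
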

\begin{proof}
This follows as in the proof of Lemma 1 of \cite{Warren}. See also the beginning of the proof of Proposition \ref{PropositionEdgeParticle}.
\end{proof}

We are now ready to prove the following result on the transition densities.

 \begin{prop}\label{transititiondensities1}
Assume $(\textbf{R})$ and $(\textbf{BC+})$ hold for the $L$-diffusion and $(\textbf{YW})$ holds for both the $L$ and $\hat{L}$ diffusions. Moreover, assume that $l$ and $r$ are either natural or entrance for the $L$-diffusion. Then $q_t^{n,n+1}$ form the transition densities for the system of $SDEs$ (\ref{System1SDEs}).
 \end{prop}
\begin{proof}
Let $\textbf{Q}^{n,n+1}_{x,y}$ denote the law of the process $(X_1,Y_1,\cdots,Y_n,X_{n+1})$ satisfying the system of $SDEs$ (\ref{System1SDEs}) and starting from $(x,y)$. Define for $f$ continuous with compact support,
\begin{align*}
F^{n,n+1}(t,(x,y))&=\int_{W^{n,n+1}(I^\circ)}^{}q_t^{n,n+1}((x,y),(x',y'))f(x',y')dx'dy'.
\end{align*}
Our goal is to prove that for fixed $T>0$,
\begin{align}
F^{n,n+1}(T,(x,y))&=\textbf{Q}^{n,n+1}_{x,y}\big[f(X(T),Y(T))\textbf{1}(T<\tau)\big]\label{goaltransition1}.
\end{align}
The result then follows since from observation (\ref{boundaryLebesgue}) the only part of the distribution of  $(X(T),Y(T))$ that charges the boundary corresponds to the event $\{T\ge \tau\}$.

In what follows we shall slightly abuse notation and use the same notation for both the scalar entries and the matrices that come into the definition of $q_t^{n,n+1}$. First, note the following with $x,y \in I^\circ$,
\begin{align*}
\partial_t A_t(x,x')&=\mathcal{D}_m^{x}\mathcal{D}_s^{x}A_t(x,x')\ \ ,\ 
\ \ \partial_tB_t(x,y')=\mathcal{D}_m^{x}\mathcal{D}_s^{x}B_t(x,y'),\\
\partial_t C_t(y,x')&=\mathcal{D}_{\hat{m}}^{y}\mathcal{D}_{\hat{s}}^{y}C_t(y,x')\ \ ,\ \ 
\partial_t D_t(y,y')=\mathcal{D}_{\hat{m}}^{y}\mathcal{D}_{\hat{s}}^{y}D_t(y,y').
\end{align*}
To see the equation for $ C_t(y,x')$ note that since $\mathcal{D}_{\hat{m}}=\mathcal{D}_s$ and $\mathcal{D}_{\hat{s}}=\mathcal{D}_m$ we have, 
\begin{align*}
\partial_t C_t(y,x')=-\mathcal{D}_{s}^{y}\partial_tp_t(y,x')=-\mathcal{D}_{s}^{y}\mathcal{D}_m^{y}\mathcal{D}_s^{y}p_t(y,x')=-\mathcal{D}_{\hat{m}}^{y}\mathcal{D}_{\hat{s}}^{y}\mathcal{D}_s^{y}p_t(y,x')=\mathcal{D}_{\hat{m}}^{y}\mathcal{D}_{\hat{s}}^{y}C_t(y,x').
\end{align*}
Hence, for fixed $(x',y')\in \mathring{W}^{n,n+1}(I^\circ)$ we have,
\begin{align*}
\partial_t q_t^{n,n+1}((x,y),(x',y'))=\bigg(\sum_{i=1}^{n+1}\mathcal{D}_m^{x_i}\mathcal{D}_s^{x_i}+\sum_{i=i}^{n}\mathcal{D}_{\hat{m}}^{y_i}\mathcal{D}_{\hat{s}}^{y_i} \bigg) \ q_t^{n,n+1}((x,y),(x',y')),\\ \text{in} \ (0,\infty)\times \mathring{W}^{n,n+1}(I^\circ).
\end{align*}
Now, by definition of the entries $A_t,B_t,C_t,D_t$ we have for $x,y\in I^\circ$, 
\begin{align*}
\partial_x A_t(x,x')|_{x=y}=-\hat{m}(y)C_t(y,x'),\\
\partial_x B_t(x,y')|_{x=y}=-\hat{m}(y)D_t(y,y').
\end{align*}
Hence for fixed $(x',y')\in W^{n,n+1}(I^\circ)$ by differentiating the determinant and since two rows are equal up to multiplication by a constant we obtain,
\begin{align*}
\partial_{x_i}q_t^{n,n+1}((x,y),(x',y'))|_{x_i=y_i}&=0 \ , \ \partial_{x_i}q_t^{n,n+1}((x,y),(x',y'))|_{x_i=y_{i-1}}=0.
\end{align*}
The Dirichlet boundary conditions for $y_i=y_{i+1}$ are immediate since again two rows of the determinant are equal. Furthermore, in case $l$ or $r$ are entrance boundaries for the $L$-diffusion the Dirichlet boundary conditions for $y_1=l$ and $y_n=r$ follow from the fact that (in the limit as $y \to l,r$),
\begin{align*}
D_t(y,y')|_{y=l,r}=0, C_t(y,x')|_{y=l,r}=\mathcal{D}_s^{x}A_t(x,x')|_{x=l,r}=0.
\end{align*} 

Fix $T,\epsilon>0$. Applying Ito's formula we obtain that for each $(x',y')$ the process,
\begin{align*}
\left(\mathfrak{Q}_t(x',y'):t\in [0,T]\right)=\left(q^{n,n+1}_{T+\epsilon-t}\left((X(t),Y(t)),(x',y')\right):t\in [0,T]\right),
\end{align*}
is a local martingale. Now consider a sequence of compact intervals $J_k$ exhausting $I$ as $k\to \infty$ and write $\tau_k$ for $\inf\{t:(X(t),Y(t)) \notin J_k \}$. Note that $\textbf{1}(T<\tau\wedge \tau_k)\to \textbf{1}(T<\tau)$ as $k\to \infty$ by our boundary assumptions, more precisely by making use of the observation that $X$ does not hit $l$ or $r$ before $Y$ does. Using the optional stopping theorem (since the stopped process $\left(\mathfrak{Q}^{\tau_k}_t(x',y'):t\in [0,T]\right)$ is bounded and hence a true martingale) and then the monotone convergence theorem we obtain,
 \begin{align*}
 q_{T+\epsilon}^{n,n+1}((x,y),(x',y'))=\textbf{Q}^{n,n+1}_{x,y}\big[q_{\epsilon}^{n,n+1}((X(T),Y(T)),(x',y'))\textbf{1}(T<\tau)\big].
\end{align*}
Now multiplying by $f$ continuous with compact support, integrating with respect to $(x',y')$ and using Fubini's theorem to exchange expectation and integral we obtain,
\begin{align*}
F^{n,n+1}(T+\epsilon,(x,y))=\textbf{Q}^{n,n+1}_{x,y}\big[F^{n,n+1}(\epsilon,(X(T),Y(T))\textbf{1}(T<\tau)\big].
\end{align*}
By Lemma \ref{time0lemma}, we can let $\epsilon \downarrow 0$ to conclude,
\begin{align*}
F^{n,n+1}(T,(x,y))&=\textbf{Q}^{n,n+1}_{x,y}\big[f(X(T),Y(T))\textbf{1}(T<\tau)\big].
\end{align*}
The proposition is proven.
\end{proof}

Completely analogous arguments prove the following:
\begin{prop}\label{transititiondensities2}
Assume $(\textbf{R})$ and $(\textbf{BC+})$ hold for the $L$-diffusion and $(\textbf{YW})$ holds for both the $L$ and $\hat{L}$ diffusions. Moreover, assume that $l$ is either natural or exit and $r$ is either natural or entrance for the $L$-diffusion. Then $q_t^{n,n}$ form the transition densities for the system of $SDEs$ (\ref{System2}).
 \end{prop}
 
We note here that Propositions \ref{transititiondensities1} and \ref{transititiondensities2} apply in particular to the cases of Brownian motions with drifts, Ornstein-Uhlenbeck, $BESQ(d)$ for $d\ge2$, $Lag(\alpha)$ for $\alpha \ge 2$ and  $Jac(\beta,\gamma)$ for $\beta,\gamma \ge 1$ considered in the Applications and Examples section.

In the case $l$ and/or $r$ are regular reflecting boundary points we have the following proposition. This is where the non-degeneracy and regularity at the boundary in assumption $(\textbf{BC+})$ is used. This is technical but quite convenient since it allows for a rather streamlined rigorous argument. It presumably can be removed.

 \begin{prop}\label{transitiondensitiesreflecting}
Assume $(\textbf{R})$ and $(\textbf{BC+})$ hold for the $L$-diffusion and $(\textbf{YW})$ holds for both the $L$ and $\hat{L}$ diffusions. Moreover, assume that $l$ and/or $r$ are regular reflecting for the $L$-diffusion. Then $q_t^{n,n+1}$ form the transition densities for the system of $SDEs$ (\ref{System1SDEs}). 
\end{prop}

\begin{proof}
The strategy is the same as in Proposition \ref{transititiondensities1} above. We give the proof in the case that both $l$ and $r$ are regular reflecting for the $L$-diffusion (the other cases are analogous). First, recall that $(\textbf{BC+})$ in this case requires that $\underset{x \to l,r}{\lim} a(x)>0$ and that the limits $\underset{x \to l,r}{\lim} b(x)$, $\underset{x \to l,r}{\lim} \left(a'(x)-b(x)\right)$ exist and are finite.

Now, note that by the non-degeneracy condition $\underset{x \to l,r}{\lim}a(x)>0$ and since $\underset{x \to l,r}{\lim}b(x)$ is finite we thus obtain $\underset{x \to l,r}{\lim}s'(x)>0$. 

So for $x'\in I^{\circ}$ the relations,
\begin{align*}
\underset{x \to l,r}{\lim}\mathcal{D}^x_sA_t(x,x')=0 \textnormal{ and } \underset{x \to l,r}{\lim}\mathcal{D}^x_sB_t(x,x')=0,
\end{align*}
actually imply that for $x' \in I^{\circ}$,
\begin{align}\label{Neumannboundary}
\underset{x \to l,r}{\lim}\partial_xA_t(x,x')=0 \textnormal{ and } \underset{x \to l,r}{\lim}\partial_xB_t(x,x')=0.
\end{align}
Moreover, by rearranging the backwards equations we have for fixed $y \in I^{\circ}$ that the functions,
\begin{align*}
(t,x)\mapsto \partial_x^{2}p_t(x,y)&=\frac{\partial_tp_t(x,y)-b(x)\partial_xp_t(x,y)}{a(x)},\\
(t,x)\mapsto \partial_x^{2}\mathcal{D}^x_sp_t(x,y)&=\frac{\partial_t\mathcal{D}_s^xp_t(x,y)-\left(a'(x)-b(x)\right)\partial_x\mathcal{D}_s^xp_t(x,y)}{a(x)},\\
&=\frac{\partial_t\mathcal{D}_s^xp_t(x,y)-\left(a'(x)-b(x)\right)m(x)\partial_tp_t(x,y)}{a(x)},
\end{align*}
and more generally for $n \ge 0$ and fixed $y\in I^{\circ}$,
\begin{align*}
(t,x)\mapsto \partial_t^n\partial_x^{2}\mathcal{D}^x_sp_t(x,y)=\frac{\partial_t^{n+1}\mathcal{D}_s^xp_t(x,y)-\left(a'(x)-b(x)\right)m(x)\partial^{n+1}_tp_t(x,y)}{a(x)},
\end{align*}
can be extended continuously to $(0,\infty) \times [l,r]$ (note the closed interval $[l,r]$). This is because every function on the right hand side can be extended by the assumptions of proposition and the fact that for $y\in I^{\circ} $, $\partial_t^np_t(\cdot,y) \in Dom(L)$ (see Theorem 4.3 of \cite{McKean} for example). Thus by Whitney's extension theorem, essentially a clever reflection argument in this case (see Section 3 of \cite{ExtensionWhitney} for example), $q_t^{n,n+1}((x,y),(x',y'))$ can be extended as a $C^{1,2}$ function in $(t,(x,y))$ to the whole space. We can hence apply Ito's formula, and it is important to observe that the finite variation terms $dK^l$ and $dK^r$ at $l$ and $r$ respectively (corresponding to $X_1$ and $X_{n+1}$) vanish by the Neumann boundary conditions (\ref{Neumannboundary}), from which we deduce as before that for fixed $T>0$,
\begin{align*}
 q_{T+\epsilon}^{n,n+1}((x,y),(x',y'))=\textbf{Q}^{n,n+1}_{x,y}\big[q_{\epsilon}^{n,n+1}((X(T),Y(T)),(x',y'))\textbf{1}(T<\tau)\big].
\end{align*}
The conclusion then follows as in Proposition \ref{transititiondensities1}.
\end{proof}

Completely analogous arguments give the following:

\begin{prop}\label{transititiondensities2reflecting}
Assume $(\textbf{R})$ and $(\textbf{BC+})$ hold for the $L$-diffusion and $(\textbf{YW})$ holds for both the $L$ and $\hat{L}$ diffusions. Moreover, assume that $l$ is regular absorbing and/or $r$ is regular reflecting for the $L$-diffusion. Then $q_t^{n,n}$ form the transition densities for the system of $SDEs$ (\ref{System2}).
 \end{prop}

 These propositions cover in particular the cases of Brownian motions in the half line and in an interval considered in Sections 3.2 and 3.3 respectively.

\section{Appendix}\label{Appendix}
We collect here the proofs of some of the facts regarding conjugate diffusions that were stated and used in previous sections.

We first give the derivation of the table on the boundary behaviour of a diffusion and its conjugate. Keeping with the notation of Section 2 consider the following quantities with $x\in I^\circ$ arbitrary,
\begin{align*}
& N(l)=\int_{(l^+,x]}^{}(s(x)-s(y))M(dy)=\int_{(l^+,x]}^{}(s(x)-s(y))m(y)dy ,\\
&\Sigma(l)=\int_{(l^+,x]}^{}(M(x)-M(y))s(dy)=\int_{(l^+,x]}^{}(M(x)-M(y))s'(y)dy. 
\end{align*} 
We then have the following classification of the boundary behaviour at $l$ (see e.g. \cite{EthierKurtz}):
\begin{itemize}
\item $l$ is an entrance boundary iff $N(l)<\infty, \Sigma(l)=\infty$.
\item $l$ is a exit boundary iff $N(l)=\infty, \Sigma(l)
<\infty$.
\item $l$ is a natural boundary iff $N(l)=\infty, \Sigma(l)=\infty$.
\item $l$ is a regular boundary iff $N(l)<\infty, \Sigma(l)<\infty$.
\end{itemize}
From the relations $\hat{s}'(x)=m(x)$ and $\hat{m}(x)=s'(x)$ we obtain the following,
\begin{align*}
&\hat{N}(l)=\int_{(l^+,x]}^{}(\hat{s}(x)-\hat{s}(y))\hat{m}(y)dy=\Sigma(l),\\
&\hat{\Sigma}(l)=\int_{(l^+,x]}^{}(\hat{M}(x)-\hat{M}(y))\hat{s}'(y)dy=N(l).
\end{align*}

These relations immediately give us the table on boundary behaviour, namely: If $l$ is an entrance boundary for $X$, then it is exit for $\hat{X}$ and vice versa. If $l$ is natural for $X$, then so it is for its conjugate. If $l$ is regular for $X$, then so it is for its conjugate. In this instance as already stated in Section 2 we define the conjugate diffusion $\hat{X}$ to have boundary behaviour dual to that of $X$, namely if $l$ is reflecting for $X$ then it is absorbing for $\hat{X}$ and vice versa.

\begin{proof}[Proof of Lemma 2.1]
There is a total number of $5^2$ boundary behaviours ($5$ at $l$ and $5$ at $r$) for the $L$-diffusion (the boundary behaviour of $\hat{L}$ is completely determined from $L$ as explained above) however since the boundary conditions for an entrance and regular reflecting ($\mathcal{D}_sv=0$) and similarly for an exit and regular absorbing boundary ($\mathcal{D}_m\mathcal{D}_sv=0$) are the same we can pair them to reduce to $3^2$ cases $(\mathsf{b.c.}(l),\mathsf{b.c.}(r))$ abbreviated as follows:
\begin{align*}
(nat,nat),(ref,ref),(abs,abs),(nat,abs),(ref,abs),(abs,ref),(abs,nat),(nat,ref),(ref,nat).
\end{align*}
We now make some further reductions. Note that for $x,y \in I^{\circ}$,
\begin{align*}
\mathsf{P}_t \textbf{1}_{[l,y]}(x)=\mathsf{\hat{P}}_t \textbf{1}_{[x,r]}(y) \iff \mathsf{P}_t \textbf{1}_{[y,r]}(x)=\mathsf{\hat{P}}_t \textbf{1}_{[l,x]}(y).
\end{align*}
After swapping $x \leftrightarrow y$ this is equivalent to,
\begin{align*}
\mathsf{\hat{P}}_t \textbf{1}_{[l,y]}(x)=\mathsf{P}_t \textbf{1}_{[x,r]}(y).
\end{align*}
So we have a bijection that swaps boundary conditions with their duals $(\mathsf{b.c.}(l),\mathsf{b.c.}(r))\leftrightarrow(\widehat{\mathsf{b.c.}(l)},\widehat{\mathsf{b.c.}(r)})$. Moreover, if $\mathfrak{h}:(l,r)\to (l,r)$ is any homeomorphism such that $\mathfrak{h}(l)=r,\mathfrak{h}(r)=l$ and writing $\mathsf{H}_t$ for the semigroup associated with the $\mathfrak{h}(X)(t)$-diffusion and similarly $\hat{\mathsf{H}}_t$ for the semigroup associated with the $\mathfrak{h}(\hat{X})(t)$-diffusion we see that,
\begin{align*}
\mathsf{P}_t \textbf{1}_{[l,y]}(x)=\mathsf{\hat{P}}_t \textbf{1}_{[x,r]}(y) \ \ \forall x,y \in I^\circ \iff \mathsf{H}_t \textbf{1}_{[l,y]}(x)=\mathsf{\hat{H}}_t \textbf{1}_{[x,r]}(y) \ \ \forall x,y \in I^{\circ}.
\end{align*}
And we furthermore observe that, the boundary behaviour of the  $\mathfrak{h}(X)(t)$-diffusion at $l$ is the boundary behaviour of the $L$-diffusion at $r$ and its boundary behaviour at $r$ is that of the $L$-diffusion at $l$ and similarly for $\mathfrak{h}(\hat{X})(t)$. We thus obtain an equivalent problem where now $(\mathsf{b.c.}(l),\mathsf{b.c.}(r))\leftrightarrow(\mathsf{b.c.}(r),\mathsf{b.c.}(l))$. Putting it all together, we reduce to the following 4 cases since all others can be obtained from the transformations above,
\begin{align*}
(nat,nat),(ref,nat),(ref,ref),(ref,abs).
\end{align*}
The first case is easy since there are no boundary conditions to keep track of and is omitted. The second case is the one originally considered by Siegmund and studied extensively in the literature (see e.g. \cite{CoxRosler} for a proof). We give the proof for the last two cases.

First, assume $l$ and $r$ are regular reflecting for $X$ and so absorbing for $\hat{X}$. Let $\mathcal{R}_{\lambda}$ and $\hat{\mathcal{R}}_{\lambda}$ be the resolvent operators associated with $\mathsf{P}_t$ and $\mathsf{\hat{P}}_t$ then with $f$ being a continuous function with compact support in $I^{\circ}$ the function $u=\mathcal{R}_{\lambda}f$ solves Poisson's equation $\mathcal{D}_m\mathcal{D}_su-\lambda u=-f$ with $\mathcal{D}_su(l^+)=0, \mathcal{D}_su(r^-)=0$. Apply $\mathcal{D}_m^{-1}$ defined by $\mathcal{D}_m^{-1}f(y)=\int_{l}^{y}m(z)f(z)dz$ for $y\in I^\circ$ to obtain $\mathcal{D}_su-\lambda\mathcal{D}_m^{-1} u=-\mathcal{D}_m^{-1}f$ which can be written as,
\begin{align*}
\mathcal{D}_{\hat{m}}\mathcal{D}_{\hat{s}}\mathcal{D}_m^{-1}u-\lambda\mathcal{D}_m^{-1} u=-\mathcal{D}_m^{-1}f.
\end{align*}
So $v=\mathcal{D}_m^{-1}u$ solves Poisson's equation with $g=\mathcal{D}_m^{-1}f$,
\begin{align*}
\mathcal{D}_{\hat{m}}\mathcal{D}_{\hat{s}}v-\lambda v=-g,
\end{align*}
with the boundary conditions $\mathcal{D}_{\hat{m}}\mathcal{D}_{\hat{s}}v(l^+)=\mathcal{D}_{s}\mathcal{D}_{m}\mathcal{D}_{m}^{-1}u(l^+)=\mathcal{D}_s u(l^+)=0$ and $\mathcal{D}_{\hat{m}}\mathcal{D}_{\hat{s}}v(r^-)=0$. Now in the second case when $l$ is reflecting and $r$ absorbing we would like to check the reflecting boundary condition for $v=\mathcal{D}_m^{-1}u$ at $r$. Namely, that $(\mathcal{D}_{\hat{s}})v(r^-)=0$ and note that this is equivalent to $(\mathcal{D}_{m})v(r^-)=u(r^-)=0$. This then follows from the fact that (since $r$ is now absorbing for the $L$-diffusion) $(\mathcal{D}_m\mathcal{D}_s)u(r^-)=0$ and that $f$ is of compact support. The proof proceeds in the same way for both cases, by uniqueness of solutions to Poisson's equation (see e.g. Section 3.7 of \cite{ItoMckean}) this implies $v= \hat{\mathcal{R}}_{\lambda}g$ and thus we may rewrite the relationship as,
\begin{align*}
\mathcal{D}_m^{-1}\mathcal{R}_{\lambda}f= \hat{\mathcal{R}}_{\lambda}\mathcal{D}_m^{-1}f.
\end{align*}
Let now $f$ approximate $\delta_x$ with $x \in I^\circ$ to obtain with $r_{\lambda}(x,z)$ the resolvent density of $\mathcal{R}_{\lambda}$ with respect to the speed measure in $I^\circ \times I^\circ$,
\begin{align*}
\int_{l}^{y}r_{\lambda}(z,x)m(z)dz=m(x)\hat{\mathcal{R}}_{\lambda}\textbf{1}_{[x,r]}(y).
\end{align*}
Since $r_{\lambda}(z,x)m(z)=m(x)r_{\lambda}(x,z)$ we obtain,
\begin{align*}
\mathcal{R}_{\lambda}\textbf{1}_{[l,y]}(x)=\hat{\mathcal{R}}_{\lambda}\textbf{1}_{[x,r]}(y),
\end{align*}
and the result follows by uniqueness of Laplace transforms.
\end{proof}

It is certainly clear to the reader that the proof only works for $x,y$ in the interior $I^\circ$. In fact the lemma is not always true if we allow $x,y$ to take the values $l,r$. To wit, first assume $x=l$ so that we would like,
\begin{align*}
\mathsf{P}_t \textbf{1}_{[l,y]}(l)\overset{?}{=}\mathsf{\hat{P}}_t \textbf{1}_{[l,r]}(y)=1 \ \forall y.
\end{align*}
This is true if and only if $l$ is either absorbing, exit or natural for the $L$-diffusion (where in the case of a natural boundary we understand $\mathsf{P}_t \textbf{1}_{[l,y]}(l)$ as $\lim_{x\to l}\mathsf{P}_t \textbf{1}_{[l,y]}(x)$). Analogous considerations give the following: The statement of Lemma \ref{ConjugacyLemma} remains true with $x=r$ if $r$ is either a natural, reflecting or entrance boundary point for the $L$-diffusion. Enforcing the exact same boundary conditions gives that the statement remains true with $y$ taking values on the boundary of $I$.

\begin{rmk}
For the reader who is familiar with the close relationship between duality and intertwining first note that with the $L$-diffusion satisfying the boundary conditions in the paragraph above and denoting as in Section 2 by $P_t$ the semigroup associated with an $L$-diffusion killed (not absorbed) at $l$ our duality relation becomes,
\begin{align*}
P_t \textbf{1}_{[x,r]}(y)=\mathsf{\hat{P}}_t \textbf{1}_{[l,y]}(x) .
\end{align*}
It is then a simple exercise, see Proposition 5.1 of \cite{CarmonaPetitYor} for the general recipe of how to do this, that this is equivalent to the intertwining relation,
\begin{align*}
P_t\Lambda=\Lambda\mathsf{\hat{P}}_t,
\end{align*}
where $\Lambda$ is the unnormalized kernel given by $(\Lambda f)(x)=\int_{l}^{x}\hat{m}(z)f(z)dz$. This is exactly the intertwining relation obtained in (\ref{KMintertwining}) with $n_1=n_2=1$.
\end{rmk}

\paragraph{Entrance Laws} For $x \in I$ and $\mathfrak{h}_n$ a positive eigenfunction of $P_t^n$ we would like to compute the following limit that defines our entrance law $\mu_t^x\left(\vec{y}\right)$ (with respect to Lebesgue measure) and corresponds to starting the Markov process $P^{n,\mathfrak{h}_n}_t$ from $(x,\cdots,x)$,
\begin{align*}
\mu_t^x\left(\vec{y}\right):=\lim_{(x_1,\cdots,x_n)\to x \vec{1}}e^{-\lambda t}\frac{\mathfrak{h}_n(y_1,\cdots,y_n)}{\mathfrak{h}_n(x_1,\cdots,x_n)}\det\left(p_t(x_i,y_j)\right)^n_{i,j=1} .
\end{align*}

Note that, since as proven in subsection \ref{subsectioneigen} all eigenfunctions built from the intertwining kernels are of the form $\det\left(h_i(x_j)\right)^n_{i,j=1}$ we will restrict to computing,
\begin{align*}
\mu_t^x\left(\vec{y}\right):= e^{-\lambda t}\det\left(h_i(y_j)\right)^n_{i,j=1}\lim_{(x_1,\cdots,x_n)\to x\vec{1}}\frac{\det\left(p_t(x_i,y_j)\right)^n_{i,j=1}}{\det\left(h_i(x_j)\right)^n_{i,j=1}}.
\end{align*}
If we now assume that $p_t(\cdot,y) \in C^{n-1} \forall t>0, y\in I^{\circ}$ and similarly $h_i(\cdot) \in C^{n-1}$ (in fact we only need to require this in a neighbourhood of $x$) we have,
\begin{align*}
\lim_{(x_1,\cdots,x_n)\to x\vec{1}}\frac{\det\left(p_t(x_i,y_j)\right)^n_{i,j=1}}{\det\left(h_i(x_j)\right)^n_{i,j=1}}&=\lim_{(x_1,\cdots,x_n)\to x\vec{1}}\frac{\det\left(x_j^{i-1}\right)^n_{i,j=1}}{\det\left(h_i(x_j)\right)^n_{i,j=1}}\times \frac{\det\left(p_t(x_i,y_j)\right)^n_{i,j=1}}{\det\left(x_j^{i-1}\right)^n_{i,j=1}}\\
&=\frac{1}{\det\left(\partial^{i-1}_xh_j(x)\right)^n_{i,j=1}}\det\left(\partial^{i-1}_xp_t(x,y_j)\right)^n_{i,j=1}.
\end{align*}
For the fact that the Wronskian, $\det\left(\partial^{i-1}_xh_j(x)\right)^n_{i,j=1}>0$ and in particular does not vanish see subsection \ref{subsectioneigen}. Thus,
\begin{align*}
\mu_t^x\left(\vec{y}\right)=const_{x,t}\times \det\left(h_i(y_j)\right)^n_{i,j=1}\det\left(\partial^{i-1}_xp_t(x,y_j)\right)^n_{i,j=1},
\end{align*}
is given by a biorthogonal ensemble as in (\ref{biorthogonalensemble}). The following lemma, which is an adaptation of Lemma 3.2 of \cite{O Connell} to our general setting, gives some more explicit information.

\begin{lem}
Assume that for $x'$ in a neighbourhood of $x$ there is a convergent Taylor expansion $\forall t>0, y \in I^{\circ}$,
\begin{align*}
\frac{p_t(x',y)}{p_t(x,y)}=f(t,x')\sum_{i=0}^{\infty}\left(x'-x\right)^i\phi_i(t,y),
\end{align*}
for some functions $f, \{\phi_{i}\}_{i\ge 0}$ that in particular satisfy $f(t,x)\phi_0(t,y)\equiv 1$. Then $\mu_t^x\left(\vec{y}\right)$ is given by the biorthogonal ensemble,
\begin{align*}
const_{x,t}\times \det\left(h_i(y_j)\right)^n_{i,j=1} \det\left(\phi_{i-1}(t,y_j)\right)^n_{i,j=1} \prod_{i=1}^{n}p_t(x,y_i).
\end{align*}
If moreover we assume that we have a factorization $\phi_i(t,y)=y^ig_i(t)$ then $\mu_t^x\left(\vec{y}\right)$ is given by the polynomial ensemble,
\begin{align*}
const'_{x,t} \times \det\left(h_i(y_j)\right)^n_{i,j=1}\det\left(y^{i-1}_j\right)^n_{i,j=1} \prod_{i=1}^{n}p_t(x,y_i).
\end{align*}
\end{lem}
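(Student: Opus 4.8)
The starting point is the formula established immediately above the statement,
\[
\mu_t^x\left(\vec{y}\right)=const_{x,t}\times \det\left(h_i(y_j)\right)^n_{i,j=1}\det\left(\partial^{i-1}_xp_t(x,y_j)\right)^n_{i,j=1},
\]
so the entire task is to rewrite the second determinant using the hypothesised Taylor expansion. The plan is to write $p_t(x',y)=p_t(x,y)\,r(x',y)$ with $r(x',y)=f(t,x')\sum_{k\ge0}(x'-x)^k\phi_k(t,y)$ and, using that a power series may be differentiated term by term in the interior of its disc of convergence, apply Leibniz' rule to $\partial_{x'}^{i-1}r(x',y)$. Setting $x'=x$ then kills all but the $k=m$ term of the inner series at each derivative order $m$, and one gets
\[
\partial_x^{i-1}p_t(x,y)=p_t(x,y)\sum_{m=1}^{i}c_{i,m}(t,x)\,\phi_{m-1}(t,y),\qquad c_{i,m}(t,x)=\binom{i-1}{m-1}(m-1)!\,f^{(i-m)}(t,x),
\]
where the coefficients $c_{i,m}(t,x)$ depend only on $(t,x)$, vanish for $m>i$, and have diagonal value $c_{i,i}(t,x)=(i-1)!\,f(t,x)$. (The case $i=1$ recovers $\partial_x^0 p_t(x,y)=p_t(x,y)$ using the normalisation $f(t,x)\phi_0(t,y)\equiv1$, a sanity check.)

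Next I would read the last display as a matrix factorisation: the $n\times n$ matrix $\left(\partial_x^{i-1}p_t(x,y_j)\right)_{i,j}$ is the product of the lower-triangular matrix $\left(c_{i,m}(t,x)\right)_{i,m=1}^n$ with the matrix $\left(\phi_{m-1}(t,y_j)\,p_t(x,y_j)\right)_{m,j=1}^n$. Taking determinants, and pulling the factor $p_t(x,y_j)$ out of the $j$-th column,
\[
\det\left(\partial_x^{i-1}p_t(x,y_j)\right)_{i,j=1}^n=\Bigl(\prod_{i=1}^n (i-1)!\Bigr)f(t,x)^n\,\det\left(\phi_{i-1}(t,y_j)\right)_{i,j=1}^n\prod_{j=1}^n p_t(x,y_j).
\]
The triangular determinant is a nonzero scalar depending only on $(t,x)$, nonzero precisely because $f(t,x)\phi_0(t,y)\equiv1$ forces $f(t,x)\neq0$. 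Substituting this back into the formula for $\mu_t^x$ and absorbing every $(t,x)$-scalar into the constant yields the biorthogonal ensemble $const'_{x,t}\,\det\left(h_i(y_j)\right)_{i,j=1}^n\det\left(\phi_{i-1}(t,y_j)\right)_{i,j=1}^n\prod_{i=1}^n p_t(x,y_i)$.

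For the polynomial-ensemble refinement I would substitute $\phi_i(t,y)=y^ig_i(t)$, so that $\det\left(\phi_{i-1}(t,y_j)\right)_{i,j=1}^n=\bigl(\prod_{i=1}^n g_{i-1}(t)\bigr)\det\left(y_j^{i-1}\right)_{i,j=1}^n$; the scalar $\prod_i g_{i-1}(t)$ is again $(t,x)$-dependent only, and nonzero unless the ensemble degenerates, so it too is absorbed into $const'_{x,t}$, producing the stated polynomial ensemble. The Leibniz bookkeeping and the triangularity observation are routine; the only point I expect to need genuine care is justifying the term-by-term differentiation, i.e.\ checking that the assumed convergence of $\sum_k(x'-x)^k\phi_k(t,y)$ near $x$ lets one interchange $\partial_{x'}^{i-1}$ with the sum uniformly enough at $x'=x$ (this is standard for power series but should be stated explicitly, and it is what underpins the smoothness of $x'\mapsto p_t(x',y)$ used already in the displayed formula above), together with recording the implicit nonvanishing of the $g_i(t)$ in the polynomial case.
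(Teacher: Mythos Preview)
Your proof is correct but takes a different route from the paper. The paper does not start from the Wronskian formula $\det\bigl(\partial_x^{i-1}p_t(x,y_j)\bigr)$; instead it goes back to the ratio $\det(p_t(x_i,y_j))/\det\bigl((x_j-x)^{i-1}\bigr)$, substitutes the Taylor expansion directly into the Karlin--McGregor determinant, and obtains a Cauchy--Binet type expansion $\sum_{0\le k_1<\cdots<k_n}\det\bigl((x_j-x)^{k_i}\bigr)\det\bigl(\phi_{k_i}(t,y_j)\bigr)$. After dividing by the Vandermonde in the $x_j-x$, each summand becomes a Schur function $\chi_{\vec k}(x_1-x,\ldots,x_n-x)$ times $\det(\phi_{k_i}(t,y_j))$, and in the limit $x_i\to x$ only $\vec k=(0,1,\ldots,n-1)$ survives. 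This yields the same answer with constant $f(t,x)^n$, never differentiating $f$.

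Your approach --- Leibniz on the product $p_t(x,y)f(t,x')\sum_k(x'-x)^k\phi_k$, then reading off a lower-triangular matrix factorisation --- is arguably cleaner once the Wronskian formula is in hand, and it makes the triangular structure completely explicit. The trade-off is that you need $f(t,\cdot)\in C^{n-1}$ near $x$, which the lemma does not state (though it holds in every example). The paper's Schur-function argument uses only the assumed convergence of the power series and so is marginally more robust in that respect, at the cost of a slightly longer combinatorial detour.
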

\begin{proof}
By expanding the Karlin-McGregor determinant and plugging in the Taylor expansion above we obtain,
\begin{align*}
\frac{\det\left(p_t(x_i,y_j)\right)^n_{i,j=1}}{\prod_{i=1}^{n}p_t(x,y_i)}&=\prod_{i=1}^{n}f(t,x_i)\sum_{k_1,\cdots,k_n \ge 0}^{}\prod_{i=1}^{n}\left(x_i-x\right)^{k_i}\sum_{\sigma \in  \mathfrak{S}_n}^{} sign(\sigma)\prod_{i=1}^{n}\phi_{k_i}(t,y_{\sigma(i)})\\
&=\prod_{i=1}^{n}f(t,x_i)\sum_{k_1,\cdots,k_n \ge 0}^{}\prod_{i=1}^{n}\left(x_i-x\right)^{k_i}\det\left(\phi_{k_i}(t,y_j)\right)^n_{i,j=1}.
\end{align*}
First, note that we can restrict to $k_1,\cdots k_n$ distinct otherwise the determinant vanishes. Moreover, we can in fact restrict the sum over $k_1,\cdots,k_n \ge 0$ to $k_1,\cdots,k_n$ ordered by replacing $k_1,\cdots,k_n$ by $k_{\tau(1)},\cdots,k_{\tau(n)}$ and summing over $\tau \in \mathfrak{S}_n$ to arrive at the following expansion,
\begin{align*}
\frac{\det\left(p_t(x_i,y_j)\right)^n_{i,j=1}}{\prod_{i=1}^{n}p_t(x,y_i)}=\prod_{i=1}^{n}f(t,x_i)\sum_{0\le k_1<k_2<\cdots<k_n }^{}\det\left(\left(x_j-x\right)^{k_i}\right)^n_{i,j=1}\det\left(\phi_{k_i}(t,y_j)\right)^n_{i,j=1}.
\end{align*}
Now, write with $\vec{k}=(0\le k_1< \cdots <k_n)$ ,
\begin{align*}
\chi_{\vec{k}}(z_1,\cdots,z_n)=\frac{\det\left(z^{k_i}_j\right)^n_{i,j=1}}{\det\left(z^{i-1}_j\right)^n_{i,j=1}},
\end{align*}
for the Schur function and note that $\lim_{(z_1,\cdots,z_n)\to 0}\chi_{\vec{k}}(z_1,\cdots,z_n)=0$ unless $\vec{k}=(0,\cdots,n-1)$ in which case we have $\chi_{\vec{k}}\equiv 1$. We can now finally compute,
\begin{align*}
&\lim_{(x_1,\cdots,x_n)\to x\vec{1}} \frac{\det\left(p_t(x_i,y_j)\right)^n_{i,j=1}}{\det\left(x_j^{i-1}\right)^n_{i,j=1}}=\lim_{(x_1,\cdots,x_n)\to x\vec{1}} \frac{\det\left(p_t(x_i,y_j)\right)^n_{i,j=1}}{\det\left((x_j-x)^{i-1}\right)^n_{i,j=1}}=\prod_{i=1}^{n}p_t(x,y_i)\times\\
&\times \lim_{(x_1,\cdots,x_n)\to x\vec{1}} \prod_{i=1}^{n}f(t,x_i)\sum_{0\le k_1<k_2<\cdots<k_n }^{}\chi_{\vec{k}}(x_1-x,\cdots,x_n-x)\det\left(\phi_{k_i}(t,y_j)\right)^n_{i,j=1}=\\
&=f^n(t,x)\times \prod_{i=1}^{n}p_t(x,y_i)\det\left(\phi_{i-1}(t,y_j)\right)^n_{i,j=1}.
\end{align*}
The first statement of the lemma now follows with,
\begin{align*}
const_{x,t}=e^{-\lambda t} f^n(t,x)\frac{1}{\det\left(\partial^{i-1}_xh_j(x)\right)^n_{i,j=1}}.
\end{align*}
The fact that when  $\phi_i(t,y)=y^ig_i(t)$ we obtain a polynomial ensemble is then immediate.
\end{proof}

\bigskip
\noindent
{\sc School of Mathematics, University of Bristol, U.K.}\newline
\href{mailto:T.Assiotis@bristol.ac.uk}{\small T.Assiotis@bristol.ac.uk}

\bigskip
\noindent
{\sc School of Mathematics and Statistics, University College Dublin, Belfield, Dublin 4, Ireland}\newline
\href{mailto:neil.oconnell@ucd.ie}{\small neil.oconnell@ucd.ie}

\bigskip
\noindent
{\sc Department of Statistics, University of Warwick, Coventry CV4 7AL, U.K.}\newline
\href{mailto:J.Warren@warwick.ac.uk}{\small J.Warren@warwick.ac.uk}

\end{document}